\documentclass[11pt,reqno]{amsart}

\usepackage{amssymb}
\usepackage{mathtools}
\usepackage{bm}
\usepackage{mathrsfs}

\usepackage[
    a4paper,
    left=30mm,
    right=30mm,
    top=30mm,
    bottom=40mm
]{geometry}
\usepackage{enumitem}
\usepackage{indentfirst}
\usepackage{array}
\usepackage{booktabs}

\usepackage{aliascnt}

\usepackage[hidelinks]{hyperref}
\usepackage[nameinlink,noabbrev]{cleveref}
\makeatletter
\renewcommand{\@defaultbiblabelstyle}[1]{[#1]}
\makeatother

\numberwithin{equation}{section}
\newlist{taggedconditions}{enumerate}{1}
\setlist[taggedconditions]{
    style=multiline,
    align=left,
    label={},
    leftmargin=4.75em,
    labelwidth=4em,
    labelsep=.75em,
    font=\normalfont,
    topsep=.4\baselineskip,
    itemsep=.25\baselineskip,
    parsep=0pt,
    partopsep=0pt
}

\theoremstyle{plain}

\newtheorem{theorem}{Theorem}[section]

\newaliascnt{proposition}{theorem}
\newtheorem{proposition}[proposition]{Proposition}
\aliascntresetthe{proposition}

\newaliascnt{lemma}{theorem}
\newtheorem{lemma}[lemma]{Lemma}
\aliascntresetthe{lemma}

\newaliascnt{corollary}{theorem}
\newtheorem{corollary}[corollary]{Corollary}
\aliascntresetthe{corollary}

\newaliascnt{conjecture}{theorem}

\aliascntresetthe{conjecture}

\theoremstyle{definition}

\newaliascnt{definition}{theorem}
\newtheorem{definition}[definition]{Definition}
\aliascntresetthe{definition}

\newaliascnt{assumption}{theorem}
\newtheorem{assumption}[assumption]{Assumption}
\aliascntresetthe{assumption}

\theoremstyle{remark}

\newaliascnt{remark}{theorem}
\newtheorem{remark}[remark]{Remark}
\aliascntresetthe{remark}

\crefname{theorem}{theorem}{theorems}
\Crefname{theorem}{Theorem}{Theorems}

\crefname{proposition}{proposition}{propositions}
\Crefname{proposition}{Proposition}{Propositions}

\crefname{lemma}{lemma}{lemmas}
\Crefname{lemma}{Lemma}{Lemmas}

\crefname{corollary}{corollary}{corollaries}
\Crefname{corollary}{Corollary}{Corollaries}

\crefname{conjecture}{conjecture}{conjectures}
\Crefname{conjecture}{Conjecture}{Conjectures}

\crefname{definition}{definition}{definitions}
\Crefname{definition}{Definition}{Definitions}

\crefname{assumption}{assumption}{assumptions}
\Crefname{assumption}{Assumption}{Assumptions}

\crefname{remark}{remark}{remarks}
\Crefname{remark}{Remark}{Remarks}

\crefname{subappendix}{appendix subsection}{appendix subsections}
\Crefname{subappendix}{Appendix subsection}{Appendix subsections}
\crefname{subsubappendix}{appendix subsection}{appendix subsections}
\Crefname{subsubappendix}{Appendix subsection}{Appendix subsections}

\newcommand{\bb}{\mathbb}
\newcommand{\mc}{\mathcal}
\newcommand{\mf}{\mathfrak}
\newcommand{\ms}{\mathscr}
\newcommand{\msf}{\mathsf}
\newcommand{\bs}{\bm}

\newcommand{\eps}{\varepsilon}
\newcommand{\lam}{\lambda}
\newcommand{\sig}{\sigma}
\newcommand{\vph}{\varphi}
\newcommand{\dd}{\,\mathrm d}

\DeclareMathOperator{\supp}{supp}
\DeclareMathOperator{\rank}{rank}
\DeclareMathOperator{\tr}{tr}
\DeclareMathOperator{\Ree}{Re}
\DeclareMathOperator{\spec}{spec}
\DeclareMathOperator{\dom}{dom}
\DeclareMathOperator{\ran}{ran}
\DeclareMathOperator{\diver}{div}
\DeclareMathOperator{\Diver}{Div}
\DeclareMathOperator*{\lowlim}{\underline{lim}}
\DeclareMathOperator*{\uplim}{\overline{lim}}

\title[Gamma Expansions for Metastable Diffusions]
{Gamma Expansions of Level Two Large Deviation Rate Functionals for Metastable Diffusions}

\hypersetup{
    pdftitle={Gamma Expansions of Level Two Large Deviation Rate Functionals for Metastable Diffusions},
    pdfauthor={Donghyun Kim and Insuk Seo}
}

\author{Donghyun Kim}
\address{
Department of Mathematical Sciences,
Seoul National University
}
\email[Donghyun Kim]{h3nry040515@snu.ac.kr}

\author{Insuk Seo}
\address{
Department of Mathematical Sciences and Research Institute of Mathematics,
Seoul National University
}
\email[Insuk Seo]{insuk.seo@snu.ac.kr}

\begin{document}

\begin{abstract}
Consider the diffusions
\[
    \dd X_t^\eps=b(X_t^\eps)\dd t+\sqrt{2\eps}\,\sig(X_t^\eps)\dd B_t
\]
on $\bb R^d$, where $b$ and $\sig$ are smooth coefficients and $B_t$ is an $r$-dimensional Brownian motion, with $r\le d$. We allow nonreversible dynamics and a degenerate diffusion matrix. Let $\mc I_\eps$ be the level-two large deviation rate functional of the empirical measure. Under our assumptions, including a complete metastable hierarchy described by resolvent convergence, we establish the full $\Gamma$-expansion
\[
    \mc I_\eps
    =\frac1\eps\mc J^{(-1)}+\mc J^{(0)}
    +\sum_{p=1}^{\mf q}\frac1{\theta_\eps^{(p)}}\mc J^{(p)}
\]
in the sense of successive $\Gamma$-limits in the weak topology on probability measures. Here $\theta_\eps^{(p)}$ are the metastable time scales, and each $\mc J^{(p)}$, $p\ge1$, is the lift of the level-two rate functional of the corresponding finite-state Markov chain. We establish the pre-metastable $\Gamma$-limits for generalized overdamped, underdamped, and third-order Langevin dynamics. For the generalized overdamped model, existing metastability results yield the full expansion under additional conditions. For the underdamped and third-order models, the full expansion remains conditional on verification of the complete metastable hierarchy.
\end{abstract}

\maketitle

\section{Introduction}

A diffusion with small noise can spend a long time near one stable equilibrium before moving to another. Different transitions may become visible on widely separated time scales. The empirical measure provides a common description of these regimes: it records the fraction of time spent in each region, while its large deviation functional measures the cost of an atypical occupation profile. Our aim is to identify the successive orders of this cost as the noise vanishes, using a resolvent description of the metastable dynamics.

We consider the diffusions
\begin{equation} \label{eq:intro-sde}
    \dd X_t^\eps
    =
    b(X_t^\eps)\dd t
    +
    \sqrt{2\eps}\,\sig(X_t^\eps)\dd B_t
\end{equation}
on $\mc X=\bb R^d$, where $B_t$ is an $r$-dimensional Brownian motion, $r\le d$, and $b$ and $\sig$ are smooth. The diffusion matrix $a=\sig\sig^{\msf T}$ may be degenerate, and we do not assume reversibility. Under our hypotheses, for each sufficiently small $\eps>0$, the empirical measures
\[
    \eta_t^\eps
    =\frac1t\int_0^t\delta_{X_s^\eps}\dd s
\]
satisfy a level-two large deviation principle as $t\to\infty$, with speed $t$ and a convex good rate functional $\mc I_\eps$ on $\mc P(\mc X)$. Heuristically, for fixed sufficiently small $\eps$ and large $t$,
\[
    \bb P_x^\eps\bigl(\eta_t^\eps\approx\nu\bigr)
    \approx\exp\bigl\{-t\,\mc I_\eps(\nu)\bigr\}.
\]
Thus $\mc I_\eps(\nu)$ measures the cost of spending time according to the occupation profile $\nu$. We study the small-noise limit of these functionals in the weak topology on probability measures.

Under the hypotheses stated in \cref{sec:model}, including a complete metastable hierarchy, our main result, \cref{thm:main}, gives the full $\Gamma$-expansion
\begin{equation} \label{eq:intro-full-expansion}
    \mc I_\eps
    =
    \frac1\eps\mc J^{(-1)}
    +
    \mc J^{(0)}
    +
    \sum_{p=1}^{\mf q}
    \frac1{\theta_\eps^{(p)}}\mc J^{(p)},
\end{equation}
with successive scales
\[
    \eps\prec1\prec\theta_\eps^{(1)}\prec\cdots\prec\theta_\eps^{(\mf q)},
\]
where $\alpha_\eps\prec\beta_\eps$ means $\alpha_\eps/\beta_\eps\to0$. The expansion denotes successive $\Gamma$-limits: $\eps\mc I_\eps$ converges to $\mc J^{(-1)}$, $\mc I_\eps$ to $\mc J^{(0)}$, and $\theta_\eps^{(p)}\mc I_\eps$ to $\mc J^{(p)}$. The precise definition is given in \cref{def:gamma-expansion}.

The first two scales are pre-metastable. The functional $\mc J^{(-1)}$ penalizes occupation away from equilibria. At the unit scale, $\mc J^{(0)}$ is finite only on measures supported on equilibria and records the cost of concentration near unstable equilibria. Its zero set consists of measures supported on stable equilibria. At each later scale, $\mc J^{(p)}$ is the lift of the Donsker--Varadhan functional of the finite-state chain describing transitions between the corresponding metastable groups. Each new limit therefore distinguishes occupation profiles that have zero cost at the preceding scale.

Full $\Gamma$-expansions have been established by Landim \cite{Landim2023FiniteState} for nonreversible finite-state Markov chains, by Landim, Lee and Mariani \cite{LandimLeeMariani2025} for reversible overdamped Langevin diffusions with Morse potentials, and by Landim and Maura \cite{LandimMaura2026} for nonreversible one-dimensional diffusions with periodic boundary conditions. In the reversible diffusion setting, the recovery measures in \cite{LandimLeeMariani2025} are built from approximate equilibrium potentials and the reversible Dirichlet form. We develop a general transfer argument and the analytic framework needed to apply it to nonreversible diffusions with possibly degenerate noise.

The paper has four main contributions.

\emph{First, we give a general argument transferring a complete metastable hierarchy into a full $\Gamma$-expansion, including when the effective chains are reducible.} The argument uses resolvent convergence, the Donsker--Varadhan variational formula, and contact measures for positive resolvent tests. It applies to general Markov processes, including Markov jump processes. In the resolvent approach of \cite{LandimMarcondesSeo2025}, the rescaled resolvent, applied to observables that are constant on the wells, converges there to the resolvent of a finite-state chain. A complete hierarchy supplies this description at successive scales, until only one closed communication class remains. \Cref{thm:resolvent-transfer} states the transfer theorem in our diffusion setting. Starting from the pre-metastable limits, we apply it successively along the hierarchy to obtain the full expansion.

Choi \cite[Theorem~3.2 and Section~4.1]{Choi2024ZeroRange} established a resolvent-based transfer at a single metastable scale for discrete-state systems. That recovery argument uses irreducibility of the limiting chain to identify the limiting measure through the uniqueness of the invariant law of a tilted chain. Along a hierarchy, the intermediate chains may be reducible. Stationarity then does not determine the weights of distinct closed classes, and measures charging transient classes must also be treated. We construct recovery measures separately on each communication class, retaining the cost of leaving a transient class. Combining these measures with the prescribed weights and using convexity gives the recovery bound for the full chain.

\emph{Second, we develop the analytic framework needed to apply this transfer argument to diffusions on an unbounded state space with possibly degenerate noise.} We establish the level-two large deviation principle, identify its rate functional through the Donsker--Varadhan formula, and justify the use of positive smooth test functions in this formula. Schematically,
\[
    \mc I_\eps(\nu)
    =\sup_{u>0}\int_{\mc X}-\frac{\mc L_\eps u}{u}\dd\nu,
    \qquad
    \mc L_\eps=b\cdot\nabla+\eps a:\nabla^2,
\]
where the admissible class of positive smooth tests is specified in \cref{prop:smooth-core-eps}. For a suitable positive resolvent test $F$, we construct the tilted Markov semigroup and prove that it admits an invariant probability measure $\nu_{\eps,F}$ satisfying
\[
    \mc I_\eps(\nu_{\eps,F})
    =\int_{\mc X}-\frac{\ms L_\eps F}{F}\dd\nu_{\eps,F}.
\]
Here $\ms L_\eps$ is the extended generator, defined in \cref{ssec:extended-generator-eps}. We call $\nu_{\eps,F}$ a \emph{contact measure} for $F$: the test $F$ attains the Donsker--Varadhan supremum at this measure. These results supply the analytic ingredients required by the general transfer argument, including the measures used to construct recovery sequences. The use of stationary measures of tilted dynamics follows \cite{Landim2023FiniteState,LandimMaura2026}; the construction and its relation to these earlier recovery arguments are explained in \cref{ssec:contact-measures-eps}.

\emph{Third, we identify the two pre-metastable limits without assuming a metastable hierarchy.} The assumptions in \cref{ass:algebraic-structure,ass:cocentration} concern the drift and noise and provide a Lyapunov function controlling the process at infinity and near equilibria. The requirements in \cref{ass:algebraic-structure} are automatic when the diffusion matrix is positive definite everywhere. Under these assumptions, \cref{prop:pre-metastable-gamma,prop:unit-gamma} establish the limits at scales $\eps$ and $1$.

\emph{Fourth, we apply the framework to generalized overdamped, underdamped, and third-order Langevin dynamics.} Under the respective model assumptions, we establish the pre-metastable limits for all three. The generalized overdamped model is
\[
    \dd X_t^\eps
    =-\bigl(\nabla U+\ell\bigr)(X_t^\eps)\dd t
    +\sqrt{2\eps}\,\dd B_t,
    \qquad
    \ell\cdot\nabla U=0.
\]
For this model, the condition $\diver\ell=0$, together with additional assumptions on the landscape of $U$, allows us to apply the resolvent results of \cite{LandimLeeSeo2024First,LandimLeeSeo2025General}, giving the full expansion. The underdamped and third-order models illustrate the treatment of degenerate noise; their full expansions remain conditional on the existence of a complete metastable hierarchy. The examples are presented in \cref{sec:applications}.

The transfer argument does not use an explicit formula for the invariant measure. Identifying the metastable scales and effective chains remains a separate, model-dependent problem. The available results used here to verify nontrivial hierarchies concern diffusions with explicitly known stationary measures. Establishing the hierarchy beyond this setting remains a central open problem; we discuss this limitation at the end of \cref{sec:model}.

The paper is organized as follows. \Cref{sec:model,sec:main-result} introduce the assumptions and the hierarchy, define the limiting functionals, and state the main results; \cref{sec:applications} presents the Langevin examples. \Cref{sec:DV-framework,sec:contact-resolvent} develop the large deviation framework and the contact measures used in \cref{sec:metastable-scales} to prove the transfer theorem. \Cref{sec:scale-eps,sec:unit-scale} establish the pre-metastable limits. The appendices collect the finite-state and control-theoretic facts and verify the assumptions for the examples.

\section{Model and metastable hierarchy} \label{sec:model}

\subsection{Low-temperature diffusions}

Let the state space be $\mc X=\bb R^d$. Let $B_t$ be a standard Brownian motion on $\bb R^r$, where $r\le d$. Denote by $B_b(\mc X)$ the space of bounded measurable functions on $\mc X$. For each temperature $\eps>0$, consider the diffusion
\begin{equation} \label{eq:sde-eps}
    \dd X_t^\eps
    =
    b(X_t^\eps)\dd t
    +
    \sqrt{2\eps}\,\sig(X_t^\eps)\dd B_t,
\end{equation}
where
\[
    b\in C^\infty(\mc X;\bb R^d),
    \qquad
    \sig\in C^\infty(\mc X;\bb R^{d\times r}).
\]
Define the diffusion matrix $a\coloneqq\sig\sig^{\msf T}$ and the formal generator
\[
    \mc L_\eps
    \coloneqq
    b\cdot\nabla+\eps a:\nabla^2,
\]
where $\msf A : \msf B = \tr(\msf A^{\msf T}\msf B)$.
Define the equilibrium set of the deterministic flow by
\[
    \mc C_0
    \coloneqq
    \{x\in\mc X:b(x)=0\}.
\]
\begin{assumption}[Algebraic structure]\label{ass:algebraic-structure}
    Assume the following.
    \begin{enumerate}
        \item \emph{Fixed noise range.}
            There exists a linear subspace $\mf H\le\bb R^d$ such that $\dim\mf H=r$ and $\ran\sig(x)=\mf H$ for every $x\in\mc X$.
        \item \emph{Affine nondiffusive drift.}
            There exists a constant matrix $\msf\Omega\in\bb R^{d\times d}$ such that
            \[
                b(x)-b(0)-\msf\Omega x\in\mf H,
                \qquad
                x\in\mc X.
            \]
        \item \emph{Controllability.}
            For some $x_*\in\mc X$, the pair $(\msf\Omega,\sig(x_*))$ is controllable:
            \[
                \rank
                \begin{pmatrix}
                    \sig(x_*) & \msf\Omega\sig(x_*) & \cdots & \msf\Omega^{d-1}\sig(x_*)
                \end{pmatrix}
                =d.
            \]
    \end{enumerate}
\end{assumption}

\Cref{ass:algebraic-structure} imposes no additional restriction when $a(x)$ is positive definite for every $x\in\mc X$: in that case $r=d$, $\mf H=\bb R^d$, and $\sig(x)$ is invertible. These conditions become genuine restrictions when $a$ is degenerate.

In the degenerate case, items \textup{(1)} and \textup{(2)} have the following concrete interpretation. After an orthogonal change of coordinates associated with $\bb R^d=\mf H^\perp\oplus\mf H$, write $x=(y,z)\in\bb R^{d-r}\times\bb R^r$. Then \eqref{eq:sde-eps} takes the form $X_t^\eps=(Y_t^\eps,Z_t^\eps)$ with
\begin{equation}\label{eq:qp-coordinates}
\begin{aligned}
    \dd Y_t^\eps
    &=
    b_y(Y_t^\eps,Z_t^\eps)\dd t,\\
    \dd Z_t^\eps
    &=
    b_z(Y_t^\eps,Z_t^\eps)\dd t
    +
    \sqrt{2\eps}\,
    \sig_z(Y_t^\eps,Z_t^\eps)\dd B_t,
\end{aligned}
\end{equation}
where $b_y(y,z)$ is affine and $\sig_z(y,z)\in\bb R^{r\times r}$ is invertible for every $(y,z)\in\mc X$. Thus the noise acts only in the $z$-variables, while the drift in the nondiffusive variables is affine. In \cref{ssec:third-order}, we demonstrate how a nonlinear change of variables can reveal this structure even when $b_y(y,z)$ is non-affine.

\Cref{ass:algebraic-structure} implies that $(Db(x),\sig(x))$ is controllable at every $x$. Indeed, differentiating item \textup{(2)} gives
\[
    \ran\bigl(Db(x)-\msf\Omega\bigr)
    \subseteq
    \mf H
    =
    \ran\sig(x).
\]
Thus $Db(x)$ differs from $\msf\Omega$ by state feedback, which preserves controllability by \cref{lem:feedback-invariance}. Since the full-rank matrices $\sig(x)$ have the same range, controllability of $(\msf\Omega,\sig(x))$ is independent of $x$.

Under item \textup{(1)}, $\sig(x)$ has full column rank. Throughout the paper, we use its smooth Moore--Penrose pseudoinverse
\[
    \msf R(x)
    \coloneqq
    \bigl(\sig(x)^{\msf T}\sig(x)\bigr)^{-1}\sig(x)^{\msf T}.
\]
Then $\msf R(x)\sig(x)=\msf{Id}_r$, while $\sig(x)\msf R(x)$ is the orthogonal projection onto $\mf H$.

For $x\in\mc X$ and $p\in\bb R^d$, define the Freidlin--Wentzell Hamiltonian
\[
    \ms H(x,p)
    \coloneqq
    b(x)\cdot p+|\sig(x)^{\msf T}p|^2.
\]

Throughout the paper, $C$ denotes a finite positive constant whose value may change from line to line. Dependence on parameters is indicated whenever relevant. The next assumption combines Lyapunov control at infinity with conditions on the equilibria of the deterministic flow.
\begin{assumption}[Concentration] \label{ass:cocentration}
    There exists $V\in C^\infty(\mc X;[0,\infty))$ such that, with
    \[
        W(x)
        \coloneqq
        -\ms H(x,\nabla V(x)),
    \]
    the following conditions hold.
    \begin{enumerate}
        \item \emph{Confinement.}
            The functions $V$ and $W$ are inf-compact, meaning that all their sublevel sets are compact.
        \item \emph{Lyapunov bound.}
            There exists $C<\infty$ such that
            \[
                a:\nabla^2V
                \le
                C(1+W)
                \qquad\text{on }\mc X.
            \]
        \item \emph{Strict dissipation.}
            We have
            \[
                W\ge0,
                \qquad
                W^{-1}(\{0\})=\mc C_0.
            \]
        \item \emph{Hyperbolic equilibria.}
            Every $c\in\mc C_0$ is hyperbolic:
            \[
                \spec(Db(c))\cap i\bb R=\emptyset.
            \]
    \end{enumerate}
\end{assumption}

\begin{remark}
    Consider the overdamped Langevin dynamics
    \[
        \dd X_t^\eps
        =-\nabla U(X_t^\eps)\dd t
        +\sqrt{2\eps}\,\dd B_t,
    \]
    where $U\in C^\infty(\bb R^d)$. Here $a=\sig=\msf{Id}_d$, so \cref{ass:algebraic-structure} is automatic. After normalizing $\min U=0$, the choice $V=U/2$ gives
    \[
        W=-\ms H(\,\cdot\,,\nabla V)=\frac14|\nabla U|^2,
        \qquad
        a:\nabla^2V=\frac12\Delta U.
    \]
    Thus \cref{ass:cocentration} is readily verified under growth and nondegeneracy conditions on $U$. We verify \cref{ass:algebraic-structure,ass:cocentration} for the three Langevin models in \cref{sec:applications}.
\end{remark}

\begin{remark}
    The main results extend, with the usual modifications, to the periodic state space $\mc X=\bb T^d$ when $a$ is uniformly elliptic. In this case $r=d$, and \cref{ass:algebraic-structure} may be replaced by uniform ellipticity. Compactness makes \cref{ass:cocentration}\textup{(1)} automatic, and item \textup{(2)} follows from boundedness of $a:\nabla^2V$ when $W\ge0$. Thus the geometric assumptions reduce to items \textup{(3)}--\textup{(4)}.

    The results also extend to the partially periodic state space $\mc X=\bb T^{d-r}\times\bb R^r$. In the adapted coordinates $(y,z)$, retain item \textup{(1)} of \cref{ass:algebraic-structure} with $\mf H=\{0\}^{d-r}\times\bb R^r$ and replace items \textup{(2)}--\textup{(3)} by the existence of a matrix $\msf\Omega'\in\bb R^{d\times r}$ such that
    \[
        b(y,z)-b(0,0)-\msf\Omega'z\in\mf H.
    \]
    This includes the underdamped Langevin dynamics with periodic position variable when $d=2n$ and $r=n$. All functions and test functions are then understood to be periodic in the $y$-variables.
\end{remark}

\subsection{The level-2 large deviation principle}
Suppose that \cref{ass:algebraic-structure,ass:cocentration} hold. By \cref{prop:diffusion-properties-eps}, for every sufficiently small $\eps>0$, the diffusion defines a non-explosive strong Markov family $(X_t^\eps,\bb P_x^\eps)$ with continuous paths.

Moreover, the empirical measures
\[
    \eta_t^\eps
    \coloneqq
    \frac1t\int_0^t\delta_{X_s^\eps}\dd s
\]
satisfy, with speed $t$, the level-2 large deviation principle (LDP) on $\mc P(\mc X)$ equipped with the weak topology, locally uniformly with respect to the initial point. More precisely, there exists a convex good rate functional $\mc I_\eps$ such that, for every nonempty compact set $K\subset\mc X$ and every Borel set $A\subset\mc P(\mc X)$,
\[
    \begin{aligned}
        -\inf_{\nu\in A^\circ}\mc I_\eps(\nu)
        &\le
        \lowlim_{t\to\infty}\,
        \inf_{x\in K}
        \frac1t\log\bb P_x^\eps(\eta_t^\eps\in A)\\
        &\le
        \uplim_{t\to\infty}\,
        \sup_{x\in K}
        \frac1t\log\bb P_x^\eps(\eta_t^\eps\in A)
        \le
        -\inf_{\nu\in\overline A}\mc I_\eps(\nu),
    \end{aligned}
\]
where $A^\circ$ and $\overline A$ denote the interior and closure of $A$ in the weak topology.
The LDP is proved in \cref{thm:LDP-eps}.

\subsection{Metastable hierarchy} \label{ssec:metastable-hierarchy}

Define the set of stable equilibria of the system $\dot z=b(z)$ by
\[
    \mc M_0
    \coloneqq
    \big\{
    m\in\mc C_0:
    \spec(Db(m))
    \subset
    \{z\in\bb C:\Ree z<0\}
    \big\}.
\]
\begin{lemma} \label{lem:C0-finite}
    Under \cref{ass:cocentration}, the set $\mc C_0$ is nonempty and finite, and $\mc M_0$ is nonempty.
\end{lemma}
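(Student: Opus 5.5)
Finiteness and nonemptiness of $\mc C_0$ come from the structure of $W$, and the existence of a stable equilibrium comes from a Lyapunov-type argument with $V$.

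\emph{Step 1: $\mc C_0$ is nonempty and finite.}
By \cref{ass:cocentration}(1), $W$ is continuous and inf-compact, so it attains its minimum on $\mc X$. Since $W\ge0$ and, by (3), $W^{-1}(\{0\})=\mc C_0$, it remains to show that $\min W=0$. Here I use $V$ directly. $V$ is inf-compact and smooth, so it attains a global minimum at some point $x_0$, where $\nabla V(x_0)=0$. Then $W(x_0)=-\ms H(x_0,0)=0$, hence $x_0\in\mc C_0$ and $\mc C_0\neq\emptyset$.

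Next, $\mc C_0=W^{-1}(\{0\})$ is a sublevel set of $W$, hence compact. Each $c\in\mc C_0$ is hyperbolic by (4), so $Db(c)$ is invertible. By the inverse function theorem, $c$ is an isolated zero of $b$. A compact set of isolated points is finite.

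\emph{Step 2: $\mc M_0$ is nonempty.}
This is the main obstacle: I must exclude the possibility that every equilibrium has an unstable direction. The plan is to show that the global minimizer $x_0$ of $V$ from Step 1 is stable.

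Along the flow $\dot z=b(z)$ we have
\[
\frac{\dd}{\dd t}V(z_t)=b\cdot\nabla V=-W-|\sig^{\msf T}\nabla V|^2\le0.
\]
So $V$ is a strict Lyapunov function: it decreases strictly off $\mc C_0$, because $W>0$ there.

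Suppose, for contradiction, that $x_0$ were not asymptotically stable. Then the hyperbolic point $x_0$ would have a nontrivial unstable manifold, by the stable/unstable manifold theorem. Take a point $y\neq x_0$ on it near $x_0$. The backward orbit of $y$ converges to $x_0$ as $t\to-\infty$, so $V(y)<V(x_0)$ since $V$ decreases strictly along nonconstant orbits. This contradicts the minimality of $V(x_0)$. Hence $x_0$ has no unstable directions.

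By hyperbolicity (4), $Db(x_0)$ has no purely imaginary eigenvalues, and the previous paragraph shows it has no eigenvalues with positive real part. Therefore $\spec(Db(x_0))\subset\{\Ree z<0\}$, i.e. $x_0\in\mc M_0$.

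The one delicate point is that $y\ne x_0$ lies on a nonconstant orbit. This holds because $\mc C_0$ is finite (Step 1), so $y$ is not an equilibrium once it is close enough to $x_0$. Consequently $V$ strictly decreases along that orbit, as required.
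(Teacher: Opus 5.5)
Your proof is correct. Step 1 matches the paper. Step 2 takes a different route.

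\emph{The paper's route.} Since $\mc C_0$ is finite and contains every global minimizer of $V$, the point $x_0$ is an isolated global minimizer. Hence $V>V(x_0)$ on a punctured neighborhood, where also $b\cdot\nabla V=-W-|\sig^{\msf T}\nabla V|^2<0$. Lyapunov's direct method then gives asymptotic stability, and hyperbolicity upgrades this to $Db(x_0)$ being Hurwitz.

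\emph{Your route.} You argue by contradiction with the unstable manifold theorem. You need only global, not strict, minimality of $V(x_0)$, together with the strict decrease of $V$ along nonconstant orbits.

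\emph{Comparison.} Your argument reaches the spectral conclusion directly and does not need the strictness of the minimum. It does not need finiteness of $\mc C_0$ either: any $y\neq x_0$ whose backward orbit converges to $x_0$ cannot be an equilibrium, so the ``delicate point'' resolves itself. The paper's argument yields asymptotic stability as a by-product and stays within the classical Lyapunov framework. However, its last step (asymptotic stability plus hyperbolicity implies Hurwitz) implicitly uses the same instability fact for eigenvalues with positive real part that you invoke explicitly.

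\emph{One streamlining point.} Your contradiction hypothesis should simply be that $Db(x_0)$ has an eigenvalue with positive real part, since that is what the unstable manifold theorem needs. The detour through ``not asymptotically stable'' is unnecessary.
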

\begin{proof}
    Since $V$ is inf-compact, it attains its minimum at some $c\in\mc X$. Then $\nabla V(c)=0$, so the definition of $W$ gives $W(c)=0$, and hence $c\in\mc C_0$. Moreover, $\mc C_0=\{W\le0\}$ is compact. Hyperbolicity makes $Db(c)$ invertible at each equilibrium, so the inverse function theorem shows that every equilibrium is isolated. Thus $\mc C_0$ is finite.

    Every global minimizer of $V$ belongs to $\mc C_0$, so $c$ is an isolated global minimizer. In a sufficiently small punctured neighborhood of $c$,
    \[
        V>V(c),
        \qquad
        b\cdot\nabla V
        =
        -W-|\sig^{\msf T}\nabla V|^2
        <0,
    \]
    so $c$ is asymptotically stable. Since $c$ is hyperbolic, $Db(c)$ is Hurwitz, meaning that all its eigenvalues have negative real parts. Hence $c\in\mc M_0$.
\end{proof}

\Cref{ass:algebraic-structure,ass:cocentration} provide the probabilistic framework. To describe the successive metastable scales, we use the resolvent approach of \cite{LandimMarcondesSeo2025}, in which the motion between neighborhoods of stable equilibria is described by finite-state Markov chains.

Write $\bb E_x^\eps$ for expectation under $\bb P_x^\eps$ and, for $\lam>0$ and $G\in B_b(\mc X)$, define the resolvent
\[
    R_\lam^\eps G(x)
    \coloneqq
    \bb E_x^\eps
    \left[
    \int_0^\infty e^{-\lam t}G(X_t^\eps)\dd t
    \right].
\]
Then $\theta^{-1}R_{\lam/\theta}^\eps G$ is the resolvent of the time-rescaled process $(X_{\theta t}^\eps)_{t\ge0}$.

For each $m\in\mc M_0$, fix a model-dependent relatively compact Borel neighborhood $\mc E(m)$ whose compact closure lies in the basin of attraction of $m$, such that the closures of these neighborhoods are pairwise disjoint and each boundary $\partial\mc E(m)$ has zero Lebesgue measure. For $\mc M\subset\mc M_0$, define
\[
    \mc E(\mc M)
    \coloneqq
    \bigcup_{m\in\mc M}\mc E(m).
\]
Then $\mc E(\mc M)\cap\mc M_0=\mc M$, and its boundary is disjoint from $\mc M_0$ and has zero Lebesgue measure. Hence $\mc E(\mc M)$ is a continuity set both for the Lebesgue measure and for every measure supported on $\mc M_0$, in particular for all lifted invariant measures introduced below.

For two positive scales $\alpha_\eps$ and $\beta_\eps$, write
\[
    \alpha_\eps\prec\beta_\eps
    \quad\Longleftrightarrow\quad
    \lim_{\eps\to0}
    \frac{\alpha_\eps}{\beta_\eps}
    =
    0.
\]

If $\mc M_0=\{m\}$, the metastable hierarchy is trivial. In this case, set $\mf q=0$,
\[
    \ms V^{(1)}
    \coloneqq
    \bigl\{\{m\}\bigr\},
    \qquad
    \pi_{\{m\}}
    \coloneqq
    \delta_m,
\]
and introduce no metastable time scale or reduced generator. By convention, this is a complete metastable hierarchy. Henceforth, suppose that $|\mc M_0|\ge2$.

We recursively construct the metastable tree structure from triples
\[
    \left(
    \theta_\eps^{(p)},
    \ms V^{(p)},
    \mf L^{(p)}
    \right)_{1\le p \le \mf q}
\]
at successive levels. Here $\theta_\eps^{(p)}$ is a metastable time scale, $\ms V^{(p)}$ is a finite collection of pairwise disjoint subsets of $\mc M_0$, and $\mf L^{(p)}$ is a nonzero Markov generator on $\ms V^{(p)}$. We denote by $\bs y^{(p)}(\cdot)$ the continuous-time Markov chain generated by $\mf L^{(p)}$. The terminal level $\mf q$ is specified in Step 3.

Along with these triples, we recursively define a probability measure $\pi_{\mc M}\in\mc P(\mc M_0)$ for every metastable state $\mc M\in\ms V^{(p)}$ appearing in the construction. Each $\pi_{\mc M}$ depends only on the set $\mc M\subset\mc M_0$ and is independent of the levels at which $\mc M$ appears.

For a finite-state Markov chain, two states communicate if each is reachable from the other. A communication class is a maximal communicating set, and it is closed if the chain has no transition from the class to its complement. At each level below, the closed communication classes of the limiting chain become the states at the next level, while transient states disappear. The following three steps formalize this recursive coarse-graining.

\medskip
\noindent{\textbf{Step 1.}} Initially, define
\[
    \theta_\eps^{(0)}
    \equiv
    1,
    \qquad
    n_0
    \coloneqq
    |\mc M_0|,
    \qquad
    \ms V^{(1)}
    \coloneqq
    \bigl\{\{m\}:m\in\mc M_0\bigr\}.
\]
For every $m\in\mc M_0$, define $\pi_{\{m\}}\coloneqq\delta_m$.

\medskip
\noindent{\textbf{Step 2.}} Assume that $\theta_\eps^{(p-1)}$ and $\ms V^{(p)}$ have been defined, with $|\ms V^{(p)}|=n_{p-1}\ge2$. Assume that there exist a scale $\theta_\eps^{(p)}$ and a nonzero Markov generator $\mf L^{(p)}$ on $\ms V^{(p)}$ such that $\theta_\eps^{(p-1)}\prec\theta_\eps^{(p)}$. Assume moreover that, for every $g:\ms V^{(p)}\to\bb R$ and $\lam>0$, if
\[
    G
    \coloneqq
    \sum_{\mc N\in\ms V^{(p)}}
    g(\mc N)\bs1_{\mc E(\mc N)},
    \qquad
    F_\eps
    \coloneqq
    \frac1{\theta_\eps^{(p)}}
    R_{\lam/\theta_\eps^{(p)}}^\eps G,
\]
then, writing $f\coloneqq(\lam-\mf L^{(p)})^{-1}g$,
\begin{equation} \label{eq:metastable-resolvent-convergence}
    \lim_{\eps\to0}
    \max_{\mc M\in\ms V^{(p)}}
    \sup_{x\in\mc E(\mc M)}
    |F_\eps(x)-f(\mc M)|
    =0.
\end{equation}

Denote the closed communication classes of $\bs y^{(p)}(\cdot)$, in the sense of \cref{def:finite-communication}, by
\[
    \mf C_1^{(p)},
    \ldots,
    \mf C_{n_p}^{(p)}.
\]
Since $\mf L^{(p)}$ is nonzero, $n_p<n_{p-1}$. The transient states of $\bs y^{(p)}(\cdot)$ are discarded and will not be considered at subsequent time scales. For every $i\in\{1,\ldots,n_p\}$, set
\[
    \mc M_i^{(p+1)}
    \coloneqq
    \bigcup_{\mc M\in\mf C_i^{(p)}}\mc M,
\]
and define
\[
    \ms V^{(p+1)}
    \coloneqq
    \left\{
    \mc M_1^{(p+1)},
    \ldots,
    \mc M_{n_p}^{(p+1)}
    \right\}.
\]

For each $i\in\{1,\ldots,n_p\}$, let $\omega_i^{(p)}\in\mc P(\ms V^{(p)})$ be the invariant probability measure of $\bs y^{(p)}(\cdot)$ supported on $\mf C_i^{(p)}$. By \cref{lem:finite-invariant-measures},
\[
    \operatorname{co}
    \left\{
    \omega_1^{(p)},
    \ldots,
    \omega_{n_p}^{(p)}
    \right\}
\]
is the complete set of invariant probability measures of $\bs y^{(p)}(\cdot)$.

For every $i\in\{1,\ldots,n_p\}$, define
\begin{equation}\label{eq:piM-def}
    \pi_{\mc M_i^{(p+1)}}
    \coloneqq
    \sum_{\mc M\in\mf C_i^{(p)}}
    \omega_i^{(p)}(\mc M)\pi_{\mc M}.
\end{equation}
Since \cref{lem:finite-invariant-measures} gives $\omega_i^{(p)}>0$ on $\mf C_i^{(p)}$, induction yields
\[
    \supp\pi_{\mc M_i^{(p+1)}}
    =
    \mc M_i^{(p+1)}.
\]
The definition \eqref{eq:piM-def} is consistent when a state persists across levels. Indeed, if $\mf C_i^{(p)}=\{\mc M\}$, then $\mc M_i^{(p+1)}=\mc M$, $\omega_i^{(p)}=\delta_{\mc M}$, and the formula leaves $\pi_{\mc M}$ unchanged. Otherwise, every $\mc M\in\mf C_i^{(p)}$ is a proper subset of $\mc M_i^{(p+1)}$, so \eqref{eq:piM-def} defines a new measure.

\medskip
\noindent{\textbf{Step 3.}} The recursion is complete if there exists a finite $\mf q\ge1$ such that Step 2 can be carried out for every $p\in\{1,\ldots,\mf q\}$, with
\[
    n_p\ge2
    \quad\text{for }p<\mf q,
    \qquad
    n_{\mf q}=1.
\]
Since $n_p<n_{p-1}$ at every step, such a recursion is necessarily finite. No generator is introduced beyond the terminal level, whose active state space is
\[
    \ms V^{(\mf q+1)}
    =
    \left\{
    \mc M_1^{(\mf q+1)}
    \right\}.
\]

\begin{assumption}[Complete metastable hierarchy]
    \label{ass:complete-metastable-hierarchy}
    The model \eqref{eq:sde-eps} admits a \emph{complete metastable hierarchy}. This means that either $|\mc M_0|=1$, in which case $\mf q=0$, or there exist an integer $\mf q\ge1$ and a recursive construction satisfying Steps 1--3 above.
\end{assumption}

\Cref{sec:applications} presents examples illustrating these hypotheses. In particular, the generalized overdamped Langevin diffusions satisfying \textup{\textbf{(OD1)}}--\textup{\textbf{(OD3)}} and \textup{\textbf{(OD-M)}} satisfy all the hypotheses. For the underdamped and third-order Langevin models, we establish the pre-metastable conclusions; the full expansion remains conditional on the existence of a complete metastable hierarchy.

Our argument does not directly require an explicit formula for the invariant measure. However, the results invoked here to verify \cref{ass:complete-metastable-hierarchy} for nontrivial metastable hierarchies concern diffusions whose invariant measures are explicitly known. Establishing such hierarchies when the invariant measure is not explicitly known remains a central open problem in metastability; see also the discussion in \cite[Section~1]{LandimMaura2026}.

\section{Full \texorpdfstring{$\Gamma$}{Gamma}-expansion} \label{sec:main-result}

\subsection{\texorpdfstring{$\Gamma$-convergence and full expansions}{Gamma-convergence and full expansions}}

We first recall the definition of $\Gamma$-convergence and then formulate the notion of a full $\Gamma$-expansion used in this paper; see \cite{Mariani2018} for discussion about $\Gamma$-convergence of LDP rate functionals.
\begin{definition}[$\Gamma$-convergence]
    Let $X$ be a Polish space. A family of functionals $I_\eps:X\to[0,\infty]$ $\Gamma$-converges to $I:X\to[0,\infty]$ as $\eps\to0$ if the following conditions hold.
    \begin{enumerate}
        \item
            For every $x\in X$ and every family $x_\eps\to x$,
            \[
                \lowlim_{\eps\to0}I_\eps(x_\eps)\ge I(x).
            \]
        \item
            For every $x\in X$, there exists a family $x_\eps\to x$ such that
            \[
                \uplim_{\eps\to0}I_\eps(x_\eps)\le I(x).
            \]
    \end{enumerate}
\end{definition}

The $\Gamma$-convergences below are taken on the set of Borel probability measures $\mc P(\mc X)$ equipped with the weak topology, which is a Polish space.
\begin{definition}[Full $\Gamma$-expansion] \label{def:gamma-expansion}
    Let $\mf r,\mf q\ge0$ be integers. A \emph{full $\Gamma$-expansion} of $(\mc I_\eps)_{\eps>0}$ is given by positive scales $\theta_\eps^{(p)}$ and functionals $\mc J^{(p)}:\mc P(\mc X)\to[0,\infty]$, indexed by the integers $p$ satisfying $-\mf r\le p\le\mf q$, if the following conditions hold.
    \begin{enumerate}
        \item
            Consecutive scales satisfy
            \[
                \theta_\eps^{(p)}\prec\theta_\eps^{(p+1)},
                \qquad -\mf r\le p<\mf q.
            \]
        \item
            If $\varrho_\eps\prec\theta_\eps^{(-\mf r)}$, then $\varrho_\eps\mc I_\eps$ $\Gamma$-converges to the zero functional.
        \item
            For every integer $p$ with $-\mf r\le p\le\mf q$, $\theta_\eps^{(p)}\mc I_\eps$ $\Gamma$-converges to $\mc J^{(p)}$.
        \item
            For every integer $p$ with $-\mf r\le p<\mf q$ and every $\nu\in\mc P(\mc X)$,
            \[
                \mc J^{(p)}(\nu)=0
                \quad\Longleftrightarrow\quad
                \mc J^{(p+1)}(\nu)<\infty.
            \]
        \item
            The $0$-level set of $\mc J^{(\mf q)}$ is a singleton.
    \end{enumerate}
    We call the scales with $p\ge1$ \emph{metastable} and those with $p\le0$ \emph{pre-metastable}. We write this expansion as
    \[
        \mc I_\eps
        =
        \sum_{p=-\mf r}^{\mf q}
        \frac1{\theta_\eps^{(p)}}\mc J^{(p)}.
    \]
\end{definition}

\subsection{Scales and limiting functionals} \label{ssec:limit-define}

Set $\theta_\eps^{(-1)}\coloneqq\eps$ and $\theta_\eps^{(0)}\coloneqq1$. For $p\in\{1,\ldots,\mf q\}$, let $\theta_\eps^{(p)}$ be the metastable scales in \cref{ass:complete-metastable-hierarchy}.

At the scale $\eps$, define
\begin{equation}\label{eq:def-J-minus}
\begin{split}
    \mc J^{(-1)}(\nu)
    &\coloneqq
    -\inf_{\vph\in C_c^\infty(\mc X)}
    \int_{\mc X}
    \ms H(x,\nabla\vph(x))
    \dd\nu\\
    &=
    \sup_{\vph\in C_c^\infty(\mc X)}
    \int_{\mc X}
    \left[
        -b\cdot\nabla\vph
        -
        |\sig^{\msf T}\nabla\vph|^2
    \right]
    \dd\nu.
\end{split}
\end{equation}
By \cref{lem:J-Riesz}, its effective domain is
\[
    \dom\mc J^{(-1)}
    =
    \left\{
    \nu\in\mc P(\mc X):
    \begin{array}{l}
    \text{there exists }j\in L^2(\nu;\bb R^r)\text{ such that}\\
    \diver\bigl((b-\sig j)\nu\bigr)=0
    \text{ in }\mc D'(\mc X)
    \end{array}
    \right\}.
\]
All divergence equalities are understood in the sense of distributions.
Under \cref{ass:algebraic-structure,ass:cocentration}, \cref{cor:J-minus-zeroset} gives
\begin{equation} \label{eq:J-minus-zero}
    \{\nu\in\mc P(\mc X):\mc J^{(-1)}(\nu)=0\}
    =
    \mc P(\mc C_0).
\end{equation}

At the unit scale, define
\[
    \kappa_c
    \coloneqq
    \sum_{\substack{\lam\in\spec(Db(c))\\\Ree\lam>0}}
    \Ree\lam,
    \qquad
    c\in\mc C_0,
\]
where the eigenvalues are counted with algebraic multiplicity, and set
\begin{equation}\label{eq:def-J-zero}
\mc J^{(0)}(\nu)
\coloneqq
\begin{cases}
    \displaystyle
    \sum_{c\in\mc C_0}\omega_c\kappa_c,
    & \displaystyle
    \nu=\sum_{c\in\mc C_0}\omega_c\delta_c
    \in\mc P(\mc C_0),\\
    +\infty,
    & \nu\notin\mc P(\mc C_0).
\end{cases}
\end{equation}
Thus
\[
    \dom\mc J^{(0)}=\mc P(\mc C_0).
\]
Under \cref{ass:algebraic-structure,ass:cocentration}, hyperbolicity implies that an equilibrium is stable exactly when it has no eigenvalue with positive real part. Hence
\begin{equation}\label{eq:J-zero-zero}
    \{\nu\in\mc P(\mc X):\mc J^{(0)}(\nu)=0\}
    =
    \mc P(\mc M_0).
\end{equation}

For $p\in\{1,\ldots,\mf q\}$, let $\mf J^{(p)}$ be the Donsker--Varadhan functional of the finite-state Markov chain $\bs y^{(p)}(\cdot)$:
\begin{equation} \label{eq:finite-Jp}
    \mf J^{(p)}(\omega)
    \coloneqq
    \sup_{h>0}
    \sum_{\mc M\in\ms V^{(p)}}
    -\frac{\mf L^{(p)}h(\mc M)}{h(\mc M)}
    \omega(\mc M),
    \qquad
    \omega\in\mc P(\ms V^{(p)}).
\end{equation}
Thus $\mf J^{(p)}=\mf J_{\mf L^{(p)}}$ in the notation of \eqref{eq:finite-DV}.
Lift this functional to $\mc P(\mc X)$ by
\begin{equation} \label{eq:J-p}
    \mc J^{(p)}(\nu)
    \coloneqq
    \begin{cases}
        \mf J^{(p)}(\omega),
        & \displaystyle
        \nu
        =
        \sum_{\mc M\in\ms V^{(p)}}
        \omega(\mc M)\pi_{\mc M},
        \quad
        \omega\in\mc P(\ms V^{(p)}),\\
        +\infty,
        & \text{otherwise}.
    \end{cases}
\end{equation}
This representation is unique because the sets in $\ms V^{(p)}$ are pairwise disjoint and $\supp\pi_{\mc M}=\mc M$ for every $\mc M\in\ms V^{(p)}$. By \cref{lem:finite-DV-finite}, $\mf J^{(p)}$ is finite on $\mc P(\ms V^{(p)})$. Therefore,
\[
    \dom\mc J^{(p)}
    =
    \left\{
        \sum_{\mc M\in\ms V^{(p)}}
        \omega(\mc M)\pi_{\mc M}:
        \omega\in\mc P(\ms V^{(p)})
    \right\}.
\]
By \cref{lem:finite-DV-zero,lem:finite-invariant-measures}, the zero set of a finite-state Donsker--Varadhan functional is the convex hull of the invariant laws on its closed communication classes. Lifting these laws by \eqref{eq:piM-def} gives
\begin{equation} \label{eq:zero-set-Jp}
    \{\nu\in\mc P(\mc X):\mc J^{(p)}(\nu)=0\}
    =
    \bigg\{
    \sum_{\mc M\in\ms V^{(p+1)}}
    \omega(\mc M)\pi_{\mc M}:\omega\in\mc P(\ms V^{(p+1)})
    \bigg\},
\end{equation}
for $1\le p\le\mf q$. The same identity holds for $p=0$ by \eqref{eq:J-zero-zero} and $\pi_{\{m\}}=\delta_m$. For $p<\mf q$, its right-hand side is $\dom\mc J^{(p+1)}$; for $p=\mf q$, it is a singleton because $\ms V^{(\mf q+1)}$ has one element. Together with \eqref{eq:J-minus-zero}, this gives all the zero-set and domain relations required in \cref{def:gamma-expansion}.

\subsection{Main result}
The following theorem is the main result of this paper.
\begin{theorem}[$\Gamma$-expansion for metastable diffusion] \label{thm:main}
    Under \cref{ass:algebraic-structure,ass:cocentration,ass:complete-metastable-hierarchy}, the scales and limiting functionals defined in \cref{ssec:limit-define}, indexed by $p\in\{-1,\ldots,\mf q\}$, form a full $\Gamma$-expansion of the level-2 rate functionals $(\mc I_\eps)_{\eps>0}$. In the notation of \cref{def:gamma-expansion},
    \[
        \mc I_\eps
        =
        \frac1{\eps}\mc J^{(-1)}
        +
        \mc J^{(0)}
        +
        \sum_{p=1}^{\mf q}\frac1{\theta_\eps^{(p)}}\mc J^{(p)}.
    \]
\end{theorem}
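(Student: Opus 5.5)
The proof checks the five conditions of \cref{def:gamma-expansion} with $\mf r=1$, and the plan is mostly assembly of results announced in the introduction. Condition (1) holds by construction: $\eps\prec1=\theta_\eps^{(0)}\prec\theta_\eps^{(1)}\prec\cdots$ by Step 2 of the hierarchy. Conditions (4) and (5) are the zero-set and domain relations already recorded in \cref{ssec:limit-define}.
\begin{itemize}
\item For $p=-1$, \eqref{eq:J-minus-zero} gives $\{\mc J^{(-1)}=0\}=\mc P(\mc C_0)=\dom\mc J^{(0)}$.
\item For $0\le p<\mf q$, \eqref{eq:zero-set-Jp} identifies $\{\mc J^{(p)}=0\}$ with $\dom\mc J^{(p+1)}$.
\item For $p=\mf q$, the zero set is the singleton $\{\pi_{\mc M_1^{(\mf q+1)}}\}$.
\end{itemize}
So everything reduces to the $\Gamma$-convergence statements (2) and (3).

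\emph{Pre-metastable scales.} The $\Gamma$-convergence of $\eps\mc I_\eps$ to $\mc J^{(-1)}$ and of $\mc I_\eps$ to $\mc J^{(0)}$ are exactly \cref{prop:pre-metastable-gamma,prop:unit-gamma}, which need only \cref{ass:algebraic-structure,ass:cocentration}.

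For condition (2), let $\varrho_\eps\prec\eps$.
\begin{itemize}
\item The liminf inequality is trivial because $\mc I_\eps\ge0$.
\item For the limsup at a given $\nu$, take any recovery family $\nu_\eps\to\nu$ with $\limsup\eps\mc I_\eps(\nu_\eps)\le\mc J^{(-1)}(\nu)<\infty$; then $\varrho_\eps\mc I_\eps(\nu_\eps)\to0$.
\item If $\mc J^{(-1)}(\nu)=\infty$, approximate $\nu$ by measures of finite $\mc J^{(-1)}$-cost, for instance smooth positive densities, whose cost is finite via \cref{lem:J-Riesz}. Then diagonalize, using the standard fact that $\Gamma$-limsup functionals are lower semicontinuous.
\end{itemize}

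\emph{Metastable scales.} For $p\ge1$ I would argue by induction on $p$, applying the transfer theorem \cref{thm:resolvent-transfer} at each level. Its inputs are:
\begin{itemize}
\item the resolvent convergence \eqref{eq:metastable-resolvent-convergence} at scale $\theta_\eps^{(p)}$;
\item the $\Gamma$-convergence at the previous scale $\theta_\eps^{(p-1)}$, whose zero set is $\{\sum_{\mc M\in\ms V^{(p)}}\omega(\mc M)\pi_{\mc M}\}$.
\end{itemize}
Its output is $\theta_\eps^{(p)}\mc I_\eps\xrightarrow{\Gamma}\mc J^{(p)}$.

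The lower bound goes as follows.
\begin{itemize}
\item If $\theta^{(p)}_\eps\mc I_\eps(\nu_\eps)$ stays bounded, then $\theta^{(p-1)}_\eps\mc I_\eps(\nu_\eps)\to0$. The previous $\Gamma$-liminf then forces $\nu$ into $\dom\mc J^{(p)}$.
\item For $\nu$ in that domain, insert in the Donsker--Varadhan formula the test $F_\eps=\theta^{-1}R^\eps_{\lam/\theta}G$ with $G=\sum g(\mc N)\bs1_{\mc E(\mc N)}$ and $g=(\lam-\mf L^{(p)})h$, for $h>0$ and $\lam$ large so that $g>0$.
\item Since $-\ms L_\eps F_\eps/F_\eps=(G/\theta-\lam F_\eps/\theta)/F_\eps$, rescaling by $\theta$ yields $\int(G-\lam F_\eps)/F_\eps\,d\nu_\eps$.
\item On the wells this converges to $\sum\omega(\mc M)(-\mf L^{(p)}h/h)(\mc M)$, using \eqref{eq:metastable-resolvent-convergence} and the fact that $\mc E(\mc M)$ are continuity sets.
\item The contribution off the wells must be shown to be nonnegative or vanishing. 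The integrand there is $-\lam<0$ wherever $G=0$, but $\nu_\eps$ puts vanishing mass there only at the rate controlled by the previous scale; handling this is part of the transfer theorem.
\end{itemize}

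For the upper bound, use the contact measures $\nu_{\eps,F}$ of \cref{sec:contact-resolvent}, for which $\mc I_\eps(\nu_{\eps,F})$ equals the DV integral. The steps are:
\begin{itemize}
\item Tilt the finite chain so that $\omega$ becomes stationary, separately on each communication class.
\item Lift to $\nu_{\eps,F}$ and identify its limit through resolvent convergence.
\item Recombine the classes using convexity of $\mc I_\eps$; this is the reducible-chain issue discussed in the introduction.
\end{itemize}

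The main obstacle is this recovery step for reducible intermediate chains, especially for measures charging transient classes. I would first reduce to $\omega$ with full support on a single communication class. Positive $h$ then realizes the supremum in \eqref{eq:finite-Jp}, by \cref{lem:finite-DV-finite}, and the tilted chain is irreducible on that class. General $\omega$ would follow by convex combination and lower-semicontinuous approximation. Given \cref{thm:resolvent-transfer}, however, the main theorem itself is just the inductive bookkeeping above.
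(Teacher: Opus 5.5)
Your proposal is correct and is the paper's own proof. Items (1), (4), (5) of \cref{def:gamma-expansion} come from the scale ordering and the zero-set identities \eqref{eq:J-minus-zero}, \eqref{eq:J-zero-zero}, \eqref{eq:zero-set-Jp}. Item (2) and the pre-metastable part of item (3) are \cref{prop:pre-metastable-gamma,prop:unit-gamma}; item (2) is literally \cref{prop:pre-metastable-gamma}\textup{(2)}, so you need not re-derive it. The metastable limits then follow by applying \cref{thm:resolvent-transfer} inductively, with $\beta_\eps=1$, $\mc J=\mc J^{(0)}$ at $p=1$ and $\beta_\eps=\theta_\eps^{(p-1)}$, $\mc J=\mc J^{(p-1)}$ for $p\ge2$.

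Your side sketches of the ingredients contain slips, though the bookkeeping does not rely on them.
\begin{itemize}
\item Under degenerate noise a smooth density typically has $\mc J^{(-1)}=+\infty$. In the underdamped model, finiteness forces the $q$-marginal of the current $p\,\nu(\dd q,\dd p)$ to be divergence free. This is why the paper proves density of $\dom\mc J^{(-1)}$ using occupation measures of closed controlled paths.
\item The off-well term in the liminf needs no rate. It equals $-\lam\,\nu_\eps(\mc X\setminus\bigcup_{\mc M}\mc E(\mc M))$, which tends to $0$ for fixed $\lam$ by weak convergence alone.
\item For strictly positive $\omega$ on a transient class, the supremum in \eqref{eq:finite-Jp} at $\widehat\omega$ is not attained by any positive $h$; the escape cost $\omega(c_{\mf D})$ appears only as $h\to0$ off $\mf D$. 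This is why the paper lifts the zero extension $\widehat h$ of an optimizer for the reflected functional $\mf J_{\mf D}$ (\cref{lem:finite-class-optimizer}), not \cref{lem:finite-DV-finite}.
\end{itemize}
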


\begin{samepage}
The two pre-metastable limits are established separately.
\begin{proposition} \label{prop:pre-metastable-gamma}
    Under \cref{ass:algebraic-structure} and items \textup{(1)}--\textup{(2)} of \cref{ass:cocentration}, the following statements hold.
    \begin{enumerate}
        \item
            The functionals $\eps\mc I_\eps$ $\Gamma$-converge to $\mc J^{(-1)}$.
        \item
            If $\varrho_\eps\prec\eps$, then $\varrho_\eps\mc I_\eps$ $\Gamma$-converges to the zero functional.
    \end{enumerate}
\end{proposition}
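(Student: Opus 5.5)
The plan is to use the Donsker--Varadhan representation of \cref{prop:smooth-core-eps},
\[
    \mc I_\eps(\nu)=\sup_{u>0}\int_{\mc X}-\frac{\mc L_\eps u}{u}\dd\nu ,
\]
together with the logarithmic substitution $u=e^{\vph/\eps}$. For $\vph\in C_c^\infty(\mc X)$ this substitution gives
\[
    -\eps\,\frac{\mc L_\eps e^{\vph/\eps}}{e^{\vph/\eps}}
    =-b\cdot\nabla\vph-|\sig^{\msf T}\nabla\vph|^2-\eps\,a:\nabla^2\vph
    =-\ms H(\,\cdot\,,\nabla\vph)-\eps\,a:\nabla^2\vph .
\]
Since $e^{\vph/\eps}$ is $1$ outside a compact set, I expect it to belong to the admissible positive test class. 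Both directions of the $\Gamma$-convergence will be read off from this identity.

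\emph{Lower bound for (1).} Let $\nu_\eps\to\nu$ weakly. For fixed $\vph\in C_c^\infty(\mc X)$ we have
\[
    \eps\,\mc I_\eps(\nu_\eps)\ge\int\bigl[-\ms H(\,\cdot\,,\nabla\vph)-\eps\,a:\nabla^2\vph\bigr]\dd\nu_\eps .
\]
The integrand is bounded and continuous, and it converges uniformly to $-\ms H(\,\cdot\,,\nabla\vph)$ because $a:\nabla^2\vph$ is bounded. Taking $\lowlim$ and then the supremum over $\vph$ yields $\lowlim\eps\mc I_\eps(\nu_\eps)\ge\mc J^{(-1)}(\nu)$. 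For (2), the lower bound is trivial since $\mc I_\eps\ge0$.

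\emph{Upper bound for (1).} I would proceed in three stages.
\begin{enumerate}
\item By lower semicontinuity and convexity of $\mc J^{(-1)}$, together with a diagonal argument, it suffices to find recovery sequences for $\nu$ in a dense class with $\mc J^{(-1)}$ converging. First truncate and mollify $\nu\in\dom\mc J^{(-1)}$ to a measure $\nu'=\rho\,dx$ with smooth positive density. Convexity, by mixing with a reference measure, and the Lyapunov bounds on $V$, $W$, which control tails, should give $\mc J^{(-1)}(\nu')\to\mc J^{(-1)}(\nu)$. Here the Riesz representation of \cref{lem:J-Riesz} is used: $\diver((b-\sig j)\nu)=0$ with $\mc J^{(-1)}(\nu)=\tfrac14\|j\|^2_{L^2(\nu)}$.
\item For such a smooth $\nu'$, take the constant sequence $\nu_\eps=\nu'$. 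The task is to show $\uplim\eps\mc I_\eps(\nu')\le\mc J^{(-1)}(\nu')$.
\item Using the representation, write $u=e^{\psi/\eps}$ for a general positive test. Then
\[
    \eps\,\mc I_\eps(\nu')=\sup_\psi\int\bigl[-\ms H(\,\cdot\,,\nabla\psi)-\eps\,a:\nabla^2\psi\bigr]\rho\dd x ,
\]
and integrating by parts in the last term gives $\eps\int a\nabla\psi\cdot\nabla\rho+\eps\int\psi\,\nabla\cdot(\cdots)$. Write $g=\sig^{\msf T}\nabla\psi$ and complete the square in $|g|^2$. The extra term $\eps\int g\cdot\sig^{\msf T}\nabla\log\rho\,\rho$ then costs at most $\eps\|g\|\,\|\sig^{\msf T}\nabla\log\rho\|$, so the supremum is bounded by $\mc J^{(-1)}$ evaluated at a perturbed drift. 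Concretely, $\eps\mc I_\eps(\nu')\le\bigl(\tfrac12\|j\|+O(\eps)\bigr)^2$, using the Fisher information of $\rho$ in the noise directions.
\end{enumerate}

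\emph{Part (2).} By convexity and the density argument from step (1), for smooth $\nu'$ the bound above already gives $\varrho_\eps\mc I_\eps(\nu')\to0$ whenever $\nu'$ has finite $\mc J^{(-1)}$ and finite Fisher information. For general $\nu$, which may have $\mc J^{(-1)}(\nu)=\infty$, I would approximate by mixtures $\nu_\eps=(1-t_\eps)\mu_\eps+t_\eps\mu^\star$ with $\mu_\eps\to\nu$ chosen so that $\mc I_\eps(\mu_\eps)\le C(\mu_\eps)/\eps$, and let the approximation degrade slowly enough that $\varrho_\eps C(\mu_\eps)/\eps\to0$. This is a standard diagonal argument, using that every measure is a weak limit of smooth measures with finite $\mc J^{(-1)}$. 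I would obtain the latter via mollification along the flow and the controllability of \cref{ass:algebraic-structure}, which gives absolute continuity of the controlled flows.

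\emph{Main obstacle.} The hard step is the upper bound in the degenerate case. There $\rho$ must be chosen so that the continuity equation $\diver((b-\sig j)\rho)=0$ holds with $j$ in the noise directions only. Mollification does not preserve this structure when $\sig$ varies, so the error must be absorbed through the affine structure of item (2) and the controllability of item (3). I would first mollify in the full space, then correct the divergence defect by solving a controllability problem for the linearization $(\msf\Omega,\sig)$, using \cref{lem:feedback-invariance}.
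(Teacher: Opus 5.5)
Your $\Gamma$-liminf is exactly the paper's argument. The estimate in your stage~3 is in substance \cref{lem:level-2.5-bound}: $\mc I_\eps(\nu)\le\frac1{4\eps}\int|j+\eps q|^2\dd\nu$ with $q=-\sig^{\msf T}\nabla\log\rho-\msf R\Diver a$. It needs two repairs. First, do not integrate the $\Diver a$ term by parts a second time, since $\eps\int\psi\,\diver((\Diver a)\rho)$ is not controlled by $\|\sig^{\msf T}\nabla\psi\|$; instead use $\Diver a\in\mf H$ to write $\nabla\psi\cdot\Diver a=\sig^{\msf T}\nabla\psi\cdot\msf R\Diver a$. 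Second, take $\nu'$ compactly supported, so that \cref{cor:smooth-eq-eps} reduces the supremum to $\vph\in C_c^\infty(\mc X)$ and the integrations by parts are legitimate.

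The genuine gap is stage~1, which carries essentially all the difficulty of the upper bound. Because $\mc J^{(-1)}$ is only lower semicontinuous, you need compactly supported approximants that remain \emph{stationary}, $\diver((b-\sig j')\nu')=0$, with $\uplim\|j'\|_{L^2(\nu')}\le\|j\|_{L^2(\nu)}$. Your tools do not deliver this:
\begin{itemize}
\item A cutoff $\chi$ produces the source $\diver((b-\sig j)\chi\nu)=\nu\,(b-\sig j)\cdot\nabla\chi$. In general this cannot be absorbed by a control in the noise directions at controlled cost; think of $\nu$ carried by a controlled closed loop that leaves the support of $\chi$.
\item Mixing with a reference measure leaves the tails untouched.
\item The Lyapunov bounds give integrability, not a stationary truncation.
\end{itemize}
The missing idea is a superposition argument. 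Testing with $\Xi_R\circ V$ gives $v\cdot\nabla V\in L^1(\nu)$ for $v=b-\sig j$, which makes $v$ integrable for a complete metric built from $\nabla V$. Bernard's stationary superposition theorem then yields a stationary law on solutions of $\dot\gamma=v(\gamma)$. Keep the paths on $[0,T]$ with endpoints in a compact $K$ and bounded range, close each with a unit-time controlled connector from \cref{lem:controlled-connectors}, and average the occupation measures. This produces compactly supported stationary pairs within $2(1-p)+2/(T+1)$ of $\nu$ in total variation, with cost at most $(TE+pC_K)/(p(T+1))$. Here $E=\int|j|^2\dd\nu$, $p$ is the mass of the retained paths, and $C_K$ bounds the connector energies.

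Your degenerate mollification is also unresolved. The obstruction is not that $\sig$ varies; its range is fixed, so $\msf R$ handles that. The problem is that $b(x)\rho_\delta(x)-(\vartheta_\delta*b\mu)(x)=\int(b(x)-b(y))\vartheta_\delta(x-y)\dd\mu(y)$ contains the component $\msf\Omega\int(x-y)\vartheta_\delta(x-y)\dd\mu(y)$ outside $\mf H$. ``Correcting the divergence defect by a controllability problem'' does not say how this is removed. The paper's fix changes the kernel rather than correcting afterwards. Using a stabilizing feedback, the Lyapunov equation and feedback invariance, it replaces $\msf\Omega$ by some $\msf\Omega+\sig(x_*)\msf F$, which keeps $b(x)-b(0)-\msf\Omega x\in\mf H$, and arranges $\msf\Omega\msf Q+\msf Q\msf\Omega^{\msf T}=0$ for a positive definite $\msf Q$. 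It then convolves with a $\msf Q$-radial kernel, so that $\diver_z(\msf\Omega z\,\vartheta_\delta(z))=0$. The offending term is then itself divergence free and can be added to the convolved current, after which $j_\delta$ is read off through $\msf R(x)$.

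Finally, in part~(2) your diagonal argument is right, but the weak density of $\dom\mc J^{(-1)}$ is only asserted, and ``mollifying along the flow'' does not produce stationary measures. What works is the occupation measure of a closed controlled loop that follows the flow from $x$ for time $\tau$ and returns in time $\tau^2$. It converges weakly to $\delta_x$, and its cost is at most $\frac1{4T}\int_0^T|u|^2\dd t<\infty$ with $T=\tau+\tau^2$. Convex combinations of such measures then approximate every finitely supported probability.
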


\end{samepage}

\begin{proposition} \label{prop:unit-gamma}
    Under \cref{ass:algebraic-structure,ass:cocentration}, the functionals $\mc I_\eps$ $\Gamma$-converge to $\mc J^{(0)}$.
\end{proposition}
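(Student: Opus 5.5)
We prove \cref{prop:unit-gamma} through a lower bound and a recovery sequence. Both rest on the Donsker--Varadhan representation $\mc I_\eps(\nu)=\sup_{u>0}\int-\mc L_\eps u/u\dd\nu$ over the admissible positive smooth tests of \cref{prop:smooth-core-eps}.

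\emph{Lower bound.} Let $\nu_\eps\to\nu$ with $\lowlim\mc I_\eps(\nu_\eps)<\infty$. Then $\eps\mc I_\eps(\nu_\eps)\to0$, so \cref{prop:pre-metastable-gamma}(1) and \eqref{eq:J-minus-zero} give $\nu\in\mc P(\mc C_0)$. Write $\nu=\sum_c\omega_c\delta_c$. We take tests of the form $u=e^{-\vph/\eps}\psi$.

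The Hamiltonian part of $-\mc L_\eps u/u$ then equals $\eps^{-1}(b\cdot\nabla\vph-|\sig^{\msf T}\nabla\vph|^2)$, up to a sign convention, together with $O(1)$ corrections. The aim is to make this leading part nonnegative everywhere and to make the $O(1)$ part approximately $\kappa_c$ near each $c$.

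Near each equilibrium $c$ we choose $\vph$ quadratic, $\vph(x)=\tfrac12(x-c)\cdot\msf Q_c(x-c)$. Here $\msf Q_c$ solves the algebraic Riccati equation associated with the linearization $(Db(c),\sig(c))$, namely the stabilizing solution for the time-reversed quasi-potential. This solution exists because the pair $(Db(c),\sig(c))$ is controllable, which follows from \cref{ass:algebraic-structure}, and because $Db(c)$ is hyperbolic. With this choice the $1/\eps$ term vanishes to second order at $c$. The unit-order term $\eps a:\nabla^2\vph/\eps=\tr(a\msf Q_c)$ equals $\sum_{\Ree\lam>0}2\Ree\lam$ after the standard trace identity for Riccati solutions. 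We therefore use $\vph/2$, i.e.\ $u=e^{-\vph/(2\eps)}$, which balances the cost to give exactly $\kappa_c$.

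These local choices are patched with cutoffs, and $V$ from \cref{ass:cocentration} controls the test away from $\mc C_0$. Items (2)--(3) of \cref{ass:cocentration} make the remainder bounded below there. Passing to the limit uses weak convergence, $\nu_\eps\to\nu$, and continuity of the test integrand once the $\eps$-dependent terms are shown to be uniformly bounded below. This yields $\lowlim\mc I_\eps(\nu_\eps)\ge\sum\omega_c\kappa_c$.

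\emph{Upper bound.} By convexity of $\mc I_\eps$ and lower semicontinuity, it suffices to treat $\nu=\delta_c$ and then take mixtures. For $\delta_c$ we use a contact measure $\nu_{\eps,F}$ from \cref{ssec:contact-measures-eps}, with $F=e^{-\vph/(2\eps)}$ modified by the same Riccati quadratic near $c$. The tilted dynamics is then the diffusion with drift $b+2\eps a\nabla\log F$, which is locally linear with $c$ stable and Gaussian-like invariant law concentrating at $c$ at scale $\sqrt\eps$. Its cost is $\int-\ms L_\eps F/F\dd\nu_{\eps,F}\to\kappa_c$. Stable equilibria need no tilt, since the quasi-stationary concentration costs $0$.

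The main obstacle is the global construction. The Riccati-based test must be glued to the Lyapunov function so that the $1/\eps$ term keeps a favourable sign off $\mc C_0$ and the tilted process has an invariant law localizing at the chosen $c$, even though $c$ may be a saddle and the noise may be degenerate. I would first try $\vph=\chi_c\,\tfrac12(x-c)\cdot\msf Q_c(x-c)+\delta V$, with a small $\delta$, and verify positivity of the Hamiltonian on annuli around $c$ using hyperbolicity.
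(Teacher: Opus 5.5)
Both halves of your proposal have genuine gaps, the upper bound more seriously.

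\textbf{Recovery step.} \Cref{prop:contact-measure-eps} only provides \emph{some} invariant law $\nu_{\eps,F}$ of the tilted semigroup, namely a limit of time averages from a fixed starting point. It says nothing about where $\nu_{\eps,F}$ concentrates as $\eps\to0$. For $F=e^{-\vph/(2\eps)}$ the tilted drift is $b-a\nabla\vph$. With a tilt localized near $c$, every other stable equilibrium of $b$ remains an attractor of the tilted dynamics, and $q_F=-\ms L_\eps F/F=0$ near it. Nothing then prevents $\nu_{\eps,F}$ from converging to some $\delta_m$ with $m\in\mc M_0$, or to a mixture. The optimizer-identification argument of \cref{lem:class-recovery-p} does not transfer. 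Stable equilibria have zero cost at this scale, so the $\Gamma$-liminf cannot rule out this escape.

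Even granting localization at $c$, you would still have to show that $\nu_{\eps,F}$ gives negligible weight to the regions where the $\eps^{-1}$-part of $q_F$ is positive (the cubic Taylor remainder and the cutoff region). That is a quantitative small-noise analysis of the invariant law of a nonreversible, possibly degenerate diffusion, which is exactly what one wants to avoid.

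The paper bypasses invariant laws entirely. It writes down $\rho_\eps$, the Gaussian with covariance $\eps\msf\Sigma$, where $\msf A\msf\Sigma+\msf\Sigma\msf A^{\msf T}=-2a_c$ and $\msf A=\msf B_c-2a_c\msf P_c$. This is precisely the invariant law of your linearized tilted dynamics. It is cut off along level sets of $y\cdot\msf\Sigma^{-1}y$, so the current $\msf Jy\,\rho_\eps$ with $\msf J=\msf A+a_c\msf\Sigma^{-1}$ stays divergence free. The paper then defines the controls $j_\eps=\msf R(b-\msf Jy)$ and $q_\eps$ explicitly; the fixed noise range and affine remainder put the corrections in $\mf H$. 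Applying \cref{lem:level-2.5-bound} gives $\tr(\msf P_ca_c\msf P_c\msf\Sigma)+o(1)=\kappa_c+o(1)$. The same construction with $\msf P_c=0$ covers stable equilibria, whereas your claim that ``quasi-stationary concentration costs $0$'' is left unproved.

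\textbf{Lower bound.} The global construction you defer is the whole difficulty, and your suggested $\chi_cq_c+\delta V$ does not resolve it. The Hamiltonian term carries the factor $\eps^{-1}$ and $\nu_\eps$ is arbitrary, so it must be nonnegative everywhere, not merely bounded below. Near $c$ two errors arise that $\delta V$ cannot absorb:
\begin{enumerate}
    \item the Riccati quadratic leaves a sign-indefinite $O(|y|^3)$ remainder;
    \item the cross term between $\nabla q_c$ and $\delta\nabla V$ is sign-indefinite of order $\delta|y|^2$ (recall $\sig^{\msf T}\nabla V(c)=0$).
\end{enumerate}
This is because $W$ is only known to vanish on $\mc C_0$, not to dominate a multiple of $|y|^2$. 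In addition, an ordinary cutoff at radius $s$ changes the gradient by about $|y|^2|\nabla\xi|\sim|y|$. That produces a Hamiltonian error of size $C|y|^2$ with $C$ independent of $s$.

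The paper needs three ingredients missing from your plan.
\begin{enumerate}
    \item Scale the quadratic by $1-\theta$ and use concavity of $p\mapsto-\ms H(x,p)$.
    \item Build an auxiliary $v$ from a strict Riccati inequality $\msf B_c^{\msf T}\msf S_c+\msf S_c\msf B_c+2\msf S_ca_c\msf S_c<0$ (\cref{lem:strict-riccati-inequality}, which uses hyperbolicity), glued with a small weight to $\Xi_R\circ V$. Then $-\ms H(x,\nabla v)\ge0$ everywhere and $\ge C_1|y|^2$ near each $c$.
    \item Use a logarithmic cutoff with $|y||\nabla\xi_s|\le C/|\log s|$. This makes the cutoff error $o(1)|y|^2$, hence absorbable by $\theta C_1|y|^2$, while the Hessian term stays $\ge\kappa_c-\eta$.
\end{enumerate}

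Finally, your normalization is inconsistent. A quadratic $\frac12y\cdot\msf Qy$ whose $\eps^{-1}$-term vanishes to second order satisfies $\msf B_c^{\msf T}\msf Q+\msf Q\msf B_c=2\msf Qa_c\msf Q$. Its stabilizing solution (\cref{lem:Riccati}) already has $\tr(a_c\msf Q)=\kappa_c$, so halving it would give only $\kappa_c/2$.
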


\Cref{prop:pre-metastable-gamma,prop:unit-gamma} are proved in \cref{sec:scale-eps,sec:unit-scale}, respectively.

The next theorem transfers metastable resolvent convergence at a single scale to $\Gamma$-convergence. It does not require a complete hierarchy. For this theorem, the superscript $p$ labels a single scale. Let $\ms V^{(p)}$ be a finite state space with Markov generator $\mf L^{(p)}$. For each $\mc M\in\ms V^{(p)}$, choose the wells $\mc E(\mc M)$ as in \cref{ssec:metastable-hierarchy} and a probability measure $\pi_{\mc M}$ compactly supported in that interior. Define $\mf J^{(p)}$ and its lift $\mc J^{(p)}$ by \eqref{eq:finite-Jp} and \eqref{eq:J-p}; the disjoint supports of the measures $\pi_{\mc M}$ make the lift well defined.

\begin{theorem}[Resolvent transfer at a single scale] \label{thm:resolvent-transfer}
    Suppose that \cref{ass:algebraic-structure} and items \textup{(1)}--\textup{(2)} of \cref{ass:cocentration} hold. Let $\theta_\eps^{(p)}$ be a positive scale, and assume the following.
    \begin{enumerate}
        \item \emph{$\Gamma$-liminf from previous scale.}
            There exist a positive scale $\beta_\eps\prec\theta_\eps^{(p)}$ and a functional $\mc J:\mc P(\mc X)\to[0,+\infty]$ such that
            \begin{equation} \label{eq:transfer-previous-lower}
                \lowlim_{\eps\to0}\beta_\eps\mc I_\eps(\nu_\eps)\ge\mc J(\nu)
                \qquad\text{whenever }\nu_\eps\rightharpoonup\nu,
            \end{equation}
            \begin{equation} \label{eq:transfer-zero-set}
                \{\nu\in\mc P(\mc X):\mc J(\nu)=0\}
                =
                \left\{
                    \sum_{\mc M\in\ms V^{(p)}}\omega(\mc M)\pi_{\mc M}
                    :\omega\in\mc P(\ms V^{(p)})
                \right\}.
            \end{equation}
        \item \emph{Resolvent convergence at target scale.}
            For every $\lam>0$ and every $g:\ms V^{(p)}\to\bb R$, the convergence in \eqref{eq:metastable-resolvent-convergence} holds at scale $\theta_\eps^{(p)}$ for the wells and generator specified above.
    \end{enumerate}
    Then $\theta_\eps^{(p)}\mc I_\eps$ $\Gamma$-converges to $\mc J^{(p)}$.
\end{theorem}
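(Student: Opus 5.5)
The proof splits into a $\Gamma$-liminf and a $\Gamma$-limsup bound, both built on the Donsker--Varadhan representation $\mc I_\eps(\nu)=\sup_{u>0}\int -\mc L_\eps u/u\dd\nu$ over the admissible class of \cref{prop:smooth-core-eps}, extended to resolvent tests through the extended generator $\ms L_\eps$.

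\emph{Liminf.} Let $\nu_\eps\rightharpoonup\nu$ with $\uplim\theta_\eps^{(p)}\mc I_\eps(\nu_\eps)<\infty$; otherwise there is nothing to prove. Since $\beta_\eps\prec\theta_\eps^{(p)}$, we get $\beta_\eps\mc I_\eps(\nu_\eps)\to0$. Then \eqref{eq:transfer-previous-lower} gives $\mc J(\nu)=0$, and \eqref{eq:transfer-zero-set} yields $\nu=\sum\omega(\mc M)\pi_{\mc M}$. This lies in the domain of $\mc J^{(p)}$, and since the wells are continuity sets, $\nu_\eps(\mc E(\mc M))\to\omega(\mc M)$.

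Fix a positive $h$ on $\ms V^{(p)}$. Choose $\lam$ large and set $g\coloneqq(\lam-\mf L^{(p)})h>0$ (possible for $\lam$ large), $G=\sum g(\mc N)\bs1_{\mc E(\mc N)}$ and $F_\eps=\theta^{-1}R^\eps_{\lam/\theta}G$. By the resolvent identity,
\[
    -\theta\,\ms L_\eps F_\eps/F_\eps=(G-\lam F_\eps)/F_\eps .
\]
Plugging $F_\eps$ (suitably lifted, e.g.\ $F_\eps+\delta$, to stay positive and bounded below) into the DV formula gives
\[
    \theta\mc I_\eps(\nu_\eps)\ge\int (G-\lam F_\eps)/F_\eps\dd\nu_\eps .
\]
On $\mc E(\mc M)$, \eqref{eq:metastable-resolvent-convergence} gives $F_\eps\to h(\mc M)$ uniformly, so the integrand tends to $-\mf L^{(p)}h/h(\mc M)$. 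Off the wells, $G=0$ and the integrand equals $-\lam\ge$ a bounded negative constant. The outside mass tends to $0$, so that region contributes $o(1)$. Taking the liminf and then the supremum over $h$ yields $\mf J^{(p)}(\omega)$. Justifying $F_\eps$ as an admissible test (positivity, the extended generator, and the $\delta$-regularization with $\delta\to0$) is supplied by \cref{sec:DV-framework,sec:contact-resolvent}.

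\emph{Limsup.} Let $\nu=\sum\omega\pi_{\mc M}$. I first treat $\omega$ supported on a single communication class $\mf C$, with $\omega>0$ there. By the finite-state theory there is an $h>0$ attaining $\mf J^{(p)}(\omega)$, in the sense that $\omega$ is invariant for the tilted chain $h^{-1}\mf L^{(p)}(h\,\cdot)$ restricted appropriately. If $\mf C$ is transient, the leaving rate is retained by letting $h$ be small off $\mf C$, with a limiting argument.

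Build $F_\eps$ from $g=(\lam-\mf L^{(p)})h$ as above, and take the contact measure $\nu_{\eps,F_\eps}$. It satisfies $\theta\mc I_\eps(\nu_{\eps,F})=\int(G-\lam F)/F\dd\nu_{\eps,F}$. By tightness (the Lyapunov function $V$) and the liminf bound, limit points lie in the $\mc J$ zero set, i.e.\ have the form $\sum\omega'\pi_{\mc M}$. Stationarity of $\nu_{\eps,F}$ under the tilted semigroup, tested against resolvent functions, passes to the limit and shows that $\omega'$ is invariant for the tilted finite chain. Uniqueness on $\mf C$ then forces $\omega'=\omega$, and the cost converges to $\sum\omega(-\mf L h/h)=\mf J^{(p)}(\omega)$.

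For general $\omega$, decompose along communication classes, $\omega=\sum_k t_k\omega_k$, and use
\[
    \mf J^{(p)}(\omega)=\sum_k t_k\mf J^{(p)}(\omega_k),
\]
which holds because DV functionals of chains are additive over classes when supports are separated; this must be checked against the finite-state lemmas. Take convex combinations of the recovery measures, using convexity of $\mc I_\eps$. Boundary $\omega$ with zeros inside a class is handled by approximation together with lower semicontinuity and a diagonal argument.

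\textbf{Main obstacle.} The hardest step is the identification of the limit of the contact measures: passing the tilted stationarity to the limit requires controlling $\nu_{\eps,F}$ outside the wells, where only the $\beta_\eps$-scale liminf is available. I would first try to bound $\theta\mc I_\eps(\nu_{\eps,F})$ uniformly, which forces mass into the wells, and then use resolvent convergence to pass the tilted stationarity to the limit.
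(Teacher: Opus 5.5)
Your $\Gamma$-liminf is essentially the paper's argument. Two small corrections. The resolvent test already lies in $D_0^+(\ms L_\eps)$ by \cref{lem:positive-resolvent-eps}, so no $\delta$-shift is needed. And to reduce to bounded cost you should pass to a subsequence realizing the $\lowlim$, not assume the $\uplim$ is finite.

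The $\Gamma$-limsup has a genuine gap. Your test comes from an $h>0$ on all of $\ms V^{(p)}$ and a source charging every well, and such a test cannot single out the class $\mf C$. Grant your identification step. It yields only that limit weights $\omega'$ are invariant for the tilted chain with rates $\msf q(\mc M,\mc N)h(\mc N)/h(\mc M)$. This conclusion does follow from squeezing the $\Gamma$-liminf against the contact identity. But since $h>0$, this chain has exactly the same communication classes as $\mf L^{(p)}$. The consequences are as follows.
\begin{itemize}
\item If $\mf C$ is transient, every invariant law of the tilted chain vanishes on $\mf C$, so the contact measures cannot converge to $\nu$. Making $h$ small off $\mf C$ only slows the exit. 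Downstream closed classes still absorb the mass; there, for instance, a constant $h$ makes the DV integrand vanish.
\item If $\mf C$ is closed but $\mf L^{(p)}$ has other closed classes, the tilted chain has several invariant laws. ``Uniqueness on $\mf C$'' is then useless until you know the limit is carried by $\mf C$.
\item A uniform bound on $\theta_\eps^{(p)}\mc I_\eps(\nu_{\eps,F})$ only pushes mass into the union of all wells, not into those of $\mf C$.
\item With $h>0$, the DV integrand cannot be made uniformly negative on another closed class. That would require $\mf L^{(p)}h>0$ throughout it, which is impossible because $\mf L^{(p)}h$ integrates to zero against the invariant law of that class.
\end{itemize}

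The missing idea is to localize the source to the class. The paper fixes an arbitrary communication class $\mf D$ and an optimizer $h:\mf D\to(0,\infty)$ of the \emph{reflected} functional $\mf J_{\mf D}$. It then uses the source $\sum_{\mc M\in\mf D}[\alpha h(\mc M)-\mf L^{(p)}\widehat h(\mc M)]\bs1_{\mc E(\mc M)}$, which is supported only on the wells of $\mf D$.
\begin{itemize}
\item The chain cannot return to $\mf D$ once it leaves, so $(\alpha-\mf L^{(p)})^{-1}g=h$ on $\mf D$. Hence the test converges to $h$ there, and $-\mf L^{(p)}\widehat h/h=-\mf L_{\mf D}h/h+c_{\mf D}$ retains the escape rate automatically.
\item Take $\alpha=\lam_\eps\to\infty$ slowly. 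The DV integrand then equals exactly $-\lam_\eps$ on $\Delta_{\mf D}=\mc X\setminus\mc E_{\mf D}$, so $\mc I_\eps\ge0$ forces $\nu_\eps(\Delta_{\mf D})\le C/\lam_\eps$. This gives tightness and shows that the limit weights $\omega_*$ live on $\mf D$.
\item Compare the already-proved $\Gamma$-liminf $\mf J_{\mf D}(\omega_*)+\omega_*(c_{\mf D})$ with the contact upper bound $\sum_{\mc M\in\mf D}(-\mf L_{\mf D}h/h)(\mc M)\,\omega_*(\mc M)+\omega_*(c_{\mf D})$. This shows $h$ is optimal at $\omega_*$. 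So $\omega_*$ is invariant for the irreducible generator $\mf L_{\mf D}^h$, and equals $\omega$ by \cref{lem:finite-class-optimizer}.
\end{itemize}
This squeeze also removes your ``main obstacle''. You never pass the tilted stationarity to the limit, which would need convergence of Feynman--Kac-tilted dynamics not contained in the resolvent hypothesis.

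Two further corrections. For weights with zeros inside a class, you need $\uplim_{t\downarrow0}\mf J^{(p)}(\widehat\omega_t)\le\mf J^{(p)}(\widehat\omega)$; this comes from convexity and finiteness, not from lower semicontinuity. The additivity over classes you invoke is \cref{lem:finite-class-splitting}.
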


The proof is given in \cref{sec:metastable-scales}. Now we combine the results to prove the main theorem.

\begin{proof}[Proof of \cref{thm:main}]
    We prove the five items in \cref{def:gamma-expansion}. Item \textup{(1)} follows from $\eps\prec1$ and \cref{ass:complete-metastable-hierarchy}. \Cref{prop:pre-metastable-gamma,prop:unit-gamma} give item \textup{(2)} and the pre-metastable part of item \textup{(3)}. If $\mf q\ge1$, apply \cref{thm:resolvent-transfer} first with $\beta_\eps=1$ and $\mc J=\mc J^{(0)}$, using \eqref{eq:J-zero-zero}. At level $p\ge2$, take $\beta_\eps=\theta_\eps^{(p-1)}$ and $\mc J=\mc J^{(p-1)}$. The lower bound already obtained at level $p-1$ and \eqref{eq:zero-set-Jp} verify \eqref{eq:transfer-previous-lower} and \eqref{eq:transfer-zero-set}, respectively; the hierarchy supplies resolvent convergence. Induction gives every metastable $\Gamma$-limit, completing item \textup{(3)}. Items \textup{(4)} and \textup{(5)} follow from \eqref{eq:J-minus-zero} and \eqref{eq:zero-set-Jp}, as observed above. All conditions of \cref{def:gamma-expansion} are therefore satisfied, including when $\mf q=0$.
\end{proof}
\section{Applications} \label{sec:applications}

We apply \cref{thm:main} to three Langevin diffusions. In each case, the model assumptions imply the pre-metastable conclusions of \cref{prop:pre-metastable-gamma,prop:unit-gamma}: the limits at scales $\eps$ and $1$, and the zero limit at every smaller scale $\varrho_\eps\prec\eps$. A complete metastable hierarchy then gives the full $\Gamma$-expansion, with scales and limiting functionals as in \cref{ssec:limit-define}. For the generalized overdamped model, existing resolvent results verify this hierarchy under an additional condition. For the underdamped and third-order models, it remains an assumption. The proofs and structural verifications are given in \cref{app:langevin-verification}.

\subsection{Generalized overdamped Langevin dynamics} \label{ssec:overdamped}

Let $n\ge2$. On $\bb R^n$, consider
\[
    \dd X_t^\eps=-\bigl(\nabla U+\ell\bigr)(X_t^\eps)\dd t+\sqrt{2\eps}\,\dd B_t.
\]
\begin{samepage}
In the notation of \cref{sec:model}, $d=r=n$, $a=\sig=\msf{Id}_n$, and $b=-\nabla U-\ell$.
We impose the following conditions.
\begin{taggedconditions}
    \item[\textup{\textbf{(OD1)}}]
        We have $U\in C^\infty(\bb R^n)$ and $\ell\in C^\infty(\bb R^n;\bb R^n)$, and $U$ is Morse.

    \item[\textup{\textbf{(OD2)}}]
        As $|x|\to\infty$,
        \[
            \frac{x}{|x|}\cdot\nabla U(x)\longrightarrow\infty,
            \qquad
            |\nabla U(x)|-2\Delta U(x)\longrightarrow\infty.
        \]
        We normalize $\min U=0$.

    \item[\textup{\textbf{(OD3)}}]
        The vector field $\ell$ is orthogonal to $\nabla U$:
        \[
            \ell\cdot\nabla U=0.
        \]
\end{taggedconditions}
\end{samepage}

Under \textup{\textbf{(OD1)}}--\textup{\textbf{(OD3)}}, \cref{lem:overdamped-structural} verifies the assumptions for the pre-metastable limits. To obtain the complete hierarchy from the available resolvent results, we impose the additional condition below.
\begin{taggedconditions}
    \item[\textup{\textbf{(OD-M)}}]
        We have $\diver\ell=0$, and, for every index-one saddle of $U$, both forward branches of its one-dimensional unstable manifold under $\dot x=b(x)$ converge to local minima of $U$; the two limiting minima need not be distinct.
\end{taggedconditions}

\begin{theorem} \label{thm:overdamped-gamma-expansion}
    Under \textup{\textbf{(OD1)}}--\textup{\textbf{(OD3)}} and \textup{\textbf{(OD-M)}}, the level-2 rate functionals of the generalized overdamped Langevin diffusion admit the full $\Gamma$-expansion of \cref{thm:main}.
\end{theorem}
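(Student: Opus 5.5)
Since \cref{thm:main} already converts \cref{ass:algebraic-structure,ass:cocentration,ass:complete-metastable-hierarchy} into the full expansion, the proof reduces to verifying these three assumptions for $b=-\nabla U-\ell$, $\sig=\msf{Id}_n$. The plan is to check the structural assumptions directly and to import the metastable hierarchy from the resolvent results of \cite{LandimLeeSeo2024First,LandimLeeSeo2025General}.

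\emph{Step 1: structural assumptions.} \Cref{ass:algebraic-structure} is automatic because $a=\msf{Id}_n$ is positive definite, with $\mf H=\bb R^n$ and any $\msf\Omega$. For \cref{ass:cocentration} take $V=U/2$. Using \textup{\textbf{(OD3)}}, $\ell\cdot\nabla V=0$, so
\[
    W=-\ms H(\,\cdot\,,\nabla V)=\tfrac12|\nabla U|^2-\tfrac14|\nabla U|^2=\tfrac14|\nabla U|^2 .
\]
\begin{itemize}
    \item \emph{Confinement.} The first condition in \textup{\textbf{(OD2)}} gives $U(x)\to\infty$ and $|\nabla U(x)|\to\infty$, so $V$ and $W$ are inf-compact.
    \item \emph{Lyapunov bound.} The second condition gives $\Delta U\le\frac12|\nabla U|$ outside a compact set, hence $\frac12\Delta U\le C(1+|\nabla U|^2)$.
    \item \emph{Strict dissipation.} $W\ge0$, and $W=0$ exactly where $\nabla U=0$. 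If $b=0$, then $0=b\cdot\nabla U=-|\nabla U|^2$, so $\mc C_0$ is the critical set of $U$.
\end{itemize}

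\emph{Step 2: hyperbolicity.} This is the delicate structural point. At a critical point $c$ we have $Db(c)=-\nabla^2U(c)-D\ell(c)$, and hyperbolicity of $Db(c)$ is not obvious from the Morse property alone.

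I would first differentiate the identity $\ell\cdot\nabla U=0$ twice at $c$. Since $\nabla U(c)=0$, this gives that $\nabla^2U(c)\,D\ell(c)$ is antisymmetric, i.e.
\[
    H\,L+L^{\msf T}H=0,\qquad H=\nabla^2U(c),\quad L=D\ell(c).
\]
Now suppose $-(H+L)v=i\mu v$ with $v\ne0$ and $\mu\in\bb R$. Take $\bar v^{\msf T}H$ of both sides; the $L$-contribution is purely imaginary and $\bar v^{\msf T}H v$ is real. Then argue via the Lyapunov-type identity
\[
    (H+L)^{\msf T}H+H(H+L)=2H^2 ,
\]
whose right-hand side is positive definite because $H$ is invertible. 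This forbids imaginary-axis eigenvalues: from $(H+L)v=-i\mu v$ one gets $2|Hv|^2=\bar v^{\msf T}(\ldots)v=i\mu\,\bar v^{\msf T}Hv-i\mu\,\bar v^{\msf T}Hv=0$, so $Hv=0$, a contradiction.

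This is presumably the content of \cref{lem:overdamped-structural}, which I may cite.

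\emph{Step 3: complete hierarchy.} I expect this to be the main obstacle, because it requires matching the hypotheses and conclusions of \cite{LandimLeeSeo2024First,LandimLeeSeo2025General} with Steps 1--3 of \cref{ssec:metastable-hierarchy}.

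\begin{itemize}
    \item \emph{Invariant measure.} Under $\diver\ell=0$ and $\ell\cdot\nabla U=0$, the Gibbs measure $e^{-U/\eps}$ is invariant. This is the setting of those papers.
    \item \emph{Saddle-branch condition.} The condition in \textup{\textbf{(OD-M)}} is their hypothesis guaranteeing that the Eyring--Kramers-type rates between minima are well defined.
    \item \emph{Resolvent convergence.} Their results construct the hierarchy of depths and produce, at each scale $\theta_\eps^{(p)}=e^{d^{(p)}/\eps}$ (possibly with prefactors), the tree of generators $\mf L^{(p)}$. They give the uniform convergence \eqref{eq:metastable-resolvent-convergence} on the wells $\mc E(\mc M)$, for wells chosen as small sublevel neighborhoods of the minima. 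Here I must check that these wells can be taken with boundaries of zero Lebesgue measure, inside the basins of $b$.
    \item \emph{Matching the recursion.} Their construction must agree with ours: the states at the next level are the closed classes, transient states are discarded, and the recursion terminates with $n_{\mf q}=1$. If their statement is phrased via test functions or the hitting-time/trace process rather than resolvents, I would invoke the equivalence established in \cite{LandimMarcondesSeo2025}.
\end{itemize}

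With all three assumptions verified, \cref{thm:main} applies and yields the theorem.
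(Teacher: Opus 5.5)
Your route is the paper's: verify \cref{ass:algebraic-structure,ass:cocentration} with $V=U/2$, $W=\frac14|\nabla U|^2$ and the identity $\msf B^{\msf T}H+H\msf B=-2H^2$ for $\msf B=Db(c)$, $H=\nabla^2U(c)$ (this is \cref{lem:overdamped-structural}). Then import the hierarchy from \cite{LandimLeeSeo2024First,LandimLeeSeo2025General} and apply \cref{thm:main}. Your hyperbolicity computation is correct. The structural part, however, has one invalid inference and one omission.

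From $b=0\Rightarrow\nabla U=0$ you only get $\mc C_0\subseteq\{\nabla U=0\}=W^{-1}(0)$. Strict dissipation requires equality, so $\ell$ must vanish at every critical point of $U$. This is where the Morse hypothesis is needed: differentiating $\ell\cdot\nabla U=0$ once gives $\nabla^2U(c)\ell(c)=0$ at a critical point, hence $\ell(c)=0$. Without Morse the claim fails. For $U=(|x|^2-1)^2$ on $\bb R^2$ with $\ell$ the rotation field, $\ell\ne0$ on the critical circle. Likewise, your antisymmetry of $\nabla^2U(c)D\ell(c)$ needs $\ell(c)=0$ to kill the third-derivative term, not just $\nabla U(c)=0$; this is harmless at $c\in\mc C_0$.

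The omission concerns the index sets. The cited hierarchies are indexed by the local minima of $U$, whereas the recursion of \cref{ssec:metastable-hierarchy} starts from $\mc M_0$, so you must show these sets coincide. Your identity gives this. If $H>0$ and $\msf Bv=\lam v$, then $2\Ree\lam\,\overline v^{\msf T}Hv=-2|Hv|^2$, so $\Ree\lam<0$. Conversely, if $\msf B$ is Hurwitz, then $H=2\int_0^\infty e^{t\msf B^{\msf T}}H^2e^{t\msf B}\dd t>0$.

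In Step~3 you check Gibbs invariance and the saddle condition, but the cited results also require the growth condition $U(x)/|x|\to\infty$. The paper obtains it by radial integration of the first limit in \textbf{(OD2)}. The case of a single local minimum must be treated separately, via the convention $\mf q=0$.

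No detour through \cite{LandimMarcondesSeo2025} is needed. \cite[Theorem~2.2]{LandimLeeSeo2024First} gives the first scale. \cite[Corollary~4.8]{LandimLeeSeo2025General} builds the tree through its terminal level, so $n_{\mf q}=1$ as Step~3 requires. \cite[Theorem~2.14]{LandimLeeSeo2025General} states the successive-scale resolvent convergence directly. That convergence holds for the valleys of \cite[Section~2.5]{LandimLeeSeo2025General}, which the paper takes as the wells $\mc E(m)$.
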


When $\ell=0$, the model reduces to the classical overdamped Langevin diffusion, a canonical reversible model of metastability; see, for instance, \cite{BovierEckhoffGayrardKlein2004}. Under an additional integrability hypothesis, the full $\Gamma$-expansion of its level-2 rate functional was established in \cite[Theorem~2.6]{LandimLeeMariani2025}. Thus \cref{thm:overdamped-gamma-expansion} extends this conclusion to the nonreversible Gibbsian setting.

\subsection{Underdamped Langevin dynamics} \label{ssec:underdamped}

Let $n\ge1$ be the dimension of the position space. On $\bb R^{2n}$, consider the family of diffusions $X_t^\eps=(Q_t^\eps,P_t^\eps)\in\bb R^n\times\bb R^n$ where
\[
    \begin{aligned}
        \dd Q_t^\eps
        &=
        P_t^\eps\dd t,\\
        \dd P_t^\eps
        &=
        -\bigl(\nabla U(Q_t^\eps)+P_t^\eps\bigr)\dd t
        +
        \sqrt{2\eps}\,\dd B_t.
    \end{aligned}
\]
In the notation of \cref{sec:model}, $d=2n$, $r=n$, and
\[
    \sig
    =
    \begin{pmatrix}
        0\\
        \msf{Id}_n
    \end{pmatrix},
    \qquad
    a
    =
    \begin{pmatrix}
        0&0\\
        0&\msf{Id}_n
    \end{pmatrix},
    \qquad
    b(q,p)
    =
    \begin{pmatrix}
        p\\
        -\nabla U(q)-p
    \end{pmatrix}.
\]
\begin{samepage}
We impose the following condition.
\begin{taggedconditions}
    \item[\textup{\textbf{(UD)}}]
        The function $U\in C^\infty(\bb R^n)$ is Morse and
        \[
            \frac{q}{|q|}\cdot\nabla U(q)
            \longrightarrow
            \infty
            \qquad
            \text{as }|q|\to\infty.
        \]
        We normalize $\min U=0$.
\end{taggedconditions}
\end{samepage}
Under \textup{\textbf{(UD)}}, \cref{lem:underdamped-structural} verifies the assumptions for the pre-metastable limits. The full expansion follows once the metastable hierarchy is available.

\begin{theorem} \label{thm:underdamped-gamma-expansion}
    Assume \textup{\textbf{(UD)}}. If the underdamped Langevin diffusion admits a complete metastable hierarchy, its level-2 rate functionals admit the full $\Gamma$-expansion of \cref{thm:main}.
\end{theorem}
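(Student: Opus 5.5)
The plan is to obtain \cref{thm:underdamped-gamma-expansion} directly from \cref{thm:main}. Since a complete metastable hierarchy, \cref{ass:complete-metastable-hierarchy}, is assumed in the statement, what remains is to check \cref{ass:algebraic-structure,ass:cocentration} for the underdamped model under \textup{\textbf{(UD)}}. This verification is what \cref{lem:underdamped-structural} is designed to provide. I would carry it out in the following order.

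\emph{Algebraic structure.} Take $\mf H=\{0\}^n\times\bb R^n$. Then $\ran\sig=\mf H$ with $\dim\mf H=n=r$, which gives item (1). The first block of $b$ is $p$, which is linear. Hence $b(x)-b(0)-\msf\Omega x\in\mf H$ with
\[
\msf\Omega=\begin{pmatrix}0&\msf{Id}_n\\0&0\end{pmatrix}.
\]
For item (3), the Kalman matrix $(\sig,\msf\Omega\sig)$ equals
\[
\begin{pmatrix}0&\msf{Id}_n\\ \msf{Id}_n&0\end{pmatrix},
\]
which has rank $2n$.

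\emph{Equilibria and hyperbolicity.} We have $\mc C_0=\{(q,0):\nabla U(q)=0\}$. At $c=(q,0)$, with $\msf A=\nabla^2U(q)$,
\[
Db(c)=\begin{pmatrix}0&\msf{Id}\\-\msf A&-\msf{Id}\end{pmatrix}.
\]
An eigenvalue $\lam$ of this matrix satisfies $\lam^2+\lam+\mu=0$ for some eigenvalue $\mu\neq0$ of $\msf A$; here $\mu\neq0$ because $U$ is Morse. Suppose $\lam=i s$. Taking real and imaginary parts gives $s=0$, and then $\mu=0$, a contradiction. So there is no purely imaginary eigenvalue.

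\emph{Lyapunov function.} This is the main step. The natural first candidate is $V_0=\tfrac12(U(q)+|p|^2/2)$, i.e.\ a multiple of the Hamiltonian $E=U+|p|^2/2$. For it,
\[
W_0=-b\cdot\nabla V_0-|\nabla_pV_0|^2=\tfrac12|p|^2-\tfrac14|p|^2=\tfrac14|p|^2.
\]
This vanishes on all of $\{p=0\}$, so it is not inf-compact and does not satisfy strict dissipation. I would therefore add the standard hypocoercive cross term: set $V=\alpha E+\delta\,\phi(q)\cdot p$ with small $\delta>0$, where $\phi$ is a smooth bounded modification of $\nabla U$. A natural choice is $\phi=\nabla U/\sqrt{1+|\nabla U|^2}$, possibly further truncated. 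The aim is that
\[
b\cdot\nabla(\phi\cdot p)=D\phi\,p\cdot p-\phi\cdot\nabla U-\phi\cdot p
\]
contributes $-\delta\,\phi\cdot\nabla U$. This term is positive away from critical points of $U$ and tends to infinity as $|q|\to\infty$, because $\tfrac{q}{|q|}\cdot\nabla U\to\infty$ forces $|\nabla U|\to\infty$. It should make $W=-\ms H(\cdot,\nabla V)$ satisfy
\[
W\ge c_1|p|^2+c_2\,\phi\cdot\nabla U-C\delta^2(\ldots),
\]
which yields $W\ge0$, $W^{-1}(0)=\mc C_0$, and inf-compactness.

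The delicate points are the following.
\begin{itemize}
\item \emph{Controlling $D\phi$.} The term $\delta\,D\phi\,p\cdot p$ must be absorbed by $\tfrac{\alpha}{2}|p|^2$. This requires $D\phi$ to be bounded, which is why the normalized $\nabla U$ is used; boundedness of $\nabla^2U$ is not assumed.
\item \emph{Sign of $V$.} We need $V\ge0$ and inf-compact. This holds because $U\to\infty$ (a consequence of \textup{\textbf{(UD)}}) and $|\delta\,\phi\cdot p|\le\delta|p|$ can be absorbed by $\alpha|p|^2/2+C$. A constant shift then makes $V\ge0$.
\item \emph{Lyapunov bound.} Since $a:\nabla^2V=\Delta_pV=\alpha n$ is constant, the bound $a:\nabla^2 V\le C(1+W)$ is automatic.
\end{itemize}

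If the bounded normalization of $\nabla U$ makes $\phi\cdot\nabla U$ grow too slowly to control the error terms, I would instead try $\phi(q)=\chi(|q|)\,q/|q|$. Then $\phi\cdot\nabla U\to\infty$ directly by \textup{\textbf{(UD)}}, and near the finitely many critical points $\nabla U$ itself would be used to obtain strict positivity of $W$ there.

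Once these assumptions are verified, \cref{thm:main} applies verbatim and gives the stated full $\Gamma$-expansion.
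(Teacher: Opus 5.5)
Your reduction to \cref{thm:main} through \cref{ass:algebraic-structure,ass:cocentration} matches the paper, and your algebraic-structure and hyperbolicity checks are correct. The gap is in your primary Lyapunov construction: normalizing the gradient does \emph{not} make $D\phi$ bounded.

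Write $N(x)=x/\sqrt{1+|x|^2}$. Then $D\phi=DN(\nabla U)\,\nabla^2U$. The factor $DN(x)$ decays only like $|x|^{-1}$ in directions orthogonal to $x$, and like $|x|^{-3}$ radially. Bounding $D\phi$ therefore needs control of $\nabla^2U$ relative to $|\nabla U|$, which \textbf{(UD)} does not give. Already for $n=1$, take $U'(q)=q+\sin(q^5)$. It satisfies \textbf{(UD)}: the only critical point is $0$, with $U''(0)=1$. But $\phi'=U''/(1+U'^2)^{3/2}\approx5q\cos(q^5)$ is unbounded. The coefficient of $p^2$ in your $W$ is $(\alpha-\alpha^2)-\delta\phi'(q)\le\tfrac14-\delta\phi'(q)$; the cross term $\delta(1-2\alpha)\phi p$ is only linear in $p$. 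So at large $q$ with $\cos(q^5)=1$ this coefficient is negative, and $W\to-\infty$ as $|p|\to\infty$. Hence $W\ge0$ and inf-compactness fail for every $\alpha$ and every fixed $\delta>0$. Slow growth of $\phi\cdot\nabla U$, which you name as the trigger for switching constructions, is not the problem, since $\phi\cdot\nabla U\sim|\nabla U|\to\infty$.

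Your fallback is the correct repair. It is essentially the paper's choice: $V=\tfrac12E+\delta\,v_0\cdot p+1$ with $v_0=\chi\nabla U+(1-\chi)q/\sqrt{1+|q|^2}$. However, the points that make it work are missing.

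\emph{Choice of cutoff.} The cutoff must equal one on a ball $B(0,R)$ that contains all critical points and outside which $q\cdot\nabla U>0$. Then $g\coloneqq v_0\cdot\nabla U=\chi|\nabla U|^2+(1-\chi)\,q\cdot\nabla U/\sqrt{1+|q|^2}$ is nonnegative, vanishes exactly at the critical points, and is inf-compact. A field like $\chi(|q|)q/|q|$ that vanishes on a region not covered by the $\nabla U$ part would leave $W=\tfrac14|p|^2$ there. That violates $W^{-1}(0)=\mc C_0$.

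\emph{Boundedness.} Here $v_0$ and $Dv_0$ are genuinely bounded, unlike in your primary choice.

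\emph{Absorbing the $\delta^2$ term.} You need $g\ge c_v|v_0|^2$ to absorb the $-\delta^2|v_0|^2$ term you left as $-C\delta^2(\ldots)$. This holds with equality near critical points, where $v_0=\nabla U$. Away from them, $g$ has a positive lower bound on compacts, and at infinity $g\to\infty$ while $v_0$ stays bounded.

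With $\delta\|Dv_0\|_\infty\le\tfrac18$ and $\delta\le c_v/2$, this yields $W\ge\tfrac18|p|^2+\tfrac\delta2 g$, which gives all of \cref{ass:cocentration}.
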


\subsection{Third-order Langevin dynamics} \label{ssec:third-order}

Let $n\ge1$. On $\bb R^{3n}$, consider $X_t^\eps=(Q_t^\eps,P_t^\eps,R_t^\eps)$ satisfying
\begin{equation} \label{eq:third-order-langevin}
\begin{aligned}
    \dd Q_t^\eps&=P_t^\eps\dd t,\\
    \dd P_t^\eps&=\bigl(-\nabla U(Q_t^\eps)+R_t^\eps\bigr)\dd t,\\
    \dd R_t^\eps&=-\bigl(P_t^\eps+R_t^\eps\bigr)\dd t+\sqrt{2\eps}\,\dd B_t.
\end{aligned}
\end{equation}
Here $B_t$ is an $n$-dimensional Brownian motion. We impose the following condition.
\begin{taggedconditions}
    \item[\textup{\textbf{(TD)}}]
        The function $U\in C^\infty(\bb R^n)$ is Morse and, after the normalization $\min U=0$, there exist $c_0,R_0>0$ such that
        \begin{equation}\label{eq:third-order-radial-coercivity}
            q\cdot\nabla U(q)\ge c_0\bigl(|q|^2+U(q)\bigr),
            \qquad |q|\ge R_0.
        \end{equation}
        Moreover,
        \begin{equation}\label{eq:third-order-force-growth}
            \frac{|\nabla U(q)|^2}{(1+U(q))(1+q\cdot\nabla U(q))}
            \longrightarrow0
            \qquad\text{as }|q|\to\infty.
        \end{equation}
\end{taggedconditions}

The nonlinear force acts in the $p$-equation, outside the noise range, so item \textup{(2)} of \cref{ass:algebraic-structure} need not hold in these coordinates. The diffeomorphism
\[
    T(q,p,r)=\bigl(q,p,r-\nabla U(q)\bigr)=(q,p,\widetilde r)
\]
leaves the noise coefficient unchanged and gives the drift
\[
    \widetilde b(q,p,\widetilde r)
    =\bigl(p,\widetilde r,-p-\widetilde r-\nabla U(q)-\nabla^2U(q)p\bigr).
\]
The nonlinearity now appears only in the noisy equation. Under \textup{\textbf{(TD)}}, \cref{lem:third-order-structural} verifies \cref{ass:algebraic-structure,ass:cocentration} for the transformed process. The pre-metastable limits transfer to the original coordinates through the homeomorphism $T_\#$; the same argument transports a complete metastable hierarchy and the full expansion, as shown in \cref{ssec:third-order-verification}.

\begin{theorem} \label{thm:third-order-gamma-expansion}
    Assume \textup{\textbf{(TD)}}. If the third-order Langevin diffusion admits a complete metastable hierarchy, its level-2 rate functionals admit the full $\Gamma$-expansion of \cref{thm:main}.
\end{theorem}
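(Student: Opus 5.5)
The plan is to reduce \cref{thm:third-order-gamma-expansion} to \cref{thm:main} applied to the transformed process $\widetilde X_t^\eps\coloneqq T(X_t^\eps)$, and then to transport every conclusion back through the pushforward $T_\#$.

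\emph{Transformed process.} By It\^o's formula, $\widetilde X^\eps$ solves \eqref{eq:sde-eps} with drift $\widetilde b$ and the same noise coefficient $\sig=(0,0,\msf{Id}_n)^{\msf T}$. Indeed, $\widetilde R=R-\nabla U(Q)$ and $Q$ has no martingale part, so no second-order correction appears, and $\dd\widetilde R=(-P-R-\nabla^2U(Q)P)\dd t+\sqrt{2\eps}\dd B_t$. By \cref{lem:third-order-structural}, \cref{ass:algebraic-structure,ass:cocentration} hold for $\widetilde b$. Since $T$ is a homeomorphism, $\widetilde\eta_t^\eps=T_\#\eta_t^\eps$, and $T_\#$ is a homeomorphism of $\mc P(\bb R^{3n})$ in the weak topology. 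The contraction principle, applied to both $T$ and $T^{-1}$, gives $\mc I_\eps=\widetilde{\mc I}_\eps\circ T_\#$. Locally uniform starting points are also preserved, because $T$ maps compacts to compacts.

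\emph{Transporting the hierarchy.} The equilibria correspond: $\widetilde b(q,p,\widetilde r)=0$ if and only if $p=\widetilde r=0$ and $\nabla U(q)=0$, and $T(q,0,0)=(q,0,0)$. Because $T$ conjugates the flows, $D\widetilde b(T(c))$ is similar to $Db(c)$. Hence $\widetilde{\mc C}_0=T(\mc C_0)$, $\widetilde{\mc M}_0=T(\mc M_0)$, and $\widetilde\kappa_{T(c)}=\kappa_c$. The resolvents satisfy $\widetilde R_\lam^\eps G=R_\lam^\eps(G\circ T)\circ T^{-1}$. We take the wells $\widetilde{\mc E}(T(m))\coloneqq T(\mc E(m))$; these are relatively compact, lie in the basins of attraction, have pairwise disjoint closures, and have Lebesgue-null boundaries because $T$ is a smooth diffeomorphism. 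With these wells, \eqref{eq:metastable-resolvent-convergence} for $X^\eps$ is literally the same statement for $\widetilde X^\eps$, with identical scales $\theta_\eps^{(p)}$ and generators $\mf L^{(p)}$ (relabelled by $T$). So a complete hierarchy for the original diffusion yields one for the transformed diffusion, and the lifted measures satisfy $\widetilde\pi_{T(\mc M)}=T_\#\pi_{\mc M}$.

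\emph{Conclusion.} \Cref{thm:main} gives the full expansion for $\widetilde{\mc I}_\eps$, with functionals $\widetilde{\mc J}^{(p)}$. Set $\mc J^{(p)}\coloneqq\widetilde{\mc J}^{(p)}\circ T_\#$. $\Gamma$-convergence is invariant under composition with a homeomorphism: liminf sequences and recovery sequences are mapped by $T_\#^{\pm1}$. Zero sets and domains are transported as well, so items (1)--(5) of \cref{def:gamma-expansion} hold. Finally, we check that $\widetilde{\mc J}^{(-1)}\circ T_\#$ and the other pulled-back functionals coincide with the formulas of \cref{ssec:limit-define} written in the original coordinates. For $p\ge0$ this follows from the correspondences above. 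For $p=-1$, substitute $\vph\circ T$, which ranges over $C_c^\infty$, and use $\nabla(\vph\circ T)=DT^{\msf T}\nabla\vph\circ T$, $\widetilde b\circ T=DT\,b$, and $DT\,\sig=\sig$. These give $\ms H(x,\nabla(\vph\circ T)(x))=\widetilde{\ms H}(T(x),\nabla\vph(T(x)))$.

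\emph{Main obstacle.} The substantive step is \cref{lem:third-order-structural}, which requires constructing the Lyapunov function $V$ for the degenerate transformed drift. I would try a perturbed energy $V=\delta\bigl(U(q)+\tfrac12|p|^2+\tfrac12|\widetilde r+p|^2\bigr)$ plus small cross terms such as $\gamma q\cdot p$ and $\gamma p\cdot\widetilde r$, which produce dissipation in $q$ and $p$. Then $W$ is inf-compact through the coercivity \eqref{eq:third-order-radial-coercivity}. The Hessian term $\nabla^2U\,p$ is controlled via \eqref{eq:third-order-force-growth}. The strict dissipation statement $W^{-1}(0)=\mc C_0$ needs small $\delta,\gamma$. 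Since that lemma is stated earlier, the proof here reduces to the transport argument above.
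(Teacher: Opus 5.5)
Your proof is correct and follows the paper's own argument. You verify \cref{ass:algebraic-structure,ass:cocentration} for the transformed process via \cref{lem:third-order-structural}, and you transport the LDP, the functionals $\mc J^{(-1)},\mc J^{(0)}$, the wells $T(\mc E(\mc M))$, the resolvent convergence and the lifted measures through $T$ and $T_\#$; you then pull back the expansion given by \cref{thm:main}.

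Your closing Lyapunov sketch is not needed, and as written it would have to handle the term $\nabla^2U\,p$, which (TD) does not control. The paper avoids this term by building $V$ in physical coordinates with the weight $\rho(\mc H)$ and setting $\widetilde V=V\circ T^{-1}$. This does not affect your proof, since you rely on the lemma.
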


\begin{remark}[A unique local minimum]
    For each model, assume \textup{\textbf{(OD1)}}--\textup{\textbf{(OD3)}}, \textup{\textbf{(UD)}}, or \textup{\textbf{(TD)}}, respectively. If $U$ has a unique local minimum, there is exactly one stable equilibrium. By the convention in \cref{ssec:metastable-hierarchy}, the hierarchy is then complete with $\mf q=0$, and the full $\Gamma$-expansion consists of the scales $\eps$ and $1$. No additional metastable assumption is needed; in particular, \textup{\textbf{(OD-M)}} is unnecessary in this case.
\end{remark}

\section{The Donsker--Varadhan framework} \label{sec:DV-framework}

Throughout this section, $(X_t^\eps)$ denotes the diffusion in \eqref{eq:sde-eps}. We assume \cref{ass:algebraic-structure} and items \textup{(1)}--\textup{(2)} of \cref{ass:cocentration}.

\subsection{Properties of the diffusion}

For $\eps>0$, set $\Psi_\eps\coloneqq\exp(V/\eps)$ and $\Phi_\eps\coloneqq-\mc L_\eps\Psi_\eps/\Psi_\eps$. Since $V$ is nonnegative and inf-compact, $\Psi_\eps\in C^\infty(\mc X;[1,\infty))$ is inf-compact. The chain rule gives
\begin{equation} \label{eq:DV-Lyapunov-eps}
    \Phi_\eps
    =
    \frac1\eps W-a:\nabla^2V.
\end{equation}
By the Lyapunov bound,
\[
    \Phi_\eps
    \ge
    \left(\frac1\eps-C\right)W-C.
\]
For every sufficiently small $\eps>0$, the coefficient $\eps^{-1}-C$ is positive; hence the inf-compactness of $W$ implies that of $\Phi_\eps$.

Let $\sig_1,\ldots,\sig_r$ be the columns of $\sig$. The Stratonovich form of \eqref{eq:sde-eps} is
\[
    \dd X_t^\eps
    =
    b_\eps^\circ(X_t^\eps)\dd t
    +
    \sqrt{2\eps}\,\sig(X_t^\eps)\circ\dd B_t,
\]
where
$b_\eps^\circ\coloneqq b-\eps\sum_{k=1}^rD\sig_k\sig_k$.
Accordingly,
\[
    \mc L_\eps
    =
    b_\eps^\circ\cdot\nabla
    +
    \eps\sum_{k=1}^r(\sig_k\cdot\nabla)^2.
\]

\begin{lemma} \label{lem:hypoellipticity-eps}
    The vector fields $b_\eps^\circ\cdot\nabla$ and $\sig_1\cdot\nabla,\ldots,\sig_r\cdot\nabla$ satisfy the parabolic H\"ormander bracket condition for every $\eps>0$. Consequently, $\mc L_\eps$, $\alpha-\mc L_\eps$ for every $\alpha\in\bb R$, the Lebesgue formal adjoint $\mc L_\eps^*$, and $\partial_t-\mc L_\eps$ are hypoelliptic.
\end{lemma}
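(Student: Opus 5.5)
The plan is to verify the parabolic H\"ormander condition directly from \cref{ass:algebraic-structure}, and then to invoke H\"ormander's theorem for each of the listed operators. Write $X_0\coloneqq b_\eps^\circ\cdot\nabla$ and $X_k\coloneqq\sig_k\cdot\nabla$ for $k=1,\ldots,r$. The parabolic condition asks that, at every $x$, the Lie algebra generated by $X_1,\ldots,X_r$ and the brackets $[X_0,X_k]$, $[X_0,[X_0,X_k]],\ldots$ span $\bb R^d$. Here $X_0$ itself is excluded; it is added only as $\partial_t+X_0$ on space-time.

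By item (1), $\operatorname{span}\{\sig_k(x)\}=\mf H$ at every point. We work modulo $\mf H$ and argue by induction. Let $\mc V_j(x)$ be the span at $x$ of all brackets of the form $\operatorname{ad}_{X_0}^iX_k$ with $i\le j$, together with brackets among these. We claim that $\mc V_j(x)\supseteq\mf H+\msf\Omega\mf H+\cdots+\msf\Omega^j\mf H$.

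For the inductive step, take a smooth vector field $Y$ whose value lies in $\mf H_j\coloneqq\mf H+\cdots+\msf\Omega^j\mf H$ modulo lower-order terms, with $Y(x)=\msf\Omega^jv+(\text{element of }\mf H_{j-1})$. Then
\[
    [X_0,Y]=DY\,b_\eps^\circ-Db_\eps^\circ\,Y .
\]
Two facts control this bracket. First, by item (2) and the remark following \cref{ass:algebraic-structure}, $Db(x)=\msf\Omega+(\text{map into }\mf H)$. Second, $b_\eps^\circ-b=-\eps\sum_k D\sig_k\sig_k$, and this correction takes values in $\mf H$ because every $\sig_k$ is $\mf H$-valued, so its derivatives are too. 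Hence $Db_\eps^\circ\,Y\equiv\msf\Omega Y$ modulo $\mf H$.

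The term $DY\,b_\eps^\circ$ is the main obstacle. It is not automatically in $\mf H_j$ for a general $Y$, so I would arrange the induction so that this term is harmless. The idea is to work with fields that are constant-coefficient modulo lower layers. Concretely, one can show inductively that $\operatorname{ad}_{X_0}^jX_k(x)=(-1)^j\msf\Omega^j\sig_k(x)+w_j(x)$ with $w_j$ valued in $\mf H_{j-1}$, provided one tracks that the derivative of an $\mf H_{j-1}$-valued field in any direction is again $\mf H_{j-1}$-valued. This holds because $\mf H_{j-1}$ is a fixed linear subspace. Since $\msf\Omega^{j-1}\sig_k$ has constant range direction modulo $\mf H_{j-2}$ up to the $x$-dependence of $\sig_k$, whose derivative stays in $\mf H$, every error term falls into $\mf H_{j-1}$.

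With this, $\operatorname{span}_k\operatorname{ad}_{X_0}^jX_k(x)+\mf H_{j-1}=\msf\Omega^j\mf H+\mf H_{j-1}=\mf H_j$. Taking $j=d-1$ and applying the Kalman rank condition of item (3), which is independent of $x$, gives $\mf H_{d-1}=\bb R^d$. This yields the bracket condition at every point and for every $\eps>0$.

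The consequences then follow from H\"ormander's theorem. The parabolic condition implies that $\partial_t-\mc L_\eps$ is hypoelliptic. For $\mc L_\eps$ and for $\alpha-\mc L_\eps$, the zeroth-order term does not affect the bracket condition, and the elliptic-type condition (in which $X_0$ is allowed) is weaker than the parabolic one, so it already holds. For the formal adjoint, $\mc L_\eps^*=\sum_kX_k^2-X_0+(\text{first order in }X_k\text{ directions})+c$. Its first-order part equals $-X_0$ up to a combination of the $X_k$. Rewriting $\mc L_\eps^*$ in H\"ormander form $\sum_kX_k^2+\widetilde X_0+c$ with $\widetilde X_0\equiv-X_0$ modulo $\operatorname{span}\{X_k\}$, we find that the iterated brackets generate the same spaces $\mf H_j$ up to sign. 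Hence $\mc L_\eps^*$ is hypoelliptic as well.
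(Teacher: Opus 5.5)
Your argument is correct and follows the paper's route. The bracket condition comes from three facts: the fixed noise range, the fact that $Db_\eps^\circ-\msf\Omega$ maps into $\mf H$, and the Kalman rank condition. H\"ormander's theorem is then applied to each operator, and the adjoint is handled by noting that its drift is again affine with matrix $-\msf\Omega$ modulo $\mf H$-valued fields.

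The only difference is bookkeeping. The paper works in the $C^\infty$-module generated by the $\sig_k$ and their brackets with $b_\eps^\circ$. This module contains every constant field $e\in\mf H$, and $[b_\eps^\circ,e]\equiv-\msf\Omega e$ modulo $\mf H$-valued fields. You instead verify pointwise that $\operatorname{ad}_{X_0}^jX_k\equiv(-1)^j\msf\Omega^j\sig_k$ modulo $\mf H_{j-1}$, which is equally valid.
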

\begin{proof}
    Use the convention $[X,Y]=DY\,X-DX\,Y$. Since every $\sig_k$ takes values in the fixed space $\mf H$, its derivatives do so as well. The Stratonovich drift therefore has the form
    \[
        b_\eps^\circ(x)
        =
        b(0)+\msf\Omega x+h_\eps(x),
        \qquad
        h_\eps\in C^\infty(\mc X;\mf H).
    \]
    Let $\mf M$ be the smooth module generated by the fields $\sig_k$ and their iterated brackets with $b_\eps^\circ$. It contains every constant field $e\in\mf H$, since $e=\sig\msf R e$. The identity
    \[
        [b_\eps^\circ,fY]
        =
        (b_\eps^\circ f)Y+f[b_\eps^\circ,Y]
    \]
    shows that $[b_\eps^\circ,\mf M]\subset\mf M$. If a constant field $e$ belongs to $\mf M$, then
    \[
        [b_\eps^\circ,e]
        =
        -\msf\Omega e-Dh_\eps e,
    \]
    where $Dh_\eps e$ is $\mf H$-valued and hence belongs to $\mf M$. Thus $\msf\Omega e$ also belongs to $\mf M$. By induction, $\mf M$ contains every constant field in
    \[
        \mf H+\msf\Omega\mf H+\cdots+\msf\Omega^{d-1}\mf H
        =\bb R^d.
    \]
    This proves the parabolic H\"ormander bracket condition.

    Write $X_0\coloneqq b_\eps^\circ\cdot\nabla$ and
    $X_k\coloneqq\sig_k\cdot\nabla$. Since
    \[
        \mc L_\eps
        =
        X_0+\eps\sum_{k=1}^rX_k^2,
    \]
    H\"ormander's theorem \cite{Hormander1967} gives the hypoellipticity
    of $\mc L_\eps$. The same theorem applies after adding the constant
    zeroth-order term $-\alpha$, so $\alpha-\mc L_\eps$ is hypoelliptic
    for every $\alpha\in\bb R$.

    On $\bb R\times\mc X$, the fields $X_k$ together with
    $X_0-\partial_t$ satisfy the H\"ormander condition: their iterated
    commutators generate all spatial directions, while
    $X_0-\partial_t$ supplies the time direction. Hence
    $\partial_t-\mc L_\eps$ is hypoelliptic.

    Finally, since $X_k^*=-X_k-\diver\sig_k$,
    \[
        \mc L_\eps^*
        =
        \widetilde X_0
        +
        \eps\sum_{k=1}^rX_k^2
        +
        c_\eps,
        \qquad
        \widetilde X_0
        \coloneqq
        -X_0
        +
        2\eps\sum_{k=1}^r(\diver\sig_k)X_k,
    \]
    for some $c_\eps\in C^\infty(\mc X)$. The coefficient vector field of
    $\widetilde X_0$ has the form
    \[
        -b(0)-\msf\Omega x+\widetilde h_\eps(x),
        \qquad
        \widetilde h_\eps\in C^\infty(\mc X;\mf H).
    \]
    The preceding module argument therefore applies with $-\msf\Omega$
    in place of $\msf\Omega$. Since
    $\mf H+(-\msf\Omega)\mf H+\cdots+(-\msf\Omega)^{d-1}\mf H
    =\bb R^d$, H\"ormander's theorem also gives the hypoellipticity of
    $\mc L_\eps^*$.
\end{proof}
\begin{remark}
    The assumption that $b$ and $\sig$ are smooth is made for convenience. It may be replaced by sufficiently high finite regularity, provided that the corresponding finite-order hypoelliptic estimates are available. We retain smooth coefficients so that H\"ormander's hypoellipticity theorem applies directly, without tracking the differentiability required in each regularity argument.
\end{remark}

\begin{proposition} \label{prop:diffusion-properties-eps}
    For every sufficiently small $\eps>0$, the following statements hold.
    \begin{enumerate}
        \item \emph{Well-posedness.}
            The SDE \eqref{eq:sde-eps} is globally strongly well-posed. It defines a non-explosive strong Markov process $(X_t^\eps,\bb P_x^\eps)$ with continuous paths. Its semigroup is denoted by
            \[
                P_t^\eps f(x)
                \coloneqq
                \bb E_x^\eps[f(X_t^\eps)].
            \]
            The operator $R_\alpha^\eps$ defined in \cref{ssec:metastable-hierarchy} is its resolvent, so $R_\alpha^\eps g=\int_0^\infty e^{-\alpha t}P_t^\eps g\dd t$.
        \item \emph{Strong Feller property.}
            The operator $P_t^\eps$ is strong Feller for every $t>0$.
        \item \emph{Resolvent irreducibility.}
            For every $\alpha>0$ and every nonempty open set $O\subset\mc X$, $R_\alpha^\eps \bs1_O>0$ pointwise.
        \item \emph{Smooth invariant measure.}
            There exists a unique invariant probability measure $\pi_\eps$, and $\pi_\eps$ has a smooth Lebesgue density and full support in $\mc X$.
    \end{enumerate}
\end{proposition}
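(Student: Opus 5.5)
The plan is to run everything off the Lyapunov function $\Psi_\eps=\exp(V/\eps)$ and the identity \eqref{eq:DV-Lyapunov-eps}, together with the hypoellipticity in \cref{lem:hypoellipticity-eps} and the controllability in \cref{ass:algebraic-structure}.

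\emph{Well-posedness.} Smooth coefficients are locally Lipschitz, so the SDE has a unique strong solution up to the explosion time $\zeta$. Fix $\eps$ small so that $\Phi_\eps\ge(\eps^{-1}-C)W-C\ge-C$. Then $\mc L_\eps\Psi_\eps=-\Phi_\eps\Psi_\eps\le C\Psi_\eps$. Localize at the exit times $\tau_n$ from $\{V<n\}$, which are compact sets by inf-compactness. Itô's formula gives $\bb E_x[e^{-Ct\wedge\tau_n}\Psi_\eps(X_{t\wedge\tau_n})]\le\Psi_\eps(x)$, hence $\bb P_x(\tau_n\le t)\le e^{Ct}\Psi_\eps(x)e^{-n/\eps}\to0$. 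This rules out explosion. The strong Markov property and continuity of paths then follow from strong uniqueness in the standard way, and the formula for the resolvent is Fubini.

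\emph{Strong Feller.} By \cref{lem:hypoellipticity-eps}, $\partial_t-\mc L_\eps^*$ is hypoelliptic (the bracket argument for $\mc L_\eps^*$ together with the time direction works exactly as for $\partial_t-\mc L_\eps$). Consequently the transition law $P_t^\eps(x,\cdot)$ has a density $p_t(x,y)$ that is smooth in $(t,x,y)$ on $(0,\infty)\times\mc X\times\mc X$, because it solves the forward and backward equations distributionally. Non-explosion is needed here so that $p_t$ is a genuine probability kernel rather than a sub-probability one. Smoothness in $x$ then gives $P_t^\eps f\in C(\mc X)$ for $f\in B_b$. This is the step I expect to need the most care, since one must justify that the distributional equations hold on the unbounded space. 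I would handle it by applying hypoellipticity to the restriction of the kernel to open subsets, working with the killed process at $\tau_n$ and passing to the limit monotonically as $n\to\infty$.

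\emph{Irreducibility.} I would use the Stroock--Varadhan support theorem. The support of $\bb P_x^\eps$ on path space is the closure of the solutions of the control system $\dot\phi=b_\eps^\circ(\phi)+\sqrt{2\eps}\,\sig(\phi)u$. By \cref{ass:algebraic-structure}, the control enters through $\sig$, whose range is exactly $\mf H$, and $b_\eps^\circ=b(0)+\msf\Omega x+h_\eps$ with $h_\eps$ taking values in $\mf H$. Feedback $u=\msf R(\phi)(v-h_\eps(\phi))$ therefore turns the system into the linear system $\dot\phi=b(0)+\msf\Omega\phi+\Pi v$, where $\Pi$ is the projection onto $\mf H$. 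This linear system is controllable by (3) and \cref{lem:feedback-invariance}. Hence every point is steerable to $O$ in any fixed time $T$, and the support theorem gives $\bb P_x(X_t\in O)>0$ for all $t$ near $T$. Integrating in $t$ yields $R_\alpha^\eps\bs1_O>0$.

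\emph{Invariant measure.} For existence, I would use the Lyapunov bound $\mc L_\eps\Psi_\eps\le-\Phi_\eps\Psi_\eps$ with $\Phi_\eps$ inf-compact. Since $\Phi_\eps\Psi_\eps$ dominates a constant outside a compact set, localized Itô gives $\frac1t\int_0^t\bb E_x[\Phi_\eps(X_s)\Psi_\eps(X_s)]\dd s\le\Psi_\eps(x)/t+C$. The averaged laws are thus tight, and Krylov--Bogoliubov, using the Feller property obtained above, produces an invariant $\pi_\eps$. For uniqueness, strong Feller together with irreducibility (Doob--Khasminskii) makes any two invariant measures equivalent, so ergodic invariant measures coincide. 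For regularity, $\pi_\eps$ solves $\mc L_\eps^*\pi_\eps=0$ in $\mc D'$, and hypoellipticity of $\mc L_\eps^*$ gives a smooth density. Full support follows from $\pi_\eps(O)=\alpha\int R_\alpha^\eps\bs1_O\dd\pi_\eps>0$.
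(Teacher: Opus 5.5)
Your proposal is correct and follows essentially the same route as the paper. The shared ingredients are: the Lyapunov function $\Psi_\eps$ for non-explosion and existence of $\pi_\eps$; the Stroock--Varadhan support theorem combined with the controllability in \cref{ass:algebraic-structure}; strong Feller plus irreducibility for uniqueness; and hypoellipticity of $\mc L_\eps^*$ together with the resolvent identity for the smooth density and full support. For the support theorem, the paper packages your feedback linearization as \cref{lem:controlled-connectors} and localizes the unbounded coefficients around the trajectory, a step you should add.

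The one detour is item (2). The paper shows directly that $u(t,x)=P_t^\eps h(x)$ solves $(\partial_t-\mc L_\eps)u=0$ in $\mc D'((0,\infty)\times\mc X)$ and applies hypoellipticity of $\partial_t-\mc L_\eps$, which gives continuity of $P_t^\eps h$ at once. Your jointly smooth kernel needs this same backward equation, plus a H\"ormander argument for a combined operator in $(t,x,y)$ (hypoellipticity of $\partial_t-\mc L_\eps^*$ alone gives no regularity in $x$), plus Scheff\'e's lemma. It costs more without buying anything needed here.
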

\begin{proof}
    Fix a sufficiently small $\eps>0$. By \eqref{eq:DV-Lyapunov-eps}, $\Phi_\eps$ is inf-compact, and hence
    \[
        \mc L_\eps\Psi_\eps
        \le
        C_\eps-\Psi_\eps
    \]
    for some $C_\eps<\infty$. Since $b$ and $\sig$ are smooth, local strong existence and pathwise uniqueness hold. The preceding estimate and the inf-compactness of $\Psi_\eps$ imply non-explosion. Thus, the SDE is globally strongly well-posed and defines a non-explosive strong Markov process with continuous paths. Its resolvent is given by the Laplace transform of its semigroup, which proves~(1).

    For $h\in B_b(\mc X)$, set $u(t,x)=P_t^\eps h(x)$. Localization of the diffusion coefficients, followed by removal of the localization using non-explosion, shows that $u$ is a locally bounded distributional solution of
    \[
        (\partial_t-\mc L_\eps)u=0
        \qquad
        \text{on }(0,\infty)\times\mc X.
    \]
    By \cref{lem:hypoellipticity-eps}, the operator $\partial_t-\mc L_\eps$ is hypoelliptic. Hence $u\in C^\infty((0,\infty)\times\mc X)$, and the equation holds pointwise. In particular $P_t^\eps$ is strong Feller. This proves~(2).

    Let $x\in\mc X$ and let $O\subset\mc X$ be nonempty and open. Choose $y\in O$. By \cref{lem:controlled-connectors}, there are a smooth path $\gamma$ from $x$ to $y$ in time one and a smooth control $u$ such that $\dot\gamma=b(\gamma)-\sig(\gamma)u$. The Stratonovich correction takes values in $\mf H$, so the same path solves
    \[
        \dot\gamma
        =
        b_\eps^\circ(\gamma)
        +\sqrt{2\eps}\,\sig(\gamma)v,
        \qquad
        v(t)
        \coloneqq
        \frac{\msf R(\gamma(t))(b-b_\eps^\circ)(\gamma(t))-u(t)}{\sqrt{2\eps}}.
    \]
    Applying the Stroock--Varadhan support theorem \cite{StroockVaradhan1972} after localizing the coefficients around this trajectory gives $\bb P_x^\eps(X_1^\eps\in O)>0$. By continuity of the paths, the process then spends a positive amount of time in $O$ on an event of positive probability. Consequently, $R_\alpha^\eps\bs1_O(x)>0$ for every $\alpha>0$, proving~(3).

    The Lyapunov function $\Psi_\eps$ yields an invariant probability measure $\pi_\eps$. Strong Fellerness and resolvent irreducibility imply its uniqueness; see \cite[Section~2.1]{Wu2001}. Moreover, for every nonempty open set $O$ and every $\alpha>0$,
    \[
        \frac1\alpha\pi_\eps(O)
        =
        \pi_\eps(R_\alpha^\eps\bs1_O)>0,
    \]
    so $\pi_\eps$ has full support. Finally, invariance implies $\mc L_\eps^*\pi_\eps=0$ in distributions. Since $\mc L_\eps^*$ is hypoelliptic by \cref{lem:hypoellipticity-eps}, the measure $\pi_\eps$ has a smooth Lebesgue density. This proves~(4).
\end{proof}

\subsection{The extended generator} \label{ssec:extended-generator-eps}

Fix a sufficiently small $\eps>0$. For Borel functions $f,g:\mc X\to\bb R$, write $\overline{\ms L}_\eps f=g$ if
\[
    \int_0^t|g(X_s^\eps)|\dd s<\infty
    \quad
    \bb P_x^\eps\text{-a.s. for every }x\in\mc X,\ t>0,
\]
and
\[
    f(X_t^\eps)-f(X_0^\eps)-\int_0^tg(X_s^\eps)\dd s
\]
is a local $\bb P_x^\eps$-martingale for every $x\in\mc X$. The extended generator may be undefined and its representative $g$ need not be unique. If $f,g\in B_b(\mc X)$, we write $f\in D(\ms L_\eps)$ and $\ms L_\eps f=g$. If $f\in C^\infty(\mc X)$, It\^o's formula and non-explosion give the representative $\overline{\ms L}_\eps f=\mc L_\eps f$.

\begin{lemma} \label{lem:generator-equivalence}
    For $f,g\in B_b(\mc X)$, the following are equivalent:
    \begin{enumerate}
        \item $\ms L_\eps f=g$;
        \item $P_t^\eps f-f=\int_0^tP_s^\eps g\dd s$ pointwise for every $t>0$;
        \item $f=R_\alpha^\eps (\alpha f-g)$ for some, and hence for every, $\alpha>0$.
    \end{enumerate}
    Moreover, $D(\ms L_\eps)\subset C(\mc X)$.
\end{lemma}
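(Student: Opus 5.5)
We prove the cycle of implications $(1)\Rightarrow(2)\Rightarrow(3)\Rightarrow(1)$, and then deduce continuity from $(3)$ together with the strong Feller property in \cref{prop:diffusion-properties-eps}.

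\emph{$(1)\Rightarrow(2)$.} Assume $\ms L_\eps f=g$ with $f,g\in B_b(\mc X)$. The process
\[
M_t\coloneqq f(X_t^\eps)-f(X_0^\eps)-\int_0^tg(X_s^\eps)\dd s
\]
is a local martingale. It satisfies $|M_t|\le 2\|f\|_\infty+t\|g\|_\infty$, so it is bounded on $[0,t]$. A bounded local martingale is a true martingale, hence $\bb E_x^\eps[M_t]=0$. Fubini's theorem then gives $P_t^\eps f(x)-f(x)=\int_0^tP_s^\eps g(x)\dd s$ for every $x$ and $t$.

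\emph{$(2)\Rightarrow(3)$.} Fix $\alpha>0$, multiply (2) by $\alpha e^{-\alpha t}$, and integrate over $t\in(0,\infty)$. The left side gives $\alpha R_\alpha^\eps f-f$. For the right side, exchanging the order of integration by Fubini, which is justified by boundedness, gives $\int_0^\infty e^{-\alpha s}P_s^\eps g\dd s=R_\alpha^\eps g$. Hence $f=R_\alpha^\eps(\alpha f-g)$.

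\emph{$(3)\Rightarrow(1)$.} Suppose $f=R_\alpha^\eps h$ for one $\alpha>0$, where $h\coloneqq\alpha f-g\in B_b$. The Markov property gives the standard martingale for resolvents: the process
\[
R_\alpha^\eps h(X_t^\eps)-R_\alpha^\eps h(X_0^\eps)-\int_0^t\bigl(\alpha R_\alpha^\eps h-h\bigr)(X_s^\eps)\dd s
\]
is a bounded $\bb P_x^\eps$-martingale. To see this, compute
\[
\bb E_x^\eps\bigl[e^{-\alpha t}R_\alpha^\eps h(X_t^\eps)+\textstyle\int_0^te^{-\alpha s}h(X_s^\eps)\dd s\,\big|\,\mc F_t\bigr],
\]
which shows that $e^{-\alpha t}R_\alpha^\eps h(X_t^\eps)+\int_0^te^{-\alpha s}h(X_s^\eps)\dd s$ is a martingale. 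Removing the discount factor $e^{-\alpha t}$ by the integration-by-parts formula for semimartingales yields the undiscounted martingale above. Since $\alpha R_\alpha^\eps h-h=\alpha f-(\alpha f-g)=g$, this is exactly statement (1). In particular (3) holding for one $\alpha$ implies (1), and (1) implies (3) for every $\alpha$ through (2), which justifies the phrase ``for some, and hence for every''.

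\emph{Continuity.} For $f\in D(\ms L_\eps)$, statement (3) gives $f=R_\alpha^\eps h$ with $h$ bounded. The function $P_t^\eps h$ is continuous for every $t>0$ by strong Fellerness (\cref{prop:diffusion-properties-eps}(2)), and it is bounded by $\|h\|_\infty$. Dominated convergence in $\int_0^\infty e^{-\alpha t}P_t^\eps h\dd t$ therefore shows that $R_\alpha^\eps h$ is continuous.

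The main point of care is the undiscounting step in $(3)\Rightarrow(1)$, because the martingale identity must hold under $\bb P_x^\eps$ for every starting point $x$ and not merely almost everywhere. This works because the strong Markov family in \cref{prop:diffusion-properties-eps} is defined for every starting point and all functions involved are bounded, so no null-set issues arise.
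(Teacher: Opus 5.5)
Your proof is correct and follows the paper's argument. It uses the same cycle $(1)\Rightarrow(2)\Rightarrow(3)\Rightarrow(1)$: the bounded local martingale, the Laplace transform of (2), and the resolvent martingale applied to $h=\alpha f-g$. You also justify that martingale, which the paper does not. One slip there: the discounted process should be identified with $\bb E_x^\eps[\int_0^\infty e^{-\alpha s}h(X_s^\eps)\dd s\mid\mc F_t]$, not with the conditional expectation of itself.

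The only deviation is the continuity step. You apply dominated convergence to $R_\alpha^\eps h=\int_0^\infty e^{-\alpha t}P_t^\eps h\dd t$. The paper instead uses $\|P_t^\eps f-f\|_\infty\le t\|g\|_\infty$ from (2) to exhibit $f$ as a uniform limit of the continuous functions $P_t^\eps f$. Both arguments rest equally on the strong Feller property and are equally short.
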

\begin{proof}
    If $\ms L_\eps f=g$, the defining local $\bb P_x^\eps$-martingale is bounded on every bounded time interval and is therefore a true martingale. Taking $\bb E_x^\eps$ gives (2). Laplace transforming (2) gives (3) for every $\alpha>0$. Finally, for every $h\in B_b(\mc X)$,
    \[
        R_\alpha^\eps h(X_t^\eps)
        -
        R_\alpha^\eps h(X_0^\eps)
        -
        \int_0^t
        \bigl(
            \alpha R_\alpha^\eps h-h
        \bigr)(X_s^\eps)\dd s
    \]
    is a $\bb P_x^\eps$-martingale. If (3) holds for some $\alpha>0$, applying this identity with $h=\alpha f-g$ proves (1). This proves the equivalence.

    Item \textup{(2)} gives $\|P_t^\eps f-f\|_\infty\le t\|g\|_\infty$. Since $P_t^\eps f$ is continuous for $t>0$ by the strong Feller property, $f$ is a uniform limit of continuous functions.
\end{proof}

The next lemma makes precise how the choice of a generator representative affects calculations.
\begin{lemma} \label{lem:generator-representatives-eps}
    If a Borel set $A$ satisfies $\pi_\eps(A)=0$, then $P_t^\eps(x,A)=0$ for every $x\in\mc X$ and $t>0$, and $A$ has zero occupation time under every $\bb P_x^\eps$. For each $f\in D(\ms L_\eps)$, the representatives of $\ms L_\eps f$ are precisely all bounded Borel functions in one $L^\infty(\pi_\eps)$ equivalence class.
\end{lemma}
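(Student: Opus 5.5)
Two claims need proof: first, that $\pi_\eps$-null sets are never visited (neither by the transition kernel nor in occupation time), and second, that representatives of $\ms L_\eps f$ form exactly one $L^\infty(\pi_\eps)$ class.

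\emph{Null sets.} Let $\pi_\eps(A)=0$ and fix $t>0$. The function $x\mapsto P_t^\eps\bs1_A(x)$ is nonnegative. It is continuous by the strong Feller property in \cref{prop:diffusion-properties-eps}(2). By invariance,
\[
    \int P_t^\eps\bs1_A\dd\pi_\eps=\pi_\eps(A)=0,
\]
so $P_t^\eps\bs1_A=0$ $\pi_\eps$-a.e. Since $\pi_\eps$ has full support by \cref{prop:diffusion-properties-eps}(4), a continuous nonnegative function vanishing $\pi_\eps$-a.e.\ vanishes on a dense set, hence everywhere. This gives $P_t^\eps(x,A)=0$ for all $x$ and $t>0$. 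Fubini then yields
\[
    \bb E_x^\eps\int_0^\infty\bs1_A(X_s^\eps)\dd s
    =\int_0^\infty P_s^\eps(x,A)\dd s=0,
\]
so $A$ has zero occupation time almost surely.

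\emph{Representatives.} Let $g$ represent $\ms L_\eps f$ and let $g'\in B_b(\mc X)$.

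If $g'=g$ $\pi_\eps$-a.e., set $A=\{g\ne g'\}$. Then $\pi_\eps(A)=0$, and by the first part $\int_0^t(g-g')(X_s^\eps)\dd s=0$ almost surely under every $\bb P_x^\eps$. Hence the martingale in the definition of the extended generator is unchanged when $g$ is replaced by $g'$, so $g'$ is also a representative.

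Conversely, suppose $g$ and $g'$ both represent $\ms L_\eps f$. By \cref{lem:generator-equivalence}(3), for any $\alpha>0$,
\[
    R_\alpha^\eps(g-g')=0 \quad\text{pointwise}.
\]
Put $h=g-g'$. By the resolvent equation, $R_\beta^\eps h=0$ for every $\beta>0$; alternatively one can argue directly from \cref{lem:generator-equivalence}(2), which gives $\int_0^tP_s^\eps h\dd s=0$ for every $t$. Either way, $P_s^\eps h(x)=0$ for almost every $s$, for each $x$.

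To conclude, pass to the limit $s\to0$. One option is the adjoint route: integrate against $\phi\,\dd\pi_\eps$ with $\phi\in C_c$. Then
\[
    \int \phi\, P_s^\eps h\dd\pi_\eps\longrightarrow\int\phi h\dd\pi_\eps \quad\text{as }s\to0,
\]
because $P_s^\eps$ is strongly continuous on $L^1(\pi_\eps)$ (it is a contraction there and $C_b$ is dense). Since the left side vanishes for almost every $s$, we get $\int\phi h\dd\pi_\eps=0$ for all $\phi\in C_c$, hence $h=0$ $\pi_\eps$-a.e. Equivalently, one can use $\alpha R_\alpha^\eps h\to h$ in $L^1(\pi_\eps)$ as $\alpha\to\infty$.

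The main point needing care is this last density and continuity argument. Strong continuity holds in $L^1(\pi_\eps)$, not pointwise, so the identification of $h$ must be made there; this relies on invariance of $\pi_\eps$ together with the contraction property.
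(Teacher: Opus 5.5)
Your proof is correct. The null-set part coincides with the paper's argument, and so does the direction ``a bounded Borel function equal $\pi_\eps$-a.e.\ to a representative is again a representative''. For the uniqueness direction you take a different route.

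\emph{The paper's route.} It argues pathwise. The difference of the two defining local martingales is $\int_0^t(g-\widetilde g)(X_s^\eps)\dd s$, a continuous finite-variation local martingale, so it vanishes identically. Differentiating in $t$ gives $(g-\widetilde g)(X_t^\eps)=0$ for a.e.\ $t$, almost surely. Starting the process from $\pi_\eps$ and using stationarity with Tonelli then gives $\pi_\eps(|g-\widetilde g|)=0$.

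\emph{Your route.} You pass through \cref{lem:generator-equivalence} to get $R_\alpha^\eps h=0$ for every $\alpha>0$. You then identify $h$ through $L^1(\pi_\eps)$-convergence, either $P_s^\eps h\to h$ or $\alpha R_\alpha^\eps h\to h$.

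\emph{Comparison.} The paper's argument is more elementary. It uses only the definition of the extended generator and stationarity. It needs neither \cref{lem:generator-equivalence} nor any boundedness of the representatives. Yours recasts the claim as injectivity of the resolvent modulo $\pi_\eps$-null sets, which fits the analytic framework, but it costs a strong-continuity step.

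Two small points in that step:
\begin{enumerate}
\item Contraction on $L^1(\pi_\eps)$ plus density of $C_b$ is not by itself enough. You also need $P_s^\eps\vph\to\vph$ pointwise for $\vph\in C_b(\mc X)$, which follows from path continuity and bounded convergence, and then bounded convergence in $L^1(\pi_\eps)$. The paper does exactly this for $Q^{\eps,F}$ in \cref{prop:contact-measure-eps}.
\item Going from ``for each $x$, $P_s^\eps h(x)=0$ for a.e.\ $s$'' to ``$\int\phi\,P_s^\eps h\dd\pi_\eps=0$ for a.e.\ $s$'' needs a Fubini step. Your resolvent alternative avoids it: $\alpha R_\alpha^\eps h\equiv0$ for every $\alpha>0$, and $\alpha R_\alpha^\eps h\to h$ in $L^1(\pi_\eps)$ as $\alpha\to\infty$.
\end{enumerate}
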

\begin{proof}
    If $\pi_\eps(A)=0$, invariance gives $\int P_t^\eps(x,A)\dd\pi_\eps(x)=0$. The function $x\mapsto P_t^\eps(x,A)$ is continuous by the strong Feller property, and the full support of $\pi_\eps$ implies $P_t^\eps(x,A)=0$ for every $x$ and $t>0$. Tonelli's theorem then gives zero occupation time under every $\bb P_x^\eps$.

    If $g$ and $\widetilde g$ are two bounded representatives of $\ms L_\eps f$, then
    \[
        \int_0^t(g-\widetilde g)(X_s^\eps)\dd s
    \]
    is a finite-variation local $\bb P_x^\eps$-martingale and hence vanishes identically. Absolute continuity in $t$ gives $(g-\widetilde g)(X_t^\eps)=0$ for almost every $t$, almost surely. Starting $X^\eps$ from $\pi_\eps$ and using stationarity yields $\pi_\eps(|g-\widetilde g|)=0$. Conversely, the zero occupation-time assertion shows that modifying a representative on a $\pi_\eps$-null set does not change its defining martingale.
\end{proof}

\subsection{The level-2 large deviation principle} \label{ssec:LDP-eps}

Recall
\[
    \eta_t^\eps
    \coloneqq
    \frac1t\int_0^t\delta_{X_s^\eps}\dd s,
\]
and equip $\mc P(\mc X)$ with the $\tau$-topology, namely, convergence against bounded measurable functions. Define
\[
    D^+(\ms L_\eps)
    \coloneqq
    D(\ms L_\eps)\cap\{u:u\ge1\}.
\]

\begin{theorem}[Donsker--Varadhan LDP] \label{thm:LDP-eps}
    For every sufficiently small $\eps>0$, the laws of $\eta_t^\eps$ under $\bb P_x^\eps$ satisfy, with speed $t$, the level-2 LDP on the $\tau$-topology, locally uniformly with respect to $x$, with convex good rate functional
    \begin{equation} \label{eq:DV-rate-eps}
        \mc I_\eps(\nu)
        =
        \begin{cases}
            \displaystyle
            \sup_{u\in D^+(\ms L_\eps)}
            \int_{\mc X}
            -\frac{\ms L_\eps u}{u}\dd\nu,
            &\nu\ll\pi_\eps,\\[1ex]
            +\infty,
            &\nu\not\ll\pi_\eps.
        \end{cases}
    \end{equation}
    In particular, the LDP also holds on the weak topology. In the first branch of \eqref{eq:DV-rate-eps}, the integral is independent of the representative chosen for $\ms L_\eps u$.
\end{theorem}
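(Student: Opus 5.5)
The plan is to deduce the statement from the abstract Donsker--Varadhan theory for strong Feller, irreducible Markov processes with a Lyapunov function, in the form developed by Wu \cite{Wu2001}. By \cref{prop:diffusion-properties-eps}, for every sufficiently small $\eps>0$ the process is a non-explosive strong Markov process with continuous paths, its semigroup is strong Feller, its resolvent is irreducible, and it has a unique invariant law $\pi_\eps$ of full support. The remaining input is a Donsker--Varadhan Lyapunov condition. Take $\Psi_\eps=\exp(V/\eps)\ge1$. Itô's formula shows that
\[
    \Psi_\eps(X_t^\eps)\exp\Bigl(\int_0^t\Phi_\eps(X_s^\eps)\dd s\Bigr)
\]
is a positive local martingale, hence a supermartingale. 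By \eqref{eq:DV-Lyapunov-eps} and the Lyapunov bound, $\Phi_\eps$ is inf-compact and bounded below for small $\eps$. This is exactly the hypothesis, with $\Phi_\eps$ having compact sublevel sets, under which Wu's theorem yields the level-2 LDP in the $\tau$-topology. The LDP is uniform over initial laws $\delta_x$ with $x$ in a compact set $K$, because $\sup_K\Psi_\eps<\infty$. Wu's theorem also gives a good rate functional and exponential tightness in $\tau$. Since the weak topology is coarser, goodness and the bounds transfer to the weak topology by the contraction principle applied to the identity map.

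The identification \eqref{eq:DV-rate-eps} is the part I expect to be the main obstacle. Wu's rate functional is the Donsker--Varadhan entropy, defined as a supremum over $u\in D^+$ of the generator of the $L^1(\pi_\eps)$- or $B_b$-semigroup, or equivalently through the Dirichlet-form expression in the strong Feller setting. It equals $+\infty$ off $\{\nu\ll\pi_\eps\}$; this comes from irreducibility and strong Feller, through Wu's characterization of the rate as the relative entropy of the transition kernel together with \cref{lem:generator-representatives-eps}.

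I would then match Wu's domain with $D(\ms L_\eps)$ via \cref{lem:generator-equivalence}. Pairs $(f,g)$ with $f=R_\alpha^\eps(\alpha f-g)$ are exactly the bounded pointwise generator pairs. If the abstract formula uses functions bounded below by a positive constant, I would rescale: the functional $-\ms L_\eps u/u$ is invariant under $u\mapsto cu$, so one may assume $u\ge1$.

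If instead the abstract formula is stated with $\sup_t$ of $-\log(P_tu/u)/t$, I would pass to the generator form. Concretely, I would use $u=R_\alpha^\eps h$ and the standard Donsker--Varadhan comparison between the discrete-time and continuous-time functionals, applied on the core $\{R_\alpha^\eps h: h\ge1\}$.

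Independence of the representative is immediate. By \cref{lem:generator-representatives-eps}, two representatives of $\ms L_\eps u$ agree $\pi_\eps$-a.e. When $\nu\ll\pi_\eps$, they therefore agree $\nu$-a.e. Since $u\ge1$, the quotient $\ms L_\eps u/u$ is bounded, so the integral is well defined. Finally, convexity of $\mc I_\eps$ follows because it is a supremum of functionals that are linear in $\nu$, restricted to the convex set $\{\nu\ll\pi_\eps\}$.
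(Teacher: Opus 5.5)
Your proposal follows the paper's proof essentially step for step: the pair $\Psi_\eps=e^{V/\eps}$, $\Phi_\eps$ verifies Wu's Lyapunov criterion for hyper-exponential recurrence, which together with \cref{prop:diffusion-properties-eps} gives the locally uniform $\tau$-LDP; Wu's appendix identifies the rate with \eqref{eq:DV-rate-eps}; and \cref{lem:generator-representatives-eps} together with linearity in $\nu$ gives representative independence and convexity. One correction: your aside about an equivalent Dirichlet-form expression holds only for reversible dynamics, so drop it here since the diffusion may be nonreversible; nothing in your argument depends on it.
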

\begin{proof}
    Fix a sufficiently small $\eps>0$. By It\^o's formula, $\overline{\ms L}_\eps\Psi_\eps=\mc L_\eps\Psi_\eps$; hence \eqref{eq:DV-Lyapunov-eps} shows that the inf-compact functions $\Psi_\eps$ and $\Phi_\eps=-\overline{\ms L}_\eps\Psi_\eps/\Psi_\eps$ verify the Lyapunov condition in \cite[Corollary~2.2]{Wu2001}. This gives the hyper-exponential recurrence required in \cite[Theorem~2.1]{Wu2001}. Together with \cref{prop:diffusion-properties-eps}, that theorem yields the locally uniform level-2 LDP in the $\tau$-topology, with $\tau$-compact level sets. The identification \eqref{eq:DV-rate-eps} follows from \cite[Theorem~A.1]{Wu2001}, while convexity is immediate from \eqref{eq:DV-rate-eps}. Finally, \cref{lem:generator-representatives-eps} gives independence of the representative because $\nu\ll\pi_\eps$ in the first branch of \eqref{eq:DV-rate-eps}.
\end{proof}

Define
\[
    D_0^+(\ms L_\eps)
    \coloneqq
    \left\{
    0<u\in D(\ms L_\eps):
    -\frac{\ms L_\eps u}{u}
    \text{ admits a representative in }B_b(\mc X)
    \right\}.
\]
\begin{corollary} \label{cor:D0-DV-eps}
    In the first branch of \eqref{eq:DV-rate-eps}, the class $D^+(\ms L_\eps)$ may be replaced by $D_0^+(\ms L_\eps)$.
\end{corollary}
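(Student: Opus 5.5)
We need to show two inequalities between the supremum over $D^+(\ms L_\eps)$ and the supremum over $D_0^+(\ms L_\eps)$, for fixed $\nu\ll\pi_\eps$. The inclusion is the easy direction. Every $u\in D^+(\ms L_\eps)$ satisfies $u\ge1$, and $\ms L_\eps u$ has a bounded representative. Hence $-\ms L_\eps u/u$ admits the bounded representative $-g/u$ with $|g/u|\le\|g\|_\infty$. So $D^+(\ms L_\eps)\subset D_0^+(\ms L_\eps)$, and the supremum over $D_0^+$ is at least $\mc I_\eps(\nu)$.

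For the reverse inequality, fix $u\in D_0^+(\ms L_\eps)$. Here $u$ and $g=\ms L_\eps u$ are bounded, and $-g/u$ agrees with some bounded function $\pi_\eps$-a.e. We may choose the representative $g$ so that $-g/u$ is itself bounded. By \cref{lem:generator-representatives-eps} and $\nu\ll\pi_\eps$, this change affects neither the martingale nor the integral against $\nu$. The difficulty is that $u$ need not be bounded below by a positive constant, so we cannot simply rescale it into $D^+$.

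The plan is a truncation from below. For $\delta>0$, consider $u_\delta\coloneqq u+\delta$. We have $\ms L_\eps u_\delta=g$, and $(u+\delta)/\delta\in D^+(\ms L_\eps)$. The DV integrand is invariant under multiplication by positive constants, so
\[
\mc I_\eps(\nu)\ge\int_{\mc X}-\frac{g}{u+\delta}\dd\nu .
\]
Pointwise,
\[
\frac{g}{u+\delta}=\frac{g}{u}\cdot\frac{u}{u+\delta},
\]
which is dominated by $\|g/u\|_\infty$ and converges to $g/u$ as $\delta\downarrow0$, because $u>0$ everywhere. Dominated convergence then gives
\[
\int-\frac{g}{u+\delta}\dd\nu\longrightarrow\int-\frac{g}{u}\dd\nu .
\]
Taking the supremum over $u$ yields the reverse inequality.

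The main obstacle is the choice of representative: the pointwise bound on $g/u$ is only available after modifying $g$ on a $\pi_\eps$-null set. To handle this, take $\widetilde g\coloneqq u\cdot h$, where $h$ is a bounded representative of $-\ms L_\eps u/u$ (up to sign). Then $\widetilde g$ is bounded because $u$ is bounded, and it equals $g$ $\pi_\eps$-a.e. By \cref{lem:generator-representatives-eps}, $\widetilde g$ is again a representative of $\ms L_\eps u$, and therefore also of $\ms L_\eps(u+\delta)$. This justifies the domination used above and completes the argument.
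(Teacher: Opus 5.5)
Your proposal is correct and follows essentially the same route as the paper: you use the inclusion $D^+(\ms L_\eps)\subset D_0^+(\ms L_\eps)$, then replace $u$ by $(u+\delta)/\delta$ and apply bounded convergence to $q_u\,u/(u+\delta)\to q_u$. Your extra step of fixing a pointwise bounded representative via \cref{lem:generator-representatives-eps} matches the paper's choice of the pointwise representative $q_u$.
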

\begin{proof}
    We have $D^+(\ms L_\eps)\subset D_0^+(\ms L_\eps)$. Conversely, if $u\in D_0^+(\ms L_\eps)$ and $q_u=-\ms L_\eps u/u\in B_b(\mc X)$ is a pointwise representative, then $(u+\delta)/\delta\in D^+(\ms L_\eps)$ and
    \[
        -\frac{\ms L_\eps(u+\delta)}{u+\delta}
        =
        q_u\frac{u}{u+\delta}
        \longrightarrow q_u
    \]
    pointwise as $\delta\downarrow0$, with absolute value bounded by $\|q_u\|_\infty$. Bounded convergence proves the claim.
\end{proof}

\subsection{Smooth test functions} \label{ssec:smooth-core-eps}

Define
\[
    D_s^+(\ms L_\eps)
    \coloneqq
    \left\{
    u\in C^\infty(\mc X):
    \mc L_\eps u\in B_b(\mc X),\
    \exists\,0<m<M\text{ such that }m<u<M
    \right\}.
\]
By It\^o's formula, $D_s^+(\ms L_\eps)\subset D(\ms L_\eps)$ and $\ms L_\eps u=\mc L_\eps u$ on this domain. The following proposition gives a smooth test-function representation of \eqref{eq:DV-rate-eps}.

\begin{proposition}\label{prop:smooth-core-eps}
    For every $\nu\in\mc P(\mc X)$,
    \[
        \mc I_\eps(\nu)
        =
        \sup_{u\in D_s^+(\ms L_\eps)}
        \int_{\mc X}
        -\frac{\mc L_\eps u}{u}\dd\nu.
    \]
    In particular, the supremum is $+\infty$ when $\nu\not\ll\pi_\eps$.
\end{proposition}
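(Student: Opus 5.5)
The inequality ``$\ge$'' is a direct comparison of test classes. The inequality ``$\le$'' is obtained by regularizing an arbitrary test through the resolvent, with hypoellipticity supplying smoothness. The case $\nu\not\ll\pi_\eps$ is handled separately by an explicit family of smooth tests whose values blow up.

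\emph{Lower direction.} Let $u\in D_s^+(\ms L_\eps)$, with $m<u<M$ and $\mc L_\eps u\in B_b(\mc X)$. Then $u\in D(\ms L_\eps)$ and $\ms L_\eps u=\mc L_\eps u$. Moreover $-\mc L_\eps u/u$ is bounded by $\|\mc L_\eps u\|_\infty/m$, so $u\in D_0^+(\ms L_\eps)$. By \cref{cor:D0-DV-eps}, if $\nu\ll\pi_\eps$ the supremum over $D_s^+$ is at most $\mc I_\eps(\nu)$. If $\nu\not\ll\pi_\eps$, this inequality is trivial because $\mc I_\eps(\nu)=+\infty$.

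\emph{Upper direction when $\nu\ll\pi_\eps$.} Since $\pi_\eps$ has a smooth density (\cref{prop:diffusion-properties-eps}), we have $\nu\ll\pi_\eps\ll\mathrm{Leb}$.

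Fix $u\in D^+(\ms L_\eps)$ and a bounded representative $g$ of $\ms L_\eps u$. Choose $\alpha>\|g\|_\infty$ and set $h\coloneqq\alpha u-g$. Then $\alpha-\|g\|_\infty\le h\le C$, and \cref{lem:generator-equivalence} gives $u=R_\alpha^\eps h$.

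Mollify to get $h_n\in C^\infty(\mc X)$ with the same two-sided bounds and $h_n\to h$ Lebesgue-a.e. Put $u_n\coloneqq R_\alpha^\eps h_n$.

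\begin{itemize}
\item[(i)] \emph{Smoothness.} The function $u_n$ solves $(\alpha-\mc L_\eps)u_n=h_n$ in $\mc D'(\mc X)$. This is checked by pairing with test functions and using the martingale identity from the proof of \cref{lem:generator-equivalence}. By \cref{lem:hypoellipticity-eps}, $u_n\in C^\infty(\mc X)$.
\item[(ii)] \emph{Membership in $D_s^+$.} We have $\mc L_\eps u_n=\alpha u_n-h_n$, which is bounded, and $(\alpha-\|g\|_\infty)/\alpha\le u_n\le C/\alpha$. Hence $u_n\in D_s^+(\ms L_\eps)$.
\item[(iii)] \emph{Pointwise convergence.} The set $\{h_n\not\to h\}$ is Lebesgue-null, hence $\pi_\eps$-null. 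By \cref{lem:generator-representatives-eps} it has zero occupation time, so bounded convergence gives $u_n\to u$ everywhere.
\end{itemize}

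It follows that
\[
-\frac{\mc L_\eps u_n}{u_n}=\frac{h_n-\alpha u_n}{u_n}\longrightarrow\frac{h-\alpha u}{u}=-\frac{g}{u}
\]
$\nu$-a.e. These functions are uniformly bounded, so dominated convergence shows that the supremum over $D_s^+$ dominates $\int -\ms L_\eps u/u\dd\nu$. Taking the supremum over $u$ and using \eqref{eq:DV-rate-eps} gives the upper direction.

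\emph{Case $\nu\not\ll\pi_\eps$.} Choose a Borel set $A$ with $\pi_\eps(A)=0$ and $\nu(A)>0$. By inner regularity of $\nu$, take a compact $K\subset A$ with $\nu(K)>0$; then $\pi_\eps(K)=0$.

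Fix $\lambda>0$. Choose smooth functions $h_n$ with $1\le h_n\le1+\lambda$, $h_n=1+\lambda$ on $K$, and $h_n\downarrow1+\lambda\bs1_K$ pointwise. Set $u_n\coloneqq R_1^\eps h_n$. As above, $u_n\in D_s^+(\ms L_\eps)$ with $1\le u_n\le1+\lambda$.

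Since $K$ has zero occupation time, $u_n\to R_1^\eps(1+\lambda\bs1_K)=1$ everywhere. The integrand $-\mc L_\eps u_n/u_n=(h_n-u_n)/u_n$ is bounded by $\lambda$. It converges to $\lambda$ on $K$, and to $0$ off $K$.

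Dominated convergence gives $\int-\mc L_\eps u_n/u_n\dd\nu\to\lambda\nu(K)$. Letting $\lambda\to\infty$ shows that the supremum is $+\infty$.

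\emph{Main obstacle.} The delicate points are the identification of $u_n$ as a smooth classical solution of $(\alpha-\mc L_\eps)u_n=h_n$ and the everywhere convergence $u_n\to u$. The latter requires null sets for mollification, which are Lebesgue-null, to be invisible to the process. This is exactly what \cref{lem:generator-representatives-eps} supplies, combined with the smooth density of $\pi_\eps$. The choice of $\alpha$ large is what secures a uniform positive lower bound on $u_n$, so that the ratios stay bounded.
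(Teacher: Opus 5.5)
Your proof is correct, but it builds the smooth approximants differently from the paper.

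\emph{The paper's route.} It proves one approximation statement valid for every $\nu\in\mc P(\mc X)$. Given a bounded source $h\ge1$, it picks nonnegative $\vph_n\in C_c^\infty(\mc X)$ with $\vph_n\to h-\tfrac12$ in $L^1(\nu+\nu R_\alpha^\eps)$ and sets $f_n=R_\alpha^\eps(\tfrac12+\vph_n)$. It then controls $\int|h_n/f_n-h/R_\alpha^\eps h|\dd\nu$ through the bound $|f_n-f|\le R_\alpha^\eps|h_n-h|$. Since $C_c^\infty$ is dense in $L^1$ of any finite Borel measure, this needs neither absolute continuity nor any null-set analysis. The case $\nu\not\ll\pi_\eps$ is then just the source $h=1+N\bs1_A$ fed into the same statement.

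\emph{Your route.} You mollify, which gives only Lebesgue-a.e.\ convergence of the sources. You upgrade this to everywhere convergence of $u_n=R_\alpha^\eps h_n$ using $\pi_\eps\ll\mathrm{Leb}$ and the zero occupation time of $\pi_\eps$-null sets from \cref{lem:generator-representatives-eps}. You get $\nu$-a.e.\ convergence of the integrands from $\nu\ll\pi_\eps$, and dominated convergence finishes. This uses more structure, namely the smooth density of $\pi_\eps$. It also forces a separate construction when $\nu\not\ll\pi_\eps$. You handle that case correctly via inner regularity and smooth functions converging everywhere to $1+\lambda\bs1_K$. In exchange, you obtain pointwise convergence of the tests themselves, not only $L^1(\nu)$ convergence of the integrands.

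\emph{A remark on step (i).} Pairing the martingale identity with test functions does not by itself produce $\mc L_\eps^*$. The workable argument is the paper's. First, $P_\delta^\eps u_n$ is smooth by hypoellipticity of $\partial_t-\mc L_\eps$. Next, the semigroup identity gives $(\alpha-\mc L_\eps)P_\delta^\eps u_n=P_\delta^\eps h_n$. Finally, let $\delta\downarrow0$ in $\mc D'(\mc X)$, which uses only that $u_n$ and $h_n$ are bounded and continuous. So this argument covers your smooth, bounded, non-compactly supported sources as well.
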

\begin{proof}
    We first verify the regularity needed for approximation. For $\alpha>0$ and $g\in C_c^\infty(\mc X)$, set $f=R_\alpha^\eps g$. For $\delta>0$, the function $P_\delta^\eps f$ is smooth by the proof of \cref{prop:diffusion-properties-eps}. Applying $P_\delta^\eps$ to the semigroup identity in \cref{lem:generator-equivalence} and differentiating in time gives
    \[
        (\alpha-\mc L_\eps)P_\delta^\eps f=P_\delta^\eps g.
    \]
    Since $f$ and $g$ are bounded and continuous, path continuity and bounded convergence allow $\delta\downarrow0$ in distributions. Thus $(\alpha-\mc L_\eps)f=g$ in $\mc D'(\mc X)$. Hypoellipticity from \cref{lem:hypoellipticity-eps} makes $f$ smooth and the identity pointwise.

    For an arbitrary $\nu\in\mc P(\mc X)$, fix $\alpha>0$ and $h\in B_b(\mc X)$ with $\inf h>0$. Multiplication of $h$ by a positive constant does not change $h/R_\alpha^\eps h$, so assume $h\ge1$. Define
    \[
        \mu_\alpha
        \coloneqq
        \nu+\nu R_\alpha^\eps ,
        \qquad
        (\nu R_\alpha^\eps )(A)
        \coloneqq
        \int_{\mc X}R_\alpha^\eps \bs1_A\dd\nu.
    \]
    Choose nonnegative $\vph_n\in C_c^\infty(\mc X)$ such that $\vph_n\to h-1/2$ in $L^1(\mu_\alpha)$, and set
    \[
        h_n=\frac12+\vph_n,
        \qquad
        f=R_\alpha^\eps h,
        \qquad
        f_n=R_\alpha^\eps h_n.
    \]
    Since $R_\alpha^\eps \bs1=\alpha^{-1}\bs1$, we have $f_n=(2\alpha)^{-1}+R_\alpha^\eps\vph_n$. The regularity just proved gives $f_n\in C^\infty(\mc X)$ and $(\alpha-\mc L_\eps)f_n=h_n$. Moreover,
    \[
        f_n\ge\frac1{2\alpha},
        \qquad
        f\ge\frac1\alpha,
        \qquad
        \|f_n-f\|_{L^1(\nu)}
        \le
        \|h_n-h\|_{L^1(\nu R_\alpha^\eps )}.
    \]
    Since $h$ is bounded,
    \[
        \left\|
            \frac{h_n}{f_n}-\frac h{f}
        \right\|_{L^1(\nu)}
        \le
        2\alpha\|h_n-h\|_{L^1(\nu)}
        +
        2\alpha^2\|h\|_\infty
        \|f_n-f\|_{L^1(\nu)}
        \longrightarrow0.
    \]
    Since $h_n$ is bounded, $f_n\le\alpha^{-1}\|h_n\|_\infty$ and $\mc L_\eps f_n=\alpha f_n-h_n$ is bounded. Thus $f_n\in D_s^+(\ms L_\eps)$ and
    \begin{equation} \label{eq:smooth-approx-eps}
        \int_{\mc X}
        -\frac{\mc L_\eps f_n}{f_n}\dd\nu
        =
        \int_{\mc X}
        \left(
            \frac{h_n}{f_n}-\alpha
        \right)\dd\nu
        \longrightarrow
        \int_{\mc X}
        \left(
            \frac h{R_\alpha^\eps h}-\alpha
        \right)\dd\nu.
    \end{equation}

    Suppose first that $\nu\ll\pi_\eps$ and take $u\in D^+(\ms L_\eps)$. Choose a bounded representative $g=\ms L_\eps u$ and $\alpha>\|g\|_\infty$. Then $h=\alpha u-g$ is bounded and strictly positive, $u=R_\alpha^\eps h$ by \cref{lem:generator-equivalence}, and
    \[
        -\frac {\ms L_\eps u}u
        =
        \frac h{R_\alpha^\eps h}-\alpha.
    \]
    By \eqref{eq:smooth-approx-eps}, every test in \eqref{eq:DV-rate-eps} can be approximated by tests in $D_s^+(\ms L_\eps)$. The reverse inequality follows from $D_s^+(\ms L_\eps)\subset D_0^+(\ms L_\eps)$ and \cref{cor:D0-DV-eps}.

    Now suppose that $\nu\not\ll\pi_\eps$. Choose a Borel set $A$ such
    that $\pi_\eps(A)=0$ and $\nu(A)>0$.
    \Cref{lem:generator-representatives-eps} gives
    $P_t^\eps(x,A)=0$ for every $x$ and $t>0$, and hence
    $R_\alpha^\eps \bs1_A=0$. For $N>0$, take $h=1+N\bs1_A$. Then
    $R_\alpha^\eps h=\alpha^{-1}$, so \eqref{eq:smooth-approx-eps} gives
    smooth tests whose integrals converge to
    \[
        \int_{\mc X}
        \left(
            \frac h{R_\alpha^\eps h}-\alpha
        \right)\dd\nu
        =
        \alpha N\nu(A).
    \]
    Letting $N\uparrow\infty$ completes the proof.
\end{proof}

\begin{corollary} \label{cor:smooth-eq-eps} \label{cor:smooth-lower-eps}
    For every $\nu\in\mc P(\mc X)$,
    \[
        \mc I_\eps(\nu)
        \ge
        \sup_{\vph\in C_c^\infty(\mc X)}
        \int_{\mc X}
        -\mc L_\eps\vph
        -
        \eps|\sig^{\msf T}\nabla\vph|^2
        \dd\nu.
    \]
    Equality holds when $\nu$ has compact support.
\end{corollary}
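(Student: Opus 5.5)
The inequality will come from plugging the exponential test $u=e^{\vph}$ into \cref{prop:smooth-core-eps}. Equality for compactly supported $\nu$ will come from turning an arbitrary smooth positive test into a compactly supported logarithm, without changing the integral on $\supp\nu$.

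\emph{Lower bound.} Fix $\vph\in C_c^\infty(\mc X)$ and set $u\coloneqq e^{\vph}$. Then $u\in C^\infty(\mc X)$ with $e^{-\|\vph\|_\infty}\le u\le e^{\|\vph\|_\infty}$. Outside $\supp\vph$ we have $u\equiv1$, so $\mc L_\eps u$ vanishes there and is continuous with compact support, hence bounded. Thus $u\in D_s^+(\ms L_\eps)$ (after widening the bounds $m,M$ slightly to make them strict). The chain rule gives
\[
    \frac{\mc L_\eps e^{\vph}}{e^{\vph}}
    =
    \mc L_\eps\vph+\eps\, a:(\nabla\vph\otimes\nabla\vph)
    =
    \mc L_\eps\vph+\eps|\sig^{\msf T}\nabla\vph|^2 .
\]
Therefore $\int-\mc L_\eps u/u\dd\nu$ equals the integral in the claim. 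Taking the supremum over $\vph$ and using \cref{prop:smooth-core-eps} gives the inequality.

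\emph{Equality for compact support.} Let $K\coloneqq\supp\nu$ be compact, and take any $u\in D_s^+(\ms L_\eps)$. Set $\psi\coloneqq\log u$, which is smooth because $m<u<M$. Choose $\chi\in C_c^\infty(\mc X)$ with $\chi\equiv1$ on an open neighbourhood $O$ of $K$, and put $\vph\coloneqq\chi\psi\in C_c^\infty(\mc X)$. On $O$ we have $\vph=\psi$, so all derivatives of $\vph$ up to order two agree with those of $\psi$ there. Since $\mc L_\eps$ is a local differential operator, the same chain-rule identity gives
\[
    -\mc L_\eps\vph-\eps|\sig^{\msf T}\nabla\vph|^2
    =
    -\frac{\mc L_\eps u}{u}
    \quad\text{on }O .
\]
Because $\nu$ is carried by $K\subset O$, the two $\nu$-integrals coincide. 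Taking the supremum over $u$ and using \cref{prop:smooth-core-eps} again gives the reverse inequality.

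If $\nu\not\ll\pi_\eps$, nothing extra is needed. \Cref{prop:smooth-core-eps} says the supremum over $D_s^+(\ms L_\eps)$ is already $+\infty$, and the localization above transfers each such test to a $C_c^\infty$ test, so the right-hand side is also $+\infty$.

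There is no real obstacle. The only points to check are that $e^{\vph}$ satisfies the membership conditions of $D_s^+(\ms L_\eps)$, and that locality of $\mc L_\eps$ lets the cutoff leave the integrand unchanged on $\supp\nu$. Both are immediate.
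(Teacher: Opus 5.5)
Your proposal is correct and follows the paper's proof essentially verbatim. The exponential test $e^{\vph}\in D_s^+(\ms L_\eps)$ gives the inequality via \cref{prop:smooth-core-eps}, and the localized logarithm $\vph=\chi\log u$, with $\chi=1$ near $\supp\nu$, gives equality for compactly supported $\nu$; your separate remark on the case $\nu\not\ll\pi_\eps$ is harmless but unnecessary, since the localization argument already covers it.
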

\begin{proof}
    For any $\vph\in C_c^\infty(\mc X)$ we have $e^\vph\in D_s^+(\ms L_\eps)$ and
    \[
        \frac{\mc L_\eps e^\vph}{e^\vph}
        =
        \mc L_\eps\vph
        +\eps|\sig^{\msf T}\nabla\vph|^2.
    \]
    Thus \cref{prop:smooth-core-eps} gives the inequality. If $\nu$ has compact support, take $u\in D_s^+(\ms L_\eps)$, choose $\chi\in C_c^\infty(\mc X)$ such that $\chi=1$ on a neighborhood of $\supp\nu$, and set $\vph=\chi\log u$. Then
    \[
        \int_{\mc X}
        -\frac{\mc L_\eps e^\vph}{e^\vph}\dd\nu
        =
        \int_{\mc X}
        -\frac{\mc L_\eps u}{u}\dd\nu.
    \]
    Taking the supremum over $u$ and using \cref{prop:smooth-core-eps} proves equality.
\end{proof}

\section{Resolvents and contact measures}
\label{sec:contact-resolvent}

Throughout this section, we retain the assumptions of \cref{sec:DV-framework}. Each test $F\in D_0^+(\ms L_\eps)$ gives a lower bound in the Donsker--Varadhan formula of \cref{cor:D0-DV-eps}. We will show that, for every such $F$, there is a probability measure $\nu_{\eps,F}\ll\pi_\eps$ for which this bound is exact:
\[
    \mc I_\eps(\nu_{\eps,F})
    =
    \int_{\mc X}-\frac{\ms L_\eps F}{F}\dd\nu_{\eps,F}.
\]
We call $\nu_{\eps,F}$ a \emph{contact measure} for $F$: the test $F$ attains the Donsker--Varadhan supremum at this measure. We construct it as an invariant probability measure of the dynamics tilted by $F$, defined in \cref{ssec:contact-measures-eps}.

We first show that the resolvents of suitable nonnegative, compactly supported Borel functions belong to $D_0^+(\ms L_\eps)$. In \cref{sec:metastable-scales}, we will apply these results to resolvents with sources supported on the metastable wells to construct recovery sequences.

\subsection{Resolvents and the extended generator}

\begin{lemma} \label{lem:positive-resolvent-eps}
    Let $\lam,\theta>0$, and let $G\in B_b(\mc X)$ be nonnegative and compactly supported. Suppose that $G\ge c\bs1_O$ for some $c>0$ and some nonempty open set $O\subset\mc X$. Then
    \[
        F(x)
        \coloneqq
        \bb E_x^\eps
        \left[
            \int_0^\infty e^{-\lam t}G(X_{\theta t}^\eps)\dd t
        \right]
        =
        \frac1\theta R_{\lam/\theta}^\eps G
    \]
    is the unique bounded solution, in the extended-generator sense, of
    \begin{equation}\label{eq:meta-resolvent-temp}
        (\lam-\theta\ms L_\eps)F=G.
    \end{equation}
    Moreover, $F\in D_0^+(\ms L_\eps)$.
\end{lemma}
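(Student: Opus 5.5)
The plan has three parts: the resolvent identity, uniqueness, and a positive lower bound for $F$ that makes $-\ms L_\eps F/F$ bounded.

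\emph{Step 1: the identity and the equation.} The change of variables $s=\theta t$ in the time integral gives $F=\theta^{-1}R^\eps_{\lam/\theta}G$. Since $G$ is bounded, $F$ is bounded with $0\le F\le\|G\|_\infty/\lam$. To see that $F$ solves \eqref{eq:meta-resolvent-temp} in the extended-generator sense, set $\alpha=\lam/\theta$ and $g\coloneqq\alpha F-\theta^{-1}G\in B_b(\mc X)$. Then
\[
R^\eps_\alpha(\alpha F-g)=R^\eps_\alpha(\theta^{-1}G)=F .
\]
By \cref{lem:generator-equivalence}\textup{(3)} this means $F\in D(\ms L_\eps)$ with $\ms L_\eps F=\alpha F-\theta^{-1}G$, which is equivalent to $(\lam-\theta\ms L_\eps)F=G$.

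\emph{Step 2: uniqueness.} Let $F'$ be another bounded solution, meaning $F'\in D(\ms L_\eps)$ and $\theta\ms L_\eps F'=\lam F'-G$ for some bounded representative. Again by \cref{lem:generator-equivalence}\textup{(3)}, $F'=R^\eps_\alpha(\alpha F'-\ms L_\eps F')=R^\eps_\alpha(\theta^{-1}G)=F$. The identity holds pointwise, so no issue about representatives arises.

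\emph{Step 3: membership in $D_0^+(\ms L_\eps)$.} This is the main obstacle. We need $F>0$ everywhere and that
\[
-\frac{\ms L_\eps F}{F}=\frac{G}{\theta F}-\alpha
\]
admits a bounded representative. Positivity is immediate: $F\ge c\theta^{-1}R^\eps_\alpha\bs1_O>0$ by \cref{prop:diffusion-properties-eps}\textup{(3)}. Boundedness of $G/F$ requires a uniform lower bound for $F$ on $\supp G$; it is enough to bound $F$ below on a compact set $K$ containing $\supp G$. By \cref{lem:generator-equivalence}, $F\in D(\ms L_\eps)\subset C(\mc X)$. A continuous, strictly positive function attains a positive minimum on the compact set $K$, so $\inf_K F>0$. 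Hence $G/F\le\|G\|_\infty/\inf_K F$ on $K$, and $G/F$ vanishes off $\supp G$.

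It follows that $-\ms L_\eps F/F$ has the bounded Borel representative $\theta^{-1}G/F-\alpha$, so $F\in D_0^+(\ms L_\eps)$. The only delicate point is combining continuity from the strong Feller property with pointwise positivity from resolvent irreducibility. The compact support of $G$ is exactly what lets this local lower bound suffice, even though $F$ may decay at infinity.
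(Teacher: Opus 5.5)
Your proposal is correct and follows essentially the same route as the paper. Both use \cref{lem:generator-equivalence} for the equation and for uniqueness, continuity from $D(\ms L_\eps)\subset C(\mc X)$, strict positivity from resolvent irreducibility via $G\ge c\bs1_O$, and a positive minimum of $F$ on the compact set $\supp G$ to make $G/F$, and hence $-\ms L_\eps F/F=\theta^{-1}G/F-\lam/\theta$, bounded.
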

\begin{proof}
    Applying the equivalence from \cref{lem:generator-equivalence} shows $F$ solves \eqref{eq:meta-resolvent-temp}. Conversely, any bounded solution of \eqref{eq:meta-resolvent-temp} satisfies
    \[
        F
        =
        R_{\lam/\theta}^\eps
        \left(
            \frac\lam\theta F-\ms L_\eps F
        \right)
        =
        \frac1\theta R_{\lam/\theta}^\eps G,
    \]
    proving uniqueness. By the same lemma we obtain the continuity of $F$. By resolvent irreducibility in \cref{prop:diffusion-properties-eps},
    \[
        F
        \ge
        \frac c\theta R_{\lam/\theta}^\eps\bs1_O
        >0
    \]
    pointwise. Hence $F$ has a positive minimum on the compact set $\supp G$, and therefore $G/F\in B_b(\mc X)$. Since $\ms L_\eps F=(\lam F-G)/\theta$,
    \[
        -\frac{\ms L_\eps F}{F}
        =
        \frac1\theta\left(\frac GF-\lam\right)
        \in B_b(\mc X),
    \]
    which proves that $F\in D_0^+(\ms L_\eps)$.
\end{proof}

\subsection{Contact measures} \label{ssec:contact-measures-eps}

The construction below uses stationary measures of tilted dynamics to obtain contact measures. This method was used previously to construct recovery sequences by Landim \cite[Sections~4--5]{Landim2023FiniteState} for finite-state Markov chains and by Landim and Maura \cite[Section~9]{LandimMaura2026} for nonreversible one-dimensional diffusions. We establish the construction here for the admissible extended-generator tests of the present diffusion setting, including degenerate noise on an unbounded state space.

Fix $F\in D_0^+(\ms L_\eps)$. Recall from \cref{lem:generator-equivalence} that $F$ is continuous. By definition, we may choose a pointwise bounded Borel representative
\[
    q_F
    \coloneqq
    -\frac{\ms L_\eps F}{F}
    \in B_b(\mc X)
\]
and the corresponding representative $\ms L_\eps F=-q_FF$. Define the Feynman--Kac semigroup
\[
    P_t^{\eps,q_F}\vph(x)
    \coloneqq
    \bb E_x^\eps\left[
        \vph(X_t^\eps)
        \exp\left(
            \int_0^tq_F(X_s^\eps)\dd s
        \right)
    \right],
    \qquad
    Q_t^{\eps,F}\vph
    \coloneqq
    \frac1F P_t^{\eps,q_F}(F\vph).
\]
By \cref{lem:generator-representatives-eps}, these operators do not depend on the selected representative $q_F$ because every $\pi_\eps$-null set has zero occupation time under every $\bb P_x^\eps$. If
\[
    A_F^\eps(t)
    \coloneqq
    \int_0^tq_F(X_s^\eps)\dd s,
\]
the extended generator martingale for $F$ and integration by parts show that $F(X_t^\eps)e^{A_F^\eps(t)}$ is a local $\bb P_x^\eps$-martingale. It is bounded on bounded time intervals and is therefore a true martingale. Thus $P_t^{\eps,q_F}F=F$ and $Q_t^{\eps,F}\bs1=\bs1$. The Markov property and additivity of $A_F^\eps$ give the semigroup property, so $(Q_t^{\eps,F})$ is a Markov semigroup.

This semigroup describes the dynamics tilted by $F$. When $F$ is smooth, its generator acts on $\vph\in C_c^\infty(\mc X)$ as
\[
    \mc L_\eps^F\vph
    =\frac{\mc L_\eps(F\vph)-\vph\mc L_\eps F}{F}
    =\mc L_\eps\vph+2\eps\,a\nabla\log F\cdot\nabla\vph.
\]
Thus the tilted diffusion has drift $b+2\eps a\nabla\log F$ and the same diffusion matrix as the original process. For general $F\in D_0^+(\ms L_\eps)$, the semigroup construction above defines the tilt without requiring smoothness.

\begin{proposition}[Contact measures] \label{prop:contact-measure-eps}
    For every $F\in D_0^+(\ms L_\eps)$, the tilted semigroup $Q^{\eps,F}$ admits an invariant probability measure $\nu_{\eps,F}\ll\pi_\eps$ such that
    \[
        \mc I_\eps(\nu_{\eps,F})
        =
        \int_{\mc X}
        -\frac{\ms L_\eps F}{F}\dd\nu_{\eps,F}.
    \]
\end{proposition}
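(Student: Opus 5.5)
Two things must be shown: that the tilted semigroup $Q^{\eps,F}$ has an invariant probability measure $\nu_{\eps,F}\ll\pi_\eps$, and that for this measure the Donsker--Varadhan supremum in \cref{cor:D0-DV-eps} is attained at $F$. The lower bound $\mc I_\eps(\nu_{\eps,F})\ge\int q_F\dd\nu_{\eps,F}$ is immediate from \cref{cor:D0-DV-eps}, since $F\in D_0^+(\ms L_\eps)$ and $\nu_{\eps,F}\ll\pi_\eps$. So the work lies in existence and in the upper bound.

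\emph{Existence.} I would use the Lyapunov function $\Psi_\eps$ transferred to the tilted dynamics. The natural candidate is $\Psi_\eps/F$ for $Q^{\eps,F}$. Formally its tilted generator is
\[
    \frac{\mc L_\eps\Psi_\eps+q_F\Psi_\eps}{F}
    =\frac{\Psi_\eps}{F}\bigl(-\Phi_\eps+q_F\bigr).
\]
Because $q_F$ is bounded and $\Phi_\eps$ is inf-compact, this is at most $-\tfrac12\Phi_\eps\Psi_\eps/F+C$. To make this rigorous I would work with the Feynman--Kac semigroup, using a localization by stopping times on $\Psi_\eps(X_t)e^{A_F^\eps(t)}$ and Fatou's lemma. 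This gives
\[
    \int_0^T Q_s^{\eps,F}\Bigl(\frac{\Phi_\eps\Psi_\eps}{F}\Bigr)(x)\dd s\le C(1+T)+\frac{\Psi_\eps(x)}{F(x)}.
\]
One caveat is that $F$ need not be bounded below uniformly. I would handle this by working with $\Psi_\eps$ weighted against $F$ directly: the quantity $F\cdot\bigl(\Psi_\eps/F\bigr)=\Psi_\eps$ appears in $P^{\eps,q_F}$, and since $F$ is bounded above, Krylov--Bogoliubov averages of $Q_s^{\eps,F}(x,\cdot)$ are tight. This tightness argument is where I expect the main technical obstacle. A weak limit point is invariant once $Q^{\eps,F}$ is Feller, or, more robustly, strong Feller. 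Strong Feller should follow because $Q_t^{\eps,F}$ is dominated by $e^{t\|q_F\|_\infty}P_t^\eps$ up to the factor $F$. This domination also gives absolute continuity: if $\pi_\eps(A)=0$, then $Q_t^{\eps,F}(x,A)=0$ by \cref{lem:generator-representatives-eps}, so any invariant measure of $Q^{\eps,F}$ charges no $\pi_\eps$-null set.

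\emph{Upper bound.} Fix $u\in D^+(\ms L_\eps)$ and write $\nu=\nu_{\eps,F}$. I would prove
\[
    \int -\frac{\ms L_\eps u}{u}\dd\nu\le\int q_F\dd\nu.
\]
Set $v=u/F$. Under the tilted dynamics the extended generator of $v$ should be
\[
    \ms L^F v=\frac{\ms L_\eps u+q_Fu}{F},
\]
so that $-\ms L_\eps u/u=q_F-\ms L^Fv/v$. By Jensen's inequality and invariance, $\int\log Q_t^{\eps,F}v\dd\nu\ge\int\log v\dd\nu$ would give $\int -\ms L^Fv/v\dd\nu\le0$. This is the standard argument that the Donsker--Varadhan functional of a Markov semigroup vanishes at its invariant measures. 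Concretely, I would use the supermartingale
\[
    \frac{v(X_t)}{v(X_0)}\exp\Bigl(-\int_0^t\frac{\ms L^Fv}{v}\dd s\Bigr),
\]
under the tilted law started from $\nu$, together with stationarity and the bound $\log x\le x-1$.

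The catch is that $v$ may be unbounded, because $F$ may decay at infinity. I would therefore first prove the inequality for $u$ replaced by $u_\delta=u+\delta F$. Then $v_\delta=v+\delta$ is bounded below by $\delta$, and boundedness above follows if $u\le CF$. If that fails, I would approximate via the resolvent representation $u=R_\alpha^\eps h$ as in \cref{prop:smooth-core-eps}, taking $h$ compactly supported with a positive floor $\delta F$-type term. Finally I would pass $\delta\to0$ by bounded convergence, as in \cref{cor:D0-DV-eps}. Combined with the lower bound, this yields the stated equality.
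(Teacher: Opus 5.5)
Your existence argument and the lower bound match the paper's route: the Lyapunov function $\Psi_\eps/F$ through the Feynman--Kac semigroup with localization and Fatou, Krylov--Bogoliubov averages, and absolute continuity from $P_t^\eps(x,A)=0$. The upper bound, however, has a genuine gap.

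You reduce it to bounding $v=u/F$ from above, i.e.\ to $u\le CF$. For $u\ge1$ this fails whenever $\inf F=0$. That case is allowed in $D_0^+(\ms L_\eps)$, and it can occur for the resolvent tests with compactly supported sources in \cref{sec:metastable-scales}, for instance when the drift is linear at infinity. Your fallback is to approximate $u$ by resolvents of compactly supported sources with a ``$\delta F$-type floor''. This is not yet an argument. It would need a comparison $R_\alpha^\eps h\le CF$, which you neither state nor prove. It would also need a proof that such tests still exhaust the Donsker--Varadhan supremum; the approximation in \cref{prop:smooth-core-eps} uses sources $\tfrac12+\vph_n$, which are not compactly supported.

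The missing observation is that $v$ only needs to be integrable, and your own Lyapunov estimate provides this. Pass your averaged bound to the limit by the portmanteau theorem, applied to the continuous, bounded-below function $\Phi_\eps\Psi_\eps/F$. This gives $\nu_{\eps,F}(\Phi_\eps\Psi_\eps/F)<\infty$, hence $\nu_{\eps,F}(\Psi_\eps/F)<\infty$. Since $\Psi_\eps\ge1$, also $\nu_{\eps,F}(1/F)<\infty$, so $v$ and $\log v$ lie in $L^1(\nu_{\eps,F})$ for every bounded $u\ge1$. Note also that $v\ge1/\sup F$ holds automatically for such $u$, so your $\delta$-shift handles the wrong end. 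The paper needs that shift only because it allows tests in $D_0^+(\ms L_\eps)$, which need not be bounded below.

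With this integrability, either route closes. In your exponential-martingale version, the terms $\int\log v(X_t)$ and $\int\log v(X_0)$ cancel under stationarity precisely because $\log v\in L^1(\nu_{\eps,F})$. The paper instead derives $Q_t^{\eps,F}v=v+\int_0^tQ_s^{\eps,F}\bigl((\ms L_\eps u+q_Fu)/F\bigr)\dd s$ and bounds the integrand by a constant times $v$. It then uses strong continuity of $Q^{\eps,F}$ on $L^1(\nu_{\eps,F})$ to differentiate at $t=0$ in $L^1$, and finishes with Jensen's inequality and $\log z\le z-1$.

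Separately, your justification of the strong Feller property does not work. The kernel bound $Q_t^{\eps,F}(x,\cdot)\le C_t(x)P_t^\eps(x,\cdot)$ gives absolute continuity but not continuity in $x$. Multiplying a strong Feller kernel by the indicator of a non-open set preserves such a bound while destroying continuity. Use instead the Markov property at a small time $\delta$. Then $P_t^{\eps,q_F}\vph$ differs from $P_\delta^\eps\bigl(P_{t-\delta}^{\eps,q_F}\vph\bigr)$ by at most $\delta\|q_F\|_\infty e^{t\|q_F\|_\infty}\|\vph\|_\infty$, uniformly in $x$. It is therefore a uniform limit of continuous functions.
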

\begin{proof}
    We construct $\nu_{\eps,F}$ as an invariant measure of $Q^{\eps,F}$. First, $Q^{\eps,F}$ is strong Feller. For $\vph\in B_b(\mc X)$ and $0<\delta<t$, set $g_\delta=P_{t-\delta}^{\eps,q_F}\vph$. Applying the Markov property gives
    \[
        P_t^{\eps,q_F}\vph(x)-P_\delta^\eps g_\delta(x)
        =\bb E_x^\eps\left[
            \bigl(e^{A_F^\eps(\delta)}-1\bigr)g_\delta(X_\delta^\eps)
        \right].
    \]
    Since $|A_F^\eps(\delta)|\le\delta\|q_F\|_\infty$, the inequality $|e^z-1|\le |z|e^{|z|}$ and the bound $\|g_\delta\|_\infty\le e^{(t-\delta)\|q_F\|_\infty}\|\vph\|_\infty$ yield
    \[
        \left\|
            P_t^{\eps,q_F}\vph
            -P_\delta^\eps g_\delta
        \right\|_\infty
        \le
        \delta\|q_F\|_\infty e^{t\|q_F\|_\infty}\|\vph\|_\infty.
    \]
    The function $g_\delta$ is bounded Borel, so $P_\delta^\eps g_\delta$ is continuous by the strong Feller property of $P_\delta^\eps$. The estimate shows that these continuous functions converge uniformly to $P_t^{\eps,q_F}\vph$ as $\delta\downarrow0$; hence the limit is continuous. Finally, $F\vph$ is bounded Borel and $F$ is positive and continuous, so $Q_t^{\eps,F}\vph=F^{-1}P_t^{\eps,q_F}(F\vph)$ is continuous as well, proving the strong Feller property.

    Recall from \cref{sec:DV-framework} that
    \[
        \Psi_\eps=e^{V/\eps},
        \qquad
        \Phi_\eps=-\frac{\mc L_\eps\Psi_\eps}{\Psi_\eps}
        =\frac1\eps W-a:\nabla^2V,
    \]
    and that both functions are inf-compact for sufficiently small $\eps>0$. Write $\widetilde\Psi_{\eps,F}\coloneqq\Psi_\eps/F$. This function is continuous and inf-compact, since $F$ is positive, continuous, and bounded above. The inf-compactness of $\Phi_\eps$ and boundedness of $q_F$ with the continuity of $\Phi_\eps$, $\Psi_\eps$, and $F$ give a constant $C<\infty$ such that
    \[
        (1+q_F-\Phi_\eps)\Psi_\eps\le CF.
    \]
    Choose compact exhaustion stopping times $\tau_n\uparrow\infty$, so that the stopped stochastic integrals below are true martingales. It\^o's formula gives
    \[
        \begin{aligned}
            &e^{t\wedge\tau_n+A_F^\eps(t\wedge\tau_n)}
            \Psi_\eps(X_{t\wedge\tau_n}^\eps)\\
            &\quad=
            \Psi_\eps(x)
            +\int_0^{t\wedge\tau_n}
            e^{s+A_F^\eps(s)}
            (1+q_F-\Phi_\eps)(X_s^\eps)
            \Psi_\eps(X_s^\eps)\dd s
            +M_{t\wedge\tau_n}\\
            &\quad\le
            \Psi_\eps(x)
            +C\int_0^{t\wedge\tau_n}
            e^{s+A_F^\eps(s)}F(X_s^\eps)\dd s
            +M_{t\wedge\tau_n},
        \end{aligned}
    \]
    for some local martingale $M$. Since $e^{A_F^\eps(s)}F(X_s^\eps)$ is a nonnegative martingale with expectation $F(x)$, taking expectations and then applying Fatou's lemma as $n\to\infty$ yields
    \[
        e^tP_t^{\eps,q_F}\Psi_\eps(x)
        \le
        \Psi_\eps(x)+CF(x)(e^t-1).
    \]
    Equivalently,
    \begin{equation} \label{eq:Q-Lyapunov-eps}
        Q_t^{\eps,F}\widetilde\Psi_{\eps,F}(x)
        \le
        e^{-t}\widetilde\Psi_{\eps,F}(x)+C(1-e^{-t}).
    \end{equation}

    Fix $x\in\mc X$ and set
    \[
        \mu_{\eps,F,T}
        \coloneqq
        \frac1T\int_0^TQ_t^{\eps,F}(x,\cdot)\dd t.
    \]
    By \eqref{eq:Q-Lyapunov-eps},
    \[
        \mu_{\eps,F,T}(\widetilde\Psi_{\eps,F})
        <
        \frac{1-e^{-T}}{T}\widetilde\Psi_{\eps,F}(x)+C.
    \]
    The inf-compactness of $\widetilde\Psi_{\eps,F}$ gives that the family $(\mu_{\eps,F,T})_{T\ge1}$ is tight. Choose $T_n\to\infty$ such that $\mu_{\eps,F,T_n}\rightharpoonup\nu_{\eps,F}$. For $s>0$ and $\vph\in C_b(\mc X)$,
    \[
        \left|
            \mu_{\eps,F,T}(Q_s^{\eps,F}\vph)
            -\mu_{\eps,F,T}(\vph)
        \right|
        \le
        \frac{2s}{T}\|\vph\|_\infty.
    \]
    The strong Feller property allows passage to the limit, so $\nu_{\eps,F}$ is $Q^{\eps,F}$-invariant. Portmanteau's theorem also gives
    \[
        \nu_{\eps,F}(\widetilde\Psi_{\eps,F})\le C<\infty.
    \]
    If $\pi_\eps(A)=0$, then \cref{lem:generator-representatives-eps} gives $P_t^\eps(x,A)=0$ for every $x$ and $t>0$, hence $Q_t^{\eps,F}(x,A)=0$. Invariance implies $\nu_{\eps,F}(A)=0$. This proves $\nu_{\eps,F}\ll\pi_\eps$.

    It remains to show that $F$ attains the Donsker--Varadhan supremum at $\nu_{\eps,F}$. Because $\Psi_\eps\ge1$, we have $\nu_{\eps,F}(F^{-1})<\infty$. Fix a test $u\in D_0^+(\ms L_\eps)$ and choose a representative of $\ms L_\eps u$ such that $\ms L_\eps u/u$ is bounded. The ratio $h=u/F$ is positive and belongs to $L^1(\nu_{\eps,F})$, since $u$ is bounded.

    Apply integration by parts to $e^{A_F^\eps(t)}u(X_t^\eps)$. The extended generator martingale for $u$ contributes the drift $\ms L_\eps u$, while differentiating the exponential contributes $q_Fu$. The resulting local martingale is bounded on bounded time intervals, since $u$, $\ms L_\eps u$, and $q_F$ are bounded. Taking expectations therefore gives
    \[
        P_t^{\eps,q_F}u
        =u+\int_0^tP_s^{\eps,q_F}(\ms L_\eps u+q_Fu)\dd s.
    \]
    By definition, this yields
    \begin{equation} \label{eq:semi-jensen-eps}
        Q_t^{\eps,F}h
        =h+\int_0^tQ_s^{\eps,F}
        \left(\frac{\ms L_\eps u+q_Fu}{F}\right)\dd s.
    \end{equation}
    The quotient $(\ms L_\eps u+q_Fu)/F$ is bounded in absolute value by a constant times $h$, so it also belongs to $L^1(\nu_{\eps,F})$. On this space $Q^{\eps,F}$ is a strongly continuous contraction semigroup: path continuity gives $Q_t^{\eps,F}\vph\to\vph$ pointwise for $\vph\in C_b(\mc X)$, and bounded convergence, density, and invariance extend the convergence to $L^1(\nu_{\eps,F})$. Thus \eqref{eq:semi-jensen-eps} gives
    \begin{equation} \label{eq:semi-derivative-eps}
        \frac{Q_t^{\eps,F}h-h}{t}
        \longrightarrow\frac{\ms L_\eps u+q_Fu}{F}
        \quad\text{in }L^1(\nu_{\eps,F})
        \quad\text{as }t\downarrow0.
    \end{equation}

    For $\delta>0$, Jensen's inequality with invariance and $\log z\le z-1$ give
    \[
        0\le
        \int_{\mc X}\log\frac{Q_t^{\eps,F}h+\delta}{h+\delta}\dd\nu_{\eps,F}
        \le
        \int_{\mc X}\frac{Q_t^{\eps,F}h-h}{h+\delta}\dd\nu_{\eps,F}.
    \]
    Dividing by $t$ and letting $t\downarrow0$, by \eqref{eq:semi-derivative-eps} we obtain
    \[
        \int_{\mc X}
        \frac{\ms L_\eps u+q_Fu}{u+\delta F}\dd\nu_{\eps,F}
        \ge0.
    \]
    As $\delta\downarrow0$, the integrand converges to $\ms L_\eps u/u+q_F$ and is bounded in absolute value by $\|\ms L_\eps u/u+q_F\|_\infty$. Dominated convergence therefore yields the desired comparison:
    \[
        \int_{\mc X}-\frac{\ms L_\eps u}{u}\dd\nu_{\eps,F}
        \le
        \int_{\mc X}q_F\dd\nu_{\eps,F}.
    \]
    Taking the supremum over $u$ in \cref{cor:D0-DV-eps} proves that $\mc I_\eps(\nu_{\eps,F})\le\nu_{\eps,F}(q_F)$. Since $F$ itself belongs to $D_0^+(\ms L_\eps)$, the reverse inequality follows from the same formula.
\end{proof}

\section{Resolvent convergence and metastable scales}
\label{sec:metastable-scales}

We prove \cref{thm:resolvent-transfer}, whose hypotheses are in force throughout this section. Write $\bs y^{(p)}$ for the finite-state chain generated by $\mf L^{(p)}$. We first obtain the $\Gamma$-liminf by lifting the finite-state test functions through the resolvent. The $\Gamma$-limsup uses the contact measures from \cref{sec:contact-resolvent} and is constructed separately on each communication class.

\subsection{The \texorpdfstring{$\Gamma$}{Gamma}-liminf}
\label{ssec:metastable-lower-bound}
\begin{proof}[Proof of \cref{thm:resolvent-transfer}: $\Gamma$-liminf]
    Let $\nu_\eps\rightharpoonup\nu$. If $\nu\notin\dom\mc J^{(p)}$, then $\mc J(\nu)>0$ by \eqref{eq:transfer-zero-set}. Since $\beta_\eps\prec\theta_\eps^{(p)}$ and $\mc I_\eps\ge0$, the lower bound \eqref{eq:transfer-previous-lower} gives $\lowlim_{\eps\to0}\theta_\eps^{(p)}\mc I_\eps(\nu_\eps)=+\infty$. We may therefore assume that $\nu=\sum_{\mc M\in\ms V^{(p)}}\omega(\mc M)\pi_{\mc M}$ for some $\omega\in\mc P(\ms V^{(p)})$.
    Fix a positive function $h$ on $\ms V^{(p)}$, and choose $\lam>0$ large enough so that $k\coloneqq(\lam-\mf L^{(p)})h$ is positive.
    Define
    \[
        K
        \coloneqq
        \sum_{\mc M\in\ms V^{(p)}}
        k(\mc M)\bs1_{\mc E(\mc M)},
        \qquad
        H_\eps
        \coloneqq
        \frac1{\theta_\eps^{(p)}}
        R_{\lam/\theta_\eps^{(p)}}^\eps K.
    \]
    Since $k>0$ and the wells are neighborhoods, the function $K$ is nonnegative, compactly supported, and dominates a positive constant on a nonempty open subset of $\bigcup_{\mc M\in\ms V^{(p)}}\mc E(\mc M)$. Hence \cref{lem:positive-resolvent-eps} gives $H_\eps\in D_0^+(\ms L_\eps)$ and $(\lam-\theta_\eps^{(p)}\ms L_\eps)H_\eps=K$.

    Since $(\lam-\mf L^{(p)})^{-1}k=h$, the convergence in \eqref{eq:metastable-resolvent-convergence} also gives
    \[
        \lim_{\eps\to0}
        \max_{\mc M\in\ms V^{(p)}}
        \sup_{x\in\mc E(\mc M)}
        |H_\eps(x)-h(\mc M)|
        =
        0.
    \]

    By \cref{cor:D0-DV-eps},
    \[
        \begin{aligned}
            \theta_\eps^{(p)}\mc I_\eps(\nu_\eps)
            &\ge
            \int_{\mc X}
            -\frac{\theta_\eps^{(p)}\ms L_\eps H_\eps}{H_\eps}
            \dd\nu_\eps \\
            &=
            -\lam
            +
            \int_{\mc X}\frac{K}{H_\eps}\dd\nu_\eps\\
            &=
            -\lam
            +
            \sum_{\mc M\in\ms V^{(p)}}
            \left(
                \frac{k(\mc M)}{h(\mc M)}
                +
                o_\eps(1)
            \right)
            \nu_\eps(\mc E(\mc M)).
        \end{aligned}
    \]
    By construction, each $\mc E(\mc M)$ is a $\nu$-continuity set, and therefore $\nu_\eps(\mc E(\mc M))\to\nu(\mc E(\mc M))=\omega(\mc M)$ for every $\mc M\in\ms V^{(p)}$.
    Consequently,
    \[
        \begin{aligned}
            \lowlim_{\eps\to0}
            \theta_\eps^{(p)}\mc I_\eps(\nu_\eps)
            &\ge
            -\lam
            +
            \sum_{\mc M\in\ms V^{(p)}}
            \frac{k(\mc M)}{h(\mc M)}
            \omega(\mc M) \\
            &=
            \sum_{\mc M\in\ms V^{(p)}}
            -\frac{\mf L^{(p)}h(\mc M)}{h(\mc M)}
            \omega(\mc M).
        \end{aligned}
    \]
    Taking the supremum over $h>0$ and using \eqref{eq:finite-Jp} and \eqref{eq:J-p} proves the claim.
\end{proof}

\subsection{The \texorpdfstring{$\Gamma$}{Gamma}-limsup}
\label{ssec:metastable-upper-bound}

We first construct the $\Gamma$-limsup recovery measures on a single communication class $\mf D$ of the chain generated by $\mf L^{(p)}$. Denote the jump rates of this chain by $\msf q(\mc M,\mc N)$. For a function $f$ on $\mf D$ or a probability measure $\omega\in\mc P(\mf D)$, write $\widehat f$ and $\widehat\omega$ for their respective zero extensions to $\ms V^{(p)}$.

The reflected generator suppresses jumps out of $\mf D$, while the escape rate records their total rate:
\[
    \mf L_{\mf D}f(\mc M)
    \coloneqq
    \sum_{\mc N\in\mf D}
    \msf q(\mc M,\mc N)\bigl[f(\mc N)-f(\mc M)\bigr],
    \qquad
    c_{\mf D}(\mc M)
    \coloneqq
    \sum_{\mc N\notin\mf D}\msf q(\mc M,\mc N),
    \quad
    \mc M\in\mf D.
\]
Then we have the identity
\begin{equation} \label{eq:class-zero-extension-p}
    \mf L^{(p)}\widehat f
    =
    \mf L_{\mf D}f-c_{\mf D}f
    \quad\text{on }\mf D.
\end{equation}
Write $\mf J_{\mf D}$ for the Donsker--Varadhan rate functional of the reflected chain, given by
\[
    \mf J_{\mf D}(\omega)
    =
    \sup_{g>0}
    \sum_{\mc M\in\mf D}
    -\frac{\mf L_{\mf D}g(\mc M)}{g(\mc M)}\omega(\mc M),
    \qquad \omega\in\mc P(\mf D).
\]
The finite-state definitions and results used below are collected in \cref{app:finite-state}.

Fix $\omega\in\mc P(\mf D)$ with $\omega(\mc M)>0$ for every $\mc M\in\mf D$. By \cref{lem:finite-class-optimizer}, the supremum above is attained. Choose a maximizing function $h:\mf D\to(0,\infty)$, so that
\[
    \mf J_{\mf D}(\omega)
    =
    \sum_{\mc M\in\mf D}
    -\frac{\mf L_{\mf D}h(\mc M)}{h(\mc M)}\omega(\mc M).
\]
We lift $h$ to a test function for the diffusion and take a corresponding contact measure from \cref{prop:contact-measure-eps}. General weights $\omega\in\mc P(\mf D)$ are then treated by approximation.

\begin{lemma} \label{lem:class-recovery-p}
    Let $\omega\in\mc P(\mf D)$ and set
    \[
        \nu
        \coloneqq
        \sum_{\mc M\in\mf D}
        \omega(\mc M)\pi_{\mc M}.
    \]
    There exists a family $\nu_\eps\rightharpoonup\nu$ such that
    \[
        \lim_{\eps\to0}
        \theta_\eps^{(p)}\mc I_\eps(\nu_\eps)
        =
        \mf J^{(p)}(\widehat\omega)
        =
        \mc J^{(p)}(\nu).
    \]
\end{lemma}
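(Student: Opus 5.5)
The plan is to build recovery measures as contact measures (\cref{prop:contact-measure-eps}) for resolvent lifts of a perturbed finite-state maximizer, identify their limits through the previous-scale lower bound \eqref{eq:transfer-previous-lower}, and then pass to general $\omega$ by a diagonal argument.

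\emph{Finite-state identity and reduction.} First I would record, from \eqref{eq:class-zero-extension-p} and the finite-state facts in \cref{app:finite-state}, that for $\omega\in\mc P(\mf D)$
\[
    \mf J^{(p)}(\widehat\omega)=\mf J_{\mf D}(\omega)+\sum_{\mc M\in\mf D}c_{\mf D}(\mc M)\omega(\mc M).
\]
The inequality $\ge$ follows by testing with $\widehat h+\delta$ and letting $\delta\downarrow0$. The inequality $\le$ should follow from the attained supremum in \cref{lem:finite-class-optimizer} together with a monotonicity argument on the off-$\mf D$ values of the test.

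Next I would reduce to $\omega$ with full support on $\mf D$. Convex finite functions on a simplex are upper semicontinuous. So for general $\omega$ I approximate by $\omega_n=(1-\frac1n)\omega+\frac1n\mathrm{unif}_{\mf D}$, which gives $\mf J^{(p)}(\widehat\omega_n)\to\mf J^{(p)}(\widehat\omega)$ by upper semicontinuity plus lower semicontinuity. A diagonal choice in $(n,\eps)$, combined with the $\Gamma$-liminf already proved, then yields a family along which the limit is an equality.

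\emph{Construction of the test.} Fix a maximizer $h>0$ on $\mf D$. For $\delta>0$ set $h_\delta=h$ on $\mf D$ and $h_\delta=\delta$ off $\mf D$. Choose $\lam$ large so that $k_\delta=(\lam-\mf L^{(p)})h_\delta>0$, and set
\[
    K_\delta=\sum_{\mc M}k_\delta(\mc M)\bs1_{\mc E(\mc M)},
    \qquad
    F_\eps=\frac{1}{\theta_\eps^{(p)}}R^\eps_{\lam/\theta_\eps^{(p)}}K_\delta .
\]
By \cref{lem:positive-resolvent-eps}, $F_\eps\in D_0^+(\ms L_\eps)$. By \eqref{eq:metastable-resolvent-convergence}, $F_\eps\to h_\delta$ uniformly on the wells. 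Let $\nu_\eps=\nu_{\eps,F_\eps}$ be the contact measure from \cref{prop:contact-measure-eps}, so that
\[
    \theta_\eps^{(p)}\mc I_\eps(\nu_\eps)=-\lam+\int K_\delta/F_\eps\dd\nu_\eps\le -\lam+\max k_\delta/\min h_\delta+o(1).
\]
This quantity is bounded. Hence $\beta_\eps\mc I_\eps(\nu_\eps)\to0$.

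\emph{Identifying the limit.} Because $\mc I_\eps$ has compact sublevel sets uniformly (the Lyapunov bound gives tightness), every subsequential limit $\nu'$ of $\nu_\eps$ satisfies $\mc J(\nu')=0$ by \eqref{eq:transfer-previous-lower}. By \eqref{eq:transfer-zero-set}, this means $\nu'=\sum\omega'(\mc M)\pi_{\mc M}$. Since the wells are $\nu'$-continuity sets, we get
\[
    \theta_\eps^{(p)}\mc I_\eps(\nu_\eps)\to\sum_{\mc M}-\frac{\mf L^{(p)}h_\delta}{h_\delta}(\mc M)\,\omega'(\mc M).
\]
Combining this with the $\Gamma$-liminf, this value is $\ge\mf J^{(p)}(\omega')$. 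Therefore $h_\delta$ attains the finite-state supremum at $\omega'$, and $\omega'$ must be invariant for the Doob-tilted chain with rates $\msf q(\mc M,\mc N)h_\delta(\mc N)/h_\delta(\mc M)$.

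As $\delta\downarrow0$, rates into $\mf D$ from outside blow up like $\delta^{-1}$, while exits from $\mf D$ scale like $\delta$. So every such invariant law converges to an invariant law of the tilted reflected chain $\mf L_{\mf D}^h$. That chain is irreducible on $\mf D$. The first-order optimality of $h$ at the fully supported $\omega$ shows that $\omega$ is its unique invariant law. Its cost is $\sum\omega(-\mf L_{\mf D}h/h+c_{\mf D})=\mf J^{(p)}(\widehat\omega)$ by the identity above. A diagonal choice $\delta=\delta_\eps\downarrow0$ slowly then gives $\nu_\eps\rightharpoonup\nu$ with $\limsup\theta_\eps^{(p)}\mc I_\eps(\nu_\eps)\le\mf J^{(p)}(\widehat\omega)$, and the liminf gives equality.

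\emph{Main obstacle.} I expect the hardest step to be this identification of the limiting weights $\omega'$. The tilted finite chain for fixed $\delta$ need not have a unique invariant law, so the argument needs either a uniform-in-$\delta$ perturbation estimate for invariant measures, or an order of limits ($\eps\to0$ first, then $\delta\to0$) that only uses concentration on $\mf D$ plus uniqueness for $\mf L^h_{\mf D}$. I would try the latter first.
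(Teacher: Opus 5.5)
\textbf{The identification step fails, and the obstacle you flag cannot be fixed by reordering the limits.}

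Your own argument shows that every subsequential limit $\omega'$ is invariant for the tilted chain with rates $\msf q(\mc M,\mc N)h_\delta(\mc N)/h_\delta(\mc M)$. These rates are positive exactly when $\msf q(\mc M,\mc N)>0$. So for every $\delta>0$ this chain has the same closed classes as $\mf L^{(p)}$, and its invariant laws are supported on them.

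\emph{Transient $\mf D$.} Suppose $\mf D$ is not closed, i.e.\ $c_{\mf D}\not\equiv0$. This case must be covered, because the lemma is applied to every communication class in the $\Gamma$-limsup. Then $\omega'(\mf D)=0$ for every $\delta$, and hence also for every limit as $\delta\downarrow0$. Your heuristic that rates into $\mf D$ blow up like $\delta^{-1}$ is misleading: states reachable from $\mf D$ can never jump back into $\mf D$. For instance, take $\ms V^{(p)}=\{1,2\}$, $\msf q(1,2)=1$, $\msf q(2,1)=0$ and $\mf D=\{1\}$. The tilted chain jumps $1\to2$ at rate $\delta$ and never returns. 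Your family therefore concentrates on the well of state $2$ at cost $0$, instead of on the well of state $1$ at cost $\mf J^{(p)}(\delta_1)=1$.

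\emph{Closed $\mf D$.} Even here the argument is incomplete. On any other closed class $h_\delta\equiv\delta$, so $k_\delta/h_\delta=\lam$ there and the cost density $K_\delta/F_\eps-\lam$ tends to $0$. Nothing excludes limits of the form $t\widehat\omega+(1-t)\omega''$ with $\omega''$ invariant on another closed class.

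The root cause is the positive source $k_\delta$ off $\mf D$. It removes every penalty for leaving the $\mf D$-wells.

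\textbf{The missing idea is to build that penalty into the test.} Drop $\delta$ and use the source $\sum_{\mc M\in\mf D}[\alpha h(\mc M)-\mf L^{(p)}\widehat h(\mc M)]\bs1_{\mc E(\mc M)}$, which vanishes off the $\mf D$-wells. Positivity of the lift still follows from \cref{lem:positive-resolvent-eps}, so $\delta$ was never needed. Because the chain cannot return to $\mf D$ after leaving it, $(\alpha-\mf L^{(p)})^{-1}$ of this source still equals $h$ on $\mf D$. The lift therefore converges to $h$ uniformly on the $\mf D$-wells.

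Now let $\alpha=\lam_\eps\to\infty$ slowly enough that this uniform error is $o(\lam_\eps^{-1})$. Write $\Delta_{\mf D}$ for the complement of $\bigcup_{\mc M\in\mf D}\mc E(\mc M)$. On $\Delta_{\mf D}$ the cost density is exactly $-\lam_\eps$, and the contact identity gives
\[
    \theta_\eps^{(p)}\mc I_\eps(\nu_\eps)+\lam_\eps\nu_\eps(\Delta_{\mf D})
    =\sum_{\mc M\in\mf D}-\frac{\mf L^{(p)}\widehat h(\mc M)}{h(\mc M)}\nu_\eps(\mc E(\mc M))+o(1)\le C.
\]
Since $\mc I_\eps\ge0$, this forces $\nu_\eps(\Delta_{\mf D})\le C/\lam_\eps\to0$. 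The limit weights are then supported on $\mf D$.

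To finish, sandwich the limit between the $\Gamma$-liminf, written as $\mf J_{\mf D}(\omega_*)+\omega_*(c_{\mf D})$, and this upper bound. This makes $h$ optimal for $\mf J_{\mf D}$ at the limit $\omega_*$. Irreducibility of $\mf L_{\mf D}^h$ in \cref{lem:finite-class-optimizer} then gives $\omega_*=\omega$.

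The rest of your plan is sound: the finite-state identity, tightness (your Lyapunov route is an acceptable substitute for the paper's use of compact well closures), the use of \eqref{eq:transfer-previous-lower} and \eqref{eq:transfer-zero-set}, and the reduction to strictly positive $\omega$.
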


\begin{proof}
    First suppose that $\omega>0$ on $\mf D$, and choose an optimizer $h:\mf D\to(0,\infty)$ as above. For sufficiently large $\alpha>0$, define
    \[
        H_\eps^\alpha
        \coloneqq
        \frac1{\theta_\eps^{(p)}}R_{\alpha/\theta_\eps^{(p)}}^\eps
        \left(
            \sum_{\mc M\in\mf D}
            [\alpha h(\mc M)-\mf L^{(p)}\widehat h(\mc M)]
            \bs1_{\mc E(\mc M)}
        \right).
    \]
    Since $h>0$ and $\mf D$ is finite, the function inside the resolvent is positive on the chosen wells and zero elsewhere. It has compact support, so \cref{lem:positive-resolvent-eps} gives $H_\eps^\alpha\in D_0^+(\ms L_\eps)$.

    Let $g$ equal $\alpha h-\mf L^{(p)}\widehat h$ on $\mf D$ and zero elsewhere, and set $f=(\alpha-\mf L^{(p)})^{-1}g$. The chain cannot return to $\mf D$ after leaving it. Since $g$ is zero outside $\mf D$, $f$ vanishes at every state reachable after exit. In particular, $f(\mc N)=0$ whenever $\mc M\in\mf D$, $\mc N\notin\mf D$, and $\msf q(\mc M,\mc N)>0$. Splitting the jumps from $\mc M\in\mf D$ according to whether they stay in or leave $\mf D$, we obtain
    \[
        \begin{aligned}
            \mf L^{(p)}f(\mc M)
            &=\mf L_{\mf D}(f|_{\mf D})(\mc M)
            +\sum_{\mc N\notin\mf D}
            \msf q(\mc M,\mc N)[f(\mc N)-f(\mc M)]\\
            &=\mf L_{\mf D}(f|_{\mf D})(\mc M)
            -c_{\mf D}(\mc M)f(\mc M).
        \end{aligned}
    \]
    Substituting this into $(\alpha-\mf L^{(p)})f=g$ gives
    \[
        (\alpha-\mf L_{\mf D}+c_{\mf D})(f|_{\mf D})
        =g|_{\mf D}
        =(\alpha-\mf L_{\mf D}+c_{\mf D})h,
    \]
    where the last equality follows from \eqref{eq:class-zero-extension-p}. Since $\alpha>0$, the operator on the left is invertible, so $f=h$ on $\mf D$. Therefore \eqref{eq:metastable-resolvent-convergence} yields
    \[
        \max_{\mc M\in\mf D}\sup_{x\in\mc E(\mc M)}
        |H_\eps^\alpha(x)-h(\mc M)|\longrightarrow0
        \qquad\text{for each fixed sufficiently large }\alpha.
    \]
    Choose $\lam_\eps\to\infty$ slowly enough that this error, with $\alpha=\lam_\eps$, is $o(\lam_\eps^{-1})$, and set $H_\eps=H_\eps^{\lam_\eps}$. This diagonal choice is possible by first making the error small for each fixed parameter. The resolvent equation then gives
    \begin{equation} \label{eq:class-resolvent-convergence-p}
        \begin{aligned}
            \max_{\mc M\in\mf D}\sup_{x\in\mc E(\mc M)}
            |H_\eps(x)-h(\mc M)|&\longrightarrow0,\\
            \max_{\mc M\in\mf D}\sup_{x\in\mc E(\mc M)}
            |\theta_\eps^{(p)}\ms L_\eps H_\eps(x)
            -\mf L^{(p)}\widehat h(\mc M)|&\longrightarrow0,
        \end{aligned}
    \end{equation}
    because on $\mc E(\mc M)$ the expression in the second line equals $\lam_\eps|H_\eps-h(\mc M)|$.

    Let $\nu_\eps=\nu_{\eps,H_\eps}$ be a contact measure from \cref{prop:contact-measure-eps}, and put
    \[
        \mc E_{\mf D}\coloneqq\bigcup_{\mc M\in\mf D}\mc E(\mc M),
        \qquad
        \Delta_{\mf D}\coloneqq\mc X\setminus\mc E_{\mf D}.
    \]
    On $\Delta_{\mf D}$, the function inside the resolvent is zero, so $\theta_\eps^{(p)}\ms L_\eps H_\eps=\lam_\eps H_\eps$. By construction of the contact measure,
    \[
        \begin{aligned}
            \theta_\eps^{(p)}\mc I_\eps(\nu_\eps)
            &=\int_{\mc X}
            -\frac{\theta_\eps^{(p)}\ms L_\eps H_\eps}{H_\eps}
            \dd\nu_\eps\\
            &=\int_{\mc E_{\mf D}}
            -\frac{\theta_\eps^{(p)}\ms L_\eps H_\eps}{H_\eps}
            \dd\nu_\eps
            -\lam_\eps\nu_\eps(\Delta_{\mf D}).
        \end{aligned}
    \]
    By \eqref{eq:class-resolvent-convergence-p}, we have
    \begin{equation} \label{eq:recovery-cost-bound-p}
        \begin{split}
            \theta_\eps^{(p)}\mc I_\eps(\nu_\eps)
            +\lam_\eps\nu_\eps(\Delta_{\mf D})
            &=\int_{\mc E_{\mf D}}
            -\frac{\theta_\eps^{(p)}\ms L_\eps H_\eps}{H_\eps}
            \dd\nu_\eps\\
            &=\sum_{\mc M\in\mf D}
            -\frac{\mf L^{(p)}\widehat h(\mc M)}{h(\mc M)}
            \nu_\eps(\mc E(\mc M))
            +o_\eps(1)
            \le C
        \end{split}
    \end{equation}
    for all sufficiently small $\eps$. Consequently,
    \begin{equation} \label{eq:recovery-concentration-p}
        \nu_\eps(\Delta_{\mf D})\le\frac C{\lam_\eps}\longrightarrow0,
        \qquad
        \beta_\eps\mc I_\eps(\nu_\eps)\longrightarrow0.
    \end{equation}

    Compactness of the well closures and \eqref{eq:recovery-concentration-p} give tightness as $\eps\to0$. Let $\nu_{\eps_j}\rightharpoonup\nu_*$ along a subsequence. The lower bound \eqref{eq:transfer-previous-lower} gives $\mc J(\nu_*)=0$, and hence \eqref{eq:transfer-zero-set} yields
    \[
        \nu_*=
        \sum_{\mc M\in\ms V^{(p)}}\omega_*(\mc M)\pi_{\mc M}
        \qquad\text{for some }\omega_*\in\mc P(\ms V^{(p)}).
    \]
    Every well is a $\nu_*$-continuity set. Therefore the well masses $\nu_{\eps_j}(\mc E(\mc M))$ converge to $\omega_*(\mc M)$, and \eqref{eq:recovery-concentration-p} shows that $\omega_*$ is supported on $\mf D$.

    From now on, $\omega_*$ denotes the restriction of the limiting weights to $\mf D$, and $\widehat\omega_*$ denotes its zero extension to $\ms V^{(p)}$. To show that $\omega_*=\omega$, we first apply the $\Gamma$-liminf proved in \cref{ssec:metastable-lower-bound}:
    \[
        \begin{aligned}
            \lowlim_{j\to\infty}
            \theta_{\eps_j}^{(p)}\mc I_{\eps_j}(\nu_{\eps_j})
            &\ge\mc J^{(p)}(\nu_*)\\
            &=\mf J^{(p)}(\widehat\omega_*)\\
            &=\mf J_{\mf D}(\omega_*)+\omega_*(c_{\mf D}),
        \end{aligned}
    \]
    where the last equality is \cref{lem:finite-class-restriction}.

    For the upper bound, drop the nonnegative term $\lam_\eps\nu_\eps(\Delta_{\mf D})$ from \eqref{eq:recovery-cost-bound-p} to obtain
    \[
        \theta_\eps^{(p)}\mc I_\eps(\nu_\eps)
        \le
        \sum_{\mc M\in\mf D}
        -\frac{\mf L^{(p)}\widehat h(\mc M)}{h(\mc M)}
        \nu_\eps(\mc E(\mc M))+o_\eps(1).
    \]
    Along the chosen subsequence, $\nu_{\eps_j}(\mc E(\mc M))\to\omega_*(\mc M)$ for every $\mc M\in\mf D$. Since \eqref{eq:class-zero-extension-p} gives
    \[
        -\frac{\mf L^{(p)}\widehat h(\mc M)}{h(\mc M)}
        =-\frac{\mf L_{\mf D}h(\mc M)}{h(\mc M)}+c_{\mf D}(\mc M),
        \qquad \mc M\in\mf D,
    \]
    it follows that
    \[
        \begin{aligned}
            \uplim_{j\to\infty}
            \theta_{\eps_j}^{(p)}\mc I_{\eps_j}(\nu_{\eps_j})
            &\le\sum_{\mc M\in\mf D}
            -\frac{\mf L_{\mf D}h(\mc M)}{h(\mc M)}\omega_*(\mc M)
            +\omega_*(c_{\mf D})\\
            &\le\mf J_{\mf D}(\omega_*)+\omega_*(c_{\mf D}).
        \end{aligned}
    \]
    The last inequality holds because $h>0$ is admissible in the supremum defining $\mf J_{\mf D}(\omega_*)$. Comparing the lower and upper bounds, and using $\lowlim\le\uplim$, forces equality throughout. In particular,
    \[
        \mf J_{\mf D}(\omega_*)
        =\sum_{\mc M\in\mf D}
        -\frac{\mf L_{\mf D}h(\mc M)}{h(\mc M)}\omega_*(\mc M),
    \]
    so $h$ is an optimizer at $\omega_*$. It was also chosen to be an optimizer at $\omega$. By \cref{lem:finite-class-optimizer}, $\omega_*=\omega$ and hence $\nu_*=\nu$. Thus the tight family $\nu_\eps$ has a unique limit point, so $\nu_\eps\rightharpoonup\nu$. Equality in the bounds also gives
    \[
        \lim_{\eps\to0}
        \theta_{\eps}^{(p)}\mc I_{\eps}(\nu_{\eps})
        =\mf J_{\mf D}(\omega)+\omega(c_{\mf D})
        =\mf J^{(p)}(\widehat\omega).
    \]
    
    For a general $\omega\in\mc P(\mf D)$, fix a strictly positive probability measure $\tau$ on $\mf D$ and put $\omega_t=(1-t)\omega+t\tau$, for $0<t<1$. Convexity, finiteness, and lower semicontinuity of the finite-state functional give $\mf J^{(p)}(\widehat\omega_t)\to\mf J^{(p)}(\widehat\omega)$ as $t\downarrow0$. Applying the preceding construction to $\omega_t$ and choosing a diagonal family proves the assertion.
\end{proof}

\begin{proof}[Proof of \cref{thm:resolvent-transfer}: $\Gamma$-limsup]
    Fix $\nu\in\mc P(\mc X)$. If
    $\nu\notin\dom\mc J^{(p)}$, the constant family is a recovery family.
    Otherwise, write
    \[
        \nu
        =
        \sum_{\mc M\in\ms V^{(p)}}
        \omega(\mc M)\pi_{\mc M},
        \qquad
        \omega\in\mc P(\ms V^{(p)}).
    \]
    Let $\mf D_1,\ldots,\mf D_N$ be all communication classes of $\bs y^{(p)}(\cdot)$. For every $i$ with $\omega(\mf D_i)>0$, let $\omega_i$ be the conditional probability measure of $\omega$ on $\mf D_i$, denote its zero extension to $\ms V^{(p)}$ by $\widehat\omega_i$, and set
    \[
        \nu_i
        \coloneqq
        \sum_{\mc M\in\mf D_i}
        \omega_i(\mc M)\pi_{\mc M}.
    \]
    By \cref{lem:class-recovery-p}, there exists $\nu_\eps^i\rightharpoonup\nu_i$ such that
    \[
        \uplim_{\eps\to0}
        \theta_\eps^{(p)}\mc I_\eps(\nu_\eps^i)
        \le
        \mf J^{(p)}(\widehat\omega_i).
    \]
    Define $\nu_\eps\coloneqq\sum_{\omega(\mf D_i)>0}\omega(\mf D_i)\nu_\eps^i$. Then $\nu_\eps\rightharpoonup\nu$, and convexity of $\mc I_\eps$ gives
    \[
        \begin{aligned}
            \uplim_{\eps\to0}
            \theta_\eps^{(p)}\mc I_\eps(\nu_\eps)
            &\le
            \sum_{\omega(\mf D_i)>0}
            \omega(\mf D_i)
            \uplim_{\eps\to0}
            \theta_\eps^{(p)}\mc I_\eps(\nu_\eps^i)\\
            &\le
            \sum_{\omega(\mf D_i)>0}
            \omega(\mf D_i)\mf J^{(p)}(\widehat\omega_i)
            =
            \mf J^{(p)}(\omega)
            =
            \mc J^{(p)}(\nu),
        \end{aligned}
    \]
    where the penultimate equality follows from \cref{lem:finite-class-splitting}.
\end{proof}

\section{The scale \texorpdfstring{$\eps$}{epsilon}}
\label{sec:scale-eps}

We prove \cref{prop:pre-metastable-gamma}, which supplies the first pre-metastable limit and the limits at smaller scales. Throughout this section, we assume \cref{ass:algebraic-structure,ass:cocentration}. The proofs of \cref{lem:J-Riesz,lem:level-2.5-bound} and the arguments in \cref{ssec:scale-eps,ssec:smaller-scales} also apply under the weaker hypotheses of \cref{prop:pre-metastable-gamma}; in particular, the proof of \cref{prop:compact-stationary-approximation} only requires $W$ to be bounded below.

\subsection{Variational formulas for stationary currents}

Here and below, $\Diver$ denotes the row-wise divergence of a matrix-valued measure or function. For a probability measure $\nu$ and $j\in L^2(\nu;\bb R^r)$, we call $(\nu,j\nu)$ a \emph{stationary pair} if $\diver((b-\sig j)\nu)=0$.

The first formula below identifies $\mc J^{(-1)}$ with the least quadratic cost of a control that makes $\nu$ stationary. The second allows us to use the same current at positive temperature, after correcting for the diffusion term.

\begin{lemma} \label{lem:J-Riesz}
    For every $\nu\in\mc P(\mc X)$,
    \[
        \mc J^{(-1)}(\nu)
        =\frac14\inf_{\substack{j\in L^2(\nu;\bb R^r)\\
        \diver((b-\sig j)\nu)=0}}
        \int_{\mc X}|j|^2\dd\nu,
    \]
    where the infimum of the empty set is $+\infty$. If $\mc J^{(-1)}(\nu)<\infty$, the infimum has a unique minimizer, denoted by $j_\nu$.
\end{lemma}
\begin{proof}
    Suppose first that $\mc J^{(-1)}(\nu)<\infty$. Optimizing over the scalar multiples of each $\vph\in C_c^\infty(\mc X)$ in \eqref{eq:def-J-minus} gives
    \[
        \left|\int_{\mc X}b\cdot\nabla\vph\dd\nu\right|^2
        \le4\mc J^{(-1)}(\nu)
        \int_{\mc X}|\sig^{\msf T}\nabla\vph|^2\dd\nu.
    \]
    Thus the map $\sig^{\msf T}\nabla\vph\mapsto\int b\cdot\nabla\vph\dd\nu$ is a well-defined bounded linear functional on a subspace of $L^2(\nu;\bb R^r)$. Its Riesz representative $j_\nu$ in the closure of that subspace satisfies
    \[
        \int_{\mc X}b\cdot\nabla\vph\dd\nu
        =\int_{\mc X}j_\nu\cdot\sig^{\msf T}\nabla\vph\dd\nu,
        \qquad \vph\in C_c^\infty(\mc X).
    \]
    This is precisely stationarity. Every other stationary control differs from $j_\nu$ by a vector orthogonal to the subspace. Completing the square in \eqref{eq:def-J-minus}, and using density in its closure, gives $\mc J^{(-1)}(\nu)=\frac14\|j_\nu\|_{L^2(\nu)}^2$ and proves minimality and uniqueness. Conversely, any stationary $j\in L^2(\nu;\bb R^r)$ bounds \eqref{eq:def-J-minus} by $\frac14\|j\|_{L^2(\nu)}^2$. This also proves the assertion when $\mc J^{(-1)}(\nu)=+\infty$.
\end{proof}

\begin{lemma} \label{lem:level-2.5-bound}
    Let $\nu\in\mc P(\mc X)$ have compact support. If $j,q\in L^2(\nu;\bb R^r)$ satisfy
    \[
        \diver((b-\sig j)\nu)=0,
        \qquad
        \diver\Diver(a\nu)=-\diver(\sig q\nu),
    \]
    then
    \[
        \mc I_\eps(\nu)
        \le\frac1{4\eps}\int_{\mc X}|j+\eps q|^2\dd\nu.
    \]
\end{lemma}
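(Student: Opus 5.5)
Since $\nu$ has compact support, \cref{cor:smooth-eq-eps} reduces the claim to a bound on the $C_c^\infty$ variational formula: it suffices to show that for every $\vph\in C_c^\infty(\mc X)$,
\[
    \int_{\mc X}\Bigl[-\mc L_\eps\vph-\eps|\sig^{\msf T}\nabla\vph|^2\Bigr]\dd\nu
    \le\frac1{4\eps}\int_{\mc X}|j+\eps q|^2\dd\nu .
\]
The plan is to rewrite the left-hand integrand as a linear functional of $\sig^{\msf T}\nabla\vph$ in $L^2(\nu;\bb R^r)$, using the two distributional identities, and then to complete the square.

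First I handle the drift term. Testing $\diver((b-\sig j)\nu)=0$ against $\vph$ gives
\[
    \int_{\mc X} b\cdot\nabla\vph\dd\nu=\int_{\mc X} j\cdot\sig^{\msf T}\nabla\vph\dd\nu .
\]
Next I treat the second-order term. The identity $\diver\Diver(a\nu)=-\diver(\sig q\nu)$, tested against $\vph$, gives
\[
    \int_{\mc X} a:\nabla^2\vph\dd\nu=\int_{\mc X} q\cdot\sig^{\msf T}\nabla\vph\dd\nu .
\]
The only care needed is the sign convention. The left side is $\langle \diver\Diver(a\nu),\vph\rangle$, obtained by integrating by parts twice with no net sign change. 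The right side is $-\langle\diver(\sig q\nu),\vph\rangle=\int \sig q\cdot\nabla\vph\dd\nu$. Since $a$ is symmetric, $a:\nabla^2\vph=\tr(a\nabla^2\vph)$, so no transpose issue arises.

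Combining the two identities, with $w\coloneqq\sig^{\msf T}\nabla\vph\in L^2(\nu;\bb R^r)$ (bounded because $\vph$ has compact support), I get
\[
    \int_{\mc X}\Bigl[-\mc L_\eps\vph-\eps|w|^2\Bigr]\dd\nu
    =\int_{\mc X}\Bigl[-(j+\eps q)\cdot w-\eps|w|^2\Bigr]\dd\nu .
\]
Completing the square pointwise,
\[
    -(j+\eps q)\cdot w-\eps|w|^2
    =\frac{|j+\eps q|^2}{4\eps}-\eps\Bigl|w+\frac{j+\eps q}{2\eps}\Bigr|^2
    \le\frac{|j+\eps q|^2}{4\eps}.
\]
Integrating against $\nu$ and taking the supremum over $\vph$ then proves the bound.

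The only delicate point is verifying that the distributional identities apply to $C_c^\infty$ test functions with exactly these signs, and that the integrals are finite. Finiteness follows because $j,q\in L^2(\nu)$ and $w$ is bounded, so every term is integrable. The compact support of $\nu$ is used solely to invoke the equality case of \cref{cor:smooth-eq-eps}.
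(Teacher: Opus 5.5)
Your proposal is correct and follows the paper's proof essentially line by line. Like the paper, you test both divergence identities against $\vph\in C_c^\infty(\mc X)$ to obtain $\int\mc L_\eps\vph\dd\nu=\int(j+\eps q)\cdot\sig^{\msf T}\nabla\vph\dd\nu$, invoke the equality case of \cref{cor:smooth-eq-eps} for compactly supported $\nu$, and complete the square, and your sign checks for the two tested identities are right.
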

\begin{proof}
    For $\vph\in C_c^\infty(\mc X)$, testing the two divergence identities against $\vph$ gives, respectively,
    \[
        \int_{\mc X}b\cdot\nabla\vph\dd\nu
        =\int_{\mc X}j\cdot\sig^{\msf T}\nabla\vph\dd\nu,
        \qquad
        \int_{\mc X}a:\nabla^2\vph\dd\nu
        =\int_{\mc X}q\cdot\sig^{\msf T}\nabla\vph\dd\nu.
    \]
    Since $\mc L_\eps\vph=b\cdot\nabla\vph+\eps a:\nabla^2\vph$, it follows that
    \[
        \int_{\mc X}\mc L_\eps\vph\dd\nu
        =\int_{\mc X}(j+\eps q)\cdot\sig^{\msf T}\nabla\vph\dd\nu.
    \]
    By \cref{cor:smooth-eq-eps} and completion of the square,
    \begin{align*}
        \mc I_\eps(\nu)
        &=\sup_{\vph\in C_c^\infty(\mc X)}
        \int_{\mc X}\left[-(j+\eps q)\cdot\sig^{\msf T}\nabla\vph
        -\eps|\sig^{\msf T}\nabla\vph|^2\right]\dd\nu\\
        &\le\frac1{4\eps}\int_{\mc X}|j+\eps q|^2\dd\nu.
    \end{align*}
\end{proof}

Fix a nonincreasing function $\psi\in C^\infty([0,\infty);[0,1])$ equal to $1$ on $[0,1]$ and to $0$ on $[2,\infty)$. For $R>0$, set
\begin{equation}\label{eq:cutoff}
    \Xi_R(s)\coloneqq-\int_s^\infty\psi(t/R)\dd t.
\end{equation}
Since $V$ is inf-compact, $\Xi_R\circ V\in C_c^\infty(\mc X)$. We will use this cutoff here and later.

\begin{corollary} \label{cor:J-minus-zeroset}
    The zero set of $\mc J^{(-1)}$ is $\mc P(\mc C_0)$.
\end{corollary}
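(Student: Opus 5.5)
The inclusion $\mc P(\mc C_0)\subseteq\{\mc J^{(-1)}=0\}$ is immediate. If $\nu$ is supported on $\mc C_0$, then $b=0$ $\nu$-a.e. In \eqref{eq:def-J-minus} the integrand is then $-|\sig^{\msf T}\nabla\vph|^2\le0$, and $\vph=0$ gives the value $0$. Equivalently, in \cref{lem:J-Riesz} the control $j=0$ is stationary with zero cost. Since $\mc J^{(-1)}\ge0$ always (take $\vph=0$), this gives $\mc J^{(-1)}(\nu)=0$.

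For the converse, suppose $\mc J^{(-1)}(\nu)=0$. By \cref{lem:J-Riesz} the minimizer $j_\nu$ has zero norm, so $\diver(b\nu)=0$ in $\mc D'(\mc X)$. That is,
\[
    \int_{\mc X}b\cdot\nabla\vph\dd\nu=0
    \qquad\text{for every }\vph\in C_c^\infty(\mc X).
\]
I will test this with $\vph=\Xi_R\circ V$ from \eqref{eq:cutoff}, which lies in $C_c^\infty(\mc X)$. Its gradient is $\nabla\vph=\psi(V/R)\nabla V$. By the definition of $W$,
\[
    b\cdot\nabla V=-W-|\sig^{\msf T}\nabla V|^2 .
\]
The identity therefore becomes
\[
    \int_{\mc X}\psi(V/R)\bigl(W+|\sig^{\msf T}\nabla V|^2\bigr)\dd\nu=0 .
\]
By strict dissipation, $W\ge0$, so the integrand is nonnegative. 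Hence $W=0$ $\nu$-a.e. on $\{V\le R\}$. Letting $R\to\infty$ and using that $V$ is finite everywhere, $W=0$ $\nu$-a.e. Strict dissipation gives $W^{-1}(0)=\mc C_0$, and therefore $\nu(\mc X\setminus\mc C_0)=0$, i.e.\ $\nu\in\mc P(\mc C_0)$.

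The one point needing care is whether the test identity applies to $\Xi_R\circ V$. It does, because this function is genuinely compactly supported smooth: inf-compactness of $V$ makes $\{V\le2R\}$ compact, and $\Xi_R\circ V$ vanishes outside it. So no truncation or integrability issue at infinity arises. The argument uses only \cref{lem:J-Riesz} (or directly \eqref{eq:def-J-minus} with scalar multiples $t\vph$, letting $t\to0$ to obtain $\int b\cdot\nabla\vph\dd\nu=0$) together with items (1) and (3) of \cref{ass:cocentration}.
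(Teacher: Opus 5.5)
Your proposal is correct and follows the paper's proof essentially step for step. You deduce $\diver(b\nu)=0$ from \cref{lem:J-Riesz}, test with $\Xi_R\circ V$ to get $\int\psi(V/R)\bigl(W+|\sig^{\msf T}\nabla V|^2\bigr)\dd\nu=0$, and conclude $W=0$ $\nu$-a.e.; the converse uses $j=0$ (or $b=0$ $\nu$-a.e.\ directly in \eqref{eq:def-J-minus}). Your localization to $\{V\le R\}$ in place of the paper's monotone convergence is only a cosmetic difference.
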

\begin{proof}
    If $\mc J^{(-1)}(\nu)=0$, \cref{lem:J-Riesz} gives $\diver(b\nu)=0$. Testing this identity with $\Xi_R\circ V$ yields
    \[
        \int_{\mc X}\psi(V/R)
        \left(W+|\sig^{\msf T}\nabla V|^2\right)\dd\nu=0.
    \]
    Since $W\ge0$, the integrand is nonnegative and increases to $W+|\sig^{\msf T}\nabla V|^2$ as $R\to\infty$. Monotone convergence gives $W=0$ $\nu$-almost everywhere, hence $\nu(\mc C_0)=1$. Conversely, if $\nu\in\mc P(\mc C_0)$, then $b\nu=0$, and \cref{lem:J-Riesz} applies with $j=0$.
\end{proof}

\subsection{\texorpdfstring{$\Gamma$}{Gamma}-convergence at scale \texorpdfstring{$\eps$}{epsilon}}
\label{ssec:scale-eps}

The $\Gamma$-liminf inequality is a direct consequence of \cref{cor:smooth-lower-eps}. The $\Gamma$-limsup inequality requires the construction of a recovery sequence, which is more involved. We first prove the $\Gamma$-liminf inequality.

\begin{proof}[Proof of \cref{prop:pre-metastable-gamma}\textup{(1)}: $\Gamma$-liminf]
    Let $\nu_\eps\rightharpoonup\nu$. For any fixed $\vph\in C_c^\infty(\mc X)$, \cref{cor:smooth-lower-eps}, tested with $\vph/\eps$, gives
    \[
        \eps\mc I_\eps(\nu_\eps)
        \ge\int_{\mc X}\left[-b\cdot\nabla\vph
        -|\sig^{\msf T}\nabla\vph|^2-\eps a:\nabla^2\vph\right]\dd\nu_\eps.
    \]
    The functions in the integrand are bounded and continuous, so passage to the limit followed by the supremum over $\vph$ proves the $\Gamma$-liminf inequality.
\end{proof}

To construct a $\Gamma$-limsup recovery sequence for a general measure $\nu$, we aim to apply \cref{lem:level-2.5-bound}, let $\eps\to0$, and then minimize over stationary controls $j$ using \cref{lem:J-Riesz}. This requires two approximation steps: first, truncate $\nu$ to compactly supported probability measures while preserving stationarity; second, smooth the resulting stationary pairs to ensure the existence of a square-integrable $q$ satisfying the diffusion divergence identity in \cref{lem:level-2.5-bound}. Both steps must preserve the quadratic control cost in the limit.

\begin{proposition} \label{prop:compact-stationary-approximation}
    For every stationary pair $(\nu,j\nu)$ with $j\in L^2(\nu;\bb R^r)$, there are compactly supported stationary pairs $(\mu_n,j_n\mu_n)$ such that
    \[
        \|\mu_n-\nu\|_{\mathrm{TV}}\longrightarrow0,
        \qquad
        \uplim_{n\to\infty}\int_{\mc X}|j_n|^2\dd\mu_n
        \le\int_{\mc X}|j|^2\dd\nu.
    \]
    Here $\|\eta\|_{\mathrm{TV}}=\sup_{|f|\le1}|\int f\dd\eta|$.
\end{proposition}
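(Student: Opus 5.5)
The plan is to truncate $\nu$ with the cutoff $\chi_R\coloneqq\psi(V/R)$ and then repair the resulting divergence defect by a small, cheap, compactly supported correcting pair. Set $\widetilde\mu_R\coloneqq\chi_R\nu$. For $\vph\in C_c^\infty(\mc X)$, testing stationarity of $(\nu,j\nu)$ against $\chi_R\vph$ gives
\[
    \diver\bigl((b-\sig j)\chi_R\nu\bigr)
    =\rho_R,
    \qquad
    \rho_R\coloneqq\frac{\psi'(V/R)}{R}\,(b-\sig j)\cdot\nabla V\,\nu .
\]
The defect $\rho_R$ is a signed measure of total mass zero supported in the compact annulus $\{R\le V\le 2R\}$.

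The first quantitative step is to make $\rho_R$ small in total variation. Testing stationarity against $\Xi_R\circ V$, as in the proof of \cref{cor:J-minus-zeroset}, and using $b\cdot\nabla V=-W-|\sig^{\msf T}\nabla V|^2$, gives
\[
    \int\psi(V/R)\bigl(W+|\sig^{\msf T}\nabla V|^2\bigr)\dd\nu
    =-\int\psi(V/R)\,j\cdot\sig^{\msf T}\nabla V\dd\nu .
\]
Here only $W\ge0$, or even only $W$ bounded below, is used. Cauchy--Schwarz and $R\to\infty$ then yield $\int(W+|\sig^{\msf T}\nabla V|^2)\dd\nu\le\int|j|^2\dd\nu<\infty$. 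Consequently $|b\cdot\nabla V|$ and $|j||\sig^{\msf T}\nabla V|$ are $\nu$-integrable, and $\|\rho_R\|_{\mathrm{TV}}\le C R^{-1}\int_{\{V\ge R\}}(|b\cdot\nabla V|+|j||\sig^{\msf T}\nabla V|)\dd\nu=o(1/R)$.

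Next, I would cancel $\rho_R$ by adding a pair $(\kappa_R,k_R\kappa_R)$ with $\diver((b-\sig k_R)\kappa_R)=-\rho_R$. The pair is built from occupation measures of controlled paths: for a path with $\dot\gamma=b(\gamma)-\sig(\gamma)u$ on $[0,T]$, the measure $\int_0^T\delta_{\gamma(t)}\dd t$ with current $u$ has flux divergence $\delta_{\gamma(0)}-\delta_{\gamma(T)}$. Decompose $\rho_R=\rho_R^+-\rho_R^-$, choose a coupling of $\rho_R^\pm$, and superpose connectors between coupled points. The connectors come from \cref{lem:controlled-connectors}, which uses controllability from \cref{ass:algebraic-structure}. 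Then $\mu_n\coloneqq(\widetilde\mu_R+\kappa_R)/Z_R$, and $j_n$ is the flux-weighted average of $j$ and $k_R$; convexity of $(\mu,J)\mapsto\int|J/\mu|^2\dd\mu$ gives $\int|j_n|^2\dd\mu_n\le Z_R^{-1}\bigl(\int|j|^2\dd\nu+\int|k_R|^2\dd\kappa_R\bigr)$. It then suffices to have $\kappa_R(\mc X)\to0$ and $\int|k_R|^2\dd\kappa_R\to0$. These also give $Z_R\to1$ and total-variation convergence, since $\|\chi_R\nu-\nu\|_{\mathrm{TV}}\le\nu(V\ge R)\to0$.

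The main obstacle is the cost of the connectors, which start and end on the far level set $\{V\approx R\}$. Going inward is free: the uncontrolled flow $\dot x=b$ strictly decreases $V$. Along this flow $\dot V=-W-|\sig^{\msf T}\nabla V|^2$, and $W$ is inf-compact, so the time needed to descend from level $2R$ to a fixed compact sublevel set $K$ is bounded uniformly in $R$. This handles $\rho_R^+$ at zero cost, with occupied mass $O(\|\rho_R\|_{\mathrm{TV}})$, and inside $K$ the connectors have uniformly bounded cost by compactness. The outward legs to $\rho_R^-$ are the problem, since climbing to $\{V\approx R\}$ can cost of order $R$. The product with $\|\rho_R\|_{\mathrm{TV}}=o(1/R)$ is then only $o(1)$ if the climbing cost is $O(R)$ uniformly, and this is what I would try first: ascend in the $\mf H$-directions along $\sig\sig^{\msf T}\nabla V$ with control proportional to $\sig^{\msf T}\nabla V$, and use \cref{ass:cocentration}(2) to bound the cost per unit of $V$ gained. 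If this fails, I would instead avoid outward legs entirely. One option is to replace the fixed cutoff by an average of $\psi(V/R')$ over $R'\in[R,R^2]$ with weight $\dd R'/R'$, which makes the defect measure $O(1/\log R)$ and smooths it. Another is to absorb $\rho_R^-$ locally, by slightly perturbing $j$ on the annulus so that the $\sig j\cdot\nabla V$ part of the defect is cancelled in place.
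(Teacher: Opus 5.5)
\textbf{There is a genuine gap: the uniform $O(R)$ climbing cost is false in general, and nothing else in your plan compensates for it.}

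Your preliminary steps are sound. The defect $\rho_R=\frac{\psi'(V/R)}{R}(b-\sig j)\cdot\nabla V\,\nu$ has zero total mass, $(b-\sig j)\cdot\nabla V\in L^1(\nu)$ gives $\|\rho_R\|_{\mathrm{TV}}=o(1/R)$, and the convexity bookkeeping for the corrected pair is correct. The gap is the step you flag, and it cannot be closed as proposed: the hypotheses give no bound of order $R$ on the cost of reaching $\{V\approx R\}$. Along any controlled path $\dot\gamma=b(\gamma)-\sig(\gamma)u$,
\[
    \frac{\dd}{\dd t}V(\gamma)
    =-W-|\sig^{\msf T}\nabla V|^2-u\cdot\sig^{\msf T}\nabla V
    \le\tfrac14|u|^2-W .
\]
So $V$, being a subsolution, only bounds the climbing cost from below. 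The true cost is the quasi-potential, which can be far larger than $V$. For example, take $\sig=\msf{Id}$, $b=-\nabla U$, $U(x)=|x|^2+|x|^4$ and $V=\frac12\log(1+U)$. Then \cref{ass:cocentration} holds, since $W\ge|\nabla U|^2/(4(1+U))$ and $\Delta V$ is bounded. But $\frac{\dd}{\dd t}U(\gamma)\le\frac14|u|^2$ shows that every path from a fixed compact set to $\{V\ge R\}$ has energy of order at least $e^{2R}$, which the factor $o(1/R)$ cannot compensate.

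Your specific ascent $u=-\lambda\sig^{\msf T}\nabla V$ does not help. It gives $\frac{\dd}{\dd t}V=(\lambda-1)|\sig^{\msf T}\nabla V|^2-W$, so each unit of $V$ gained costs more than $W/|\sig^{\msf T}\nabla V|^2$. In the example this is of order $1+U$, and under degenerate noise $\sig^{\msf T}\nabla V$ can even vanish where $W>0$. \Cref{ass:cocentration}\textup{(2)} is a second-order bound and says nothing about this.

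The fallbacks do not close the gap either. The logarithmically averaged cutoff gains only a factor $\log R$, which is useless against exponential costs. Absorbing the defect in place amounts to solving $\diver(\sig h\,\chi_R\nu)=\text{defect}$ with small $\|h\|_{L^2(\chi_R\nu)}$, for an arbitrary $\nu$ and degenerate $\sig$; that is the original difficulty. A truncation argument would need to show that the flux across high levels is small in proportion to the cost of reaching them. That is a dynamic-programming statement along the flow lines of $b-\sig j$, and an a priori bound on $\|\rho_R\|_{\mathrm{TV}}$ cannot supply it.

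The paper obtains exactly this path structure from the superposition principle, and so never reconnects at high levels. It uses the integrability of $(b-\sig j)\cdot\nabla V$ that you derived to build a complete metric $g$ with $\int\|b-\sig j\|_g\dd\nu<\infty$. The stationary case of \cite[Theorem~5.8]{Bernard2008} then gives a path measure $Q_T$ whose time marginals all equal $\nu$, concentrated on solutions of $\dot\gamma=b(\gamma)-\sig(\gamma)j(\gamma)$. The construction then runs as follows:
\begin{enumerate}
    \item Keep only the paths with $\gamma(0),\gamma(T)\in K$ and bounded range. This event has probability $p\to1$ as $K$ and the range bound grow, since $Q_T\{\gamma(0),\gamma(T)\in K\}\ge1-2\nu(K^c)$.
    \item Close each retained path in time $1$ by a connector from \cref{lem:controlled-connectors}, at cost at most $C_K$.
    \item Time-average over $[0,T+1]$.
\end{enumerate}
This gives a compactly supported stationary pair with cost at most $(TE+pC_K)/(p(T+1))$ and total-variation distance at most $2(1-p)+2/(T+1)$ from $\nu$. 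Excursions to high levels are paid for by $j$ along the retained paths themselves. Reconnection happens only inside the fixed compact set $K$, and its cost is amortized by letting $T\to\infty$. Paths that wander far are simply discarded, at a total-variation price of $2(1-p)$.
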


\begin{proof}
    Let
    \[
        v=b-\sig j,
        \qquad E=\int_{\mc X}|j|^2\dd\nu<\infty.
    \]
    We first estimate $v\cdot\nabla V$. Recall the cutoff $\Xi_R$ from \eqref{eq:cutoff}. Since $b\cdot\nabla V=-W-|\sig^{\msf T}\nabla V|^2$, testing $\diver(v\nu)=0$ against $\Xi_R(V)$ and completing the square gives
    \[
        \begin{aligned}
            0&=\int_{\mc X}\psi(V/R)
            \left(W+|\sig^{\msf T}\nabla V|^2
            +j\cdot\sig^{\msf T}\nabla V\right)\dd\nu\\
            &\ge\int_{\mc X}\psi(V/R)
            \left(W+\tfrac12|\sig^{\msf T}\nabla V|^2\right)\dd\nu
            -\tfrac12E.
        \end{aligned}
    \]
    Since $W$ is bounded below, monotone convergence as $R\to\infty$ gives $W,|\sig^{\msf T}\nabla V|^2\in L^1(\nu)$. Moreover,
    \[
        v\cdot\nabla V
        =-W-|\sig^{\msf T}\nabla V|^2-j\cdot\sig^{\msf T}\nabla V,
    \]
    and the last term is integrable by Cauchy--Schwarz. Hence $v\cdot\nabla V\in L^1(\nu)$.

    Fix $T>0$. We first construct a complete Riemannian metric for which $v$ is integrable. Define
    \[
        g_x(\xi,\xi)
        =|\nabla V(x)\cdot\xi|^2
        +\frac{|\xi|^2}{1+|b(x)|^2+\|\sig(x)\|_{\mathrm F}^2},
    \]
    where $\|\cdot\|_{\mathrm F}$ is the Frobenius norm. From $v=b-\sig j$ we get
    \[
        \|v\|_g
        \le |v\cdot\nabla V|
        +\frac{|b|+\|\sig\|_{\mathrm F}|j|}
        {\sqrt{1+|b|^2+\|\sig\|_{\mathrm F}^2}}
        \le |v\cdot\nabla V|+1+|j|.
    \]
    Integrating gives
    \[
        \int_{\mc X}\|v\|_g\dd\nu
        \le\int_{\mc X}|v\cdot\nabla V|\dd\nu+1+E^{1/2}<\infty.
    \]
    To check completeness, let $d_g$ be the distance induced by $g$. By definition of $g$ we have
    \[
        |V(x)-V(y)|\le d_g(x,y).
    \]
    If $(x_n)$ is $d_g$-Cauchy, then $(V(x_n))$ is bounded. Thus $(x_n)$ lies in a compact sublevel set of $V$ and has a subsequence converging in the Euclidean metric. Since $g$ is smooth and positive definite, the Euclidean and $g$-distances are locally equivalent. The same subsequence therefore converges for $d_g$, and the Cauchy property gives convergence of the whole sequence.

    By the stationary case of \cite[Theorem~5.8]{Bernard2008}, $\diver(v\nu)=0$ and $\int\|v\|_g\dd\nu<\infty$ yield a probability measure $Q_T$ on $C([0,T];\mc X)$, concentrated on absolutely continuous solution paths whose distribution at every time is $\nu$. More precisely,
    \begin{equation}\label{eq:stationary-path-law}
        \begin{aligned}
            (e_t)_\#Q_T&=\nu\quad(0\le t\le T),\\
            \dot\gamma(t)&=b(\gamma(t))-\sig(\gamma(t))j(\gamma(t))
            \quad\text{for a.e. }t.
        \end{aligned}
    \end{equation}
    Here $e_t(\gamma)=\gamma(t)$, and the second identity holds for $Q_T$-almost every path $\gamma$. Each path has compact image, on which the metric norms are uniformly equivalent, so these paths are also absolutely continuous in the Euclidean metric. Tonelli's theorem and the time marginals give
    \[
        \int\int_0^T|j(\gamma(t))|^2\dd t\dd Q_T(\gamma)
        =\int_0^T\int_{\mc X}|j|^2\dd\nu\dd t=TE.
    \]

    We next turn these paths into closed paths. Fix a compact set $K\subset\mc X$. By \cref{lem:controlled-connectors}, any two points of $K$ can be joined in time $1$ by a controlled path with energy at most $C_K$. These joining paths lie in a common compact set and depend smoothly on their endpoints. To bound both the joining cost and the range of the original paths, retain only the paths
    \[
        A=\left\{\gamma:\gamma(0),\gamma(T)\in K,
        \ \sup_{0\le t\le T}|\gamma(t)|\le L\right\},
        \qquad p=Q_T(A).
    \]
    This is a Borel event. By the estimate
    \[
        Q_T\{\gamma(0),\gamma(T)\in K\}\ge1-2\nu(K^c),
    \]
    and the fact that every continuous path is bounded on $[0,T]$, $p\to 1$ as $K$ increases and $L$ increases.

    Suppose $p>0$. Append to each path in $A$ the chosen path from $\gamma(T)$ back to $\gamma(0)$. Denote the resulting closed path by $\overline\gamma:[0,T+1]\to\mc X$ and its control by $u_\gamma$. On $[0,T]$, this control is $j(\gamma(t))$; on $[T,T+1]$, it is the control of the joining path. For $Q_T$-almost every $\gamma\in A$, we have
    \[
        \dot{\overline\gamma}
        =b(\overline\gamma)-\sig(\overline\gamma)u_\gamma
        \quad\text{for a.e. }t,
        \qquad
        \overline\gamma(T+1)=\overline\gamma(0).
    \]
    The control satisfies
    \[
        \int_A\int_0^{T+1}|u_\gamma(t)|^2\dd t\dd Q_T(\gamma)
        \le TE+pC_K.
    \]
    All these paths lie in the union of the closed ball of radius $L$ and the compact set containing the joining paths. Their dependence on $\gamma$ is Borel. Define the averaged occupation measure and the vector measure of controls by
    \begin{align*}
        \mu&=\frac1{p(T+1)}\int_A\int_0^{T+1}
        \delta_{\overline\gamma(t)}\dd t\dd Q_T(\gamma),\\
        \bs k&=\frac1{p(T+1)}\int_A\int_0^{T+1}
        u_\gamma(t)\delta_{\overline\gamma(t)}\dd t\dd Q_T(\gamma).
    \end{align*}
    The normalizing factor $p(T+1)$ makes $\mu$ a probability measure, and its support is compact.

    Choose $\gamma$ with law $Q_T(\cdot\mid A)$ and, independently, choose $t$ uniformly in $[0,T+1]$. The position $\overline\gamma(t)$ then has law $\mu$. Let $h(x)$ be the conditional expectation of $u_\gamma(t)$ given $\overline\gamma(t)=x$. By this definition, $\bs k=h\mu$, and conditional Jensen's inequality gives
    \begin{equation}\label{eq:closed-path-cost}
        \int_{\mc X}|h|^2\dd\mu
        \le\frac1{p(T+1)}\int_A\int_0^{T+1}|u_\gamma(t)|^2\dd t\dd Q_T(\gamma)
        \le\frac{TE+pC_K}{p(T+1)}.
    \end{equation}
    In particular, $h\in L^2(\mu;\bb R^r)$. For every $\vph\in C_c^\infty(\mc X)$, the chain rule gives
    \[
        \begin{aligned}
            \int_{\mc X}(b-\sig h)\cdot\nabla\vph\dd\mu
            &=\frac1{p(T+1)}\int_A\int_0^{T+1}
            \frac{\dd}{\dd t}\vph(\overline\gamma(t))\dd t\dd Q_T(\gamma)\\
            &=\frac1{p(T+1)}\int_A
            [\vph(\overline\gamma(T+1))-\vph(\overline\gamma(0))]\dd Q_T(\gamma)
            =0.
        \end{aligned}
    \]
    Hence $\diver((b-\sig h)\mu)=0$, so $(\mu,h\mu)$ is a compactly supported stationary pair.

    It remains to estimate the distance from $\mu$ to $\nu$. Separate the occupation of the original paths from that of the joining paths by setting
    \[
        \mu_{\mathrm{orig}}=\frac1{pT}\int_A\int_0^T\delta_{\gamma(t)}\dd t\dd Q_T(\gamma),
        \qquad
        \mu_{\mathrm{join}}=\frac1p\int_A\int_T^{T+1}\delta_{\overline\gamma(t)}\dd t\dd Q_T(\gamma).
    \]
    Both are probability measures, and
    \[
        \mu=\frac{T}{T+1}\mu_{\mathrm{orig}}+\frac1{T+1}\mu_{\mathrm{join}}.
    \]
    By \eqref{eq:stationary-path-law}, averaging over all original paths gives $\nu$. Restricting this average to $A$ gives $p\mu_{\mathrm{orig}}\le\nu$. Thus $\nu-p\mu_{\mathrm{orig}}$ is a positive measure of mass $1-p$, and
    \[
        \|\mu_{\mathrm{orig}}-\nu\|_{\mathrm{TV}}
        \le\|(1-p)\mu_{\mathrm{orig}}\|_{\mathrm{TV}}
        +\|\nu-p\mu_{\mathrm{orig}}\|_{\mathrm{TV}}
        =2(1-p).
    \]
    Thus we have
    \begin{equation}\label{eq:closed-path-approximation}
        \|\mu-\nu\|_{\mathrm{TV}}
        \le\|\mu_{\mathrm{orig}}-\nu\|_{\mathrm{TV}}
        +\frac{\|\mu_{\mathrm{join}}-\mu_{\mathrm{orig}}\|_{\mathrm{TV}}}{T+1}
        \le2(1-p)+\frac2{T+1}.
    \end{equation}

    By choosing $K_n$, $L_n$, then $T_n$ appropriately, we can ensure that $p_n\to1$ and $T_n\to\infty$. For the corresponding stationary pairs $(\mu_n,j_n\mu_n)$, \eqref{eq:closed-path-cost} and \eqref{eq:closed-path-approximation} give
    \[
        \int_{\mc X}|j_n|^2\dd\mu_n\le E + o_n(1),
        \qquad
        \|\mu_n-\nu\|_{\mathrm{TV}}=o_n(1).
    \]
    Letting $n\to\infty$ proves both claims.
\end{proof}

\begin{samepage}
The next lemma smooths a compactly supported stationary pair, using a mollifier adapted to the fixed noise range and the affine part of the drift.

\begin{lemma} \label{lem:eps-div-construction}
    Let $(\mu,j\mu)$ be a compactly supported stationary pair with $j\in L^2(\mu;\bb R^r)$. There are smooth compactly supported probability densities $\rho_\delta$ and $j_\delta,q_\delta\in L^2(\rho_\delta\dd x;\bb R^r)$ such that
    \begin{gather*}
        \rho_\delta\dd x\rightharpoonup\mu,\\
        \diver((b-\sig j_\delta)\rho_\delta)=0,\\
        \diver\Diver(a\rho_\delta)=-\diver(\sig q_\delta\rho_\delta),\\
        \uplim_{\delta\to0}\int_{\mc X}|j_\delta|^2\rho_\delta\dd x
        \le\int_{\mc X}|j|^2\dd\mu.
    \end{gather*}
\end{lemma}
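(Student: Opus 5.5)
The plan is to mollify $\mu$ by convolution with a Gaussian kernel adapted to $(\msf\Omega,\mf H)$, and then absorb every commutator error into a control in the noise range $\mf H$.

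\textbf{Mollification.} Write $v=b-\sig j$, so that $\diver(v\mu)=0$. By \cref{ass:algebraic-structure}, $b(x)=b(0)+\msf\Omega x+h(x)$ with $h\in C^\infty(\mc X;\mf H)$. Let $\eta_\delta$ be the centered Gaussian density with covariance $\delta\Sigma$, where $\Sigma>0$ is a positive definite matrix fixed below, and set $\rho_\delta\coloneqq\eta_\delta*\mu$. To get compact support, replace $\eta_\delta$ by a truncated, renormalized Gaussian; the truncation produces exponentially small errors. Then $\rho_\delta$ is smooth, $\rho_\delta\dd x\rightharpoonup\mu$, and convolution commutes with divergence, so
\[
\diver\bigl(\eta_\delta*(v\mu)\bigr)=0 .
\]

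\textbf{Commutator terms.} I would split $\eta_\delta*(v\mu)$ into three parts.
\begin{enumerate}
\item The affine part. Using $z\eta_\delta(z)=-\delta\Sigma\nabla\eta_\delta(z)$,
\[
\eta_\delta*\bigl((b(0)+\msf\Omega y)\mu\bigr)=(b(0)+\msf\Omega x)\rho_\delta+\delta\,\msf\Omega\Sigma\nabla\rho_\delta .
\]
\item The $\mf H$-valued part. The term $\eta_\delta*\bigl((h-\sig j)\mu\bigr)$ is $\mf H$-valued. Subtracting $h\rho_\delta$ leaves an $\mf H$-valued error, which equals $\sig$ applied to $\msf R\bigl[\eta_\delta*((h-\sig j)\mu)\bigr]/\rho_\delta-\msf R h$ up to a smooth commutator.
\item The candidate control. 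Define $j_\delta\rho_\delta\coloneqq -\msf R\bigl[\eta_\delta*((h-\sig j)\mu)\bigr]+(\text{commutators})$.
\end{enumerate}
The main point is that $\sig j\mu$ mollified gives a new current whose density with respect to $\rho_\delta$ has $L^2(\rho_\delta)$ norm at most $\|j\|_{L^2(\mu)}$. This follows from the joint convexity of $(m,\rho)\mapsto|m|^2/\rho$ (Jensen) and the smoothness of $\sig$ and $\msf R$ on a compact set, and it yields the limsup bound. The local Lipschitz bounds of $h,\sig$ make the commutators $O(\sqrt\delta)$ in this norm.

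\textbf{Main obstacle.} The residual affine error $\delta\,\diver(\msf\Omega\Sigma\nabla\rho_\delta)=\tfrac\delta2(\msf\Omega\Sigma+\Sigma\msf\Omega^{\msf T}):\nabla^2\rho_\delta$ must be rewritten as $\diver(\sig\,r_\delta\rho_\delta)$ for a control $r_\delta$ with $\|r_\delta\|_{L^2(\rho_\delta)}\to0$. This is where controllability enters. I would first pick $\kappa$ with $\msf\Omega-\kappa$ Hurwitz and take $\Sigma$ to be the controllability Gramian of $(\msf\Omega-\kappa,\sig(x_*))$. Then
\[
\msf\Omega\Sigma+\Sigma\msf\Omega^{\msf T}=2\kappa\Sigma-\sig(x_*)\sig(x_*)^{\msf T}.
\]
The second term is $\mf H$-directed; it produces a control $\sim\delta\,\msf R\nabla\rho_\delta/\rho_\delta$, whose squared $L^2(\rho_\delta)$ norm is $\lesssim\delta^2\cdot\delta^{-1}=\delta$ by the Gaussian Fisher-information bound, hence negligible. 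The term $2\kappa\delta\Sigma:\nabla^2\rho_\delta$ is not $\mf H$-directed. I would remove it by letting the covariance evolve in $\delta$: replace $\eta_\delta$ by the law at a small time of the linear controlled diffusion $\dd Z=(\msf\Omega Z)\dd t+\sig(x_*)\dd B$, transported backward by $e^{-t\msf\Omega}$. This is the ``mollifier adapted to the affine drift'', and for it the transport term cancels exactly against $\msf\Omega x\rho_\delta$, leaving only $\mf H$-directed second-order terms. If this exact cancellation does not close, the fallback is to spread the mollification over a time average along the linear flow. That still yields a stationary pair, because time-averaging along flows of stationary pairs preserves stationarity, as in the closed-path argument of \cref{prop:compact-stationary-approximation}.

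\textbf{Diffusion identity.} Finally, set
\[
q_\delta\coloneqq -\msf R\,\Diver(a\rho_\delta)/\rho_\delta .
\]
Since $\ran a\subset\mf H$, we have $\sig\msf R=$ projection onto $\mf H$, so $\sig q_\delta\rho_\delta=-\Diver(a\rho_\delta)$ and hence $\diver\Diver(a\rho_\delta)=-\diver(\sig q_\delta\rho_\delta)$. It remains to check $q_\delta\in L^2(\rho_\delta)$ for each fixed $\delta$. This follows from the Gaussian structure, $|\nabla\rho_\delta|^2/\rho_\delta$ being integrable. It does not need to be uniform in $\delta$, since the diffusion identity is only used at fixed $\delta$ in \cref{lem:level-2.5-bound} before sending $\eps\to0$.
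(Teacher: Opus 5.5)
Your proposal stalls exactly at the step you call the main obstacle, and neither remedy you offer closes it.

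With a scalar shift, $\msf\Omega\Sigma+\Sigma\msf\Omega^{\msf T}=2\kappa\Sigma-a_*$, where $\sig_*=\sig(x_*)$ and $a_*=\sig_*\sig_*^{\msf T}$. The residual $\kappa\delta\,\Sigma:\nabla^2\rho_\delta$ is then not of the absorbable form $(\sig_*N):\nabla^2\rho_\delta=\diver(\sig_*N\nabla\rho_\delta)$. The reason is that the symmetric part of $\sig_*N$ vanishes as a quadratic form on $\mf H^\perp$, while $\Sigma$ is positive definite.

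The backward-transported small-time law does not give the exact cancellation you hope for either. It is the Gaussian with covariance $\Sigma_t'=\int_0^te^{-s\msf\Omega}a_*e^{-s\msf\Omega^{\msf T}}\dd s$, and $\msf\Omega\Sigma_t'+\Sigma_t'\msf\Omega^{\msf T}=a_*-e^{-t\msf\Omega}a_*e^{-t\msf\Omega^{\msf T}}$. The last term is again nonzero on $\mf H^\perp$, because controllability forces $\msf\Omega\mf H\not\subset\mf H$ when $\mf H\ne\bb R^d$. Your time-averaging fallback is not specified, and it is unclear how the closed-path mechanism of \cref{prop:compact-stationary-approximation} would apply to an open linear flow.

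There is a second, related gap. The lemma asks for compactly supported $\rho_\delta$, which is needed because \cref{lem:level-2.5-bound} is only available for compactly supported measures. It also asks for \emph{exact} stationarity. So you cannot dismiss truncation errors as exponentially small: your identity $z\eta_\delta=-\delta\Sigma\nabla\eta_\delta$ fails on the truncation annulus, and whatever it leaves must be absorbed exactly into $j_\delta$.

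The missing idea is that $\msf\Omega$ in \cref{ass:algebraic-structure}(2) is only determined modulo matrices with range in $\mf H$. Replacing it by $\msf\Omega+\sig_*L$ keeps the assumption valid. The extra term $\sig_*L(x-y)$ under the convolution is $\mf H$-valued, so it goes into $j_\delta$ through $\msf R(x)$ at a cost that vanishes with the mollification width. Hence you should shift by a stabilizing \emph{feedback}, not by a scalar:
choose $\msf K$ with $\msf A=\msf\Omega-\sig_*\msf K$ Hurwitz, let $\msf Q>0$ solve $\msf A\msf Q+\msf Q\msf A^{\msf T}=-2a_*$, and put $\widetilde{\msf\Omega}=\msf A+a_*\msf Q^{-1}$.
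Then $\widetilde{\msf\Omega}-\msf\Omega$ has range in $\mf H$ and $\widetilde{\msf\Omega}\msf Q+\msf Q\widetilde{\msf\Omega}^{\msf T}=0$. It follows that $\tr\widetilde{\msf\Omega}=0$ and $\widetilde{\msf\Omega}z\cdot\msf Q^{-1}z=0$, which give $\diver_z(\widetilde{\msf\Omega}z\,\vartheta(z))=0$ for every $\msf Q$-radial kernel $\vartheta$, compactly supported or not.

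Now define $j_\delta$ so that $(b-\sig j_\delta)\rho_\delta=\vartheta_\delta*\bigl((b-\sig j)\mu\bigr)+\int\widetilde{\msf\Omega}(x-y)\vartheta_\delta(x-y)\dd\mu(y)$. This yields exact stationarity with no affine residual at all. Your Jensen argument and your choice of $q_\delta$ then finish the proof; this is the paper's route.

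Your Gaussian computation would also close after this change. If $(\msf\Omega-\sig_*\msf K)\Sigma+\Sigma(\msf\Omega-\sig_*\msf K)^{\msf T}=-a_*$, the residual becomes $\delta\,\diver\bigl(\sig_*(\msf K\Sigma-\tfrac12\sig_*^{\msf T})\nabla\rho_\delta\bigr)$, which is absorbable at $L^2(\rho_\delta)$-cost $O(\sqrt\delta)$. However, the compact-support requirement still pushes you to the skew formulation.
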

\end{samepage}

\begin{proof}
    We will choose an ellipsoidal kernel for which the field $z\mapsto\msf\Omega z$ is divergence free and tangent to the level sets of the kernel. The freedom to change $\msf\Omega$ by a matrix with range in $\mf H$ is useful here: it leaves the nondiffusive drift unchanged.

    Fix $x_*\in\mc X$ and set $\sig_*\coloneqq\sig(x_*)$. By controllability and \cref{lem:stabilizing-feedback}, choose $\msf K\in\bb R^{r\times d}$ such that $\msf A\coloneqq\msf\Omega-\sig_*\msf K$ is Hurwitz. By \cref{lem:feedback-invariance}, $(\msf A,\sig_*)$ remains controllable. \Cref{lem:Lyapunov} yields a positive definite matrix $\msf Q$ for which $\widetilde{\msf\Omega}\coloneqq\msf A+\sig_*\sig_*^{\msf T}\msf Q^{-1}$ satisfies $\widetilde{\msf\Omega}\msf Q+\msf Q\widetilde{\msf\Omega}^{\msf T}=0$.

    Since $\widetilde{\msf\Omega}-\msf\Omega=\sig_*(\sig_*^{\msf T}\msf Q^{-1}-\msf K)$ has range in $\mf H$, we still have $b(x)-b(0)-\widetilde{\msf\Omega}x\in\mf H$. Replacing $\msf\Omega$ by this matrix and relabeling it, we may therefore assume that
    \begin{equation}\label{eq:Omega-Q}
        \msf\Omega\msf Q+\msf Q\msf\Omega^{\msf T}=0.
    \end{equation}

    Let $\vartheta_0\in C_c^\infty(\mc X)$ be a nonnegative radial probability density and define the $\msf Q$-radial probability density $\vartheta(z)\coloneqq(\det\msf Q)^{-1/2}\vartheta_0(\msf Q^{-1/2}z)$. Set $\vartheta_\delta(z)\coloneqq\delta^{-d}\vartheta(z/\delta)$ and $\rho_\delta\coloneqq\vartheta_\delta*\mu$. Then $\rho_\delta\in C_c^\infty(\mc X)$ is a probability density and $\rho_\delta\dd x\rightharpoonup\mu$.

    The identity \eqref{eq:Omega-Q} gives $\tr\msf\Omega=0$ and $(\msf\Omega z)\cdot\msf Q^{-1}z=0$. Since $\nabla\vartheta_\delta(z)$ is a scalar multiple of $\msf Q^{-1}z$, we have
    \[
        \diver_z\big(\msf\Omega z\,\vartheta_\delta(z)\big)=0.
    \]

    We now choose the control so that its current equals the convolved stationary current, up to the affine term just made divergence free. On $\{\rho_\delta>0\}$, define
    \[
        j_\delta(x)
        \coloneqq
        \frac{1}{\rho_\delta(x)}
        \int_{\mc X}
        \msf R(x)
        \big[
        \sig(y)j(y)+b(x)-b(y)-\msf\Omega(x-y)
        \big]
        \vartheta_\delta(x-y)\dd\mu(y),
    \]
    and set $j_\delta=0$ on $\{\rho_\delta=0\}$. The vector inside the square brackets belongs to $\mf H$, because $\ran\sig(y)=\mf H$ and $b(x)-b(y)-\msf\Omega(x-y)\in\mf H$. Hence $\sig(x)\msf R(x)$ acts as the identity on this vector, and rearranging the equation gives
    \[
        (b(x)-\sig(x)j_\delta(x))\rho_\delta(x)
        =
        \int_{\mc X}
        \big[
            b(y)-\sig(y)j(y)+\msf\Omega(x-y)
        \big]
        \vartheta_\delta(x-y)\dd\mu(y).
    \]
    The convolution of $(b-\sig j)\mu$ with $\vartheta_\delta$ is divergence free by stationarity of $(\mu,j\mu)$, while the last term is divergence free because $\diver_z(\msf\Omega z\,\vartheta_\delta(z))=0$. Thus $\diver((b-\sig j_\delta)\rho_\delta)=0$.

    On $\{\rho_\delta>0\}$, set
    \[
        q_\delta
        \coloneqq
        -\sig^{\msf T}\frac{\nabla\rho_\delta}{\rho_\delta}
        -\msf R\Diver a,
    \]
    and set $q_\delta=0$ on $\{\rho_\delta=0\}$. Every column of $a(x)$ belongs to the fixed space $\mf H$, so $\Diver a(x)\in\mf H$. Therefore
    \[
        \sig q_\delta\rho_\delta
        =
        -a\nabla\rho_\delta
        -\rho_\delta\Diver a.
    \]
    This equality holds globally because $\nabla\rho_\delta=0$ on $\{\rho_\delta=0\}$. It follows that
    \[
        -\diver(\sig q_\delta\rho_\delta)
        =
        \diver\big(
        a\nabla\rho_\delta
        +\rho_\delta\Diver a
        \big)
        =
        \diver\Diver(a\rho_\delta).
    \]

    To verify the required integrability, note that $\sig$, $\msf R$, and $\Diver a$ are bounded on the compact support of $\rho_\delta$. Since every $0\le f\in C_c^\infty(\mc X)$ satisfies $|\nabla f|^2\le2d f\|\nabla^2f\|_\infty$, we have
    \[
        \int_{\{\rho_\delta>0\}}
        \frac{|\nabla\rho_\delta|^2}{\rho_\delta}\dd x
        <\infty.
    \]
    Therefore $q_\delta\in L^2(\rho_\delta\dd x;\bb R^r)$.

    For each $x$ with $\rho_\delta(x)>0$, the measure $\vartheta_\delta(x-y)\dd\mu(y)/\rho_\delta(x)$ is a probability measure, and the formula for $j_\delta(x)$ is an average with respect to this measure. Whenever $y\in\supp\mu$ and $\vartheta_\delta(x-y)\ne0$, we have $|x-y|\le C\delta$. Thus, for small $\delta$, both points lie in a fixed compact set. Smoothness and the identity $\msf R(y)\sig(y)=\msf{Id}_r$ then give
    \[
        \begin{aligned}
            \|\msf R(x)\sig(y)-\msf{Id}_r\|&\le C\delta,\\
            |\msf R(x)[b(x)-b(y)-\msf\Omega(x-y)]|&\le C\delta.
        \end{aligned}
    \]
    We apply Jensen's inequality to the formula for $j_\delta(x)$ and integrate with respect to $\rho_\delta(x)\dd x$. Minkowski's inequality then gives
    \[
        \|j_\delta\|_{L^2(\rho_\delta\dd x)}
        \le(1+C\delta)\|j\|_{L^2(\mu)}+C\delta.
    \]
    In particular, $j_\delta\in L^2(\rho_\delta\dd x;\bb R^r)$. Letting $\delta\downarrow0$ gives the required bound.
\end{proof}

We now combine the two approximation results with \cref{lem:level-2.5-bound} to prove the $\Gamma$-limsup inequality.

\begin{proof}[Proof of \cref{prop:pre-metastable-gamma}\textup{(1)}: $\Gamma$-limsup]
    For the upper bound, suppose $\mc J^{(-1)}(\nu)<\infty$ and take the minimizing control $j_\nu$ from \cref{lem:J-Riesz}. Applying \cref{prop:compact-stationary-approximation,lem:eps-div-construction} successively and taking a diagonal sequence gives smooth compactly supported probability densities $\rho_n$ and controls $j_n,q_n\in L^2(\rho_n\dd x;\bb R^r)$ satisfying the two divergence identities and
    \[
        \rho_n\dd x\rightharpoonup\nu,
        \qquad
        \uplim_{n\to\infty}\|j_n\|_{L^2(\rho_n\dd x)}^2
        \le4\mc J^{(-1)}(\nu).
    \]
    The norms of $q_n$ may grow with $n$, but each is finite. We can therefore let the approximation improve slowly enough that the diffusion correction still vanishes: choose $n=n(\eps)\to\infty$ with $\eps\|q_{n(\eps)}\|_{L^2(\rho_{n(\eps)}\dd x)}\to0$. By \cref{lem:level-2.5-bound},
    \[
        \eps\mc I_\eps(\rho_{n(\eps)}\dd x)
        \le\frac14\left(
        \|j_{n(\eps)}\|_{L^2(\rho_{n(\eps)}\dd x)}
        +\eps\|q_{n(\eps)}\|_{L^2(\rho_{n(\eps)}\dd x)}
        \right)^2.
    \]
    This is the required recovery family. If $\mc J^{(-1)}(\nu)=+\infty$, the upper bound is automatic.
\end{proof}

\subsection{Smaller scales}
\label{ssec:smaller-scales}

\begin{proof}[Proof of \cref{prop:pre-metastable-gamma}, part \textup{(2)}]
    Let $\varrho_\eps\prec\eps$. The $\Gamma$-liminf inequality follows from $\mc I_\eps\ge0$. For the upper bound, the key observation is that part \textup{(1)} provides a recovery family with bounded $\eps\mc I_\eps$ whenever $\mc J^{(-1)}$ is finite. Multiplication by $\varrho_\eps/\eps\to0$ then makes its cost vanish at the smaller scale. We will therefore first show that $\dom\mc J^{(-1)}$ is weakly dense, and then apply this observation through a diagonal argument.

    Fix $x\in\mc X$. We approximate $\delta_x$ by occupation measures of closed controlled paths that spend most of their time near $x$. Let $\phi_t(x)$ be the local flow of $b$, and choose $\tau_n\downarrow0$ within its interval of existence. Follow the flow from $x$ for time $\tau_n$, with zero control. By \cref{lem:controlled-connectors}, we can then return from $\phi_{\tau_n}(x)$ to $x$ in time $\tau_n^2$ using a control of finite energy. Denote the closed path by $\gamma_n$, its control by $u_n$, and its total duration by $T_n=\tau_n+\tau_n^2$. Thus $\dot\gamma_n=b(\gamma_n)-\sig(\gamma_n)u_n$ almost everywhere, and its occupation probability is
    \[
        \mu_n\coloneqq\frac1{T_n}\int_0^{T_n}\delta_{\gamma_n(t)}\dd t.
    \]
    The flow segment stays uniformly close to $x$, while the return segment occupies a fraction $\tau_n^2/T_n\to0$ of the total time. More precisely, for every $f\in C_b(\mc X)$,
    \[
        \left|\int_{\mc X}f\dd\mu_n-f(x)\right|
        \le\sup_{0\le t\le\tau_n}|f(\phi_t(x))-f(x)|
        +2\|f\|_\infty\frac{\tau_n^2}{T_n}
        \longrightarrow0.
    \]
    Hence $\mu_n\rightharpoonup\delta_x$, even if the return paths travel far from $x$.

    To see that $\mc J^{(-1)}(\mu_n)$ is finite, take $\vph\in C_c^\infty(\mc X)$. Since $\gamma_n(0)=\gamma_n(T_n)$, the chain rule gives
    \[
        \int_{\mc X}b\cdot\nabla\vph\dd\mu_n
        =\frac1{T_n}\int_0^{T_n}
        u_n(t)\cdot\sig(\gamma_n(t))^{\msf T}\nabla\vph(\gamma_n(t))\dd t.
    \]
    Substituting this identity into \eqref{eq:def-J-minus} and completing the square yields
    \[
        \mc J^{(-1)}(\mu_n)
        \le\frac1{4T_n}\int_0^{T_n}|u_n(t)|^2\dd t<\infty.
    \]
    Since $\mc J^{(-1)}$ is convex, finite convex combinations of these measures also have finite cost. Every finitely supported probability can therefore be approximated by measures in $\dom\mc J^{(-1)}$. The weak density of finitely supported probabilities proves the desired density of this domain.

    Fix $\nu\in\mc P(\mc X)$ and choose $\nu_n\rightharpoonup\nu$ with $\mc J^{(-1)}(\nu_n)<\infty$. Part \textup{(1)} gives recovery families $\nu_\eps^n\rightharpoonup\nu_n$ with bounded $\eps\mc I_\eps(\nu_\eps^n)$ for each fixed $n$. Since $\varrho_\eps/\eps\to0$, their costs $\varrho_\eps\mc I_\eps(\nu_\eps^n)$ tend to zero for fixed $n$. Choosing $n=n(\eps)\to\infty$ sufficiently slowly gives
    \[
        \nu_\eps\coloneqq\nu_\eps^{n(\eps)}\rightharpoonup\nu,
        \qquad
        \varrho_\eps\mc I_\eps(\nu_\eps)\longrightarrow0,
    \]
    which proves the $\Gamma$-limsup inequality.
\end{proof}

\section{The unit scale}
\label{sec:unit-scale}

We prove \cref{prop:unit-gamma}, which supplies the limit at the unit scale. Throughout this section, we assume \cref{ass:algebraic-structure,ass:cocentration}. The scale-$\eps$ $\Gamma$-liminf forces every weak limit of a family with bounded unit-scale cost to be supported on the equilibria. It remains to identify the cost of concentration near each $c\in\mc C_0$. We prove the $\Gamma$-liminf by a compactly supported test function and the $\Gamma$-limsup by a probability measure concentrated near $c$.

Fix $c\in\mc C_0$, write $x=c+y$, and set $\msf B_c=Db(c)$, $\sig_c=\sig(c)$, and $a_c=a(c)$. The pair $(\msf B_c,\sig_c)$ is controllable by \cref{ass:algebraic-structure}, and $\msf B_c$ is hyperbolic by \cref{ass:cocentration}. Hence \cref{lem:Riccati} provides a symmetric nonnegative matrix $\msf P_c$ satisfying the Riccati equation
\[
    \msf B_c^{\msf T}\msf P_c+\msf P_c\msf B_c=2\msf P_ca_c\msf P_c,
    \qquad
    \tr(a_c\msf P_c)=\kappa_c.
\]
This matrix is chosen so that $\msf B_c-2a_c\msf P_c$ is Hurwitz, meaning that all its eigenvalues have negative real parts. \Cref{tab:unit-matrices} collects the matrix notation used throughout the section. The remaining matrices are introduced in the proofs where they are used.

\begin{table}[htbp]
    \centering
    \caption{Matrix notation near an equilibrium $c$.}
    \label{tab:unit-matrices}
    \begin{minipage}{.60\textwidth}
        \small
        \renewcommand{\arraystretch}{1.3}
        \begin{tabular}{@{}>{\centering\arraybackslash}m{3.6em}
            @{\hspace{2.4em}}>{\raggedright\arraybackslash}m{\dimexpr\linewidth-6em\relax}@{}}
            \toprule
            \textbf{Matrix} & \textbf{Definition} \\
            \midrule
            $\msf B_c$ & $Db(c)$ \\
            $\sig_c$ & $\sig(c)$ \\
            $a_c$ & $a(c)=\sig_c\sig_c^{\msf T}$ \\
            \addlinespace[.4ex]
            $\msf P_c$
            & $\msf P_c=\msf P_c^{\msf T}\ge0$,\quad
              $\msf B_c^{\msf T}\msf P_c+\msf P_c\msf B_c=2\msf P_ca_c\msf P_c$,\newline
              chosen so that $\msf B_c-2a_c\msf P_c$ is Hurwitz \\
            $\msf S_c$
            & $\msf S_c=\msf S_c^{\msf T}$,\newline
              $\msf B_c^{\msf T}\msf S_c+\msf S_c\msf B_c+2\msf S_ca_c\msf S_c<0$ \\
            \addlinespace[.4ex]
            $\msf A$ & $\msf B_c-2a_c\msf P_c$ \\
            \addlinespace[.4ex]
            $\msf\Sigma$
            & $\displaystyle 2\int_0^\infty e^{t\msf A}a_c e^{t\msf A^{\msf T}}\dd t$ \\
            \addlinespace[.4ex]
            $\msf J$ & $\msf A+a_c\msf\Sigma^{-1}$ \\
            \addlinespace[.4ex]
            $\msf R(x)$
            & $\bigl(\sig(x)^{\msf T}\sig(x)\bigr)^{-1}\sig(x)^{\msf T}$ \\
            \bottomrule
        \end{tabular}

        \medskip
        \footnotesize\raggedright
        All matrices are $d\times d$, except $\sig_c$ ($d\times r$) and $\msf R(x)$ ($r\times d$).\par
        The dependence of $\msf A$, $\msf\Sigma$, and $\msf J$ on $c$ is suppressed.
    \end{minipage}
\end{table}

\subsection{The \texorpdfstring{$\Gamma$}{Gamma}-liminf}
\label{ssec:unit-lower-bound}

Testing \cref{cor:smooth-lower-eps} with $\vph/\eps$ gives
\begin{equation} \label{eq:unit-smooth-lower}
    \mc I_\eps(\nu)
    \ge \int_{\mc X}
    \left[-\frac1\eps\ms H(x,\nabla\vph(x))-a:\nabla^2\vph(x)\right]\dd\nu,
    \qquad \vph\in C_c^\infty(\mc X).
\end{equation}
The Riccati equation gives $(\msf B_cy)\cdot\msf P_cy=|\sig_c^{\msf T}\msf P_cy|^2$. Thus the quadratic test $-\tfrac12y\cdot\msf P_cy$ has zero Hamiltonian for the linearized coefficients, while its Hessian contributes exactly $\kappa_c$ in \eqref{eq:unit-smooth-lower}. For the original coefficients, even a small negative error in $-\ms H$ could spoil the limit after multiplication by $\eps^{-1}$. We localize this test while preserving the Hamiltonian inequality in the proof below.
\begin{proof}[Proof of \cref{prop:unit-gamma}: $\Gamma$-liminf]
    Let $\nu_\eps\rightharpoonup\nu$. If the lower limit of $\mc I_\eps(\nu_\eps)$ is finite, pass to a subsequence along which it is attained. Along this subsequence $\eps\mc I_\eps(\nu_\eps)\to0$. The scale-$\eps$ lower bound and \cref{cor:J-minus-zeroset} imply $\nu=\sum_{c\in\mc C_0}\omega_c\delta_c$.

    Fix $\eta>0$. We construct $\vph_\eta\in C_c^\infty(\mc X)$ such that
    \[
        -\ms H(x,\nabla\vph_\eta(x))\ge0
        \quad(x\in\mc X),
        \qquad
        -a(c):\nabla^2\vph_\eta(c)\ge\kappa_c-\eta
        \quad(c\in\mc C_0).
    \]

    We first cut off the quadratic functions $-\tfrac12y\cdot\msf P_cy$ near the equilibria. This preserves their Hessians at the equilibria but introduces a small error in the subsolution inequality. We then construct a second function to correct this error.

    Both constructions use the same local expansion. Recall that $\ms H(x,p)=b(x)\cdot p+p\cdot a(x)p$. Since $b(c)=0$, Taylor expansion gives $b(c+y)=\msf B_cy+O(|y|^2)$ and $a(c+y)=a_c+O(|y|)$. Thus, for any symmetric matrix $\msf M$,
    \begin{equation}\label{eq:unit-quadratic-expansion}
        \begin{aligned}
            -\ms H(c+y,\msf My)
            & =-(\msf B_cy)\cdot\msf My-(\msf My)\cdot a_c\msf My+O(|y|^3)\\
            & =-\tfrac12y\cdot
            \bigl(\msf B_c^{\msf T}\msf M+\msf M\msf B_c
            +2\msf Ma_c\msf M\bigr)y+O(|y|^3).
        \end{aligned}
    \end{equation}
    The error is uniform for $c\in\mc C_0$ and $\msf M$ in a bounded set.

    Choose $\chi\in C^\infty(\bb R;[0,1])$ equal to one on $(-\infty,0]$ and zero on $[1,\infty)$. For $0<s<e^{-1}$, define $\xi_s=1$ on $B(0,s^2)$, $\xi_s=0$ outside $B(0,s)$, and
    \[
        \xi_s(y)=\chi\left(\frac{\log|y|-2\log s}{|\log s|}\right)
        \quad(s^2<|y|<s).
    \]
    Differentiating shows that $|y||\nabla\xi_s(y)|\le C/|\log s|$. Take $s$ small enough that the balls $B(c,s)$ are disjoint, and set
    \[
        \vph_s(x)=-\frac12\sum_{c\in\mc C_0}
        \xi_s(x-c)(x-c)\cdot\msf P_c(x-c).
    \]
    For $x=c+y\in B(c,s)$, its gradient is
    \[
        \nabla\vph_s(c+y)
        =-\xi_s(y)\msf P_cy
        -\tfrac12(y\cdot\msf P_cy)\nabla\xi_s(y).
    \]
    Apply \eqref{eq:unit-quadratic-expansion} to the first term, with $\msf M=-\xi_s(y)\msf P_c$. The Riccati equation gives
    \[
        -\ms H(c+y,-\xi_s(y)\msf P_cy)
        =\xi_s(y)\bigl(1-\xi_s(y)\bigr)|\sig_c^{\msf T}\msf P_cy|^2
        +O(|y|^3).
    \]
    The leading term is nonnegative because $0\le\xi_s\le1$. The second term of $\nabla\vph_s(c+y)$ has norm at most $C|y|/|\log s|$. Since $b(c+y)=O(|y|)$ and $\ms H$ is quadratic in its second variable, adding this term changes the Hamiltonian by at most $C|y|^2/|\log s|$. Consequently,
    \[
        -\ms H(c+y,\nabla\vph_s(c+y))
        \ge-C\left(s+\frac1{|\log s|}\right)|y|^2
        \quad(|y|<s).
    \]
    The constant $C$ can be chosen uniformly because $\mc C_0$ is finite. Outside these balls, $\vph_s$ and its gradient vanish. Since $\xi_s=1$ near zero, we also have
    \[
        \nabla^2\vph_s(c)=-\msf P_c,
        \qquad
        -a_c:\nabla^2\vph_s(c)=\tr(a_c\msf P_c)=\kappa_c.
    \]

    We next construct a fixed compactly supported function $v$ for which $-\ms H(x,\nabla v)$ is nonnegative everywhere and bounded below by a positive multiple of $|x-c|^2$ near each equilibrium $c$.
    For each $c\in\mc C_0$, fix a symmetric matrix $\msf S_c$ supplied by \cref{lem:strict-riccati-inequality}, so that
    \begin{equation}\label{eq:unit-strict-riccati}
        \msf B_c^{\msf T}\msf S_c+\msf S_c\msf B_c
        +2\msf S_ca_c\msf S_c<0.
    \end{equation}
    Set $q_c(y)=\tfrac12y\cdot\msf S_cy$, so that $\nabla q_c(y)=\msf S_cy$. Apply \eqref{eq:unit-quadratic-expansion} again, now with $\msf M=\msf S_c$. By \eqref{eq:unit-strict-riccati}, its quadratic term is bounded below by a positive multiple of $|y|^2$. The $O(|y|^3)$ error is smaller than half this bound when $|y|$ is sufficiently small. We can therefore fix disjoint balls about the equilibria and constants $\alpha_c>0$ such that
    \[
        -\ms H(c+y,\nabla q_c(y))\ge\alpha_c|y|^2.
    \]

    Choose smooth cutoffs $\chi_c$ supported in these balls and equal to one on smaller balls about $c$. Define
    \[
        F(x)=\sum_{c\in\mc C_0}\chi_c(x)q_c(x-c).
    \]
    Choose $R$ so large that all the supports of $\chi_c$ lie in $\{V<R\}$, and set $v_0=\Xi_R\circ V$, using \eqref{eq:cutoff}. Since $\Xi_R'(V)=\psi(V/R)$,
    \[
        -\ms H(x,\nabla v_0)
        =\psi(V/R)W
        +\psi(V/R)\bigl(1-\psi(V/R)\bigr)|\sig^{\msf T}\nabla V|^2
        \ge0.
    \]
    On the supports of the cutoffs, $\psi(V/R)=1$, so this expression equals $W$.

    For $t\in(0,1)$, set $v=(1-t)v_0+tF$. Concavity in the gradient variable gives
    \[
        -\ms H(x,\nabla v)
        \ge(1-t)\bigl[-\ms H(x,\nabla v_0)\bigr]
        +t\bigl[-\ms H(x,\nabla F)\bigr].
    \]
    Near $c$, we have $F=q_c(\cdot-c)$, so the right-hand side is at least $t\alpha_c|x-c|^2$. On the compact regions where the cutoffs vary, $W$ has a positive minimum because these regions contain no equilibria. The quantity $-\ms H(x,\nabla F)$ is bounded below there, so choosing $t$ sufficiently small makes the right-hand side nonnegative. Outside the cutoff supports, $F$ and its gradient vanish, and the same inequality gives nonnegativity. Fix such a $t$. Since $\mc C_0$ is finite, we can choose $C_1,s_0>0$ such that
    \[
        -\ms H(x,\nabla v)\ge0
        \quad(x\in\mc X),
        \qquad
        -\ms H(c+y,\nabla v(c+y))\ge C_1|y|^2
        \quad(|y|<2s_0).
    \]
    Both $v_0$ and $F$ are smooth and compactly supported, so $v$ has these properties as well.

    Finally, we combine $\vph_s$ and $v$. First choose $\theta\in(0,1)$ so small that
    \[
        (1-\theta)\kappa_c-\theta\,a_c:\nabla^2v(c)\ge\kappa_c-\eta
        \quad(c\in\mc C_0).
    \]
    This is possible because there are only finitely many equilibria. With $\theta$ fixed, choose $s<s_0$ so small that
    \[
        (1-\theta)C\left(s+\frac1{|\log s|}\right)\le\theta C_1,
    \]
    and set $\vph_\eta=(1-\theta)\vph_s+\theta v$. On each $B(c,s)$, concavity and the preceding estimates give
    \[
        \begin{aligned}
            -\ms H(c+y,\nabla\vph_\eta(c+y))
            &\ge(1-\theta)\bigl[-\ms H(c+y,\nabla\vph_s(c+y))\bigr]
            +\theta\bigl[-\ms H(c+y,\nabla v(c+y))\bigr]\\
            &\ge\left[\theta C_1-(1-\theta)C
            \left(s+\frac1{|\log s|}\right)\right]|y|^2\ge0.
        \end{aligned}
    \]
    Outside these balls, $\nabla\vph_s=0$, so concavity gives $-\ms H(x,\nabla\vph_\eta)\ge\theta[-\ms H(x,\nabla v)]\ge0$. At each equilibrium,
    \[
        -a_c:\nabla^2\vph_\eta(c)
        =(1-\theta)\kappa_c-\theta\,a_c:\nabla^2v(c)
        \ge\kappa_c-\eta.
    \]

    The function $\vph_\eta$ is smooth and compactly supported. Dropping the nonnegative Hamiltonian term in \eqref{eq:unit-smooth-lower} and using Portmanteau's theorem gives
    \[
        \lowlim_{\eps\to0}\mc I_\eps(\nu_\eps)
        \ge\int_{\mc X}-a:\nabla^2\vph_\eta\dd\nu
        \ge\sum_{c\in\mc C_0}\omega_c\kappa_c-\eta.
    \]
    Letting $\eta\downarrow0$ proves the lower bound. In particular, it is infinite when $\nu\notin\mc P(\mc C_0)$.
\end{proof}

\subsection{The \texorpdfstring{$\Gamma$}{Gamma}-limsup}
\label{ssec:unit-upper-bound}

For the linearized diffusion near $c$, a suitable Gaussian measure is invariant under a linear control. We localize this Gaussian and adjust the control to preserve stationarity for the original diffusion, then estimate the resulting cost using \cref{lem:level-2.5-bound}.

\begin{proof}[Proof of \cref{prop:unit-gamma}: $\Gamma$-limsup]
    For recovery, it is enough to consider $\nu=\sum_{c\in\mc C_0}\omega_c\delta_c$.

    For each $c\in\mc C_0$, we construct $\mu_\eps^c\in\mc P(\mc X)$ such that
    \[
        \mu_\eps^c\rightharpoonup\delta_c,
        \qquad
        \uplim_{\eps\downarrow0}\mc I_\eps(\mu_\eps^c)\le\kappa_c.
    \]

    Fix $c\in\mc C_0$ and write $x=c+y$. We first choose a Gaussian density and a stationary current that will remain compatible after truncation. Set $\msf A=\msf B_c-2a_c\msf P_c$, which is Hurwitz by the choice of $\msf P_c$. By \cref{lem:feedback-invariance}, $(\msf A,\sig_c)$ remains controllable, so \cref{lem:Lyapunov} gives a positive definite matrix
    \[
        \msf\Sigma
        \coloneqq2\int_0^\infty e^{t\msf A}a_c e^{t\msf A^{\msf T}}\dd t,
        \qquad
        \msf A\msf\Sigma+\msf\Sigma\msf A^{\msf T}=-2a_c.
    \]
    The centered Gaussian density $g_\eps$ with covariance $\eps\msf\Sigma$ is invariant for the linear diffusion with drift $\msf Ay$ and noise coefficient $\sqrt{2\eps}\,\sig_c$. Since $\nabla g_\eps=-\eps^{-1}\msf\Sigma^{-1}y\,g_\eps$, its probability current is
    \[
        \msf Ay\,g_\eps-\eps a_c\nabla g_\eps
        =(\msf A+a_c\msf\Sigma^{-1})y\,g_\eps.
    \]
    We therefore set $\msf J=\msf A+a_c\msf\Sigma^{-1}$, so that the current is $\msf Jy\,g_\eps$. The Lyapunov equation implies
    \begin{equation} \label{eq:unit-matrix-identities}
        \tr\msf J=0,
        \qquad
        (\msf Jy)\cdot\msf\Sigma^{-1}y=0
        \quad(y\in\bb R^d).
    \end{equation}
    The first identity says that $y\mapsto\msf Jy$ is divergence free, and the second says that it is tangent to the level sets of $s(y)=y\cdot\msf\Sigma^{-1}y$. Multiplying this vector field by any smooth function of $s$ therefore preserves zero divergence. In particular, we can cut off the Gaussian along these ellipsoids while keeping its current stationary.

    Choose a smooth cutoff $\chi$ on $[0,\infty)$ equal to one on $[0,1]$, positive on $[0,2)$, and zero on $[2,\infty)$, such that $|\chi'|^2/\chi$ is bounded on $[0,2)$. Fix $\delta>0$ small enough that the ellipsoid $\{s\le2\delta^2\}$ lies in a Taylor neighborhood of $c$, and define
    \[
        \rho_\eps(c+y)
        =Z_\eps^{-1}e^{-s(y)/(2\eps)}\chi(s(y)/\delta^2),
        \qquad
        Z_\eps=\int_{\bb R^d}e^{-s(y)/(2\eps)}\chi(s(y)/\delta^2)\dd y.
    \]
    We keep $\delta$ fixed as $\eps\to0$: the Gaussian itself concentrates on the scale $\sqrt\eps$, well inside the cutoff region. The change of variables $y=\sqrt\eps\,\msf\Sigma^{1/2}z$ and dominated convergence show that $Z_\eps\sim(2\pi\eps)^{d/2}(\det\msf\Sigma)^{1/2}$, that $\rho_\eps\dd x\rightharpoonup\delta_c$, and that
    \begin{equation} \label{eq:unit-gaussian-moments}
        \frac1\eps\int_{\bb R^d}yy^{\msf T}\rho_\eps(c+y)\dd y\longrightarrow\msf\Sigma,
        \qquad
        \int_{\bb R^d}|y|^k\rho_\eps(c+y)\dd y=O(\eps^{k/2})
        \quad(k\ge0).
    \end{equation}

    We next construct the controls required by \cref{lem:level-2.5-bound} for $\rho_\eps\dd x$. We choose $j_\eps$ so that the controlled drift is $\msf Jy$, and $q_\eps$ to account for the diffusion term. To solve for these controls, we must check that the required corrections lie in the noise range $\mf H$. The definitions of $\msf A$ and $\msf J$ give
    \begin{equation}\label{eq:unit-current-drift}
        \msf B_c-\msf J=a_c(2\msf P_c-\msf\Sigma^{-1}).
    \end{equation}
    Thus $\ran(\msf B_c-\msf J)\subseteq\mf H$. The affine-remainder condition in \cref{ass:algebraic-structure} gives $b(c+y)-\msf B_cy\in\mf H$, hence $b(c+y)-\msf Jy\in\mf H$. Every column of $a$ takes values in the fixed space $\mf H$, so $\Diver a\in\mf H$ as well. Recall that the pseudoinverse $\msf R$ in \cref{tab:unit-matrices} satisfies $\msf R\sig=\msf{Id}_r$ and $\sig\msf Rv=v$ for $v\in\mf H$. It therefore converts these drift corrections into controls. On $\{\rho_\eps>0\}$ define
    \[
        \begin{aligned}
            j_\eps(c+y)&=\msf R(c+y)\bigl[b(c+y)-\msf Jy\bigr],\\
            q_\eps(c+y)&=-\sig(c+y)^{\msf T}\nabla\log\rho_\eps(c+y)
            -\msf R(c+y)\Diver a(c+y),
        \end{aligned}
    \]
    and extend the fields by zero elsewhere. The pseudoinverse identities then give
    \[
        \sig j_\eps\rho_\eps=(b-\msf Jy)\rho_\eps,
        \qquad
        \sig q_\eps\rho_\eps=-\Diver(a\rho_\eps).
    \]
    These identities hold on the zero set as well, because the smooth nonnegative function $\rho_\eps$ and its gradient vanish there. Since $\rho_\eps(c+y)$ depends only on $s(y)$, \eqref{eq:unit-matrix-identities} implies
    \[
        \diver\bigl((b-\sig j_\eps)\rho_\eps\bigr)
        =\diver(\msf Jy\rho_\eps)=0,
        \qquad
        \diver\Diver(a\rho_\eps)=-\diver(\sig q_\eps\rho_\eps).
    \]
    In particular, the ellipsoidal cutoff preserves exact stationarity.

    It remains to estimate $j_\eps+\eps q_\eps$, the combined control in \cref{lem:level-2.5-bound}. On the positive set of the density,
    \[
        \nabla\log\rho_\eps(c+y)
        =-\frac1\eps\msf\Sigma^{-1}y
        +\frac2{\delta^2}\frac{\chi'(s(y)/\delta^2)}{\chi(s(y)/\delta^2)}\msf\Sigma^{-1}y.
    \]
    Define
    \[
        \begin{aligned}
            r_\eps(y)\coloneqq{}&
            \msf R(c+y)\bigl[b(c+y)-\msf Jy\bigr]
            +\sig(c+y)^{\msf T}\msf\Sigma^{-1}y\\
            &-2\sig_c^{\msf T}\msf P_cy
            -\eps\msf R(c+y)\Diver a(c+y).
        \end{aligned}
    \]
    We then have the decomposition and estimate
    \begin{equation} \label{eq:unit-current-expansion}
        \begin{gathered}
            j_\eps(c+y)+\eps q_\eps(c+y)
            =2\sig_c^{\msf T}\msf P_c y+r_\eps(y)+e_\eps(y),\\
            |r_\eps(y)|\le C(|y|^2+\eps),\\
            e_\eps(y)=-\frac{2\eps}{\delta^2}
            \frac{\chi'(s(y)/\delta^2)}{\chi(s(y)/\delta^2)}
            \sig(c+y)^{\msf T}\msf\Sigma^{-1}y.
        \end{gathered}
    \end{equation}
    To verify the bound, the sum of the first two terms in the definition of $r_\eps$ vanishes at $y=0$ and, by $\msf R(c)a_c=\sig_c^{\msf T}$ and \eqref{eq:unit-current-drift}, has derivative
    \[
        \msf R(c)(\msf B_c-\msf J)+\sig_c^{\msf T}\msf\Sigma^{-1}
        =2\sig_c^{\msf T}\msf P_c.
    \]
    Subtracting $2\sig_c^{\msf T}\msf P_cy$ therefore leaves an $O(|y|^2)$ remainder, while the last term is $O(\eps)$ because $\msf R\Diver a$ is bounded on the fixed support. This proves the stated uniform bound. By \eqref{eq:unit-gaussian-moments},
    \[
        \frac1\eps\int_{\bb R^d}|r_\eps(y)|^2\rho_\eps(c+y)\dd y=O(\eps).
    \]
    The other remainder, $e_\eps$, comes only from the cutoff and is supported where $\delta^2<s(y)<2\delta^2$. This fixed annulus has exponentially small Gaussian mass. Since $|\chi'|^2/\chi$ is bounded and $Z_\eps\asymp\eps^{d/2}$,
    \[
        \frac1\eps\int_{\bb R^d}|e_\eps(y)|^2\rho_\eps(c+y)\dd y
        \le C\eps^{1-d/2}e^{-\delta^2/(2\eps)}=o(1).
    \]
    These estimates also prove $j_\eps,q_\eps\in L^2(\rho_\eps\dd x;\bb R^r)$, since $j_\eps$ is bounded on the support. Together with \eqref{eq:unit-current-expansion}, they give
    \[
        j_\eps+\eps q_\eps
        =2\sig_c^{\msf T}\msf P_c(\,\cdot\,-c)
        +o_{L^2(\rho_\eps\dd x)}(\sqrt\eps).
    \]
    By Cauchy--Schwarz, the preceding $L^2$ estimate allows us to replace $j_\eps+\eps q_\eps$ by its leading term in \cref{lem:level-2.5-bound}. Using the covariance limit in \eqref{eq:unit-gaussian-moments}, we obtain
    \begin{align*}
        \mc I_\eps(\rho_\eps\dd x)
        &\le\frac1{4\eps}\int_{\bb R^d}|2\sig_c^{\msf T}\msf P_c y|^2\rho_\eps(c+y)\dd y+o(1)\\
        &=\tr(\msf P_c a_c\msf P_c\msf\Sigma)+o(1).
    \end{align*}
    To identify this trace, use the Riccati and Lyapunov equations:
    \begin{align*}
        2\tr(\msf P_c a_c\msf P_c\msf\Sigma)
        &=-\tr\bigl[(\msf A^{\msf T}\msf P_c+\msf P_c\msf A)\msf\Sigma\bigr]\\
        &=-\tr\bigl[\msf P_c(\msf A\msf\Sigma+\msf\Sigma\msf A^{\msf T})\bigr]
        =2\tr(a_c\msf P_c).
    \end{align*}
    Since $\tr(a_c\msf P_c)=\kappa_c$ by \cref{lem:Riccati}, we conclude that $\uplim_{\eps\to0}\mc I_\eps(\rho_\eps\dd x)\le\kappa_c$. Together with $\rho_\eps\dd x\rightharpoonup\delta_c$, this gives the required family $\mu_\eps^c=\rho_\eps\dd x$.

    Set $\nu_\eps=\sum_{c\in\mc C_0}\omega_c\mu_\eps^c$. Then $\nu_\eps\rightharpoonup\nu$, and convexity gives
    \[
        \uplim_{\eps\to0}\mc I_\eps(\nu_\eps)
        \le\sum_{c\in\mc C_0}\omega_c
        \uplim_{\eps\to0}\mc I_\eps(\mu_\eps^c)
        \le\sum_{c\in\mc C_0}\omega_c\kappa_c
        =\mc J^{(0)}(\nu).
    \]
\end{proof}

\begin{remark}
    The Gaussian construction uses the fixed noise range and affine-remainder condition only in a neighborhood of the chosen equilibrium. Their global versions are needed for the scale-$\eps$ regularization and the analytic framework. Alternative recovery constructions in special settings appear in \cite{LandimLeeMariani2025,LandimMaura2026}.
\end{remark}

\appendix
\crefalias{section}{appendix}
\crefname{appendix}{appendix}{appendices}
\Crefname{appendix}{Appendix}{Appendices}

\section{Finite-state Donsker--Varadhan functionals} \label{app:finite-state}

Let $\ms V$ be a finite set and let
\[
    \mf Lf(x)
    \coloneqq
    \sum_{y\in\ms V}
    \msf q(x,y)\bigl[f(y)-f(x)\bigr],
    \qquad
    \msf q(x,y)\ge0,
\]
be a Markov generator.
\begin{definition} \label{def:finite-communication}
    A state $y$ is \emph{accessible} from $x$ if there is a possibly trivial directed path from $x$ to $y$ along edges with positive jump rate. Two states \emph{communicate} if each is accessible from the other. The corresponding equivalence classes are the \emph{communication classes} of $\mf L$. A communication class $\mf C$ is \emph{closed} if $\msf q(x,y)=0$ whenever $x\in\mf C$ and $y\notin\mf C$. We reserve $\mf C$ for closed communication classes and use $\mf D$ for arbitrary communication classes. The generator is \emph{irreducible} if $\ms V$ is its only communication class.
\end{definition}

\begin{lemma} \label{lem:finite-invariant-measures}
    \cite[Appendix~A]{LandimLeeMariani2025}
    If $\mf C_1,\ldots,\mf C_k$ are the closed communication classes, the restriction of $\mf L$ to each $\mf C_i$ has a unique invariant probability measure $\omega_i$, which is strictly positive there. Writing $\widehat\omega_i$ for its zero extension to $\ms V$, the invariant probability measures of $\mf L$ are precisely
    \[
        \operatorname{co}\{\widehat\omega_1,\ldots,\widehat\omega_k\}.
    \]
\end{lemma}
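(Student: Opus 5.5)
The plan is to prove the three assertions in turn: each restricted generator has a unique, strictly positive invariant law; the zero extensions of these laws are invariant for $\mf L$; and every invariant law is a convex combination of them. All three steps are finite-dimensional linear algebra combined with the accessibility structure of \cref{def:finite-communication}.

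\emph{Restriction to a closed class.} Fix a closed class $\mf C_i$. Because $\msf q(x,y)=0$ for $x\in\mf C_i$ and $y\notin\mf C_i$, the formula
\[
    \mf L_i f(x)=\sum_{y\in\mf C_i}\msf q(x,y)[f(y)-f(x)]
\]
defines an irreducible Markov generator on $\mf C_i$. For $t>0$, the semigroup $e^{t\mf L_i}$ is a stochastic matrix, and every entry is strictly positive, since each state is reachable from every other along positive-rate paths. Perron--Frobenius then gives a unique invariant probability $\omega_i$, and it is strictly positive. An equivalent route is Kirchhoff's matrix-tree formula, which exhibits $\omega_i$ explicitly as a positive vector; uniqueness follows from irreducibility.

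\emph{The extensions are invariant.} The zero extension $\widehat\omega_i$ is invariant for $\mf L$. To see this, compute
\[
    (\widehat\omega_i\mf L)(y)=\sum_x\widehat\omega_i(x)\msf q(x,y)-\widehat\omega_i(y)\lambda(y),
\]
where $\lambda(y)$ is the total jump rate out of $y$. For $y\in\mf C_i$, closedness gives $\lambda(y)=\sum_{z\in\mf C_i}\msf q(y,z)$, so the expression reduces to $(\omega_i\mf L_i)(y)=0$. For $y\notin\mf C_i$, both terms vanish, again by closedness. Convex combinations of invariant laws are invariant, so the convex hull $\operatorname{co}\{\widehat\omega_1,\ldots,\widehat\omega_k\}$ consists of invariant laws.

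\emph{Every invariant law is of this form.} This is the main step. Let $\mu$ be invariant. I first show that $\mu$ vanishes on transient states, meaning states outside the closed classes. Let $T$ be the set of transient states. Summing the stationarity equations over $T$ yields a flux balance:
\[
    \sum_{x\notin T,\;y\in T}\mu(x)\msf q(x,y)=\sum_{x\in T,\;y\notin T}\mu(x)\msf q(x,y).
\]
The left side is zero because closed classes cannot exit. Hence $\mu(x)\msf q(x,y)=0$ whenever $x\in T$ and $y\notin T$. To propagate this, order the non-closed communication classes by accessibility. From every transient state some closed class is accessible, because the partial order of classes on a finite set has closed minimal elements. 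I induct on the distance, in this order, to the closed classes, applying the same flux-balance argument to the union of transient classes that can reach a given class. This shows $\mu=0$ on every transient class.

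An alternative for this step, which I may use instead, goes through time averages. Write $\mu=\mu e^{t\mf L}$ and average in $t$. The chain started at a transient state leaves $T$ almost surely, so $\mu(T)=\mu P_t(T)\to0$. This probabilistic argument is cleaner, and I would likely present it.

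Once $\mu$ is supported on $\bigcup_i\mf C_i$, the restriction $\mu|_{\mf C_i}$ is invariant for $\mf L_i$, since no mass enters $\mf C_i$ from other closed classes. By the uniqueness in the first step, $\mu|_{\mf C_i}=\mu(\mf C_i)\omega_i$. Therefore $\mu=\sum_i\mu(\mf C_i)\widehat\omega_i$, which completes the proof.
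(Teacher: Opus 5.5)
Your proof is correct. The paper does not prove this lemma itself but imports it from \cite[Appendix~A]{LandimLeeMariani2025}, so there is no in-paper argument to compare against. Your route is the standard one: Perron--Frobenius on each closed class, a direct check that the zero extensions are invariant, and elimination of mass on the transient states.

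One small tightening concerns the deterministic induction. As you phrase it, it is loose: flux balance over a set only kills the states that have a positive-rate edge leaving that set. It closes once you add one observation. If $\mu(y)=0$, stationarity at $y$ gives $\sum_{x\ne y}\mu(x)\msf q(x,y)=0$, so zero mass propagates backward along every positive-rate edge. Every transient state has a path that stays inside $T$ until it reaches a state with an edge out of $T$, so this propagation covers all of $T$.

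The probabilistic version you say you would present is already complete. It only needs the standard fact that a finite chain is absorbed into the closed classes almost surely.
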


Define
\begin{equation} \label{eq:finite-DV}
    \mf J_{\mf L}(\omega)
    \coloneqq
    \sup_{h>0}
    \sum_{x\in\ms V}
    -\frac{\mf Lh(x)}{h(x)}\omega(x),
    \qquad
    \omega\in\mc P(\ms V).
\end{equation}

\begin{lemma} \label{lem:finite-DV-finite}
    The functional $\mf J_{\mf L}$ is convex and lower semicontinuous, and
    \[
        0\le\mf J_{\mf L}(\omega)
        \le
        \sum_{x\in\ms V}\omega(x)\sum_{y\ne x}\msf q(x,y).
    \]
\end{lemma}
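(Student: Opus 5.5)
Each assertion follows directly from the definition \eqref{eq:finite-DV}, which writes $\mf J_{\mf L}$ as a supremum of a family of functions of $\omega$; we treat the claims in the order they are stated.

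\emph{Convexity and lower semicontinuity.} Fix $h>0$. Because $\ms V$ is finite, the map
\[
    \omega\mapsto\sum_{x\in\ms V}-\frac{\mf Lh(x)}{h(x)}\,\omega(x)
\]
is linear in $\omega$, and it is continuous on $\mc P(\ms V)\subset\bb R^{\ms V}$ since all coefficients are finite. The functional $\mf J_{\mf L}$ is the pointwise supremum of this family over $h>0$. A supremum of affine continuous functions is convex and lower semicontinuous, so both properties follow at once.

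\emph{Lower bound.} Take the admissible test $h\equiv1$. Then $\mf Lh=0$, the corresponding sum vanishes, and therefore $\mf J_{\mf L}(\omega)\ge0$.

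\emph{Upper bound.} Fix an arbitrary $h>0$ and a state $x$. Expanding the generator gives
\[
    -\frac{\mf Lh(x)}{h(x)}
    =\sum_{y\ne x}\msf q(x,y)\Bigl(1-\frac{h(y)}{h(x)}\Bigr)
    \le\sum_{y\ne x}\msf q(x,y),
\]
where the inequality uses $\msf q(x,y)\ge0$ and $h(y)/h(x)>0$. The diagonal term $y=x$ contributes nothing because its bracket $h(x)-h(x)$ is zero. Multiplying by $\omega(x)\ge0$ and summing over $x$ gives the stated bound for this $h$. This bound does not depend on $h$, so it survives taking the supremum.

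No step here is a real obstacle. The only point to watch is that the bound must be uniform in $h$, and this holds because each ratio $h(y)/h(x)$ is positive.
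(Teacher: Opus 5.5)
Your proposal is correct and follows the paper's proof exactly: convexity and lower semicontinuity from the supremum of continuous linear functionals, the lower bound from $h\equiv1$, and the upper bound from $1-h(y)/h(x)\le1$ for $y\ne x$. Your remark that the bound is uniform in $h$, so it passes to the supremum, is the only point that needs checking, and you handle it.
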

\begin{proof}
    Convexity and lower semicontinuity follow by taking a supremum of continuous linear functions. The lower bound follows by testing with $h=1$, and the upper bound follows from $1-h(y)/h(x)\le1$ for $x\ne y$.
\end{proof}

\begin{lemma} \label{lem:finite-DV-zero}
    \cite[Lemma~A.6]{LandimLeeMariani2025}
    The functional $\mf J_{\mf L}$ vanishes at $\omega$ if and only if $\omega$ is invariant for $\mf L$.
\end{lemma}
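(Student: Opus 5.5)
The plan is to prove both directions directly from the definition \eqref{eq:finite-DV}. Testing with $h\equiv1$ already gives $\mf J_{\mf L}\ge0$, as noted in \cref{lem:finite-DV-finite}. So it suffices to show two things: if $\omega$ is invariant, then every positive test gives a nonpositive value; and if $\omega$ is not invariant, then some positive test gives a strictly positive value.

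\emph{Invariant $\Rightarrow$ zero.} Suppose $\omega$ is invariant, so $\sum_x\omega(x)\mf Lf(x)=0$ for every $f:\ms V\to\bb R$. Fix $h>0$ and write
\[
    -\frac{\mf Lh(x)}{h(x)}
    =\sum_{y}\msf q(x,y)\Bigl(1-\frac{h(y)}{h(x)}\Bigr).
\]
The inequality $1-t\le-\log t$ for $t>0$ gives
\[
    -\frac{\mf Lh(x)}{h(x)}
    \le\sum_y\msf q(x,y)\bigl(\log h(x)-\log h(y)\bigr)
    =-\mf L(\log h)(x).
\]
Integrating against $\omega$ and applying invariance to $f=\log h$ shows that the right-hand side has $\omega$-integral zero. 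Hence every test in \eqref{eq:finite-DV} is at most $0$, so $\mf J_{\mf L}(\omega)\le0$, and therefore $\mf J_{\mf L}(\omega)=0$.

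\emph{Zero $\Rightarrow$ invariant.} Suppose $\mf J_{\mf L}(\omega)=0$. Fix any $f:\ms V\to\bb R$ and test with $h=e^{tf}$ for $t\in\bb R$. Since $\ms V$ is finite, a Taylor expansion gives
\[
    -\frac{\mf Le^{tf}}{e^{tf}}(x)
    =\sum_y\msf q(x,y)\bigl(1-e^{t(f(y)-f(x))}\bigr)
    =-t\,\mf Lf(x)+O(t^2),
\]
with the error uniform in $x$. The hypothesis gives $-t\sum_x\omega(x)\mf Lf(x)+O(t^2)\le0$ for all $t$. Dividing by $|t|$ and letting $t\to0^\pm$ yields both $\omega(\mf Lf)\ge0$ and $\omega(\mf Lf)\le0$. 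Thus $\omega(\mf Lf)=0$ for every $f$, which is invariance of $\omega$.

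Neither direction has a real obstacle. The only point needing care is the choice of the comparison $1-t\le-\log t$, which turns the nonlinear Donsker--Varadhan integrand into a linear generator term that invariance annihilates.
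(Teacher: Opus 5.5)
Your proof is correct: both directions hold, and because $\ms V$ is finite the $O(t^2)$ remainder is uniform in $x$, which justifies the one-sided limits. The paper gives no proof of its own here and only cites \cite[Lemma~A.6]{LandimLeeMariani2025}. Your argument is the concavity argument from the proof of \cref{lem:finite-class-optimizer}, run for $\mf L$ instead of $\mf L_{\mf D}$ and evaluated at $h\equiv1$, where the tilted generator is $\mf L$ itself. There, the bound $1-t\le-\log t$ is the tangent-line inequality that makes $g\mapsto\sum_{x,y}\omega(x)\msf q(x,y)[1-e^{g(y)-g(x)}]$ concave, and your second direction is the vanishing of its derivative $-\omega(\mf Lf)$ at $g=0$.
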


For a communication class $\mf D$, define the reflected generator and escape rate by
\[
    \mf L_{\mf D}f(x)
    \coloneqq
    \sum_{y\in\mf D}
    \msf q(x,y)\bigl[f(y)-f(x)\bigr],
    \qquad
    c_{\mf D}(x)
    \coloneqq
    \sum_{y\notin\mf D}\msf q(x,y),
    \quad x\in\mf D.
\]
Write $\mf J_{\mf D}\coloneqq\mf J_{\mf L_{\mf D}}$ and $\omega(c_{\mf D})\coloneqq\sum_{x\in\mf D}c_{\mf D}(x)\omega(x)$. For $\omega\in\mc P(\mf D)$, let $\widehat\omega$ denote its zero extension to $\ms V$.

\begin{lemma} \label{lem:finite-class-restriction}
    \cite[Lemma~A.7]{LandimLeeMariani2025}
    For every communication class $\mf D$ and every $\omega\in\mc P(\mf D)$,
    \[
        \mf J_{\mf L}(\widehat\omega)
        =
        \mf J_{\mf D}(\omega)+\omega(c_{\mf D}).
    \]
\end{lemma}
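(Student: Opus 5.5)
We prove the identity $\mf J_{\mf L}(\widehat\omega)=\mf J_{\mf D}(\omega)+\omega(c_{\mf D})$ by establishing the two inequalities separately. Since $\widehat\omega$ vanishes off $\mf D$, for every $h>0$ on $\ms V$ only the states $x\in\mf D$ contribute to the sum in \eqref{eq:finite-DV}. We split the jumps from $x\in\mf D$ into those staying in $\mf D$ and those leaving it, which gives
\[
    -\frac{\mf Lh(x)}{h(x)}
    =-\frac{\mf L_{\mf D}(h|_{\mf D})(x)}{h(x)}
    +c_{\mf D}(x)
    -\sum_{y\notin\mf D}\msf q(x,y)\frac{h(y)}{h(x)}.
\]

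\emph{Upper bound.} The last sum is nonnegative. Dropping it bounds the objective by the $\mf J_{\mf D}$-objective evaluated at the admissible test $h|_{\mf D}$, plus $\omega(c_{\mf D})$. Taking the supremum over $h$ yields $\mf J_{\mf L}(\widehat\omega)\le\mf J_{\mf D}(\omega)+\omega(c_{\mf D})$.

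\emph{Lower bound.} Fix $g>0$ on $\mf D$. For $\eta>0$, define $h_\eta$ by $h_\eta=g$ on $\mf D$ and $h_\eta=\eta$ off $\mf D$. Then $h_\eta>0$, and the exit term equals $\eta\sum_{y\notin\mf D}\msf q(x,y)/g(x)$, which tends to $0$ as $\eta\downarrow0$ because $\mf D$ is finite. Hence $\mf J_{\mf L}(\widehat\omega)$ is at least the $\mf J_{\mf D}$-objective at $g$, plus $\omega(c_{\mf D})$, minus $o(1)$. Letting $\eta\downarrow0$ and then taking the supremum over $g$ gives the reverse inequality.

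Neither step presents a real obstacle. The only point that requires care is that the supremum in \eqref{eq:finite-DV} ranges over strictly positive $h$, so we cannot set $h=0$ outside $\mf D$ directly; the limit $\eta\downarrow0$ handles this. No property of $\mf D$ beyond its finiteness is used; in particular, communication plays no role, so the identity holds for any subset $\mf D\subset\ms V$.
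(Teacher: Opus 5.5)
Your proof is correct and complete. Splitting $\mf L h$ on $\mf D$ into internal jumps and exits, then dropping the nonnegative exit term $\sum_{y\notin\mf D}\msf q(x,y)h(y)/h(x)$, gives the upper bound; taking $h=\eta\downarrow0$ off $\mf D$ gives the lower bound. You are also right that communication is never used, so the identity holds for every subset $\mf D\subset\ms V$. The paper gives no proof of its own here, only the citation \cite[Lemma~A.7]{LandimLeeMariani2025}, so your argument serves as a self-contained replacement. It rests on the same exit-splitting that underlies \eqref{eq:class-zero-extension-p} in the proof of \cref{lem:class-recovery-p}.
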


\begin{lemma} \label{lem:finite-class-splitting}
    \cite[Lemma~A.8]{LandimLeeMariani2025}
    Let $\mf D_1,\ldots,\mf D_N$ be all communication classes. For $\omega\in\mc P(\ms V)$ and every $i$ with $\omega(\mf D_i)>0$, let $\omega_i$ be the conditional probability measure of $\omega$ on $\mf D_i$. Then
    \[
        \mf J_{\mf L}(\omega)
        =
        \sum_{i:\omega(\mf D_i)>0}
        \omega(\mf D_i)\mf J_{\mf L}(\widehat\omega_i).
    \]
\end{lemma}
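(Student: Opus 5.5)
The identity asserts that $\mf J_{\mf L}$ is affine along the decomposition of $\omega$ over communication classes. Convexity of $\mf J_{\mf L}$ (\cref{lem:finite-DV-finite}) gives one inequality at once, so the real work is the reverse inequality. For that I would build a single test function $h>0$ on $\ms V$ that is nearly optimal on every class simultaneously and, in addition, makes every jump leaving a class cost its full escape rate. \Cref{lem:finite-class-restriction} identifies that full escape cost as part of each $\mf J_{\mf L}(\widehat\omega_i)$.

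\emph{Upper bound.} Since $\omega=\sum_{i:\omega(\mf D_i)>0}\omega(\mf D_i)\widehat\omega_i$ is a convex combination, convexity of $\mf J_{\mf L}$ gives
\[
\mf J_{\mf L}(\omega)\le\sum_{i:\omega(\mf D_i)>0}\omega(\mf D_i)\mf J_{\mf L}(\widehat\omega_i).
\]

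\emph{Lower bound.} By \cref{lem:finite-class-restriction}, each term on the right equals $\mf J_{\mf D_i}(\omega_i)+\omega_i(c_{\mf D_i})$. Fix $\eta>0$. For every $i$ with $\omega(\mf D_i)>0$, choose $g_i:\mf D_i\to(0,\infty)$ with
\[
\sum_{x\in\mf D_i}-\frac{\mf L_{\mf D_i}g_i(x)}{g_i(x)}\,\omega_i(x)\ge\mf J_{\mf D_i}(\omega_i)-\eta,
\]
and set $g_i\equiv1$ on the remaining classes.

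Next I order the classes. Accessibility induces a partial order on the communication classes, and it is acyclic: if $\mf D_i$ reached $\mf D_j$ and $\mf D_j$ reached $\mf D_i$ for $i\ne j$, the two classes would communicate and hence coincide. Fix a linear extension of this order, so that every positive-rate jump from $\mf D_i$ to $\mf D_j$ with $j\ne i$ has $\operatorname{rank}(\mf D_j)>\operatorname{rank}(\mf D_i)$. For $\tau\in(0,1)$, define
\[
h_\tau\coloneqq\tau^{\operatorname{rank}(\mf D_i)}g_i\quad\text{on }\mf D_i.
\]
For $x\in\mf D_i$, splitting jumps into those inside and outside $\mf D_i$ gives
\[
-\frac{\mf Lh_\tau(x)}{h_\tau(x)}
=-\frac{\mf L_{\mf D_i}g_i(x)}{g_i(x)}
+\sum_{y\notin\mf D_i}\msf q(x,y)\Bigl(1-\frac{h_\tau(y)}{h_\tau(x)}\Bigr).
\]
Every $y$ with $\msf q(x,y)>0$ and $y\notin\mf D_i$ lies in a class of strictly larger rank, so $h_\tau(y)/h_\tau(x)=O(\tau)\to0$. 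Hence the second sum tends to $c_{\mf D_i}(x)$ as $\tau\downarrow0$.

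Testing \eqref{eq:finite-DV} with $h_\tau$ and grouping the sum by classes, I get
\[
\mf J_{\mf L}(\omega)\ge\sum_{i:\omega(\mf D_i)>0}\omega(\mf D_i)\Bigl[\sum_{x\in\mf D_i}-\frac{\mf L_{\mf D_i}g_i(x)}{g_i(x)}\omega_i(x)+\omega_i(c_{\mf D_i})\Bigr]+o_\tau(1).
\]
By the choice of $g_i$, the right-hand side is at least $\sum_i\omega(\mf D_i)\bigl[\mf J_{\mf D_i}(\omega_i)+\omega_i(c_{\mf D_i})\bigr]-\eta+o_\tau(1)$. Letting $\tau\downarrow0$ and then $\eta\downarrow0$ gives the reverse inequality.

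The only point needing care is the ordering step: the acyclicity of accessibility between distinct classes is what allows the geometric weights $\tau^{\operatorname{rank}}$ to suppress every outgoing term at once. Once the ordering is fixed, the remaining steps are routine finite-dimensional limits.
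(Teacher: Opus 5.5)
Your proof is correct. The paper does not prove this lemma itself; it imports it from \cite[Lemma~A.8]{LandimLeeMariani2025}, so your argument is a self-contained replacement rather than a variant of an internal proof.

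The convexity step is immediate, and the lower bound works for the reason you give. Along a linear extension of the accessibility order, every positive-rate exit from $\mf D_i$ lands in a class of strictly larger rank. The weights $\tau^{\operatorname{rank}}$ therefore send each exit ratio $h_\tau(y)/h_\tau(x)$ to zero, and each exit term rises to its maximal value $c_{\mf D_i}(x)$. The within-class terms are unchanged. This is the same mechanism as the zero-extension identity \eqref{eq:class-zero-extension-p} used in \cref{ssec:metastable-upper-bound}, but realized by a strictly positive, hence admissible, test function. The ordering is what lets one $h_\tau$ do this for all mass-carrying classes at once, including when one of them is accessible from another.

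One further observation: you only need the elementary half of \cref{lem:finite-class-restriction}, namely $\mf J_{\mf L}(\widehat\omega_i)\le\mf J_{\mf D_i}(\omega_i)+\omega_i(c_{\mf D_i})$. This follows from $1-h(y)/h(x)\le1$ for exit jumps. With it, your lower bound and the convexity bound close into a chain of equalities. Specializing to $\omega$ supported on a single class then recovers \cref{lem:finite-class-restriction} itself. So your argument establishes both cited lemmas without appeal to the reference.
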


\begin{lemma} \label{lem:finite-class-optimizer}
    If $\omega\in\mc P(\mf D)$ is strictly positive, there exists $h:\mf D\to(0,\infty)$ such that
    \[
        \mf J_{\mf D}(\omega)
        =
        \sum_{x\in\mf D}
        -\frac{\mf L_{\mf D}h(x)}{h(x)}\omega(x).
    \]
    More generally, for any $\omega\in\mc P(\mf D)$, a positive function $h$ attains the supremum defining $\mf J_{\mf D}(\omega)$ if and only if $\omega$ is invariant for the generator
    \[
        \mf L_{\mf D}^hf
        \coloneqq
        \frac{\mf L_{\mf D}(hf)}h
        -\frac{f\mf L_{\mf D}h}h.
    \]
    This generator is irreducible; when $h$ is an optimizer, $\omega$ is its unique invariant probability measure.
\end{lemma}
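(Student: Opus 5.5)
The plan is to reduce the statement to a Perron--Frobenius-type variational problem on the finite irreducible class $\mf D$. For existence, fix a strictly positive $\omega\in\mc P(\mf D)$ and parametrize positive functions as $h=e^{\phi}$. I will work with
\[
    \Lambda(\phi)\coloneqq\sum_{x\in\mf D}-\frac{\mf L_{\mf D}e^{\phi}(x)}{e^{\phi(x)}}\omega(x)
    =\sum_{x\in\mf D}\omega(x)\sum_{y\in\mf D}\msf q(x,y)\bigl[1-e^{\phi(y)-\phi(x)}\bigr].
\]
This function is concave in $\phi$, since each term is $-e^{\text{linear}}$, and it is invariant under adding constants. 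So I may normalize $\min\phi=0$ at a fixed state. The key estimate is coercivity. If $\max\phi-\min\phi\to\infty$, irreducibility of $\mf L_{\mf D}$ and $\omega>0$ give an edge $x\to y$ with $\msf q(x,y)>0$ along which $\phi(y)-\phi(x)\ge(\max\phi-\min\phi)/|\mf D|$. Take such an edge on a directed path from an argmin to an argmax. Its term tends to $-\infty$, while all other terms stay bounded above by $\sum\omega(x)\msf q(x,y)$. Hence the supremum may be restricted to a compact set of normalized $\phi$, and continuity gives a maximizer $h=e^{\phi}>0$. I expect this coercivity step to be the only real obstacle, and it is exactly where strict positivity of $\omega$ is needed.

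For the characterization, take a general $\omega\in\mc P(\mf D)$ and positive $h$. Concavity of $\Lambda$ means that $h=e^\phi$ is a maximizer if and only if $\nabla_\phi\Lambda=0$. Differentiating in $\phi(z)$ gives
\[
    \partial_{\phi(z)}\Lambda
    =\sum_{y}\omega(z)\msf q(z,y)\frac{h(y)}{h(z)}-\sum_{x}\omega(x)\msf q(x,z)\frac{h(z)}{h(x)}.
\]
This is precisely $-(\omega\mf L^h_{\mf D})(z)$, because $\mf L^h_{\mf D}$ is the jump generator with rates $\msf q^h(x,y)=\msf q(x,y)h(y)/h(x)$. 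Indeed, expanding $\mf L_{\mf D}(hf)/h-f\mf L_{\mf D}h/h$ yields $\sum_y\msf q(x,y)\frac{h(y)}{h(x)}[f(y)-f(x)]$. Thus criticality is equivalent to $\omega\mf L^h_{\mf D}=0$, that is, to invariance of $\omega$ for $\mf L^h_{\mf D}$.

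For the remaining claims, the tilted rates $\msf q^h$ have the same support as $\msf q$ on $\mf D$, so $\mf L^h_{\mf D}$ is irreducible. Uniqueness of its invariant probability then follows from \cref{lem:finite-invariant-measures} applied with a single closed class. That lemma also yields the strict positivity of any such invariant $\omega$. In particular, an optimizer can only exist for a $\omega$ that is strictly positive.
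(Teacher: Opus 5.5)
Your proof is correct. The characterization and the closing claims follow the paper's argument. The paper also writes $h=e^g$ and uses concavity of the resulting functional. It identifies the first-order condition with $\sum_x\omega(x)\mf L_{\mf D}^hf(x)=0$ for all $f$; your componentwise gradient is the case $f=\bs1_z$. It gets irreducibility because the tilted rates $\msf q(x,y)h(y)/h(x)$ have the same support as $\msf q$.

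The genuine difference is the existence step for strictly positive $\omega$. The paper simply cites \cite[Lemma~A.3]{Landim2023FiniteState}, while you prove it directly by coercivity and compactness. Your route is self-contained and shows exactly where irreducibility enters (a directed path inside $\mf D$ from an argmin to an argmax of $\phi$) and where $\omega>0$ enters. You should state the coercivity uniformly over the finitely many positive-rate edges, for example
\[
    \Lambda(\phi)\le\sum_{x,y\in\mf D}\omega(x)\msf q(x,y)-c\bigl(e^{(\max\phi-\min\phi)/|\mf D|}-1\bigr),
    \qquad
    c\coloneqq\min_{\msf q(x,y)>0}\omega(x)\msf q(x,y)>0.
\]
Then $\{\Lambda\ge\Lambda(0)=0\}\cap\{\min\phi=0\}$ is a nonempty compact set on which the continuous $\Lambda$ attains its supremum. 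Your final remark, that a positive optimizer can exist only when $\omega>0$, is a correct consequence of the characterization.
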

\begin{proof}
    For strictly positive $\omega$, the existence of a positive optimizer follows from \cite[Lemma~A.3]{Landim2023FiniteState}.

    Now let $\omega\in\mc P(\mf D)$ be arbitrary. Writing $h=e^g$, the expression being maximized becomes
    \[
        F(g)\coloneqq
        \sum_{x,y\in\mf D}\omega(x)\msf q(x,y)
        \bigl[1-e^{g(y)-g(x)}\bigr].
    \]
    Since the exponential is convex and $\omega(x)\msf q(x,y)\ge0$, $F$ is concave on $\bb R^{\mf D}$. For every $f:\mf D\to\bb R$, differentiation gives
    \[
        \left.\frac{\dd}{\dd t}\right|_{t=0}
        F(g+tf)
        =-\sum_{x,y\in\mf D}\omega(x)\msf q(x,y)
        \frac{h(y)}{h(x)}\bigl[f(y)-f(x)\bigr]
        =-\sum_{x\in\mf D}\omega(x)\mf L_{\mf D}^hf(x).
    \]
    By concavity, $g$ maximizes $F$ if and only if this derivative vanishes for every $f$. The last expression shows that this is precisely the condition that $\omega$ be invariant for $\mf L_{\mf D}^h$.

    Finally, the jump rate from $x$ to $y$ for $\mf L_{\mf D}^h$ is $\msf q(x,y)h(y)/h(x)$. Since $h>0$, this rate is positive exactly when $\msf q(x,y)>0$. Thus $\mf L_{\mf D}^h$ has the same communication classes as $\mf L_{\mf D}$ and is irreducible. Its invariant probability measure is therefore unique.
\end{proof}

\section{Control theory} \label{app:control-theory}

We recall the finite-dimensional control-theoretic facts used in the paper. A matrix is Hurwitz if all its eigenvalues have strictly negative real parts, and hyperbolic if none of its eigenvalues has zero real part. A pair $(\msf M,\msf G)\in\bb R^{d\times d}\times\bb R^{d\times r}$ is controllable if
\[
    \rank\begin{pmatrix}
        \msf G&\msf M\msf G&\cdots&\msf M^{d-1}\msf G
    \end{pmatrix}
    =d.
\]
Equivalently, for every $T>0$, the Gramian
\begin{equation} \label{eq:control-Gramian}
    \msf Q_T
    \coloneqq
    \int_0^T
    e^{(T-t)\msf M}\msf G\msf G^{\msf T}
    e^{(T-t)\msf M^{\msf T}}\dd t
\end{equation}
is positive definite, and the system $\dot z=\msf Mz+\msf Gu$ can be steered between arbitrary points in time $T$; see \cite[Theorems~3 and~5]{Sontag1998}.

\begin{lemma} \label{lem:feedback-invariance}
    \cite[Lemma/Exercise~5.1.8]{Sontag1998}
    Let $\msf M\in\bb R^{d\times d}$ and $\msf G\in\bb R^{d\times r}$. For every $\msf F\in\bb R^{r\times d}$, the pair $(\msf M,\msf G)$ is controllable if and only if $(\msf M+\msf G\msf F,\msf G)$ is controllable.
\end{lemma}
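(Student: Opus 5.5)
The claim is the feedback-invariance lemma: $(\msf M,\msf G)$ is controllable if and only if $(\msf M+\msf G\msf F,\msf G)$ is. Because the second pair becomes the first after applying the feedback $-\msf F$, it is enough to prove one direction. The plan is to show directly that the two Kalman controllability matrices have the same column space. Write $\mc K_k(\msf M)\coloneqq\ran\begin{pmatrix}\msf G&\msf M\msf G&\cdots&\msf M^{k}\msf G\end{pmatrix}$ and set $\msf M'\coloneqq\msf M+\msf G\msf F$. We prove by induction on $k$ that
\[
    \mc K_k(\msf M')=\mc K_k(\msf M),
    \qquad k\ge0 .
\]

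For $k=0$ both sides equal $\ran\msf G$. Suppose the identity holds for some $k$. Each column of $\msf M'^{\,k+1}\msf G$ has the form $\msf M'v$ with $v\in\mc K_k(\msf M')=\mc K_k(\msf M)$. Then
\[
    \msf M'v=\msf Mv+\msf G(\msf Fv),
\]
and $\msf Mv\in\mc K_{k+1}(\msf M)$ because $\msf M\mc K_k(\msf M)\subseteq\mc K_{k+1}(\msf M)$, while $\msf G\msf Fv\in\ran\msf G\subseteq\mc K_{k+1}(\msf M)$. This gives $\mc K_{k+1}(\msf M')\subseteq\mc K_{k+1}(\msf M)$. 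The reverse inclusion follows from the same argument with the roles of $\msf M$ and $\msf M'$ exchanged, since $\msf M=\msf M'+\msf G(-\msf F)$.

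Taking $k=d-1$, the two Kalman matrices have equal ranks. Controllability is exactly the statement that this rank equals $d$, so the lemma follows. The only step needing care is the inclusion $\msf M\mc K_k\subseteq\mc K_{k+1}$, and it holds by definition.

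As a cross-check, one can argue through the Gramian or steering characterization recalled before the lemma. If $u$ steers $\dot z=\msf Mz+\msf Gu$ from one point to another, then the control $u-\msf Fz$ steers $\dot z=\msf M'z+\msf Gu$ between the same points. The Kalman-rank argument above is nonetheless the cleanest route.
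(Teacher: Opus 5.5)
Your proof is correct. The induction shows that $\mc K_k(\msf M)$ and $\mc K_k(\msf M+\msf G\msf F)$ coincide for every $k$, and taking $k=d-1$ gives equal Kalman ranks. In the inductive step you should also say that the first $k+1$ blocks span $\mc K_k(\msf M')=\mc K_k(\msf M)\subseteq\mc K_{k+1}(\msf M)$, but this is immediate.

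The paper gives no proof of this lemma; it only cites Sontag. Your argument is therefore a self-contained replacement, and it works directly from the rank definition of controllability adopted in the appendix. A shorter route is the Hautus test, via the factorization
\[
    \begin{pmatrix}\lam\msf{Id}_d-\msf M-\msf G\msf F&\msf G\end{pmatrix}
    =\begin{pmatrix}\lam\msf{Id}_d-\msf M&\msf G\end{pmatrix}
    \begin{pmatrix}\msf{Id}_d&0\\-\msf F&\msf{Id}_r\end{pmatrix},
\]
in which the right factor is invertible. That route, however, presupposes the PBH criterion, whereas yours needs nothing beyond the definition.

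Your inductive step is the mechanism the paper uses in the proof of \cref{lem:hypoellipticity-eps}. There the $\mf H$-valued term $Dh_\eps e$ in $[b_\eps^\circ,e]$ is absorbed into the module, just as you absorb $\msf G\msf Fv$ into $\ran\msf G$. Your steering cross-check, keeping the trajectory and changing the control to $u-\msf Fz$, is the trick behind \cref{lem:controlled-connectors}. There a path for the linear system is reused for the nonlinear one through the pseudoinverse $\msf R$.
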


\begin{lemma} \label{lem:stabilizing-feedback}
    \cite[Theorem~13]{Sontag1998}
    Let $\msf M\in\bb R^{d\times d}$ and $\msf G\in\bb R^{d\times r}$. If $(\msf M,\msf G)$ is controllable, then there exists $\msf K\in\bb R^{r\times d}$ such that $\msf M-\msf G\msf K$ is Hurwitz.
\end{lemma}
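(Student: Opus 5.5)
This is a classical pole-placement result, so the plan is to reduce to a single-input controllable pair in companion form and then choose the feedback coefficients explicitly.

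\emph{Step 1 (single input).} Suppose first that $r=1$, so $\msf G=g\in\bb R^d$. Controllability makes $(g,\msf Mg,\ldots,\msf M^{d-1}g)$ a basis. In the Krylov basis ordered in reverse, the pair $(\msf M,g)$ is similar to the controllable canonical form. There the matrix is the companion matrix of $\det(\lambda-\msf M)$, and the input vector is the last standard basis vector. Feedback $u=-\msf Kz$ then changes only the last row, which holds the characteristic polynomial coefficients. We may therefore prescribe any monic characteristic polynomial for $\msf M-g\msf K$, for instance $(\lambda+1)^d$. Transforming back gives a Hurwitz closed-loop matrix.

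\emph{Step 2 (Heymann's lemma).} For general $r$, we reduce to Step 1. We look for $\msf F_0\in\bb R^{r\times d}$ and a column $w\in\bb R^r$ such that $(\msf M+\msf G\msf F_0,\msf Gw)$ is controllable. The construction is as follows. Pick $g_1=\msf Gv_1\neq0$ and build the Krylov chain $x_1=g_1$, $x_{k+1}=\msf Mx_k+\msf Gv_{k+1}$. The $v_{k+1}$ are chosen as $0$ while $\msf Mx_k$ leaves the current span. When the span becomes $\msf M$-invariant but not all of $\bb R^d$, we choose $v_{k+1}$ so that $\msf Gv_{k+1}$ exits the span. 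This is possible because otherwise the span would be an $\msf M$-invariant subspace containing $\ran\msf G$, which contradicts controllability. Continuing in this way yields a basis $x_1,\ldots,x_d$. Define $\msf F_0$ by $\msf F_0x_k=v_{k+1}$ (with $v_{d+1}$ arbitrary). Then $(\msf M+\msf G\msf F_0)x_k=x_{k+1}$, so $x_1=\msf Gv_1$ is a cyclic vector and the reduced pair is controllable.

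\emph{Step 3 (combine).} Apply Step 1 to the pair $(\msf M+\msf G\msf F_0,\msf Gv_1)$ to obtain $k^{\msf T}$ with $\msf M+\msf G\msf F_0-\msf Gv_1k^{\msf T}$ Hurwitz. Then set $\msf K=v_1k^{\msf T}-\msf F_0$, which gives $\msf M-\msf G\msf K$ Hurwitz.

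An alternative that avoids Heymann's lemma is to take $\msf K=\msf G^{\msf T}\msf Q_T^{-1}$, using a shifted Gramian for $-\msf M-\mu$ with $\mu$ large. The resulting Lyapunov inequality then shows that $\msf M-\msf G\msf K$ is Hurwitz. Either route works.

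The main obstacle is Step 2, specifically the invariance argument that guarantees a new input direction exists whenever the Krylov span stalls. The rest is linear algebra.
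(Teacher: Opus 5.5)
Your proof is correct, and it is the standard argument behind the result, which the paper does not prove but only cites as Sontag's pole-shifting theorem: single-input controller form plus Heymann's lemma, as you do. Your Gramian alternative is also valid.

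One slip in Step~1. In the reversed Krylov basis $(\msf M^{d-1}g,\ldots,g)$, the coefficients of $\det(\lambda-\msf M)=\lambda^d+a_{d-1}\lambda^{d-1}+\cdots+a_0$ sit in the first column, not the last row. Use instead $w_d=g$ and $w_k=\msf Mw_{k+1}+a_kg$ for $1\le k<d$. In this basis $g=e_d$, and $\msf M$ has ones on the superdiagonal and last row $(-a_0,\ldots,-a_{d-1})$, which is the controller form you need.
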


\begin{lemma} \label{lem:controlled-connectors}
    Under \cref{ass:algebraic-structure}, for every $T>0$ and $z,w\in\mc X$, there exist $\gamma_{z,w}\in C^\infty([0,T];\mc X)$ and $u_{z,w}\in C^\infty([0,T];\bb R^r)$ such that
    \[
        \dot\gamma_{z,w}
        =
        b(\gamma_{z,w})-\sig(\gamma_{z,w})u_{z,w},
        \qquad
        \gamma_{z,w}(0)=z,
        \qquad
        \gamma_{z,w}(T)=w.
    \]
    For fixed $T$, the path and control may be chosen smoothly in $(z,w,t)$. Consequently, for endpoints in a fixed compact set, the paths remain in a common compact set and their energies are uniformly bounded.
\end{lemma}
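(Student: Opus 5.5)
The plan is to build the connecting path explicitly from the affine structure of \cref{ass:algebraic-structure}, rather than appealing to a general nonlinear controllability theorem. By item (2), write $b(x)=b(0)+\msf\Omega x+h(x)$ with $h\in C^\infty(\mc X;\mf H)$. Since $\ran\sig(x)=\mf H$ for every $x$ and $\msf R(x)$ is smooth with $\sig(x)\msf R(x)$ the orthogonal projection onto $\mf H$, any path $\gamma$ for which $\dot\gamma-b(0)-\msf\Omega\gamma$ takes values in $\mf H$ can be realized with the smooth control
\[
    u=\msf R(\gamma)\bigl(b(\gamma)-\dot\gamma\bigr).
\]
Indeed $b(\gamma)-\dot\gamma=h(\gamma)-\bigl(\dot\gamma-b(0)-\msf\Omega\gamma\bigr)\in\mf H$, so $\sig(\gamma)u=b(\gamma)-\dot\gamma$. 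The problem therefore reduces to a linear one: find a smooth $\gamma$ with $\gamma(0)=z$, $\gamma(T)=w$ and $\dot\gamma=b(0)+\msf\Omega\gamma+\msf G v$ for some smooth $v$, where $\msf G$ is any fixed $d\times r$ matrix with range $\mf H$, say $\msf G=\sig(x_*)$.

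By item (3), the pair $(\msf\Omega,\msf G)$ is controllable, so the Gramian $\msf Q_T$ of \eqref{eq:control-Gramian} is positive definite. I would then take the standard minimum-energy control
\[
    v(t)=\msf G^{\msf T}e^{(T-t)\msf\Omega^{\msf T}}\msf Q_T^{-1}
    \Bigl(w-e^{T\msf\Omega}z-\int_0^Te^{(T-s)\msf\Omega}b(0)\dd s\Bigr),
\]
which gives the linear trajectory $\gamma$ from $z$ to $w$ by variation of constants. Both $v$ and $\gamma$ are real-analytic in $t$ and affine in $(z,w)$, hence smooth in $(z,w,t)$. Substituting into the formula for $u$ shows that $u_{z,w}(t)$ is smooth in $(z,w,t)$, because $\msf R$ and $b$ are smooth.

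For the consequences, fix a compact $K$. The map $(z,w,t)\mapsto\gamma_{z,w}(t)$ is continuous on the compact set $K\times K\times[0,T]$, so the paths lie in a common compact set. Likewise $|u_{z,w}(t)|^2$ is continuous there, so $\int_0^T|u_{z,w}|^2\dd t$ is uniformly bounded.

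I expect the only delicate point to be checking that the correction $b(\gamma)-\dot\gamma$ really lies in $\mf H=\ran\sig(\gamma)$. This is exactly where the fixed noise range and the affine nondiffusive drift are used. Once that holds, the pseudoinverse gives the control with no further work, and controllability of $(\msf\Omega,\sig(x_*))$ is all that is needed for the linear steering.
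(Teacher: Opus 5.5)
Your proposal is correct and is essentially the paper's proof. The paper also steers the linear system $\dot\gamma=b(0)+\msf\Omega\gamma-\sig(x_*)u^0$ with the Gramian minimum-energy control (your $v$ is its $-u^0$). It then recovers the actual control as $\msf R(\gamma)\bigl[b(\gamma)-b(0)-\msf\Omega\gamma+\sig(x_*)u^0\bigr]$, using that this vector lies in $\mf H$, and gets smoothness in $(z,w,t)$ and the compactness and energy bounds by the same continuity argument.
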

\begin{proof}
    Fix $x_*\in\mc X$ such that $(\msf\Omega,\sig(x_*))$ is controllable, and set
    \[
        \sig_*\coloneqq\sig(x_*),
        \qquad
        b_{\mathrm{rem}}(x)\coloneqq b(x)-b(0)-\msf\Omega x.
    \]
    Let $\msf Q_T$ be the positive definite Gramian in \eqref{eq:control-Gramian} for $(\msf M,\msf G)=(\msf\Omega,\sig_*)$, and define
    \[
        \begin{aligned}
            \xi_{z,w}
            &\coloneqq
            e^{T\msf\Omega}z
            +\int_0^T e^{(T-t)\msf\Omega}b(0)\dd t-w,\\
            u_{z,w}^0(t)
            &\coloneqq
            \sig_*^{\msf T}e^{(T-t)\msf\Omega^{\msf T}}
            \msf Q_T^{-1}\xi_{z,w},\\
            \gamma_{z,w}^0(t)
            &\coloneqq
            e^{t\msf\Omega}z
            +\int_0^t e^{(t-s)\msf\Omega}b(0)\dd s
            -\int_0^t e^{(t-s)\msf\Omega}\sig_*u_{z,w}^0(s)\dd s.
        \end{aligned}
    \]
    Then
    \[
        \dot\gamma_{z,w}^0
        =b(0)+\msf\Omega\gamma_{z,w}^0-\sig_*u_{z,w}^0,
        \qquad
        \gamma_{z,w}^0(0)=z,
        \qquad
        \gamma_{z,w}^0(T)=w,
    \]
    where the last identity follows from the definition of $\msf Q_T$.

    Set $\gamma_{z,w}\coloneqq\gamma_{z,w}^0$ and
    \[
        u_{z,w}(t)
        \coloneqq
        \msf R(\gamma_{z,w}(t))
        \left[
            b_{\mathrm{rem}}(\gamma_{z,w}(t))
            +\sig_*u_{z,w}^0(t)
        \right].
    \]
    The vector in brackets belongs to $\mf H$, and $\sig\msf R$ is the identity on $\mf H$. Hence
    \[
        b(\gamma_{z,w})-\sig(\gamma_{z,w})u_{z,w}
        =b(0)+\msf\Omega\gamma_{z,w}-\sig_*u_{z,w}^0
        =\dot\gamma_{z,w}.
    \]
    The formulas depend smoothly on $(z,w,t)$. If the endpoints range over a compact set, the path images lie in a common compact set and the controls are uniformly bounded on the corresponding compact parameter set. Their energies are therefore uniformly bounded.
\end{proof}

\begin{lemma} \label{lem:Lyapunov}
    Let $\msf A\in\bb R^{d\times d}$ be Hurwitz and let $\sig\in\bb R^{d\times r}$ be such that $(\msf A,\sig)$ is controllable. Set $a\coloneqq\sig\sig^{\msf T}$. Then
    \[
        \msf Q
        \coloneqq
        2\int_0^\infty e^{t\msf A}a e^{t\msf A^{\msf T}}\dd t
    \]
    is positive definite and satisfies the Lyapunov equation
    \[
        \msf A\msf Q+\msf Q\msf A^{\msf T}=-2a.
    \]
    Moreover, if $\msf J\coloneqq\msf A+a\msf Q^{-1}$, then
    \[
        \msf J\msf Q+\msf Q\msf J^{\msf T}=0,
        \qquad
        \tr(\msf J)=0,
        \qquad
        (\msf Jy)\cdot\msf Q^{-1}y=0
    \]
    for every $y\in\bb R^d$.
\end{lemma}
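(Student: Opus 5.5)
The lemma has four conclusions: $\msf Q$ is positive definite, it solves the Lyapunov equation, and $\msf J$ has the three stated properties. I will prove them in that order; each step uses only that $\msf A$ is Hurwitz and that $(\msf A,\sig)$ is controllable.

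\emph{Convergence and the Lyapunov equation.} Since $\msf A$ is Hurwitz, there are constants $C,\lambda>0$ with $\|e^{t\msf A}\|\le Ce^{-\lambda t}$. Hence the integral defining $\msf Q$ converges absolutely, and $\msf Q$ is symmetric and nonnegative definite. Differentiating gives
\[
\frac{\dd}{\dd t}\bigl(e^{t\msf A}ae^{t\msf A^{\msf T}}\bigr)=\msf A\,e^{t\msf A}ae^{t\msf A^{\msf T}}+e^{t\msf A}ae^{t\msf A^{\msf T}}\msf A^{\msf T}.
\]
Integrating over $[0,\infty)$ and using the decay at infinity yields $\msf A\msf Q+\msf Q\msf A^{\msf T}=2(0-a)=-2a$.

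\emph{Positive definiteness.} Suppose $\xi\cdot\msf Q\xi=0$. Then
\[
2\int_0^\infty|\sig^{\msf T}e^{t\msf A^{\msf T}}\xi|^2\dd t=0,
\]
so by continuity $\sig^{\msf T}e^{t\msf A^{\msf T}}\xi=0$ for all $t\ge0$. Differentiating $k$ times at $t=0$ gives $\sig^{\msf T}(\msf A^{\msf T})^k\xi=0$ for every $k$. Thus $\xi$ is orthogonal to the range of $(\sig\ \msf A\sig\ \cdots\ \msf A^{d-1}\sig)$. This range is $\bb R^d$ by controllability, so $\xi=0$. Equivalently, $\msf Q$ is a scaled limit of the Gramians in \eqref{eq:control-Gramian}, which are positive definite.

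\emph{Properties of $\msf J$.} Compute directly, using the symmetry of $a$ and $\msf Q$:
\[
\msf J\msf Q+\msf Q\msf J^{\msf T}=\msf A\msf Q+a+\msf Q\msf A^{\msf T}+\msf Q\msf Q^{-1}a=-2a+2a=0.
\]
Hence $\msf J\msf Q$ is antisymmetric. Its trace vanishes, and $\tr\msf J=\tr(\msf J\msf Q\msf Q^{-1})$.

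For the trace of $\msf J$, write $\msf J=\msf K\msf Q^{-1}$ with $\msf K\coloneqq\msf J\msf Q$ antisymmetric. Then $\tr(\msf K\msf Q^{-1})=\tr(\msf Q^{-1}\msf K)$. Transposing, this trace also equals $\tr(\msf K^{\msf T}\msf Q^{-1})=-\tr(\msf K\msf Q^{-1})$, so it is zero.

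For the orthogonality, set $w=\msf Q^{-1}y$. Then
\[
(\msf Jy)\cdot\msf Q^{-1}y=(\msf J\msf Qw)\cdot w=(\msf Kw)\cdot w=0,
\]
again because $\msf K$ is antisymmetric.

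No step is a real obstacle. The only point needing care is positive definiteness, where controllability enters through the Kalman rank condition.
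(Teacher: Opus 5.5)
Your proof is correct and follows the paper's argument. You obtain the Lyapunov equation by integrating the derivative of $e^{t\msf A}ae^{t\msf A^{\msf T}}$, and the three properties of $\msf J$ from $\msf J\msf Q+\msf Q\msf J^{\msf T}=0$ through antisymmetry of $\msf J\msf Q$ (equivalently, skew-symmetry of $\msf Q^{-1}\msf J$). The only difference is cosmetic: for positive definiteness you check the kernel directly against the Kalman rank condition, whereas the paper notes that $\msf Q$ dominates twice the positive definite finite-time Gramian, which amounts to the same fact.
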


\begin{proof}
    Since $\msf A$ is Hurwitz, the integral defining $\msf Q$ converges, and differentiation under the integral gives
    \[
        \msf A\msf Q+\msf Q\msf A^{\msf T}
        =
        2\int_0^\infty
        \frac{\dd}{\dd t}
        \left(e^{t\msf A}a e^{t\msf A^{\msf T}}\right)\dd t
        =-2a.
    \]
    The matrix $\msf Q$ is positive definite because it dominates twice the finite-time Gramian for $(\msf A,\sig)$.

    Finally,
    \[
        \msf J\msf Q+\msf Q\msf J^{\msf T}
        =\msf A\msf Q+\msf Q\msf A^{\msf T}+2a=0.
    \]
    Taking the trace after multiplication by $\msf Q^{-1}$ gives $\tr(\msf J)=0$, while multiplication on the left and right by $\msf Q^{-1}$ shows that $\msf Q^{-1}\msf J$ is skew-symmetric. Hence $(\msf Jy)\cdot\msf Q^{-1}y=0$ for every $y\in\bb R^d$.
\end{proof}

\begin{lemma} \label{lem:strict-riccati-inequality}
    Let $\msf B\in\bb R^{d\times d}$ be hyperbolic and let $a\in\bb R^{d\times d}$ be symmetric and nonnegative. There exists a symmetric matrix $\msf S$ such that
    \[
        \msf B^{\msf T}\msf S+\msf S\msf B+2\msf Sa\msf S<0.
    \]
\end{lemma}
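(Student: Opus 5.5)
The plan is to reduce the statement to the stable case by a spectral splitting of $\msf B$, and then to handle that case with a small multiple of a Lyapunov matrix. Hyperbolicity gives a real invariant decomposition $\bb R^d=E_s\oplus E_u$, where $E_s$ and $E_u$ are the generalized eigenspaces for eigenvalues with negative and positive real parts. Let $\msf T$ be invertible with
\[
    \msf T^{-1}\msf B\msf T=\begin{pmatrix}\msf B_s&0\\0&\msf B_u\end{pmatrix},
\]
with $\msf B_s$ Hurwitz and $-\msf B_u$ Hurwitz. The congruence $\msf S\mapsto\msf T^{\msf T}\msf S\msf T$ sends $\msf B^{\msf T}\msf S+\msf S\msf B+2\msf Sa\msf S$ to the same expression built from $\msf T^{-1}\msf B\msf T$, $\widetilde{\msf S}=\msf T^{\msf T}\msf S\msf T$, and $\widetilde a=\msf T^{-1}a\msf T^{-\msf T}$, which is still symmetric and nonnegative. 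Negative definiteness is preserved by congruence, so we may assume $\msf B$ is block diagonal.

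Next, solve the Lyapunov equations on each block. Choose $\msf S_s>0$ with $\msf B_s^{\msf T}\msf S_s+\msf S_s\msf B_s=-\msf{Id}$; this exists because $\msf B_s$ is Hurwitz. Choose $\msf S_u>0$ with $(-\msf B_u)^{\msf T}\msf S_u+\msf S_u(-\msf B_u)=-\msf{Id}$; this exists because $-\msf B_u$ is Hurwitz. Then set
\[
    \msf S_0\coloneqq\begin{pmatrix}\msf S_s&0\\0&-\msf S_u\end{pmatrix},
\]
so that $\msf B^{\msf T}\msf S_0+\msf S_0\msf B=-\msf{Id}$. The sign flip on the unstable block is the one point needing care. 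On a purely unstable block, a negative semidefinite $\msf S$ makes the quadratic term $\msf Sa\msf S$ nonnegative, so that term cannot help; this is why one uses the linear term there and lets the quadratic term be a small perturbation.

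Finally, take $\msf S=t\msf S_0$ with $t>0$ small. Then
\[
    \msf B^{\msf T}\msf S+\msf S\msf B+2\msf Sa\msf S=-t\,\msf{Id}+2t^2\msf S_0a\msf S_0,
\]
which is negative definite as soon as $2t\|\msf S_0a\msf S_0\|<1$. Undoing the congruence gives the required matrix.

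I expect no serious obstacle. The only points to check are that the real block-diagonalization is available (it is, via the real Jordan form or the spectral projections) and that the congruence transforms all three terms consistently. Controllability is not needed here; it is used elsewhere, for instance in \cref{lem:Riccati}.
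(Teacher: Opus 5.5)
Your proof is correct and follows the paper's argument step for step. You block-diagonalize $\msf B$ into stable and unstable parts, solve Lyapunov equations with opposite signs on the two blocks to obtain $\msf S_0$ with $\msf B^{\msf T}\msf S_0+\msf S_0\msf B=-\msf{Id}$, and take $\msf S=t\msf S_0$ with $t$ small so that the term $2t^2\msf S_0a\msf S_0$ is dominated by $-t\,\msf{Id}$; your explicit congruence check, with $a$ replaced by $\msf T^{-1}a\msf T^{-\msf T}$, spells out what the paper leaves implicit in ``work in real coordinates adapted to the stable and unstable invariant subspaces''.
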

\begin{proof}
    Work in real coordinates adapted to the stable and unstable invariant subspaces of $\msf B$, so that
    \[
        \msf B=\begin{pmatrix}\msf B_s&0\\0&\msf B_u\end{pmatrix},
        \qquad \Ree\spec(\msf B_s)<0<\Ree\spec(\msf B_u).
    \]
    If either subspace is zero, omit the corresponding block. By \cref{lem:Lyapunov}, the positive definite matrices
    \[
        \msf Q_s=\int_0^\infty e^{t\msf B_s^{\msf T}}e^{t\msf B_s}\dd t,
        \qquad
        \msf Q_u=\int_0^\infty e^{-t\msf B_u^{\msf T}}e^{-t\msf B_u}\dd t
    \]
    satisfy
    \[
        \msf B_s^{\msf T}\msf Q_s+\msf Q_s\msf B_s=-\msf{Id},
        \qquad
        \msf B_u^{\msf T}\msf Q_u+\msf Q_u\msf B_u=\msf{Id}.
    \]
    Thus
    \[
        \msf S_0=\begin{pmatrix}\msf Q_s&0\\0&-\msf Q_u\end{pmatrix}
        \qquad\text{satisfies}\qquad
        \msf B^{\msf T}\msf S_0+\msf S_0\msf B=-\msf{Id}.
    \]
    Set $\msf S=\delta\msf S_0$. Then
    \[
        \msf B^{\msf T}\msf S+\msf S\msf B+2\msf Sa\msf S
        =-\delta\msf{Id}+2\delta^2\msf S_0a\msf S_0<0
    \]
    for sufficiently small $\delta>0$.
\end{proof}

\begin{lemma} \label{lem:Riccati}
    Let $\msf B\in\bb R^{d\times d}$ be hyperbolic and let $\sig\in\bb R^{d\times r}$ be such that $(\msf B,\sig)$ is controllable. Set $a=\sig\sig^{\msf T}$. There exists a symmetric nonnegative matrix $\msf P$ that satisfies the Riccati equation
    \[
        \msf B^{\msf T}\msf P+\msf P\msf B-2\msf Pa\msf P=0,
        \qquad
        \msf B-2a\msf P\text{ is Hurwitz}.
    \]
    Moreover,
    \[
        \tr(a\msf P)
        =\sum_{\substack{\lam\in\spec(\msf B)\\\Ree\lam>0}}\Ree\lam,
    \]
    where the eigenvalues are counted with algebraic multiplicity.
\end{lemma}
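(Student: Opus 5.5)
We construct $\msf P$ from the stable/unstable splitting of $\msf B$, working in real coordinates adapted to the $\msf B$-invariant decomposition $\bb R^d=E_s\oplus E_u$, where $\Ree\spec(\msf B|_{E_s})<0<\Ree\spec(\msf B|_{E_u})$. The candidate is the maximal solution of the algebraic Riccati equation. Equivalently, via the substitution $\msf X=2\msf P$ the equation reads $\msf B^{\msf T}\msf X+\msf X\msf B-\msf Xa\msf X=0$, the standard LQ-type Riccati equation with zero state cost for the pair $(\msf B,\sig)$. The natural candidate $\msf X$ vanishes on $E_s$ and is determined on $E_u$.

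Concretely, I would proceed in three steps. First, reduce to the unstable block. Since $(\msf B,\sig)$ is controllable, the projected pair $(\msf B_u,\Pi_u\sig)$ is controllable, where $\Pi_u$ is the spectral projection onto $E_u$ along $E_s$; this holds because $\Pi_u$ commutes with $\msf B$ and is surjective onto $E_u$. Let $a_u=\Pi_u a\Pi_u^{\msf T}$, viewed on $E_u$. Since $-\msf B_u$ is Hurwitz and $(-\msf B_u,\Pi_u\sig)$ is controllable, \cref{lem:Lyapunov} gives a positive definite $\msf G=2\int_0^\infty e^{-t\msf B_u}a_ue^{-t\msf B_u^{\msf T}}\dd t$ with $\msf B_u\msf G+\msf G\msf B_u^{\msf T}=2a_u$. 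Second, define $\msf P\coloneqq\Pi_u^{\msf T}\msf G^{-1}\Pi_u$, which is symmetric and nonnegative. Multiplying the Lyapunov identity by $\msf G^{-1}$ on both sides yields $\msf G^{-1}\msf B_u+\msf B_u^{\msf T}\msf G^{-1}=2\msf G^{-1}a_u\msf G^{-1}$. Using $\Pi_u\msf B=\msf B_u\Pi_u$, this gives $\msf B^{\msf T}\msf P+\msf P\msf B=2\msf Pa\msf P$, because $\Pi_u a\Pi_u^{\msf T}=a_u$.

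Third, check the Hurwitz property and compute the trace. In the block decomposition, $\msf B-2a\msf P$ satisfies $(\msf B-2a\msf P)E_s$-components as follows: for $v\in E_s$ we have $\msf Pv=0$, so $E_s$ is invariant and the restriction is $\msf B_s$. The quotient by $E_s$, that is, $\Pi_u(\msf B-2a\msf P)$ on $E_u$, equals $\msf B_u-2a_u\msf G^{-1}$. From the Lyapunov identity, $\msf B_u-2a_u\msf G^{-1}=-\msf G\msf B_u^{\msf T}\msf G^{-1}$, whose spectrum is $-\overline{\spec(\msf B_u)}$, and this lies in the open left half-plane. Hence $\msf B-2a\msf P$ is block triangular with Hurwitz diagonal blocks, so it is Hurwitz. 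For the trace, $\tr(a\msf P)=\tr(a_u\msf G^{-1})=\tfrac12\tr\bigl((\msf B_u\msf G+\msf G\msf B_u^{\msf T})\msf G^{-1}\bigr)=\tr\msf B_u$, which is the sum of the eigenvalues of $\msf B_u$ counted with algebraic multiplicity. Since $\msf B$ is real, its eigenvalues come in conjugate pairs, so this sum equals $\sum_{\Ree\lam>0}\Ree\lam$.

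The main obstacle is careful bookkeeping with the non-orthogonal spectral projection: one must check that $\Pi_u^{\msf T}$ correctly intertwines $\msf B^{\msf T}$, which follows from $\Pi_u\msf B=\msf B\Pi_u$. One must also verify the controllability transfer. For the latter, the rank argument goes through because $\Pi_u\msf B^k\sig=\msf B_u^k\Pi_u\sig$, and $\Pi_u$ maps $\bb R^d$ onto $E_u$.
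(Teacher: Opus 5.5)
Your proof is correct and is essentially the paper's argument. The paper works in block coordinates adapted to $E_s\oplus E_u$ and takes $\msf P=\operatorname{diag}(0,\msf Q_u^{-1})$, where $\msf Q_u$ comes from \cref{lem:Lyapunov} applied to $(-\msf B_u,\sig_u)$; this is exactly your $\Pi_u^{\msf T}\msf G^{-1}\Pi_u$ in those coordinates, and the Hurwitz property and the identity $\tr(a_{uu}\msf Q_u^{-1})=\tr\msf B_u$ are checked through the same block-triangular structure. The only point the paper makes explicit that you leave implicit is the degenerate case $E_u=\{0\}$, where one simply takes $\msf P=0$.
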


\begin{proof}
    Work in real coordinates adapted to the stable and unstable invariant subspaces of $\msf B$, so that
    \[
        \msf B=\begin{pmatrix}\msf B_s&0\\0&\msf B_u\end{pmatrix},
        \qquad \Ree\spec(\msf B_s)<0<\Ree\spec(\msf B_u).
    \]
    If the unstable subspace is zero, take $\msf P=0$. Otherwise, omit the stable block if it is absent. Let $\sig_u$ be the unstable block of $\sig$ and set $a_{uu}=\sig_u\sig_u^{\msf T}$. Controllability of $(\msf B,\sig)$ passes to $(\msf B_u,\sig_u)$ and is unchanged when $\msf B_u$ is replaced by $-\msf B_u$. By \cref{lem:Lyapunov}, the positive definite matrix
    \[
        \msf Q_u=2\int_0^\infty
        e^{-t\msf B_u}a_{uu}e^{-t\msf B_u^{\msf T}}\dd t
    \]
    satisfies
    \[
        \msf B_u\msf Q_u+\msf Q_u\msf B_u^{\msf T}=2a_{uu}.
    \]
    Multiplying on the left and right by $\msf Q_u^{-1}$ gives
    \[
        \msf B_u^{\msf T}\msf Q_u^{-1}
        +\msf Q_u^{-1}\msf B_u
        -2\msf Q_u^{-1}a_{uu}\msf Q_u^{-1}=0.
    \]
    Thus the symmetric nonnegative matrix
    \[
        \msf P=\begin{pmatrix}0&0\\0&\msf Q_u^{-1}\end{pmatrix}
        \qquad\text{satisfies}\qquad
        \msf B^{\msf T}\msf P+\msf P\msf B-2\msf Pa\msf P=0.
    \]
    The equation for $\msf Q_u$ also gives
    \[
        \msf B-2a\msf P
        =\begin{pmatrix}
            \msf B_s&-2a_{su}\msf Q_u^{-1}\\
            0&-\msf Q_u\msf B_u^{\msf T}\msf Q_u^{-1}
        \end{pmatrix},
    \]
    where $a_{su}$ is the block of $a$ with stable rows and unstable columns. Both diagonal blocks are Hurwitz: the first is $\msf B_s$, and the second is similar to $-\msf B_u^{\msf T}$. Hence $\msf B-2a\msf P$ is Hurwitz.

    Finally, multiplying the equation for $\msf Q_u$ by $\msf Q_u^{-1}$ and taking the trace gives
    \[
        \tr(a\msf P)=\tr(a_{uu}\msf Q_u^{-1})
        =\tr(\msf B_u).
    \]
    The last trace is the sum of the positive real parts of the eigenvalues of $\msf B$.
\end{proof}

\section{Proofs for the Langevin models}
\label{app:langevin-verification}

For each model in \cref{sec:applications}, we first verify the algebraic and concentration assumptions and identify the stable equilibria. We then apply \cref{thm:main} to prove the corresponding $\Gamma$-expansion. The structural verifications also yield the pre-metastable limits through \cref{prop:pre-metastable-gamma,prop:unit-gamma}, without assuming a metastable hierarchy.

\subsection{Generalized overdamped Langevin dynamics}\label{ssec:overdamped-verification}

\begin{lemma} \label{lem:overdamped-structural}
    Under \textup{\textbf{(OD1)}}--\textup{\textbf{(OD3)}}, the overdamped diffusion satisfies \cref{ass:algebraic-structure,ass:cocentration}. Its equilibria are the critical points of $U$, and its stable equilibria are the local minima of $U$. One may take
    \[
        V=\frac12U,
        \qquad
        W=\frac14|\nabla U|^2.
    \]
\end{lemma}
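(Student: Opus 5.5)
We verify the two assumptions directly, with $d=r=n$, $\sig=a=\msf{Id}_n$ and $b=-\nabla U-\ell$. For \cref{ass:algebraic-structure}, take $\mf H=\bb R^n$, so items (1) and (2) hold trivially with any $\msf\Omega$, say $\msf\Omega=0$. Controllability also holds, since $\sig(x_*)=\msf{Id}_n$ already has rank $n$.

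For \cref{ass:cocentration}, take $V=U/2$, which is nonnegative because $\min U=0$. Then $\nabla V=\nabla U/2$, and
\[
    \ms H(x,\nabla V)=-(\nabla U+\ell)\cdot\tfrac12\nabla U+\tfrac14|\nabla U|^2=-\tfrac14|\nabla U|^2,
\]
where \textbf{(OD3)} removes the $\ell$-term. Hence $W=\tfrac14|\nabla U|^2\ge0$. Next, $W=0$ exactly when $\nabla U=0$. At such a point $b=-\ell$, so we still need $\ell=0$ at critical points of $U$.

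This is the step that needs care. I would argue as follows. Near a critical point $c$, orthogonality $\ell\cdot\nabla U=0$ forces $\ell$ to be tangent to the level sets of $U$. Since $U$ is Morse, $\nabla U(c+y)=\nabla^2U(c)y+O(|y|^2)$. Differentiate $\ell\cdot\nabla U=0$ once at $c$ to get $\nabla^2U(c)\ell(c)=0$. Nondegeneracy of $\nabla^2U(c)$ then gives $\ell(c)=0$. Conversely, $b(x)=0$ gives $0=b\cdot\nabla U=-|\nabla U|^2$, so $\nabla U(x)=0$. Therefore $\mc C_0$ is exactly the set of critical points of $U$, and $W^{-1}(0)=\mc C_0$.

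For confinement, the condition $\frac{x}{|x|}\cdot\nabla U\to\infty$ from \textbf{(OD2)} gives $|\nabla U|\to\infty$. It also gives radial growth of $U$, by integrating along rays, so $U\to\infty$. Thus both $V$ and $W$ are inf-compact.

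The Lyapunov bound requires $\tfrac12\Delta U\le C(1+\tfrac14|\nabla U|^2)$. This follows because $|\nabla U|-2\Delta U\to\infty$ gives $\Delta U\le\tfrac12|\nabla U|$ outside a compact set. Since $|\nabla U|\le 1+|\nabla U|^2$, we get the bound there, and on the compact set we use boundedness.

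For hyperbolicity, write $\ell(c+y)=\msf Ly+O(|y|^2)$ with $\msf L=D\ell(c)$. Set $\msf H=\nabla^2U(c)$, so $Db(c)=-\msf H-\msf L$. Expanding $\ell\cdot\nabla U=0$ to second order gives $(\msf Ly)\cdot\msf Hy=0$, that is, $\msf H\msf L$ is skew-symmetric. Now suppose $Db(c)v=i\omega v$ for some $v\neq 0$ and real $\omega$. Pair this with $\msf Hv$: the term $\langle\msf L v,\msf H v\rangle$ is purely imaginary by skewness, and $\langle v,\msf H v\rangle$ is real. Taking real parts gives $-\langle\msf Hv,\msf Hv\rangle=0$, which contradicts $\msf H$ being invertible. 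So $Db(c)$ has no purely imaginary eigenvalues.

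Finally, the stable equilibria are the local minima of $U$. First, the matrix $\msf H^{1/2}$-type similarity does not apply when $\msf H$ is indefinite, so I would use an inertia argument instead. Note that $-\msf H^{-1}Db(c)=\msf{Id}+\msf H^{-1}\msf L$, and $\msf H^{-1}\msf L$ is skew with respect to $\msf H$. The Lyapunov identity
\[
    Db(c)^{\msf T}\msf H+\msf H\,Db(c)=-2\msf H^2<0
\]
then follows from skewness of $\msf H\msf L$. By the Lyapunov inertia theorem, the number of eigenvalues of $Db(c)$ with negative real part equals the number of positive eigenvalues of $\msf H$. Hence $Db(c)$ is Hurwitz exactly when $\msf H>0$, that is, when $c$ is a local minimum.

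I expect the main obstacle to be the two derivative computations at critical points: showing $\ell(c)=0$, and showing that $\msf H\msf L$ is skew-symmetric. Both rely on differentiating the orthogonality relation $\ell\cdot\nabla U=0$ and on the Morse nondegeneracy of $\nabla^2U(c)$.
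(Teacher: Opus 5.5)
Your proof is correct and follows the paper's argument closely. You make the same choices $\mf H=\bb R^n$, $\msf\Omega=0$, $V=U/2$, and you use the same first-order differentiation of $\ell\cdot\nabla U=0$ to get $\nabla^2U(c)\ell(c)=0$. You also reach the same identity $Db(c)^{\msf T}\msf H+\msf H\,Db(c)=-2\msf H^2$; the paper gets it by differentiating the orthogonality relation twice, which is equivalent to your skewness of $\msf H\msf L$. The eigenvector pairing for hyperbolicity and the Lyapunov bound from the second limit in \textup{\textbf{(OD2)}} are also the same.

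The one real difference is how the stable equilibria are identified. You cite the Ostrowski--Schneider inertia theorem. That theorem buys more: it also yields hyperbolicity, and it matches the Morse index of $c$ with the number of unstable eigenvalues of $Db(c)$. The paper stays elementary instead. Pairing an eigenvector $z$ with the identity gives $2\Ree\lambda\,z^*\msf Hz=-2|\msf Hz|^2$, so $\msf H>0$ forces $\Ree\lambda<0$. Conversely, a Hurwitz $Db(c)$ gives $\msf H=2\int_0^\infty e^{tDb(c)^{\msf T}}\msf H^2e^{tDb(c)}\dd t>0$.

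One aside you never use is inaccurate. In general $\msf H^{-1}\msf L$ is not skew with respect to $\msf H$, since that would require $\msf L^{\msf T}=-\msf L$. What holds is that $\msf L$ itself is $\msf H$-skew, i.e.\ $\msf L^{\msf T}\msf H+\msf H\msf L=0$, which is exactly what your Lyapunov identity uses.
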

\begin{proof}
    \emph{Algebraic structure.}
    Since $\sig=\msf{Id}_n$, take $\mf H=\bb R^n$ and $\msf\Omega=0$. The noise has full range, the affine-remainder condition is automatic, and the controllability matrix has rank $n$. Thus \cref{ass:algebraic-structure} holds.

    \emph{Equilibria and stability.}
    Orthogonality gives $b\cdot\nabla U=-|\nabla U|^2$, so every equilibrium is a critical point of $U$. Conversely, differentiating $\ell\cdot\nabla U=0$ at a critical point $c$ gives $\nabla^2U(c)\ell(c)=0$. The Morse property implies $\ell(c)=0$, so $c$ is an equilibrium.

    Write $\msf H_c=\nabla^2U(c)$ and $\msf B_c=Db(c)$. Differentiating the orthogonality identity twice at $c$ gives
    \[
        \msf B_c^{\msf T}\msf H_c+\msf H_c\msf B_c=-2\msf H_c^2.
    \]
    If $\msf B_cz=i\beta z$ for some $z\in\bb C^n\setminus\{0\}$ and $\beta\in\bb R$, multiplication on the left and right by $z^*$ and $z$ gives $0=-2|\msf H_cz|^2<0$, a contradiction. Thus every equilibrium is hyperbolic. If $\msf H_c>0$, the same identity implies that every eigenvalue of $\msf B_c$ has negative real part. Conversely, if $\msf B_c$ is Hurwitz, integration gives
    \[
        \msf H_c
        =
        2\int_0^\infty
        e^{t\msf B_c^{\msf T}}\msf H_c^2e^{t\msf B_c}\,\dd t>0.
    \]
    Hence the stable equilibria are exactly the local minima of $U$.

    \emph{Lyapunov function.}
    For $V=U/2$, orthogonality gives
    \[
        W=-\ms H(\,\cdot\,,\nabla V)=\frac14|\nabla U|^2.
    \]
    The first limit in \textup{\textbf{(OD2)}} implies that $U$ and $|\nabla U|$ are inf-compact, so $V$ and $W$ are inf-compact. Since $\min U=0$, we have $V\ge0$. Moreover, $W\ge0$ and $W^{-1}(0)=\mc C_0$. Outside a sufficiently large compact set, the second limit in \textup{\textbf{(OD2)}} and $|\nabla U|\ge1$ give
    \[
        a:\nabla^2V
        =
        \frac12\Delta U
        \le
        \frac14|\nabla U|
        \le
        \frac14|\nabla U|^2
        =
        W.
    \]
    The left-hand side is bounded above on the remaining compact set. Thus $a:\nabla^2V\le C(1+W)$ on $\bb R^n$, completing the verification of \cref{ass:cocentration}.
\end{proof}

\begin{proof}[Proof of \cref{thm:overdamped-gamma-expansion}]
    By \cref{lem:overdamped-structural}, the diffusion satisfies \cref{ass:algebraic-structure,ass:cocentration}, and its stable equilibria are the local minima of $U$. It remains to verify \cref{ass:complete-metastable-hierarchy}.

    If $U$ has a unique local minimum, the hierarchy is complete with $\mf q=0$ by the convention in \cref{ssec:metastable-hierarchy}. Suppose now that $U$ has several local minima. Radial integration of the first condition in \textup{\textbf{(OD2)}} gives $U(x)/|x|\to\infty$, which verifies the additional growth condition in the following references. Under \textup{\textbf{(OD1)}}--\textup{\textbf{(OD3)}} and \textup{\textbf{(OD-M)}}, the first metastable scale is identified in \cite[Theorem~2.2]{LandimLeeSeo2024First}. The recursive tree is constructed through its terminal level in \cite[Corollary~4.8]{LandimLeeSeo2025General}, while \cite[Theorem~2.14]{LandimLeeSeo2025General} gives the successive-scale resolvent convergence. Taking $\mc E(m)$ to be the valleys defined in \cite[Section~2.5]{LandimLeeSeo2025General}, these results verify \cref{ass:complete-metastable-hierarchy}.

    The full $\Gamma$-expansion now follows from \cref{thm:main}.
\end{proof}

\subsection{Underdamped Langevin dynamics}\label{ssec:underdamped-verification}

Define the physical Hamiltonian
\[
    \mc H(q,p)\coloneqq U(q)+\frac12|p|^2.
\]

\begin{lemma} \label{lem:underdamped-structural}
    Under \textup{\textbf{(UD)}}, the underdamped diffusion satisfies \cref{ass:algebraic-structure,ass:cocentration}. Its equilibria are the points $(c,0)$ with $\nabla U(c)=0$, and its stable equilibria are the points $(m,0)$ with $m$ a local minimum of $U$. One may choose a smooth vector field $v:\bb R^n\to\bb R^n$ such that
    \[
        \begin{aligned}
            V(q,p)&=\frac12\mc H(q,p)+p\cdot v(q)+1,\\
            W(q,p)&=\frac14|p|^2-p^{\msf T}Dv(q)p
            +v(q)\cdot\nabla U(q)-|v(q)|^2.
        \end{aligned}
    \]
\end{lemma}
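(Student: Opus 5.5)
The plan is to verify \cref{ass:algebraic-structure} directly, locate and classify the equilibria by a spectral computation, and then build the vector field $v$ so that the displayed $W$ works; I expect the last step to be the main obstacle.

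\emph{Algebraic structure.} Take $\mf H=\{0\}^n\times\bb R^n$ and
\[
    \msf\Omega=\begin{pmatrix}0&\msf{Id}_n\\0&-\msf{Id}_n\end{pmatrix}.
\]
Then $\ran\sig=\mf H$, and $b(x)-b(0)-\msf\Omega x=(0,-\nabla U(q)+\nabla U(0))\in\mf H$. For controllability, note that $\sig$ and $\msf\Omega\sig=(\msf{Id}_n,-\msf{Id}_n)^{\msf T}$ already span $\bb R^{2n}$.

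\emph{Equilibria and their stability.} Equilibria satisfy $p=0$ and $\nabla U(q)=0$. At $(c,0)$,
\[
    Db=\begin{pmatrix}0&\msf{Id}\\-\msf H_c&-\msf{Id}\end{pmatrix},
    \qquad \msf H_c=\nabla^2U(c).
\]
Diagonalizing $\msf H_c$ reduces the spectrum to the roots of $\lam^2+\lam+\mu=0$, where $\mu$ runs over the eigenvalues of $\msf H_c$. These $\mu$ are real and nonzero because $U$ is Morse. Substituting $\lam=i\beta$ forces $\beta=0$ and then $\mu=0$, which is impossible. So every equilibrium is hyperbolic, and both roots have negative real part exactly when $\mu>0$. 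Hence the stable equilibria are $(m,0)$ with $m$ a local minimum of $U$.

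\emph{Formula for $W$.} With $\nabla_qV=\tfrac12\nabla U+Dv^{\msf T}p$ and $\nabla_pV=\tfrac12p+v$, a direct expansion of $b\cdot\nabla V+|\tfrac12p+v|^2$ gives exactly the stated $W$. The Lyapunov bound is immediate: $a:\nabla^2V=\Delta_pV=n/2$ is constant.

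\emph{Choosing $v$ (main obstacle).} We need four properties:
\begin{enumerate}
    \item $\|Dv\|_\infty\le1/8$, so that $\tfrac14|p|^2-p^{\msf T}Dv\,p\ge\tfrac18|p|^2$;
    \item $v$ is bounded and small;
    \item $v\cdot\nabla U-|v|^2\ge0$, with equality only at critical points;
    \item $v\cdot\nabla U\to\infty$ as $|q|\to\infty$.
\end{enumerate}
I would take $v=\delta w$, where
\[
    w=\chi\nabla U+(1-\chi)\frac{q}{|q|}
\]
and $\chi\in C_c^\infty$ equals $1$ on $B(0,R_0)$ and vanishes outside $B(0,R_0+1)$. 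Here $R_0$ is chosen so that $q\cdot\nabla U>0$ for $|q|\ge R_0$; this is possible by \textbf{(UD)}. Then $w$ is smooth and bounded with bounded derivative, so taking $\delta$ small gives (1) and (2). For (3), near and inside $B(0,R_0)$ we have
\[
    v\cdot\nabla U-|v|^2=(\delta-\delta^2)|\nabla U|^2,
\]
which is nonnegative and vanishes only at critical points. In the transition annulus and outside it, $w\cdot\nabla U$ is a convex combination of the positive quantities $|\nabla U|^2$ and $\tfrac{q}{|q|}\cdot\nabla U$. On the compact annulus it therefore has a positive minimum, which beats $\delta^2|w|^2$ once $\delta$ is small. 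Far out, $\delta\,\tfrac{q}{|q|}\cdot\nabla U\to\infty$ by \textbf{(UD)}, which gives (4). The smallness of $\delta$ must be chosen uniformly over these three regions; this is the delicate point.

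\emph{Concentration.} With this $v$,
\[
    W\ge\tfrac18|p|^2+\bigl(v\cdot\nabla U-|v|^2\bigr)\ge0,
\]
with equality iff $p=0$ and $\nabla U(q)=0$, i.e.\ $W^{-1}(0)=\mc C_0$. Moreover $W$ is inf-compact by (4) and the $\tfrac18|p|^2$ term. For $V$, Young's inequality gives
\[
    V\ge\tfrac14|p|^2+\tfrac12U-\|v\|_\infty^2+1,
\]
so $V\ge0$ for small $\delta$ (recall $\min U=0$). Radial integration of \textbf{(UD)} gives $U\to\infty$, so $V$ is inf-compact. This completes \cref{ass:cocentration}.
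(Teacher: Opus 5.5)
Your proposal is correct and follows the paper's proof essentially step for step: the same noise range $\mf H$, the same computation of $W$ with $a:\nabla^2V=n/2$, and the same construction $v=\delta w$, with $w=\nabla U$ near the critical set and a bounded radial field far out. The paper only packages your three-region choice of $\delta$ into the single inequality $w\cdot\nabla U\ge c_v|w|^2$, and your characteristic-polynomial argument via $\lambda^2+\lambda+\mu=0$ is a valid, more explicit substitute for its energy identity $\msf B_c^{\msf T}\msf K_c+\msf K_c\msf B_c=-2a$.

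One slip needs fixing: $\frac14|p|^2+p\cdot v\ge\frac14|p|^2-\|v\|_\infty^2$ fails for large $|p|$. Use $|p\cdot v|\le\frac18|p|^2+2|v|^2$ instead to get $V\ge\frac18|p|^2+\frac12U+1-2\|v\|_\infty^2$, which still gives $V\ge0$ and inf-compactness once $\delta$ is small.
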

\begin{proof}
    \emph{Algebraic structure.}
    Take
    \[
        \mf H=\{0\}^n\times\bb R^n,
        \qquad
        \sig=
        \begin{pmatrix}
            0\\
            \msf{Id}_n
        \end{pmatrix},
        \qquad
        \msf\Omega=
        \begin{pmatrix}
            0&\msf{Id}_n\\
            0&0
        \end{pmatrix}.
    \]
    Then $\ran\sig=\mf H$, $\dim\mf H=n=r$, and
    \[
        b(q,p)-b(0,0)-\msf\Omega(q,p)
        =
        \begin{pmatrix}
            0\\
            -\nabla U(q)+\nabla U(0)-p
        \end{pmatrix}
        \in\mf H.
    \]
    Moreover,
    \[
        \msf\Omega\sig=
        \begin{pmatrix}
            \msf{Id}_n\\
            0
        \end{pmatrix},
    \]
    so $[\sig,\msf\Omega\sig]$ has rank $2n$. This proves \cref{ass:algebraic-structure}.

    \emph{Equilibria and stability.}
    The equilibrium set is
    \[
        \mc C_0=\{(c,0):\nabla U(c)=0\}.
    \]
    Write $\msf H_c=\nabla^2U(c)$ and
    \[
        \msf B_c=Db(c,0)
        =\begin{pmatrix}
            0&\msf{Id}_n\\
            -\msf H_c&-\msf{Id}_n
        \end{pmatrix},
        \qquad
        \msf K_c=\operatorname{diag}(\msf H_c,\msf{Id}_n).
    \]
    The linearized energy identity is
    \[
        \msf B_c^{\msf T}\msf K_c+\msf K_c\msf B_c=-2a.
    \]
    If $\msf B_cz=i\beta z$ for some $z\in\bb C^{2n}$ and $\beta\in\bb R$, multiplication by $z^*$ and $z$ shows that the momentum block of $z$ vanishes. The second block equation then gives $\msf H_cz_q=0$, so $z=0$ because $U$ is Morse. Thus every equilibrium is hyperbolic. If $\msf H_c>0$, the same identity places every eigenvalue in the closed left half-plane, and hyperbolicity makes $\msf B_c$ Hurwitz. Conversely, if $\msf B_c$ is Hurwitz, integration gives
    \[
        \msf K_c
        =2\int_0^\infty e^{t\msf B_c^{\msf T}}a e^{t\msf B_c}\,\dd t
        \ge0.
    \]
    Since $\msf H_c$ is nonsingular, it is then positive definite. Hence the stable equilibria correspond exactly to the local minima of $U$.

    \emph{Lyapunov function.}
    The condition \textup{\textbf{(UD)}} implies that $U$ is inf-compact and that its critical set is compact. This set is finite because $U$ is Morse. Set $v_\infty(q)\coloneqq q/\sqrt{1+|q|^2}$. Choose $R>0$ so that all critical points lie in $B(0,R)$ and $v_\infty\cdot\nabla U>0$ on $B(0,R)^c$. Let $\chi\in C_c^\infty(\bb R^n;[0,1])$ equal one on $B(0,R)$ and vanish outside $B(0,2R)$, and set
    \[
        \begin{aligned}
            v_0&\coloneqq\chi\nabla U+(1-\chi)v_\infty,\\
            g&\coloneqq v_0\cdot\nabla U
            =\chi|\nabla U|^2+(1-\chi)v_\infty\cdot\nabla U.
        \end{aligned}
    \]
    The fields $v_0$ and $Dv_0$ are bounded. The function $g$ is nonnegative, vanishes precisely at the critical points, and is inf-compact because $g=v_\infty\cdot\nabla U\to\infty$ at infinity. Near each critical point, $g=|v_0|^2$. On compact sets disjoint from the critical points, $g$ has a positive minimum; at infinity, $g\to\infty$ while $v_0$ remains bounded. Hence
    \[
        g\ge c_v|v_0|^2
    \]
    for some $c_v>0$.

    Choose $\delta>0$ so small that
    \begin{equation} \label{eq:underdamped-delta-choice}
        2\delta^2\|v_0\|_\infty^2\le1,
        \qquad
        \delta\|Dv_0\|_\infty\le\frac18,
        \qquad
        \delta\le\frac{c_v}{2}.
    \end{equation}
    Set $v\coloneqq\delta v_0$ and $V(q,p)\coloneqq\mc H(q,p)/2+p\cdot v(q)+1$. Young's inequality gives
    \[
        V(q,p)
        \ge
        \frac12U(q)+\frac18|p|^2
        +1-2\delta^2\|v_0\|_\infty^2
        \ge
        \frac12U(q)+\frac18|p|^2.
    \]
    Thus $V$ is nonnegative and inf-compact. Direct calculation and \eqref{eq:underdamped-delta-choice} give
    \[
        \begin{aligned}
            W(q,p)
            &=-\ms H\bigl((q,p),\nabla V(q,p)\bigr)\\
            &=\frac14|p|^2-\delta p^{\msf T}Dv_0(q)p
            +\delta g(q)-\delta^2|v_0(q)|^2\\
            &\ge\frac18|p|^2+\frac{\delta}{2}g(q).
        \end{aligned}
    \]
    Therefore $W$ is nonnegative and inf-compact. The lower bound and the formula for $W$ show that $W^{-1}(0)=\{(c,0):\nabla U(c)=0\}=\mc C_0$. Finally,
    \[
        a:\nabla^2V=\Delta_pV=\frac n2,
    \]
    which proves the Lyapunov bound and completes the verification of \cref{ass:cocentration}.
\end{proof}

\begin{proof}[Proof of \cref{thm:underdamped-gamma-expansion}]
    By \cref{lem:underdamped-structural}, the diffusion satisfies \cref{ass:algebraic-structure,ass:cocentration}. Together with the assumed complete metastable hierarchy, these conditions allow us to apply \cref{thm:main}, which gives the full $\Gamma$-expansion.
\end{proof}

\subsection{Third-order Langevin dynamics}\label{ssec:third-order-verification}

\begin{lemma}\label{lem:third-order-structural}
    Under \textup{\textbf{(TD)}}, the third-order diffusion transformed by $T$ satisfies \cref{ass:algebraic-structure,ass:cocentration}. In physical coordinates, its equilibria are the points $(c,0,0)$ with $\nabla U(c)=0$, and its stable equilibria are the points $(m,0,0)$ with $m$ a local minimum of $U$.
\end{lemma}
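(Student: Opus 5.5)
The proof splits into three parts, mirroring the underdamped case: the algebraic structure for the transformed drift, the identification of equilibria and their stability, and a Lyapunov function for \cref{ass:cocentration}. In the transformed coordinates $(q,p,\widetilde r)$ the drift is
\[
    \widetilde b=\bigl(p,\ \widetilde r,\ -p-\widetilde r-\nabla U(q)-\nabla^2U(q)p\bigr),
\]
and the noise is $\sig=(0,0,\msf{Id}_n)^{\msf T}$. We take $\mf H=\{0\}^{2n}\times\bb R^n$ and let $\msf\Omega$ be the block shift sending $(q,p,\widetilde r)$ to $(p,\widetilde r,0)$. Then $\widetilde b-\widetilde b(0)-\msf\Omega x\in\mf H$. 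The controllability matrix $[\sig,\msf\Omega\sig,\msf\Omega^2\sig]$ is block anti-identity and has full rank $3n$, so \cref{ass:algebraic-structure} holds.

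The equilibria satisfy $p=0$ and $\widetilde r=0$, which forces $\nabla U(q)=0$. Since $T$ fixes these points, they are $(c,0,0)$ in physical coordinates as well. At such a point the linearization is
\[
    \msf B_c=\begin{pmatrix}0&\msf{Id}&0\\0&0&\msf{Id}\\-\msf H_c&-\msf H_c-\msf{Id}&-\msf{Id}\end{pmatrix},
\]
which is conjugate to the physical linearization. Its eigenvalues solve $\det\bigl(\lam^3+\lam^2+\lam(\msf{Id}+\msf H_c)+\msf H_c\bigr)=0$. Diagonalizing $\msf H_c$ reduces this to the scalar cubics $\lam^3+\lam^2+(1+h)\lam+h$ with $h$ an eigenvalue of $\msf H_c$, so $h\neq0$ by the Morse property.
\begin{itemize}
    \item \emph{Hyperbolicity.} Substituting $\lam=i\beta$ and separating real and imaginary parts gives $h=\beta^2$ and $\beta(1+h-\beta^2)=0$. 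If $\beta=0$ then $h=0$; if $\beta\neq0$ then $1+h=\beta^2=h$. Both are impossible.
    \item \emph{Stability.} For $h>0$ the Routh--Hurwitz condition $1\cdot(1+h)>h$ holds, so all roots have negative real part. For $h<0$ the product of the roots is $-h>0$, so some root has positive real part.
\end{itemize}
Hence the stable equilibria are exactly the points $(m,0,0)$ with $m$ a local minimum of $U$.

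The main work, and the main obstacle, is constructing $V$ with $W=-\ms H(\cdot,\nabla V)\ge0$, $W^{-1}(0)=\mc C_0$, both inf-compact, and $\Delta_{\widetilde r}V\le C(1+W)$. In physical coordinates I would start from the energy $E=U(q)+\tfrac12|p|^2+\tfrac12|r|^2$. Along the deterministic flow $\dot E=-|r|^2$, which is not strictly dissipative in $q$ or $p$. I would therefore add small cross terms: $\delta_1\,p\cdot v(q)$ with $v$ built from $\nabla U$ near critical points and $q/\sqrt{1+|q|^2}$ far out, as in \cref{lem:underdamped-structural}, together with $-\delta_2\,p\cdot r$, to produce $-\delta_1 v\cdot\nabla U$ and $-\delta_2|p|^2$ dissipation. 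I would then take $V=\tfrac12(E+\text{cross terms})+1$, pulled back by $T$.

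The quadratic noise term $|\sig^{\msf T}\nabla V|^2=|\partial_r V|^2$ then generates terms like $\delta_2^2|p|^2$ and $|r|^2$; these are absorbed because of the factor $\tfrac12$ in front of $E$. The dangerous term is the cross contribution $p^{\msf T}Dv\,p$ paired against $\nabla U$-dependent terms at infinity. Here I expect to need the radial coercivity \eqref{eq:third-order-radial-coercivity}, which makes the $q$-dissipation comparable to $|q|^2+U$. I also expect to need \eqref{eq:third-order-force-growth} to control errors of order $|\nabla U|^2$ from $\dot r$ by $(1+U)(1+q\cdot\nabla U)$. This is the delicate part: it may require letting the weights depend on $U$, using $v$ proportional to $q/(1+U)^{1/2}$, and tuning $\delta_1\ll\delta_2\ll1$. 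The Lyapunov bound is easy since $a:\nabla^2V=\Delta_rV$ is bounded, as in the underdamped case.

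Finally, inf-compactness of $V$ follows from Young's inequality as in the underdamped lemma. Inf-compactness of $W$ and $W^{-1}(0)=\mc C_0$ follow from the lower bound $W\ge c\bigl(|p|^2+|r|^2+g(q)\bigr)$ with $g$ inf-compact and vanishing only at critical points. Hyperbolicity is invariant under conjugation by $DT$.
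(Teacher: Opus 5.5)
Your treatment of \cref{ass:algebraic-structure} and of the equilibria is correct. Deriving hyperbolicity and stability from the scalar cubics $\lam^3+\lam^2+(1+h)\lam+h$ with Routh--Hurwitz is a valid, more explicit alternative to the paper's energy identity $\msf B_c^{\msf T}\msf K_c+\msf K_c\msf B_c=-2a$, where $\msf K_c=\operatorname{diag}(\msf H_c,\msf{Id}_n,\msf{Id}_n)$.

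The gap is the Lyapunov function, which is the substance of the lemma. You leave it open, and the ansatz you describe cannot work under \textup{\textbf{(TD)}}. Work in physical coordinates with $V=\tfrac12E+\delta_1v\cdot p+\delta_2r\cdot p+1$. The sign must be $+\delta_2$: with your $-\delta_2p\cdot r$, the coefficient of $|p|^2$ in $W$ is $-\delta_2-\delta_2^2$. A direct computation gives
\[
    W=\bigl(\tfrac14-\delta_2\bigr)|r|^2+(\delta_2-\delta_2^2)|p|^2-\delta_1p^{\msf T}Dv\,p+\delta_1g-\delta_1r\cdot v+\delta_2\,r\cdot\nabla U,
    \qquad g=v\cdot\nabla U.
\]
The term $p^{\msf T}Dv\,p$ that you call dangerous is harmless, since $Dv$ is bounded and $\delta_1\ll\delta_2$. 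The real obstruction is $\delta_2\,r\cdot\nabla U$. Take $p=0$, $r$ antiparallel to $\nabla U$, and optimize $|r|$; then $W<0$ as soon as $\delta_2^2|\nabla U|^2\gg\delta_1 g$. Suppose $|v|\le C(1+|q|)$, which covers all your choices and any $v$ with bounded $Dv$. Then $g\le C(1+|q|)|\nabla U|=o(|\nabla U|^2)$ at infinity for $U=|q|^4+|q|^2$, and this $U$ satisfies \textup{\textbf{(TD)}}. Your proposed remedies do not touch this term. Changing $v$ leaves it unchanged. A nonconstant weight $w(q)$ on $r\cdot p$, such as a function of $U$, adds the cubic term $-\delta_2(\nabla w\cdot p)(r\cdot p)$ to $W$, which at fixed $q$ is negative for large $(p,r)$.

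The missing idea is to weight the whole correction by a function of the total energy: $V=\tfrac12\mc H+\delta\rho(\mc H)\Theta+1$ with $\Theta=v\cdot p+\kappa\,r\cdot p$. Since $b\cdot\nabla\mc H=-|r|^2$ and $\sig^{\msf T}\nabla\mc H=r$, the terms linear in $\rho'(\mc H)\Theta$ cancel exactly in $W$. The remaining derivative terms are controlled by $(1+t)|\rho'(t)|+(1+t)^2|\rho''(t)|\le C\rho(t)$. The force-growth condition \eqref{eq:third-order-force-growth} is used to build $\rho(t)=k(\log(1+t))/(1+t)$ with $\rho(U)|\nabla U|^2\le Cg$, which pays exactly the extra factor $1+U$ you anticipated, while keeping $(1+t)\rho(t)\to\infty$. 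What comes out is only $W\ge c_1|r|^2+c_2\delta\rho(\mc H)(|p|^2+g)$, not the uniform bound $W\ge c(|p|^2+|r|^2+g)$ on which your last paragraph relies. Inf-compactness of $W$ then needs a separate argument: the estimate $1+\mc H\le C(1+|p|^2+|r|^2+g)$ together with $(1+t)\rho(t)\to\infty$. This requires $g\gtrsim1+U$, so $v$ must equal $q$ outside a compact set; the bounded field from the underdamped case, or $q/(1+U)^{1/2}$, is too weak here.
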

\begin{proof}
    \emph{Algebraic structure.}
    The change of variables $T$ produces no It\^o correction because $Q_t^\eps$ has finite variation. It gives the drift displayed in \cref{ssec:third-order} and noise coefficient $\widetilde\sig u=(0,0,u)$. Take $\mf H=\{0\}^n\times\{0\}^n\times\bb R^n$ and $\msf\Omega(q,p,\widetilde r)=(p,\widetilde r,0)$. Then $\ran\widetilde\sig=\mf H$ and $\widetilde b(z)-\widetilde b(0)-\msf\Omega z\in\mf H$. The ranges of $\widetilde\sig$, $\msf\Omega\widetilde\sig$, and $\msf\Omega^2\widetilde\sig$ are the third, second, and first coordinate blocks. Thus \cref{ass:algebraic-structure} holds.

    \emph{Equilibria and stability.}
    Solving $b=0$ gives the stated equilibrium set. Write $\msf H_c=\nabla^2U(c)$, $\msf B_c=Db(c,0,0)$, and $\msf K_c=\operatorname{diag}(\msf H_c,\msf{Id}_n,\msf{Id}_n)$. The identities
    \[
        \msf B_c(u,v,w)=(v,-\msf H_cu+w,-v-w),
        \qquad
        \msf B_c^{\msf T}\msf K_c+\msf K_c\msf B_c=-2a
    \]
    imply hyperbolicity: an eigenvector for an imaginary eigenvalue has zero last block by the energy identity, and the third and second block equations then force the other blocks to vanish because $\msf H_c$ is nonsingular. If $\msf H_c>0$, the energy identity places the spectrum in the closed left half-plane, so $\msf B_c$ is Hurwitz. Conversely, if $\msf B_c$ is Hurwitz, then
    \[
        \msf K_c=2\int_0^\infty e^{t\msf B_c^{\msf T}}a e^{t\msf B_c}\,\dd t\ge0,
    \]
    and nonsingularity gives $\msf H_c>0$. Thus the stable equilibria correspond exactly to the local minima of $U$. The map $T$ fixes the equilibria and conjugates their linearized drifts, so these conclusions also hold in transformed coordinates.

    \emph{Energy and auxiliary weight.}
    Work in physical coordinates and set
    \[
        \mc H(q,p,r)\coloneqq U(q)+\frac12\bigl(|p|^2+|r|^2\bigr).
    \]
    Radial integration of \eqref{eq:third-order-radial-coercivity} gives $U(q)\ge c|q|^2-C$, and the same condition confines the critical points to a compact set. Thus $U$ and $\mc H$ are inf-compact, and the critical set is finite because $U$ is Morse. The identities
    \begin{equation} \label{eq:third-order-HJ}
        b\cdot\nabla\mc H=-|r|^2,
        \qquad
        \sig^{\msf T}\nabla\mc H=r
    \end{equation}
    show why the Lyapunov function needs a correction to $\mc H/2$: the energy dissipates only in $r$.

    Choose $R>R_0$ and a cutoff $\chi\in C_c^\infty(\bb R^n;[0,1])$ equal to one on $B(0,R)$ and zero outside $B(0,2R)$. Put
    \[
        v(q)=\chi(q)\nabla U(q)+(1-\chi(q))q,
        \qquad
        g(q)=v(q)\cdot\nabla U(q).
    \]
    Then $Dv$ is bounded, $g\ge0$, and $g$ vanishes exactly at the critical points. Near these points $g=|v|^2=|\nabla U|^2$, whereas outside a compact set $g=q\cdot\nabla U\ge c_0(|q|^2+U)$. Near $\{U=0\}$, Taylor's theorem gives $|v|^2\le CU$. These observations, together with bounds on the remaining compact sets, give
    \begin{equation} \label{eq:third-order-v-bounds}
        |v|^2\le CU,
        \qquad |v|^2\le Cg,
        \qquad 1+U\le C(1+g).
    \end{equation}

    We will add a weighted correction $\rho(\mc H)(v\cdot p+\kappa r\cdot p)$. Its drift derivative contains the mixed term $r\cdot\nabla U$, so the weight must satisfy $\rho(U)|\nabla U|^2\le Cg$. To preserve confinement, we also require $(1+t)\rho(t)\to\infty$. Construct such a weight by defining
    \[
        \eta(t)=\sup_{1+U(q)\ge t}
        \frac{|\nabla U(q)|^2}{(1+U(q))(1+g(q))},
        \qquad t\ge1.
    \]
    This function is finite, nonincreasing, and tends to zero by \eqref{eq:third-order-force-growth} and inf-compactness of $U$. Choose a smooth nondecreasing $\vartheta:\bb R\to[0,1]$ equal to zero on $(-\infty,0]$ and one on $[2,\infty)$, with $\|\vartheta'\|_\infty\le1$. Choose $s_j\uparrow\infty$ with $s_1>0$, $s_{j+1}-s_j\ge4$, and $\eta(e^s)^{-1/2}\ge j+2$ for $s\ge s_j$, with value infinity when $\eta(e^s)=0$. Set
    \[
        k(s)=1+\sum_{j\ge1}\vartheta(s-s_j),
        \qquad
        \rho(t)=\frac{k(\log(1+t))}{1+t}.
    \]
    The sum is locally finite, and the derivative supports are disjoint. Hence $0\le k'\le k$, $|k''|\le Ck$, and $k(s)\to\infty$. Moreover, $k(s)\le C(1+s)$ for $s\ge0$, and $k(s)\le j+1\le\eta(e^s)^{-1/2}$ on $[s_j,s_{j+1})$. It follows that $k(s)\eta(e^s)\to0$ and that $\rho$ is smooth, positive, and nonincreasing on $[0,\infty)$, with
    \begin{equation} \label{eq:third-order-rho-bounds}
    \begin{gathered}
        \rho(t)\longrightarrow0,
        \qquad (1+t)\rho(t)\longrightarrow\infty,
        \qquad (1+t)\rho(t)\eta(1+t)\longrightarrow0,\\
        (1+t)|\rho'(t)|+(1+t)^2|\rho''(t)|\le C\rho(t).
    \end{gathered}
    \end{equation}
    Outside a compact set, $g\ge1$ and
    $\rho(U)|\nabla U|^2\le(1+U)\rho(U)\eta(1+U)(1+g)\le2g$.
    Near the critical points $g=|\nabla U|^2$, and on the remaining compact set $g$ is bounded away from zero. Hence
    \begin{equation} \label{eq:third-order-force-weight}
        \rho(U(q))|\nabla U(q)|^2\le Cg(q),
        \qquad q\in\bb R^n.
    \end{equation}

    \emph{Lyapunov function and dissipation.}
    Since $Dv$ is bounded, choose $\kappa>0$ such that
    \begin{equation} \label{eq:third-order-kappa}
        p^{\msf T}(\kappa\msf{Id}_n-Dv(q))p\ge|p|^2,
        \qquad q,p\in\bb R^n.
    \end{equation}
    Put $\Theta(q,p,r)=v(q)\cdot p+\kappa r\cdot p$. By Young's inequality and \eqref{eq:third-order-v-bounds},
    \begin{equation} \label{eq:third-order-Theta-bound}
        |\Theta|\le\tfrac12|v|^2+C_\kappa(|p|^2+|r|^2)\le C\mc H.
    \end{equation}
    For a sufficiently small $\delta>0$, define
    \[
        V=\frac12\mc H+\delta\rho(\mc H)\Theta+1.
    \]
    Boundedness of $\rho$ gives
    \begin{equation} \label{eq:third-order-V-comparison}
        1+\frac14\mc H\le V\le1+\frac34\mc H,
    \end{equation}
    so $V$ is nonnegative and inf-compact.

    To compute $W=-b\cdot\nabla V-|\sig^{\msf T}\nabla V|^2$, first note that
    \[
        \begin{aligned}
            b\cdot\nabla\Theta
            &=-p^{\msf T}(\kappa\msf{Id}_n-Dv)p-g
              +r\cdot v-\kappa r\cdot\nabla U+\kappa|r|^2-\kappa p\cdot r,\\
            \sig^{\msf T}\nabla V
            &=\frac12r+\delta\bigl[\kappa\rho(\mc H)p+\rho'(\mc H)\Theta r\bigr].
        \end{aligned}
    \]
    In the expression for $W$, the terms linear in $\rho'(\mc H)\Theta$ cancel by \eqref{eq:third-order-HJ}, as do those containing $\kappa\rho(\mc H)p\cdot r$. Thus
    \begin{equation} \label{eq:third-order-W-identity}
    \begin{aligned}
        W={}&\frac14|r|^2
        +\delta\rho(\mc H)\Bigl[
            p^{\msf T}(\kappa\msf{Id}_n-Dv)p+g
            +\kappa r\cdot\nabla U-r\cdot v-\kappa|r|^2
        \Bigr]\\
        &-\delta^2\bigl|\kappa\rho(\mc H)p+\rho'(\mc H)\Theta r\bigr|^2.
    \end{aligned}
    \end{equation}
    Monotonicity of $\rho$, $\mc H\ge U$, and \eqref{eq:third-order-force-weight} give $\rho(\mc H)|\nabla U|^2\le Cg$; also $\rho(\mc H)|v|^2\le Cg$ by \eqref{eq:third-order-v-bounds}. The derivative bounds in \eqref{eq:third-order-rho-bounds} and $|\Theta|\le C\mc H$ imply
    \begin{equation} \label{eq:third-order-rho-derivative-controls}
        |\rho'(\mc H)\Theta|
        +|\rho''(\mc H)|\,|r|^2|\Theta|
        +|\rho'(\mc H)|\,|p|\,|r|
        \le C\rho(\mc H).
    \end{equation}
    Consequently, for every $\zeta>0$,
    \[
        \begin{aligned}
            \delta\rho(\mc H)|r|(\kappa|\nabla U|+|v|)
            &\le\zeta|r|^2+C_\zeta\delta^2\rho(\mc H)g,\\
            \bigl|\kappa\rho(\mc H)p+\rho'(\mc H)\Theta r\bigr|^2
            &\le C\rho(\mc H)|p|^2+C|r|^2.
        \end{aligned}
    \]
    Substitution into \eqref{eq:third-order-W-identity}, with \eqref{eq:third-order-kappa}, gives
    \[
        W\ge(\tfrac14-\zeta-C\delta-C\delta^2)|r|^2
        +\delta\rho(\mc H)\bigl[(1-C\delta)|p|^2+(1-C_\zeta\delta)g\bigr].
    \]
    Choose $\zeta>0$ small and then decrease $\delta$. For some $c_1,c_2>0$,
    \begin{equation} \label{eq:third-order-W-lower}
        W\ge c_1|r|^2+c_2\delta\rho(\mc H)(|p|^2+g).
    \end{equation}

    \emph{Concentration in transformed coordinates.}
    To prove inf-compactness of $W$, note that \eqref{eq:third-order-v-bounds} gives
    \begin{equation} \label{eq:third-order-energy-control}
        1+\mc H\le C(1+|p|^2+|r|^2+g).
    \end{equation}
    By \eqref{eq:third-order-W-lower}, a sublevel set of $W$ bounds both $|r|^2$ and $\rho(\mc H)(|p|^2+g)$. Multiplying \eqref{eq:third-order-energy-control} by the bounded function $\rho(\mc H)$ bounds $(1+\mc H)\rho(\mc H)$, hence $\mc H$ by \eqref{eq:third-order-rho-bounds}. Thus $W$ is inf-compact. Equations \eqref{eq:third-order-W-lower} and \eqref{eq:third-order-W-identity} also give $W\ge0$ and $W^{-1}(0)=\mc C_0$. Finally,
    \[
        a:\nabla^2V=\Delta_rV
        =\frac n2+\delta\bigl[n\rho'(\mc H)\Theta
          +\rho''(\mc H)|r|^2\Theta+2\kappa\rho'(\mc H)p\cdot r\bigr]
    \]
    satisfies $|\Delta_rV|\le C$ by \eqref{eq:third-order-rho-derivative-controls}. Since $W\ge0$, this proves $a:\nabla^2V\le C(1+W)$.

    Set $\widetilde V=V\circ T^{-1}$. The map $T^{-1}$ is a translation in $r$ for fixed $(q,p)$, and the diffusion acts only in $r$. The chain rule therefore gives
    \[
        \widetilde W\circ T=W,
        \qquad
        (\widetilde a:\nabla^2\widetilde V)\circ T=a:\nabla^2V.
    \]
    Since $T$ is a diffeomorphism, it preserves inf-compactness. Together with hyperbolicity proved above, these identities verify \cref{ass:cocentration} for the transformed process.
\end{proof}

\begin{proof}[Proof of \cref{thm:third-order-gamma-expansion}]
    By \cref{lem:third-order-structural}, the transformed process satisfies the hypotheses of \cref{prop:pre-metastable-gamma,prop:unit-gamma}. We first transfer these unconditional conclusions to physical coordinates, then transfer the complete metastable hierarchy to apply \cref{thm:main}.

    Use tildes for objects associated with the transformed process. The map $T_\#$ is a homeomorphism for weak convergence and sends each occupation measure to its transformed counterpart. Thus \cref{thm:LDP-eps} and the contraction principle give the level-2 LDP in physical coordinates, with
    \[
        \widetilde{\mc I}_\eps(T_\#\nu)=\mc I_\eps(\nu).
    \]
    The chain rule also gives
    \[
        (\widetilde{\mc L}_\eps f)\circ T=\mc L_\eps(f\circ T),
        \qquad
        (\widetilde R_\lam^\eps f)\circ T=R_\lam^\eps(f\circ T).
    \]
    Since $f\mapsto f\circ T$ is a bijection on $C_c^\infty$, the first identity and \eqref{eq:def-J-minus} transport $\mc J^{(-1)}$. The map $T$ also fixes the equilibria and conjugates their linearized drifts, so it transports $\mc J^{(0)}$ as well. Thus
    \[
        \widetilde{\mc J}^{(j)}(T_\#\nu)=\mc J^{(j)}(\nu),
        \qquad j\in\{-1,0\}.
    \]
    Invariance of $\Gamma$-convergence under the weak homeomorphism $T_\#$ therefore proves all the pre-metastable conclusions stated in \cref{ssec:third-order}, without a hierarchy assumption.

    Now suppose that the physical process admits a complete metastable hierarchy. Replace each well $\mc E(\mc M)$ by $T(\mc E(\mc M))$. A smooth diffeomorphism preserves compact closures and Lebesgue-null boundaries, and conjugacy of the deterministic flows preserves the basin conditions. The resolvent identity then gives \eqref{eq:metastable-resolvent-convergence} with the same scales and effective generators. Since $T$ fixes the equilibria, it also fixes every lifting measure $\pi_{\mc M}$. Thus the transformed process has the same complete hierarchy, and all its limiting functionals agree with those in physical coordinates under $T_\#$. Applying \cref{thm:main} and transporting its conclusions through this homeomorphism proves the full $\Gamma$-expansion.
\end{proof}

\section*{Acknowledgements}
The authors thank Professor Claudio Landim for his careful reading of the manuscript and his valuable suggestions.

\bibliographystyle{amsplain}
\bibliography{references}

\end{document}